\title[]{Parabolic Simple $\sL$-invariants}
\author{Yiqin}{He}{}{Beijing}
\theoremstyle{plain}
 \newtheorem{thm}{Theorem}[section]
 \newtheorem{cor}[thm]{Corollary}
 \newtheorem{lem}[thm]{Lemma}
 \newtheorem{pro}[thm]{Proposition}
 \newtheorem{dfn}[thm]{Definition}
 \newtheorem{rmk}[thm]{Remark}
\newcommand{\hooklongrightarrow}{\lhook\joinrel\longrightarrow}
\newcommand{\twoheadlongrightarrow}{\relbar\joinrel\twoheadrightarrow}
\newcommand{\ana}{{\rm an}}
\newcommand{\Art}{{\rm Art}}
\newcommand{\alge}{{\rm alg}}
\newcommand{\bdr}{{B^+_{\mathrm{dR}}}}
\newcommand{\bersteineigenvarpik}{{\mathcal{E}_{\omepik,\fp,\bm{\lambda}_\bh}^\infty(\overline{\rho})}}
\newcommand{\bmdel}{{ \bm{\delta}_\bh}}
\newcommand{\ccyc}{{\chi_{\mathrm{cyc}}}}
\newcommand{\complocalbersteineigenvarpik}{{\widehat{\mathcal{E}_{\omepik,\fp,\bm{\lambda}_\pi}^\infty}(\overline{\rho})_x}}
\newcommand{\complocaldefvarrho}{{ \widehat{X^\Box_{\omepik,\mathbf{{h}}}}(\overline{r})_{x_L}}}
\newcommand{\complocaldefvarrhosub}{{ \widehat{\xfpxxl}}}
\newcommand{\cokerr}{\rm coker}
\newcommand{\dett}{{\rm det}}
\newcommand{\df}{{\mathbf{DF}}}
\newcommand{\dr}{{\rm dR}}
\newcommand{\Dpik}{{\mathbf{D}}}
\newcommand{\defvarring}{{R_{\overline{r}}^\Box}}
\newcommand{\defvar}{{ X_{\omepik,\mathbf{{h}}}(\overline{r})}}
\newcommand{\defvarrho}{{ X_{\omepik,\mathbf{{h}}}(\overline{r})}}
\newcommand{\EndO}{{\rm End}}
\newcommand{\ext}{{\rm Ext}}
\newcommand{\EX}{{E^\times}}
\newcommand{\fil}{{\rm Fil}}
\newcommand{\gal}{{\rm Gal}}
\newcommand{\GLN}{{\rm GL}}
\newcommand{\GL}{{\rm GL}}
\newcommand{\gen}{{\rm gen}}
\newcommand{\gr}{{\rm gr}}
\newcommand{\homo}{{\rm Hom}}
\newcommand{\hH}{{\mathrm{H}}}
\newcommand{\hpi}{{{\mathbf{h}}}}
\newcommand{\ind}{{\rm Ind}}
\newcommand{\im}{{\rm Im}}
\newcommand{\lamoverx}{{L^{\lrr}({\bm\lambda}_\bh)}}
\newcommand{\lrr}{{\langle r\rangle}}
\newcommand{\lra}{\longrightarrow}
\newcommand{\LX}{{L^\times}}
\newcommand{\loc}{{\rm loc}}
\newcommand{\lp}{{\rm lp}}
\newcommand{\MSpec}{{\rm MaxSpec}}
\newcommand{\op}{{\overline{{\mathbf{P}}}}}
\newcommand{\ob}{{\overline{{\mathbf{B}}}}}
\newcommand{\onn}{{\overline{{\mathbf{N}}}}}
\newcommand{\obp}{\overline{\mathbb{Q}}_p^\times}
\newcommand{\omepik}{\Omega_{[1,k]}}
\newcommand{\pol}{{\rm pol}}
\newcommand{\padelc}{{(\Delta_{\Dpik}^z)}}
\newcommand{\padel}{{(\Delta_{\Dpik}^0)}}
\newcommand{\pr}{{\rm pr}}
\newcommand{\pst}{{\rm pst}}
\newcommand{\Rep}{\mathrm{Rep}}
\newcommand{\red}{{\rm red}}
\newcommand{\rig}{{\rm rig}}
\newcommand{\ra}{{\longrightarrow}}
\newcommand{\rec}{{\rm rec}_L}
\newcommand{\rigch}{{\mathcal{Z}_{\bL^{\lrr},\mathcal{O}_L}}}
\newcommand{\rigchl}{{\mathcal{Z}_{\bL^{\lrr},L}}}
\newcommand{\rigerch}{{\mathcal{Z}_{\bL_{\ul{n}},\mathcal{O}_L}}}
\newcommand{\rigerchl}{{\mathcal{Z}_{\bL_{\ul{n}},L}}}
\newcommand{\Spec}{\rm Spec}
\newcommand{\Spf}{{\mathrm{Spf}}}
\newcommand{\st}{{\rm St}}
\newcommand{\soc}{{\rm soc}}
\newcommand{\sm}{{\rm sm}}
\newcommand{\spr}{{\rm Sp}}
\newcommand{\spf}{{\mathrm{Spf}}}
\newcommand{\sbanpik}{{\big(\mathrm{Spec}\hspace{2pt}\mathfrak{Z}_{\omepik}\big)^{\mathrm{rig}}}}
\newcommand{\sbanpi}{{\left(\Spec\mathfrak{Z}_{\Omega_{r}}\right)^{\mathrm{rig}}}}
\newcommand{\sban}{{\left(\Spec\mathfrak{Z}_{\Omega}\right)^{\mathrm{rig}}}}
\newcommand{\tee}{{{\otimes}_{\cR_{E,L}}}}
\newcommand{\te}{{\otimes_E}}
\newcommand{\ur}{{\rm ur}}
\newcommand{\unr}{{\rm unr}}
\newcommand{\unra}{{\rm unr}}
\newcommand{\univer}{{\rm univ}}
\newcommand{\undelram}{{\delta^0}}
\newcommand{\univ}{{\rm univ}}
\newcommand{\ul}{\underline}
\newcommand{\valua}{\mathrm val}
\newcommand{\wdre}{{\textbf{WD}}}
\newcommand{\wt}{{\rm wt}}
\newcommand{\wdr}{{W^+_{\mathrm{dR}}}}
\newcommand{\xfpx}{X^{X^\fp-\mathrm{aut}}_{\omepik}(x)}
\newcommand{\xfpxxl}{X^{X^\fp-\mathrm{aut}}_{\omepik}(x)_{x_L}}
\newcommand{\zuni}{{\widehat{Z^{\lrr}_{\varpi_L}}}}
\newcommand{\BA}{{\mathbb{A}}}
\newcommand{\BG}{{\mathbb{G}}}
\newcommand{\BI}{{\mathbb{I}}}
\newcommand{\BQ}{{\mathbb{Q}}}
\newcommand{\BU}{{\mathbb{U}}}
\newcommand{\BZ}{{\mathbb{Z}}}
\newcommand{\bB}{{\mathbf{B}}}
\newcommand{\bD}{{\mathbf{D}}}
\newcommand{\bG}{{\mathbf{G}}}
\newcommand{\bL}{{\mathbf{L}}}
\newcommand{\bN}{{\mathbf{N}}}
\newcommand{\bP}{{\mathbf{P}}}
\newcommand{\bQ}{{\mathbf{Q}}}
\newcommand{\bT}{{\mathbf{T}}}
\newcommand{\bW}{{\mathbf{W}}}
\newcommand{\bZ}{{\mathbf{Z}}}
\newcommand{\bc}{{\mathbf{c}}}
\newcommand{\bx}{{\mathbf{x}}}
\newcommand{\bv}{{\mathbf{v}}}
\newcommand{\bh}{{\mathbf{h}}}
\newcommand{\bk}{{\mathbf{k}}}
\newcommand{\cL}{\mathcal L}
\newcommand{\co}{\mathcal O}
\newcommand{\cR}{\mathcal R}
\newcommand{\cH}{\mathcal H}
\newcommand{\cC}{\mathcal C}
\newcommand{\cS}{\mathcal S}
\newcommand{\cM}{\mathcal M}
\newcommand{\cF}{\mathcal F}
\newcommand{\cE}{\mathcal E}
\newcommand{\cG}{\mathcal G}
\newcommand{\cO}{\mathcal O}
\newcommand{\cZ}{\mathcal Z}
\newcommand{\cY}{\mathcal Y}
\newcommand{\FI}{{\mathfrak{I}}}
\newcommand{\FX}{{\mathfrak{X}}}
\newcommand{\FZ}{{\mathfrak{Z}}}
\newcommand{\fa}{{\mathfrak{a}}}
\newcommand{\fb}{{\mathfrak{b}}}
\newcommand{\fd}{{\mathfrak{d}}}
\newcommand{\fg}{{\mathfrak{g}}}
\newcommand{\fh}{{\mathfrak{h}}}
\newcommand{\fl}{{\mathfrak{l}}}
\newcommand{\fm}{{\mathfrak{m}}}
\newcommand{\fn}{{\mathfrak{n}}}
\newcommand{\fo}{{\mathfrak{o}}}
\newcommand{\fp}{{\mathfrak{p}}}
\newcommand{\ft}{{\mathfrak{t}}}
\newcommand{\fz}{{\mathfrak{z}}}
\newcommand{\sE}{\mathscr E}
\newcommand{\sF}{\mathscr F}
\newcommand{\sW}{\mathscr W}
\newcommand{\sL}{\mathcal L}
\begin{document}

\begin{abstract}
Let $L$ be a finite extension of $\bQ_p$.\;Let $\rho_L$ be a potentially semi-stable non-crystalline  $p$-adic Galois representation such that the associated $F$-semisimple Weil-Deligne representation is absolutely indecomposable.\;In this paper,\;we study Fontaine-Mazur parabolic simple $\sL$-invariants of $\rho_L$, which was previously only known in the trianguline case.\;Based on the previous work \cite{2022ext1hyq} on Breuil's parabolic simple $\sL$-invariants,\;we
attach to $\rho_L$ a locally $\bQ_p$-analytic representation $\Pi(\rho_L)$ of $\GLN_{n}(L)$, which carries the information of parabolic simple $\sL$-invariants of $\rho_L$.\;When $\rho_L$  comes from a patched automorphic representation of  $\bG(\BA_{F^+})$ (for a define unitary group $\bG$ over a totally real field $F^+$ which is compact at infinite places and $\GLN_n$ at $p$-adic places),\;we prove under mild hypothesis that $\Pi(\rho_L)$ is a subrepresentation of the associated Hecke-isotypic subspace of the Banach spaces of (patched) $p$-adic automophic forms on $\bG(\BA_{F^+})$,\;this is equivalent to say that the Breuil's parabolic simple $\sL$-invariants are equal to Fontaine-Mazur parabolic simple $\sL$-invariants.\;
\end{abstract}

 \tableofcontents %% Just for papers exceeding 50 pages.

\section{Introduction and notation}

This paper aims to investigate the $p$-adic aspects of the Langlands program.\;The $p$-adic Langlands correspondence seeks to relate $p$-adic Galois representations to Banach space representations (or locally analytic representations) of $p$-adic reductive groups.\;

Associating admissible continuous unitary  $p$-adic Banach space representation $\Pi(\rho_p)$ of $\GLN_2(\BQ_p)$ to two-dimensional continuous representation $\rho_p:\gal_{\BQ_p}\rightarrow \GLN_2(E)$,\;was first proposed by Breuil for those $\rho_p$ that are potentially semistable with distinct Hodge-Tate weight \cite{Breuil03a},\;\cite{Breuil03b}.\;Through the work of many people,\;Breuil's conjectured correspondence or the $p$-adic Langlands correspondence for $\GLN_2(\BQ_p)$ (and its  $p$-adic local-global compatibility) have been achieved largely by the the fundamental work of Colmez \cite{PADINGL2COL} and Emerton \cite{localglobalgl2}.\;The current focus of research is on higher-dimensional representations and on base fields other than the field $\bQ_p$.\;The case $n\geq 3$ is much more difficult and only a few partial results are known.\;

If the $\rho_p$ in the above Breuil's consideration is genuinely irreducible semistabelline,\;Breuil proposed that $\Pi(\rho_p)$ should be a certain explicit completion of the locally algebraic representation $\Pi(\rho_p)_{\alge}$ depending on the $\sL$-invariant $\cL(\rho_p)$ of $\rho_p$.\;This 
$\sL$-invariant can be read off explicitly from an admissible Hodge filtration on some linear algebra data via Fontaine's theory.\;

But the terminology ``$\sL$-invariant(s)" has a long history.\;The study of  $\sL$-invariants started with the work of Mazur-Tate-Teitelbaum, in which the  $\sL$-invariant $\cL(f)$ describes the derivative of the  $p$-adic  $L$-function (for a certain modular form $f$) at its exceptional zero.\;If we write $\rho_f$ for the $\gal(\overline{\BQ}/\BQ)$-representation attached to $f$ via Langlands correspondence.\;Then the derivative $\cL(f)$ can be read off explicitly from Fontaine-Mazur $\sL$-invariant of $\rho_{f,p}:=\rho_f|_{\gal_{\BQ_p}}$.\;There were later several other definitions of $\sL$-invariants via different approaches.\;Through the work of many people, these  $\sL$-invariants turned out to be all the same in the modular form case.\;

Among many different definitions of the $\sL$-invariant(s) $\cL(f)$,\;Breuil constructs an explicit finite length locally analytic representation $\Pi_{f,p}$ of $\GLN_2({\BQ}_p)$ whose isomorphism class recovers $\cL(f)$. Furthermore,\;he shows that $\Pi_{f,p}$ can be embedded into the $f$-isotypic component of completed cohomology of modular curves (we usually use the term \textit{Breuil's $\sL$-invariants} for invariants defined in terms of Breuil's constructions).\;The second result is usually called ($p$-adic) local-global compatibility.\;Therefore,\;the construction of Breuil's $\sL$-invariants is actually one of the first instances of the $p$-adic Langlands program (for $\GLN_2(\BQ_p)$).\;

In general,\;the philosophy of the $p$-adic local Langlands program,\;which goes back to Breuil's initial ideals on $\sL$-invariants,\;gives two problems:
\begin{itemize}
	\item[(I)] To find the information of $p$-adic Galois representations in $p$-adic automorphic representations. More precisely,\;we need to find the missing information  of $\rho_L$  when passing from an $n$-dimensional de Rham $p$-adic Galois representation $\rho_L$ to its associated Weil-Deligne representation,\;on the automorphic side,\;e.g.,\;in the Banach representations or locally ${\BQ}_p$-analytic  representations of $\GLN_n(L)$.\;The lost information (besides the Hodge-Tate weights) of $p$-adic Galois representations can be concretely described via  Fontaine-Mazur $\sL$-invariants.\;
	\item[(II)] More precisely,\;to seek generalizations of Breuil's $\sL$-invariants to $\GLN_n(L)$,\;i.e.,\;to recover all Fontaine-Mazur $\sL$-invariants of $\rho_L$ from a certain locally analytic representation $\Sigma(\rho_L)$.\;
\end{itemize}
%The proofs of the equality witnessed the profound development of the $p$-adic arithmetic, including the $p$-adic comparison theorems, $p$-adic Galois deformations,\;and $p$-adic Langlands program.(we usually use the term \textit{Fontaine-Mazur $\sL$-invariant} for invariants defined in terms of the Hodge filtration).\;,\;a basic problem (and a starting point) in the $p$-adic Langlands program is to %Among  these different definitions of $\sL$-invariants,\;we focus on the  (see \cite{BCLINV2}).\;
In \cite{2019DINGSimple},\;Yiwen Ding studies this problem and extends the theory of 
$\sL$-invariants to higher-rank groups. He generalizes Breuil's approach (see \cite{BCLINV2}) to patched $p$-adic automorphic forms on certain definite unitary groups,\;and defines what he calls Breuil's simple $\sL$-invariants and shows that they are equal to Fontaine-Mazur $\sL$-invariants of two-dimensional subquotients of the associated semi-stable non-crystalline (trianguline) local Galois representation (or of the associated $(\varphi,\Gamma)$-module).\;

\textit{While the trianguline $p$-adic Galois representations are studied widely,\;there are fewer examples of results for the non-trianguline $p$-adic Galois representations.\;}In this paper,\;we extend the theory of {Fontaine-Mazur  simple $\sL$-invariants} and  {Breuil's simple $\sL$-invariants} to certain potentially semi-stable non-crystalline (not necessarily trianguline) Galois representation $\rho_L$ such that  the associated smooth representation is given by  the Zelevinsky-segment.\;We first define \textit{parabolic Fontaine-Mazur simple $\sL$-invariants} for this special $p$-adic Galois representation $\rho_L$.\;On the other hand,\;we have studied \textit{parabolic Breuil's simple $\sL$-invariants} in \cite{2022ext1hyq}.\;They are $\ext$-groups between some locally $\BQ_p$-analytic generalized \textit{parabolic} Steinberg representations of $\GLN_n(L)$ attached to a Zelevinsky-segment.\;

Finally,\;we establish some local-global compatibility results,\;i.e.,\;the correspondence between these two parabolic simple $\sL$-invariants can be realized in the $p$-adic completed cohomology  of some Shimura varieties (especially,\;in the space of $p$-adic automorphic forms on certain definite unitary group).\;Our results were previously only known in the trianguline case.\;We prove this result by using the geometry of Bernstein eigenvarieties,\;which were developed by Christophe Breuil and  Yiwen  Ding \cite{Ding2021} (since the non-trianguline $p$-adic Galois representations do not occur in the "classical" theory of eigenvarieties,\;we need the framework of Bernstein eigenvarieties).\;In \cite{Ding2021},\;the authors use the geometry of these Bernstein eigenvarieties to obtain various local-global compatibility results in the \textit{generic non-trianguline} case.\;Note that our local-global compatibility results lie in the \textit{non-generic non-trianguline} case.\;This is different from that in \cite{Ding2021}.\;

Our result coincides with the work of Ding \cite{2019DINGSimple} when our potentially semi-stable non-crystalline  Galois representation $\rho_L$ is collapsed to the trianguline case.\;This paper gives a parabolic generalization of Ding's work \cite{2019DINGSimple},\;and gives new evidence for the $p$-adic Langlands correspondence.\;We sketch the main results of this paper as follows.\;

\subsection{Statements of the main results}
Let $L$ (resp. $E$) be a finite extension of $\BQ_p$.\;Suppose that $E$ is sufficiently large containing all the embeddings $\Sigma_L:=\{\sigma: L\hookrightarrow \overline{\BQ}_p\}$ of $L$ in $\overline{\BQ}_p$.\;Put $q_L:=p^{f_L}$,\;where $f_L$ denotes the unramified degree of $L$ over $\BQ_p$.\;

Fix two integers $k$ and $r$ such that $n=kr$.\;Let $\rho_L:\gal_L\rightarrow \GLN_n(E)$ be a potentially semi-stable non-crystalline $p$-adic Galois representation.\;Let $\Dpik=D_{\rig}(\rho_L)$ be the $(\varphi,\Gamma)$-module of rank $n$ over $\cR_{E,L}$ associated to the $\rho_L$.\;Let  $\bh:=(\hpi_{\tau,1}>\hpi_{\tau,2}>\cdots>\hpi_{\tau,n} )_{\tau\in \Sigma_L}$ be the Hodge-Tate weights of $\Dpik$.\;We put $\hpi_{i}=(\hpi_{\tau,i})_{\tau\in \Sigma_L}$ for $1\leq i\leq n$.\;

Let $\wdre_0$ be an $r$-dimensional (absolutely) irreducible Weil-Deligne representation of the Weil group $W_L$ over $E$.\;Let $\Delta$ be the $p$-adic differential equation associated to $\wdre_0$.\;Recall from \cite{berger2008equations} that $\Delta$ is an irreducible $(\varphi,\Gamma)$-module of rank $r$ over $\cR_{E,L}$ which is de rham of constant Hodge-Tate weight $0$ such that $D_{\pst}(\Delta)$ (forgetting the Hodge filtration) is isomorphic to the (absolutely) irreducible Deligne-Fontaine module  associated by Fontaine to  $\wdre_0$ \cite[Proposition 4.1]{breuil2007first}.

Assume that the Galois representation $\rho_L$ admits the following non-critical special parabolization (more precisely,\;the so-called \textit{non-critical special $\omepik$-filtration},\;see Definitions \ref{weaklynoncritical} and \ref{dfnnoncriticalspecial} for more precise statements).\;This is a parabolic analogue of triangulation.\;

We say that $\rho_L$ admits a non-critical special parabolization $\cF$ if $\Dpik$ admits an increasing filtration by saturated $(\varphi,\Gamma)$-submodules
\[\cF=\fil_{\bullet}^{\cF} \Dpik: \ 0 =\fil_0^{\cF} \Dpik \subsetneq \fil_1^{\cF}\Dpik \subsetneq \cdots \subsetneq \fil_{k}^{\cF} \Dpik=\Dpik,\]
such that for $1\leq i\leq k$,\;we have an injection of $(\varphi,\Gamma)$-modules over $\cR_{E,L}$ of rank $r$
\begin{equation}\label{introDpikinjection}
	\mathbf{I}_i:\gr_{i}^{\cF}\Dpik:=\fil_i^{\cF}\Dpik/\fil_{i-1}^{\cF}\Dpik \hookrightarrow {\Delta\;}\tee \cR_{E,L}(\unr(\alpha q_L^{i-k})z^{\hpi_{ir}}),
\end{equation}
for some $\alpha\in E^\times$,\;where $z^{\hpi_{ir}}:=\prod_{\tau\in  \Sigma_L}\tau(z)^{\hpi_{\tau,ir}}$.\;Moreover,\;we assume that  the Hodge-Tate weights of $\fil_i^{\cF} \Dpik$ (resp.,\;$\gr_i^{\cF} \Dpik$) are given by (i.e.,\;the so-called non-critical assumption) $$\{\hpi_{\tau,1},\hpi_{\tau,2},\cdots,\hpi_{\tau,ir}\}_{\tau\in \Sigma_L}\;\; (\text{resp.,}\;\{\hpi_{\tau,(i-1)r+1},\hpi_{\tau,(i-1)r+2},\cdots,\hpi_{\tau,ir}\}_{\tau\in \Sigma_L}).\;$$
Suppose that the $F$-semi-simple Weil-Deligne representation $\mathrm{WD}(\Dpik)^{\mathrm{F-ss}}$ associated to $\Dpik$ is (absolutely) indecomposable,\;i.e.,\;$\mathrm{WD}(\Dpik)^{\mathrm{F-ss}}\cong(r_L,N)$ (up to some unramified twist),\;where the underlying representation $r_L$ is isomorphic to $ \oplus_{i=1}^k\wdre_0|\cdot|_L^{k-i}$ and the monodromy operator $N$ is of full rank (i.e.,\;$N^{k-1}\neq 0$).\;

Note that the parameters of our non-critical special parabolization $\cF$ are \textit{not generic},\;which is different from the generic assumption in \cite[(4.13)]{Ding2021}.\;See Remark \ref{nongenediffobstru} and  Remark \ref{localmodeldescri} for some statements on the difference between these two cases.\;

\subsection*{Parabolic Fontaine-Mazur simple $\sL$-invariants}

Keep the assumptions on $\rho_L$.\;Then we can attach to $\rho_L$ the parabolic Fontaine-Mazur simple $\sL$-invariants $\sL(\rho_L)$.\;We sketch the constructions of $\sL(\rho_L)$.\;

The parabolic simple $\sL$-invariants contain certain information on the consecutive extensions $\Dpik_i^{i+1}$ of $\gr_{i+1}^{\cF}\Dpik$ by $\gr_{i}^{\cF}\Dpik$ for $1\leq i\leq k-1$ inside $\Dpik$ (which we identify the set $\Delta_k:=\{1,\cdots,k-1\}$ with the set of simple roots $\Delta_n(k):=\{r,2r,\cdots,(k-1)r\}$ of $\GLN_n$).\;

For $1\leq i\leq k$,\;we  put $\bm{\delta}_{\bh,i}:=\unr(q_L^{i-k})z^{\hpi_{ir}}$.\;Using the injections $\mathbf{I}_{i}$ and $\mathbf{I}_{i+1}$  (see (\ref{introDpikinjection})),\;
we can construct a non-degenerate pairing (but maybe not perfect):
\begin{equation}\label{introperfectpairing}
	\begin{aligned}
		\ext^1_{(\varphi,\Gamma)}(\gr_{i+1}^{\cF} \Dpik,\gr_{i}^{\cF}\Dpik)  \;\;\times\;\;  &\homo(L^\times,E) \big(\cong \hH^1_{(\varphi,\Gamma)}(\cR_{E,L})\big) \\      &\xrightarrow{\cup}
		E\big(\cong\ext^2_{(\varphi,\Gamma)}(\cR_{E,L}(\bm{\delta}_{\bh,i}\bm{\delta}^{-1}_{\bh,i+1}))\big)
	\end{aligned}
\end{equation}
where $\homo(L^\times,E)$ denotes the $d_L+1$-dimensional $E$-vector space of $E$-valued additive characters on $L^\times$.\;The extension $\Dpik_i^{i+1}$ gives an extension class $[\Dpik_{i}^{i+1}]\in \ext^1_{(\varphi,\Gamma)}(\gr_{i+1}^{\cF} \Dpik,\gr_{i}^{\cF}\Dpik)$.\;We let
\[\sL(\rho_L)_{ir}\subset \homo(L^\times,E)\]
be the $E$-vector subspace orthogonal to $[\Dpik_{i}^{i+1}]$ via the non-degenerate pairing  (\ref{introperfectpairing}),\;which is a $d_L$-dimensional $E$-vector space.\;We put $\sL(\rho_L):=\{\sL(\rho_L)_{ir}\}_{1\leq i\leq k-1}$,\;which we call the \textit{parabolic Fontaine-Mazur simple $\sL$-invariants} of $\rho_L$.\;On the other hand,\;these invariants also characterize obstructions to certain
$1$-order deformations of $\Dpik$,\;see Theorem \ref{defor1thdefor} (that we call Colmez-Greenberg-Stevens formula) for more precise statements.\;

These simple $\sL$-invariants were previously only known in the trianguline case.\;When $r=1$ (i.e.,\;the trianguline case),\;our parabolic Fontaine-Mazur simple $\sL$-invariants coincide with the Fontaine-Mazur simple  $\sL$-invariants in \cite[Page 7994]{2019DINGSimple}.\;

\subsection*{Parabolic Breuil's simple $\sL$-invariants \cite{2022ext1hyq}}

%This part has been completed in the paper \cite{2022ext1hyq}.\;By \cite{berger2008equations} and the normalized local Langlands correspondence (see \cite{scholze2013local}),\;the $p$-adic differential e

Let $\Delta_n$ be the set of simple roots of $\GLN_n$ (with respect to the Borel subgroup $\bB$ of upper triangular matrices),\;and we identify the set $\Delta_n$ with the set $\{1,2,\cdots,n-1\}$. Let $\bT$ be the torus of diagonal matrices.\;We put  $\Delta_n(k):=\{r,2r,\cdots,(k-1)r\}\subseteq \Delta_n$ and $\Delta_n^k:=\Delta_n\backslash \Delta_n(k)$. For a subset $I\subseteq \Delta_n(k)$,\;we denote by $\bP_I^{\lrr}$ the parabolic subgroup of $\GLN_n$ containing the Borel subgroup $\bB$  such that $\Delta_n(k) \backslash  I$ are precisely the simple roots of the unipotent radical of $\mathbf{P}_I^{\lrr}$.\;Let $\bL_I^{\lrr}$ be the Levi subgroup of $\bP_I^{\lrr}$ containing the group $\bT$ such that  $I\cup \Delta_n^k$ is equal to the set of simple roots of  $\bL_I^{\lrr}$.\;Let $\op_I^{\lrr}$ be the parabolic subgroup opposite to $\bP_I^{\lrr}$.\;In particular,\;we have 
\[\bL^{\lrr}:=\bL^{\lrr}_{\emptyset}=\left(\begin{array}{cccc}
	\GLN_r & 0 & \cdots & 0 \\
	0 & \GLN_r & \cdots & 0 \\
	\vdots & \vdots & \ddots & 0 \\
	0 & 0 & 0 & \GLN_r \\
\end{array}\right)\subset \bP^{\lrr}:=\bP^{\lrr}_{\emptyset}=\left(\begin{array}{cccc}
	\GLN_r & \ast & \cdots & \ast \\
	0 & \GLN_r & \cdots & \ast \\
	\vdots & \vdots & \ddots & \ast \\
	0 & 0 & 0 & \GLN_r \\
\end{array}\right).\]
For simplicity,\;if $I=\{ir\}$ for some $1\leq i\leq k-1$,\;we put $\bL^{\lrr}_{ir}:=\bL^{\lrr}_{\{ir\}}$ and $\op^{\lrr}_{ir}:=\op^{\lrr}_{\{ir\}}$.\;For $\alpha\in E^\times$,\;we denote by $\unr(\alpha)$ the unramified character of $L^\times$ sending uniformizers to $\alpha$.\;

Recall the (absolutely) irreducible Weil-Deligne representation $\wdre_0$.\;Let $\pi$ be the associated irreducible cuspidal  representation of $\GLN_r(L)$ over $E$   via the classical local Langlands correspondence (normalized as in \cite{scholze2013local}).\;Following \cite{av1980induced2},\;we consider the Zelevinsky-segment \[\Delta_{[k-1,0]}(\pi):=\pi|\det|_L^{r-1}\otimes_E \cdots \otimes_E \pi|\det|_L \otimes_E \pi,\;\]which forms an irreducible cuspidal smooth representation of $\bL^{\lrr}(L)$ over $E$.\;

Put ${\bm\lambda}_\bh:=(\hpi_{\tau,i}+i-1)_{\tau\in \Sigma_L,1\leq i\leq n}$,\;which is a dominant weight of $(\mathrm{Res}_{L/\BQ_p}\GLN_n)\times_{\BQ_p}E$ with  respect to $(\mathrm{Res}_{L/\BQ_p}\bB)\times_{\BQ_p}E$.\;Let  $\{v^{\ana}_{\op^{\lrr}_{I}}(\pi,{\bm\lambda}_\bh)\}_{I\subseteq {\Delta_n(k)}}$ (resp.,\;$\{v^{\infty}_{\op^{\lrr}_{I}}(\pi,{\bm\lambda}_\bh)\}_{I\subseteq {\Delta_n(k)}}$) be the locally $\bQ_p$-analytic (resp.,\;locally $\bQ_p$-algebraic)
generalized parabolic Steinberg representations of $\GL_n(L)$ (see Section 3).\;In particular,\;we denote by \[\st_{(r,k)}^{\infty}(\pi,{\bm\lambda}_\bh):=v^{\infty}_{\op^{\lrr}_{\emptyset}}(\pi,{\bm\lambda}_\bh) \big(\text{resp.,\;}\st_{(r,k)}^{\ana}(\pi,{\bm\lambda}_\bh):=v^{\ana}_{\op^{\lrr}}(\pi,{\bm\lambda}_\bh)\big)\] the locally algebraic (resp.,\;locally $\bQ_p$-analytic) parabolic Steinberg representation. Recall that the smooth Steinberg representation $\st_{(r,k)}^{\infty}(\pi,\underline{0})$ is the irreducible smooth representation of $\GL_n(L)$ associated
to the $F$-semi-simple Weil-Deligne representation  $\mathrm{WD}(\rho_L)^{\mathrm{F-ss}}$ via the normalized local Langlands correspondence (see \cite{scholze2013local}),\;up to some unramified twist.\;For $\alpha\in E^\times$ and $\ast\in\{\infty,\ana\}$,\;let  $\st_{(r,k)}^{\ast}(\alpha,\pi,{\bm\lambda}_\bh):=\unr(\alpha)\circ\det\otimes_E\st_{(r,k)}^{\ast}(\pi,{\bm\lambda}_\bh)$ and $v^{\ast}_{\op^{\lrr}_{I}}(\alpha,\pi,{\bm\lambda}_\bh):=\unr(\alpha)\circ\det\otimes_Ev^{\ast}_{\op^{\lrr}_{I}}(\pi,{\bm\lambda}_\bh)$.\;

The main result  of the paper \cite[Theorem 5.19]{2022ext1hyq} is to compute the extension groups of locally $\bQ_p$-analytic generalized parabolic Steinberg representations.\;This theorem implies that we can see the counterpart of parabolic Fontaine-Mazur simple $\sL$-invariants in certain locally analytic representations of $\GL_n(L)$ (or see (\ref{thmintro}),\;where we refer to as parabolic Breuil's  simple $\sL$-invariants).\;
\begin{thm}
	For $ir\in {\Delta_n(k)}$,\;there exists an isomorphism of $E$-vector spaces 
	\begin{equation}\label{thmintro1}
		\homo(L^\times,E)\xrightarrow{\sim }\ext^1_{\GL_n(L)}\big(v_{\op^{\lrr}_{ir}}^{\infty}(\alpha,\pi,{\bm\lambda}_\bh), \st_{(r,k)}^{\ana}(\alpha,\pi,{\bm\lambda}_\bh)\big).
	\end{equation}
	In particular,\;we have $\dim_E  \ext^1_{G}\big(v_{\op^{\lrr}_{ir}}^{\infty}(\alpha,\pi,{\bm\lambda}_\bh), \st_{(r,k)}^{\ana}(\alpha,\pi,{\bm\lambda}_\bh)\big)=d_L+1$.\;Let $\widetilde{\Sigma}_i^{\lrr}(\alpha,\pi,{\bm\lambda}_\bh, \psi)$ be the image of $\psi\in \homo(L^\times,E)$ via (\ref{thmintro1}).\;
\end{thm}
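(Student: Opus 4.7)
The plan is to deduce the theorem directly from \cite[Theorem 5.19]{2022ext1hyq}, which the excerpt identifies as the main computation of $\ext^1$-groups between locally $\bQ_p$-analytic generalized parabolic Steinberg representations. I would begin with a twisting reduction: the functor $V \mapsto V \otimes_E (\unr(\alpha^{-1}) \circ \det)$ is an exact auto-equivalence on admissible locally $\bQ_p$-analytic $\GLN_n(L)$-representations, hence induces an isomorphism on $\ext^1$. This reduces the statement to the case $\alpha = 1$, so that the two arguments become the standard generalized Steinberg representations attached to the Zelevinsky-segment $\Delta_{[k-1,0]}(\pi)$ and the dominant weight ${\bm\lambda}_\bh$.

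Next I would match the data against the cited theorem. The pair $\bigl(v_{\op^{\lrr}_{ir}}^{\infty}(\pi,{\bm\lambda}_\bh),\ \st_{(r,k)}^{\ana}(\pi,{\bm\lambda}_\bh)\bigr)$ is the special instance of the general pair $\bigl(v_{\op^{\lrr}_{I}}^{\infty}(\pi,{\bm\lambda}_\bh),\ v_{\op^{\lrr}_{J}}^{\ana}(\pi,{\bm\lambda}_\bh)\bigr)$ treated in \cite[Theorem 5.19]{2022ext1hyq}, specialised to $I = \{ir\}$ and $J = \emptyset$, so that $J \subsetneq I$ differ by the single simple root $ir \in \Delta_n(k)$. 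The cited theorem produces the desired isomorphism with $\homo(L^\times, E)$; conceptually, the copy of $L^\times$ is the cocenter of the $\GLN_r \times \GLN_r$-block carved out by $ir$, and $\homo(L^\times, E)$ records the single degree of freedom to ``wrap'' an extension by a locally analytic additive character along that root direction. The dimension count $d_L + 1 = \dim_E \homo(L^\times,E)$ is then automatic from $L^\times \cong \cO_L^\times \times \varpi_L^{\BZ}$: the units contribute $d_L$ characters (the logarithms composed with $\sigma \in \Sigma_L$) and the uniformiser contributes the valuation.

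The main obstacle, which I would take as a black-box input from \cite{2022ext1hyq}, is the vanishing of the auxiliary $\ext^j$-groups between the other Jordan--H\"older constituents appearing in the Orlik--Strauch-type d\'evissage of $\st_{(r,k)}^{\ana}(\pi,{\bm\lambda}_\bh)$ against $v_{\op^{\lrr}_{ir}}^{\infty}(\pi,{\bm\lambda}_\bh)$; these vanishings are precisely what guarantees that the d\'evissage isolates a single copy of $\homo(L^\times,E)$ rather than a larger space. In the earlier paper they are obtained via Bruhat-type filtrations on parabolic inductions of the cuspidal Zelevinsky-segment, combined with Orlik--Strauch/Kohlhaase acyclicity for generalised Verma modules. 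Once these inputs are granted, the remaining deduction of the stated isomorphism --- including the explicit parametrisation $\psi \mapsto \widetilde{\Sigma}_i^{\lrr}(\alpha,\pi,{\bm\lambda}_\bh,\psi)$ by untwisting back by $\unr(\alpha)\circ\det$ --- is formal and proceeds entirely within the cited framework.
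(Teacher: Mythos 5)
Your proposal is correct and follows essentially the same route as the paper: the statement is simply the main theorem of \cite{2022ext1hyq} (Theorem 5.19 there), which the paper quotes directly in the untwisted case $(\pi,{\bm\lambda}_\bh)$ and then transports by the harmless exact twist $-\otimes_E\unr(\alpha)\circ\det$ built into its notation, exactly as in your reduction to $\alpha=1$. The dévissage and vanishing inputs you black-box are indeed the content of the cited theorem, so no further argument is needed here.
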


For $ir\in \Delta_n(k)$, let $V_i$ be an $E$-vector subspace of $\homo(L^\times,E)$  of dimension $d_L$,\;and let $V=\prod_{i=1}^kV_i$.\;The above theorem gives the following constructions.\;

We will construct  locally $\bQ_p$-analytic representations  $\widetilde{\Sigma}_i^{\lrr}(\alpha,\pi,{\bm\lambda}_\bh, V_i)$ (resp.,\;$	\widetilde{\Sigma}^{\lrr}(\alpha,\pi,{\bm\lambda}_\bh, V)$) of $\GL_n(L)$,\;such that $\widetilde{\Sigma}_i^{\lrr}(\alpha,\pi,{\bm\lambda}_\bh, \sL(\rho_L)_{ir})$ (resp.,\;$\widetilde{\Sigma}^{\lrr}(\alpha,\pi,{\bm\lambda}_\bh, V)$) is isomorphic to an extension of $v_{\op^{\lrr}_{ir}}^{\infty}(\alpha,\pi,{\bm\lambda}_\bh)^{\oplus d_L}$ (resp.,\;$\bigoplus_{ir\in \Delta_n(k)}v_{\op^{\lrr}_{ir}}^{\infty}(\alpha,\pi,{\bm\lambda}_\bh)^{\oplus d_L}$) by $\st_{(r,k)}^{\ana}(\alpha,\pi,{\bm\lambda}_\bh)$.\;We then construct  certain subrepresentations (which have a simpler and more clear structure) $\Sigma_i^{\lrr}(\alpha,\pi,{\bm\lambda}_\bh, V_i)$ of the above $\widetilde{\Sigma}_i^{\lrr}(\alpha,\pi,{\bm\lambda}_\bh, V_i)$ (resp.,\;$\Sigma^{\lrr}(\alpha,\pi,{\bm\lambda}_\bh, V)$ of $\widetilde{\Sigma}^{\lrr}(\alpha,\pi,{\bm\lambda}_\bh, V)$),\;which is isomorphic to an  extension of $v_{\op^{\lrr}_{ir}}^{\infty}(\alpha,\pi,{\bm\lambda}_\bh)^{\oplus d_L}$ (resp.,\;$\bigoplus_{ir\in \Delta_n(k)}v_{\op^{\lrr}_{ir}}^{\infty}(\alpha,\pi,{\bm\lambda}_\bh)^{\oplus d_L}$) by some subrepresentations of $\st_{(r,k)}^{\ana}(\alpha,\pi,{\bm\lambda}_\bh)$ (see Section 3,\;(\ref{subofsigma})). When $V$ is equal to the parabolic simple $\sL$-invariants $\sL(\rho_L)$,\;the locally $\bQ_p$-analytic representations $\widetilde{\Sigma}^{\lrr}(\alpha,\pi,{\bm\lambda}_\bh, \sL(\rho_L))$ and $	\Sigma^{\lrr}(\alpha,\pi,{\bm\lambda}_\bh, \sL(\rho_L))$ carry the exact information of $\{\pi,{\bm\lambda}_\bh,\sL(\rho_L)\}$,\;i.e.,\;the information on the Weil-Deligne representation associated with $\rho_L$,\;the Hodge-Tate weights of $\rho_L$,\;and the parabolic simple $\sL$-invariants $\sL(\rho_L)$ of $\rho_L$.\;

\begin{rmk}
	The above two (parabolic) simple $\sL$-invariants are extension parameters at simple roots of $\GLN_n$.\;We also have concepts of higher $\sL$-invariants,\;which are  extension parameters at non-simple roots of $\GLN_n$.\;The next goal is to explore generalizations of Breuil's  $\sL$-invariants that conjecturally correspond to Fontaine-Mazur $\sL$-invariants (or parabolic Fontaine-Mazur $\sL$-invariants) at non-simple roots.\;See \cite{schraen2011GL3},\;\cite{HigherLinvariantsGL3Qp},\;\cite{breuil2019ext1},\;and \cite{Dilogarithm} for some partial results.\;
\end{rmk}

\subsection*{Local-global compatibility}
In this paper,\;we prove some new local-global compatibility results in the $p$-adic Langlands program by studying the geometry of the patched Bernstein eigenvarieties and Bernstein paraboline varieties \cite{Ding2021}.\;

By \cite{PATCHING2016} (hence we also assume the so-called `standard Talyor-Wiles assumptions),\;we have a certain definite unitary group $\bG$ over $F^+$,\;where $F^+$ is a maximal totally real subfield of an imaginary CM field .\;Let $\fp$ be a $p$-adic place of $F^+$.\;Let $L=F^+_\fp$.\;We then have a continuous Banach representation $\Pi_\infty$ of $G=\GLN_{n}(L)$,\;which is equipped with a continuous action of certain patched Galois deformation ring $R_\infty$ commuting with the  $G$-action.\;The module $\Pi_\infty$ patches the $p$-adic automorphic forms on  $\bG$.\;See Section \ref{BENVARPARAVAR} for a summary.\;

Fix the above smooth representation $\pi$ of $\GLN_r(L)$ over $E$ (see the beginning of the previous part).\;Let $\omepik$ be the cuspidal Bernstein component of $\bL^{\lrr}(L)$ containing the Zelevinsky-segment $\Delta_{[k-1,0]}(\pi)=[\pi\otimes |\det|_L^{k-1},\cdots,\pi]$. In Section \ref{BENVARPARAVAR},\;we adapt
the arguments in \cite[Section 3.3,\;Section 4.2]{Ding2021} to our case.\;We define a patched Bernstein eigenvariety $\bersteineigenvarpik$ concerning the cuspidal Bernstein component $\omepik$ and weight ${\bm\lambda}_\bh$,\;where $\overline{\rho}$ is a certain global residue Galois representations,\;which is a suitable globalization of a certain local residue Galois representation $\overline{r}:\gal_L\rightarrow \GLN_{n}(k_E)$, where $k_E$ is the residue field of $E$.\;

Suppose that the above $p$-adic Galois representation $\rho_L$ appears on the  patched Bernstein eigenvariety $\bersteineigenvarpik$ (see Section \ref{appearpoint} for a more precise statement).\;In this case,\;we can associate with $\rho_L$ the  parabolic simple $\sL$-invariants $\sL(\rho_L)$ and  an admissible unitary Banach representation $\widehat{\Pi}(\rho_L)$ of $G$ (see \cite{PATCHING2016},\;where the method is global).\;Moreover,\;we get an injection (by a parabolic analogue of the so-called global triangulation theory;\;see Lemma \ref{detlameda}) of $G$-representations
\[\st_{(r,k)}^{\infty}(\alpha,\pi,{\bm\lambda}_\bh)\hookrightarrow \widehat{\Pi}(\rho_L)\]
for some $\alpha\in E^\times$.\;The following theorem is the second main result of the paper (see Theorem \ref{thm: lgln-main} ).
\begin{thm}\label{introthm: lgln-main}The injection $\st_{(r,k)}^{\infty}(\alpha,\pi,{\bm\lambda}_\bh)\hookrightarrow \widehat{\Pi}(\rho_L)$ extends uniquely to an injection of $G$-representations
	\[\Sigma^{\lrr}(\alpha,\pi,{\bm\lambda}_\bh, \sL(\rho_L))\hookrightarrow \widehat{\Pi}(\rho_L).\;\]
	Furthermore,\;let $0\neq \psi\in \homo(L^{\times},E)$ and $ir\in \Delta_n(k)$, an injection
	\begin{equation*}
		f: \st_{(r,k)}^{\infty}(\alpha,\pi,{\bm\lambda}_\bh)\hooklongrightarrow \widehat{\Pi}(\rho_L)   \end{equation*}
	can extend to an injection $\Sigma_{i}^{\lrr}(\alpha, \pi,{\bm\lambda}_\bh, \psi)\hookrightarrow \widehat{\Pi}(\rho_L)$ if and only if $\psi\in \sL(\rho_L)_{ir}$.\;Therefore, $\sL(\rho_L)$ can be read out from $\widehat{\Pi}(\rho_L)$.\;
\end{thm}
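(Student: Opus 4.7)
The plan is to read both sides of the claimed equivalence off the local geometry of the patched Bernstein eigenvariety $\bersteineigenvarpik$ at the point $x$ determined by $(\rho_L,\cF)$ together with the given embedding $\st_{(r,k)}^{\infty}(\alpha,\pi,{\bm\lambda}_\bh) \hookrightarrow \widehat{\Pi}(\rho_L)$. The existence of this initial embedding, and the identification of $x$, are provided by the parabolic global triangulation (Lemma \ref{detlameda}). The key structural fact to establish is that the tangent space $T_x$ of $\bersteineigenvarpik$ admits, for each $ir \in \Delta_n(k)$, two natural linear maps to $\homo(L^\times,E)$: a \emph{Galois} map recording the first-order perturbation of the character $\bm{\delta}_{\bh,i}\bm{\delta}_{\bh,i+1}^{-1}$, and an \emph{automorphic} map recording the extension class realized inside $\widehat{\Pi}(\rho_L)$ under the isomorphism (\ref{thmintro1}); these two maps coincide up to a nonzero scalar, with common image equal to $\sL(\rho_L)_{ir}$.

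Concretely, the steps I would carry out are: (i) use Lemma \ref{detlameda} to pin down $x$; (ii) construct the Galois tangent map $T_x \to \homo(L^\times,E)$ at the $i$-th simple root and apply Theorem \ref{defor1thdefor} (the Colmez-Greenberg-Stevens formula) to identify its image with $\sL(\rho_L)_{ir}$; (iii) construct the automorphic tangent map via the Jacquet-Emerton functor $J_{\op^{\lrr}_{ir}}$ applied to the locally analytic vectors $\widehat{\Pi}(\rho_L)^{\an}$, producing from each tangent vector $v$ an embedding $\widetilde{\Sigma}_i^{\lrr}(\alpha,\pi,{\bm\lambda}_\bh,\psi_v) \hookrightarrow \widehat{\Pi}(\rho_L)$, which restricts to $\Sigma_i^{\lrr}(\alpha,\pi,{\bm\lambda}_\bh,\psi_v) \hookrightarrow \widehat{\Pi}(\rho_L)$; (iv) match the two tangent maps by an explicit comparison on tangent vectors coming from infinitesimal deformations of $\bmdel$. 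For each $ir$, the ``$\Leftarrow$'' direction then follows by taking $\psi \in \sL(\rho_L)_{ir}$, lifting it to a tangent vector via (ii), and applying (iii); the ``$\Rightarrow$'' direction follows from a dimension count since both sides have dimension at most $d_L$. Assembling these matchings across $i = 1,\dots,k-1$, together with the fact that any embedding from $\Sigma^{\lrr}(\alpha,\pi,{\bm\lambda}_\bh,\sL(\rho_L))$ into $\widehat{\Pi}(\rho_L)$ is determined on the socle $\st_{(r,k)}^{\ana}(\alpha,\pi,{\bm\lambda}_\bh)$, yields the unique global extension.

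The main obstacle is the non-genericity of $(\omepik,\bmdel)$: the local-model techniques of \cite{Ding2021} assume generic parameters, where the completed local ring of $\bersteineigenvarpik$ at $x$ is directly controlled by the Bernstein paraboline variety, and $T_x$ is cleanly described by $(\varphi,\Gamma)$-module deformation theory. In our setting the fully non-split monodromy ($N^{k-1}\neq 0$) introduces additional relations (cf.\ Remark \ref{nongenediffobstru} and Remark \ref{localmodeldescri}), so extra work is required to bound $T_x$ and to verify that the Galois and automorphic tangent maps have the same image. I expect the technical heart of the proof to be a non-generic local-model analysis at $x$, likely carried out by combining (a) a direct computation of the Bernstein paraboline tangent space using the structure of the non-critical special parabolization $\cF$, and (b) a Zariski density / companion-point argument that reduces part of the comparison to the generic stratum, where the methods of \cite{Ding2021} apply. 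Once this tangent-space control is in hand, step (iv) and the dimension count proceed by the same mechanism as in the trianguline case of \cite{2019DINGSimple}.
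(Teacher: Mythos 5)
Your ``if'' direction follows essentially the paper's route: Lemma \ref{detlameda} pins down the point $x$; the Colmez--Greenberg--Stevens formula (Theorem \ref{defor1thdefor}) combined with the spreading-out of the $\omepik$-filtration over a neighborhood (Theorem \ref{localfiltration}, proved by exactly the accumulation/density-of-classical-points argument you anticipate for the non-generic situation) gives smoothness of $\bersteineigenvarpik$ at $x$ and the surjection $T_{\bersteineigenvarpik,x}\twoheadrightarrow \sL(\rho_L)$ (Proposition \ref{coreigentangentmap2}); and Emerton adjunction (Proposition \ref{proadjuncitonformularight}, where the balanced condition is checked using Lemma \ref{detlameda} and non-criticality) converts a tangent vector lifting $\psi$ into the desired extension inside $\Pi_{\infty}^{R_{\infty}-\ana}[\fm_y]$, with Corollary \ref{cor: lgln-bij3} used to descend from $[\FI_t]$ to $[\fm_y]$. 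One caveat in your step (iii): from a tangent vector one only gets a nonzero morphism out of $\widetilde{\Sigma}_i^{\lrr}$ in general (its injectivity is open unless $\soc_G\st_{(r,k)}^{\ana}\cong\st_{(r,k)}^{\infty}$); the paper works with $\Sigma_i^{\lrr}$, whose injectivity is secured by $\soc_G\Sigma_i^{\lrr}\cong\st_{(r,k)}^{\infty}$, so your claimed embedding of $\widetilde{\Sigma}_i^{\lrr}$ is stronger than what the method yields.

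The genuine gap is the ``only if'' direction. You dispose of it by ``a dimension count since both sides have dimension at most $d_L$'', but the bound $d_L$ on the automorphic side --- the space of $\psi$ for which $\Sigma_i^{\lrr}(\alpha,\pi,{\bm\lambda}_\bh,\psi)$ embeds extending $f$ --- is precisely what has to be proved; a priori the only available bound is $d_L+1$, the dimension of $\ext^1_G\big(v^{\infty}_{\op^{\lrr}_{ir}}(\alpha,\pi,{\bm\lambda}_\bh),\st^{\ana}_{(r,k)}(\alpha,\pi,{\bm\lambda}_\bh)\big)$ from (\ref{thmintro1}), and nothing in your steps (i)--(iv) excludes that the realizable $\psi$ fill out all of $\homo(L^\times,E)$ (your tangent-map comparison only shows that tangent vectors produce realizable $\psi$, not that every realizable $\psi$ arises from a tangent vector). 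The paper closes this by a contradiction through locally algebraic vectors: if some $\psi\notin\sL(\rho_L)_{ir}$ were realizable then, since $\sL(\rho_L)_{ir}$ has codimension one and is realizable by the ``if'' part, every character would be realizable, in particular a smooth $\psi_\infty$; but $\Sigma_i^{\lrr}(\alpha,\pi,{\bm\lambda}_\bh,\psi_\infty)$ contains $V\otimes_E L({\bm\lambda}_\bh)$ with $V$ a nonsplit smooth extension of $v^{\infty}_{\op^{\lrr}_{ir}}(\pi)$ by $\st^{\infty}_{(r,k)}(\pi)$, and applying the left-exact Jacquet--Emerton functor contradicts Lemma \ref{detlameda}. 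This exclusion of the smooth direction crucially uses that $\rho_L$ is semistable non-crystalline with $N^{k-1}\neq 0$, so that $\homo_\infty(L^\times,E)\nsubseteq\sL(\rho_L)_{ir}$ (Lemma \ref{criofnoncrystalline}); without this ingredient, or a proof that every realizable $\psi$ comes from a tangent vector of $\bersteineigenvarpik$, your dimension count does not get off the ground.
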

Indeed,\;one can prove that the injection $\st_{(r,k)}^{\infty}(\alpha,\pi,{\bm\lambda}_\bh)\hookrightarrow \widehat{\Pi}(\rho_L)$ extends uniquely to a non-zero morphism of $G$-representations
\[\widetilde{\Sigma}^{\lrr}(\alpha,\pi,{\bm\lambda}_\bh, \sL(\rho_L))\rightarrow \widehat{\Pi}(\rho_L),\;\]
if $\soc_G(\mathrm{St}_{(r,k)}^{\ana}(\pi,\underline{\lambda}))\cong \mathrm{St}_{(r,k)}^{\infty}(\pi,\underline{\lambda})$ (this is true for $(r,k)=(2,2)$,\;the general case is not known at present),\;then we can prove that this non-zero morphism is also injective.\;

\subsection{Sketch of the proof of Theorem \ref{introthm: lgln-main}}
The proof  follows along the line of \cite[Page 7996-7997]{2019DINGSimple}.\;We only show the existence of an injection $\Sigma_{i}^{\lrr}(\alpha, \pi,{\bm\lambda}_\bh, \psi)\hookrightarrow \widehat{\Pi}(\rho_L)$ if $\psi\in \sL(\rho_L)_{ir}$.\;

We first need a result on certain deformation of type $\omepik$ of $\rho_L$.\;Let $F_{\Dpik,\cF}^0$ denote the functor on $\Art_E$ (the category of Artinian local $E$-algebras with residue field $E$) ``parameterizing deformations of $\Dpik$ which admit certain $\omepik$-filtration".\;The definition of $F_{\Dpik,\cF}^0$ gives a natural map \[d\kappa:F_{\Dpik,\cF}^0(E[\epsilon]/\epsilon^2)\rightarrow
\homo(\bZ^{\lrr}(L),E)\cong \prod_{i=1}^k\homo(L^\times,E).\;\]Consider the  following composition:
\begin{equation}\label{introkappakappaL}
	\kappa:F_{\Dpik,\cF}^0(E[\epsilon]/\epsilon^2)\xrightarrow{d\kappa}  \prod_{i=1}^k\homo(L^\times,E)\xrightarrow{\kappa_L} \prod_{ir\in \Delta_n(k)}\homo(L^\times,E),
\end{equation}
where the second map $\kappa_L$ sends $(\psi_1,\psi_2,\cdots,\psi_k)$ to $(\psi_i-\psi_{i+1})_{ir\in \Delta_n(k)}$.\;We get the following proposition (see Corollary \ref{corlinvaraint} and Proposition \ref{dimtanFD0}) by the Galois cohomologies argument.\;
%,\;which is a pure Galois result and follows by Galois cohomology calculations.\;

\begin{pro}\label{introdimeF0D}The functor $F_{\Dpik,\cF}^0$ is  pro-representable.\;The tangent space $F_{\Dpik,\cF}^0(E[\epsilon]/\epsilon^2)$ has $E$-dimension $1+d_L\Big(k+\frac{n(n-r)}{2}\Big)$.\;The map $\kappa$ (\ref{introkappakappaL})  factors through a surjective map
	\[\kappa:F_{\Dpik,\cF}^0(E[\epsilon]/\epsilon^2)\twoheadlongrightarrow \sL(\rho_L).\]
\end{pro}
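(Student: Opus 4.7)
The plan is to reduce everything to computations in $(\varphi,\Gamma)$-module cohomology for $\Dpik$ equipped with its filtration $\cF$, and then to invoke the Colmez-Greenberg-Stevens formula of Theorem \ref{defor1thdefor} to identify the image of $\kappa$ with $\sL(\rho_L)$.

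For pro-representability, I would verify Schlessinger's criteria (H1)-(H4) for the functor $F_{\Dpik,\cF}^0$. The unrestricted deformation functor of the $(\varphi,\Gamma)$-module $\Dpik$ is already pro-representable by standard results on deformations of $(\varphi,\Gamma)$-modules (see the work of Bella\"iche-Chenevier and Chenevier). The conditions defining $F_{\Dpik,\cF}^0$ --- namely, the existence of an $\omepik$-filtration with graded pieces injecting into $\Delta \tee \cR_{E,L}(\unr(\alpha q_L^{i-k}) z^{\hpi_{ir}})$ and with the prescribed Hodge-Tate weights --- cut out a closed subfunctor. Under the non-critical hypothesis (Definition \ref{weaklynoncritical}), any deformation admits at most one lift of $\cF$, so these conditions behave well under base change and the functor inherits Schlessinger's criteria from the unrestricted one.

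For the tangent-space dimension, I would induct on $k$, using the short exact sequence
\[ 0 \longrightarrow \fil_{k-1}^{\cF}\Dpik \longrightarrow \Dpik \longrightarrow \gr_k^{\cF}\Dpik \longrightarrow 0 \]
and the associated long exact sequence of $\ext^*_{(\varphi,\Gamma)}$. The three sources producing the claimed dimension $1 + d_L\bigl(k+\tfrac{n(n-r)}{2}\bigr)$ are: (i) a single global unramified twist, contributing $1$; (ii) $k$ infinitesimal character parameters on the graded pieces (each contributing $d_L$ after factoring out the global twist), totaling $d_L\cdot k$; (iii) extensions between distinct graded pieces $\gr_i^{\cF}\Dpik$ and $\gr_j^{\cF}\Dpik$ for $i<j$, each contributing $d_L$ dimensions (the full $\homo(\LX,E)$ minus the one-dimensional obstruction in $\ext^2$), totaling $d_L\cdot\binom{k}{2}r^2 = d_L\cdot\tfrac{n(n-r)}{2}$. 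All of these reduce to standard $(\varphi,\Gamma)$-cohomology computations for the pieces $\Delta \tee \cR_{E,L}(\bm{\delta}_{\bh,i})$, using Tate-like duality for the relevant $\ext^1$ groups.

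For the surjectivity of $\kappa$ onto $\sL(\rho_L)$, the Colmez-Greenberg-Stevens formula (Theorem \ref{defor1thdefor}) asserts that a first-order deformation in $F_{\Dpik,\cF}^0$ with prescribed infinitesimal character parameters $(\psi_i)_{i=1}^k$ exists if and only if, for each $ir\in\Delta_n(k)$, the cup product $[\Dpik_i^{i+1}]\cup(\psi_i-\psi_{i+1})$ vanishes in $\ext^2_{(\varphi,\Gamma)}(\gr_{i+1}^{\cF}\Dpik,\gr_i^{\cF}\Dpik)\cong E$. By the definition of the pairing (\ref{introperfectpairing}), this is exactly the condition $\psi_i-\psi_{i+1}\in\sL(\rho_L)_{ir}$. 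Hence $\kappa$ factors through $\prod_{ir\in\Delta_n(k)}\sL(\rho_L)_{ir}$, and surjectivity follows by matching dimensions, using (ii) and (iii) to fill out the $(k-1)d_L$-dimensional target.

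The main obstacle, in my view, is the dimension count in the non-generic setting. Because the parameters of $\cF$ are non-generic (as emphasized after (\ref{introDpikinjection}) and in Remark \ref{nongenediffobstru}), the dimensions of the $\ext^1_{(\varphi,\Gamma)}$-groups between graded pieces may \emph{a priori} jump above their generic values, so the computations of \cite{Ding2021} do not apply directly. The proof will require a careful analysis of the coboundary maps in the long exact sequences of $\ext^\ast$, and in particular a verification that the $\ext^2$-obstructions are compatible with the non-degenerate (though possibly not perfect) pairing (\ref{introperfectpairing}), to confirm that the codimension cut out by the filtration condition matches the expected value.
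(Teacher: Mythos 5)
The dimension count is where your proposal breaks down. Your decomposition $1+d_Lk+d_L\binom{k}{2}r^2$ is internally inconsistent (if each of the $\binom{k}{2}$ pairs of graded pieces contributed only ``the full $\homo(L^\times,E)$ minus the one-dimensional obstruction'', i.e. $d_L$, the total would be $d_L\binom{k}{2}$, not $d_L\binom{k}{2}r^2$), and, more importantly, it omits a source of deformations that is present exactly when $r>1$: deformations of each graded piece $\gr_i^{\cF}\Dpik$ inside $\Delta_\pi\tee\cR_{E,L}(\delta_{A,i})$ that are not character twists, i.e.\ the kernel of $\ext^1_{(\varphi,\Gamma)}(\Dpik_i,\Dpik_i)\to\ext^1_{(\varphi,\Gamma)}(\Dpik_i,\Delta_{\pi,i})$, of dimension $d_L\frac{r(r-1)}{2}$ for each $i$ (Lemma \ref{deformationker}, after Ding). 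Adding these $d_L\frac{n(r-1)}{2}$ dimensions to your count gives $1+d_L\bigl(k+\frac{n(n-1)}{2}\bigr)$, which is what is actually proved in Proposition \ref{dimtanFD0}; the exponent $\frac{n(n-r)}{2}$ in the introductory statement appears to be a slip (the two expressions agree only for $r=1$), so your trianguline-style bookkeeping reproduces the misprint rather than the theorem.

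The second gap is that the step you flag as ``the main obstacle'' is precisely the content of the paper's proof, and you do not supply it. The paper does not compute $F^0_{\Dpik,\cF}(E[\epsilon]/\epsilon^2)$ by a direct long-exact-sequence induction; it writes $F^0_{\Dpik,\cF}=F_{\Dpik,\cF}\times_{\prod_iF_{\Dpik_i}}\prod_iF^0_{\Dpik_i}$, proves that $F_{\Dpik,\cF}$ is pro-representable and formally smooth of dimension $1+d_L\frac{n(n+r)}{2}$ by showing $\hH^0$ and $\hH^2$ of $\EndO_\cF(\Dpik,\Dpik)$ are $E$ and $0$ (Proposition \ref{dimlemmaDpik}; this uses the non-splitness of the $\Dpik_i^{i+1}$, an input your pro-representability sketch also needs, since without $\hH^0(\EndO(\Dpik))=E$ the cited ``standard results'' only give a hull), and then applies the fiber-product dimension Lemma \ref{fibredim}. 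In the non-generic situation neither $\Upsilon$ nor $\Upsilon^0$ is surjective, and additivity of dimensions only holds because both cokernels are shown to have dimension exactly $k-1$: for $\Upsilon^0$ via the Colmez-Greenberg-Stevens identification of the image as $(\kappa_L\circ\omega^0)^{-1}(\sL(\Dpik))$ (Corollary \ref{corlinvaraint1}), and for $\Upsilon$ via a snake-lemma induction on $k$ with $\EndO_\cF$ (Corollary \ref{corlinvaraint2}). Your use of Theorem \ref{defor1thdefor} for the surjectivity of $\kappa$ is the right idea and matches the paper (one builds the deformation step by step), but ``surjectivity follows by matching dimensions'' is circular as written: the dimension count presupposes knowing which extension classes are unobstructed, whereas in the paper the identification of the image comes first and feeds the dimension computation, not the reverse.
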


Recall that the  patched Galois deformation ring $R_\infty$ admits a decomposition $R_\infty\cong R^{\fp}_{\infty}\widehat{\otimes}_{\cO_E}R^{\Box}_{\overline{r}}$, where $R^{\fp}_{\infty}$ is the ``prime-to-$\fp$" part of $R_\infty$ and $R^{\Box}_{\overline{r}}$ is the maximal reduced and $p$-torsion free quotient of the universal $\cO_E$-lifting ring of $\overline{r}$.\;Let $\mathfrak{X}_{\overline{r}}^\Box=(\Spf\;R^{\Box}_{\overline{r}})^{\rig}$ be the 
rigid analytic space associated to the formal scheme $\Spf\;R^{\Box}_{\overline{r}}$.\;Let $\mathfrak{Z}_{\omepik}$ be the Bernstein center of $\omepik$ over $E$.\;By an easy variation of  \cite[Section 3.3]{Ding2021} (see Section \ref{BENVARPARAVAR} for a brief summary),\;we can construct the so-called patched Bernstein eigenvariety 
\begin{equation}\label{intropatcheigrndfn}
	\bersteineigenvarpik \hooklongrightarrow (\Spf\;R^{\fp}_{\infty})^{\rig}\times \mathfrak{X}_{\overline{r}}^\Box \times \sbanpik\times \rigch
\end{equation}
from the Jacquet-Emerton module $J_{\bP^{\lrr}(L)}(\Pi_\infty^{R_\infty-\ana})$,\;where
$\Pi_\infty^{R_\infty-\ana}$ denotes the locally $R_\infty$-analytic vectors in $\Pi_\infty$(see \cite[Section 3.1]{breuil2017interpretation}),\;$\rigch$ denotes the rigid space over $E$ parameterizing continuous characters of the center $\bZ^{\lrr}(L)$ of $\bL^{\lrr}(L)$.\;By an easy variation of  \cite[Section 3.3,\;Section 4.4]{Ding2021},\;the morphism (\ref{intropatcheigrndfn}) factors through a natural embedding
\begin{equation}\label{intropactchembedding}
	\iota_{\omepik}:\bersteineigenvarpik\hookrightarrow (\Spf\; R^{\fp}_{\infty})^{\rig}\times \defvar,
\end{equation}
where $\defvar\subset \mathfrak{X}_{\overline{r}}^\Box \times \sbanpik\times \rigch$ is the paraboline deformation variety with respect to $(\omepik,{\bm\lambda}_\bh)$ (see Section \ref{BENVARPARAVAR} for more precise statements).\;The embedding (\ref{intropactchembedding}) induces an isomorphism between $\bersteineigenvarpik$ and a union of irreducible components (equipped with the reduced closed rigid subspace structure) of $(\Spf\; R^{\fp}_{\infty})^{\rig}\times \defvar$.\;

Assume that we can associate with $\rho_L$ a point $x:=(x^{\fp}, \rho_L, \pi_{x,\bL^{\lrr}},\chi)\in \bersteineigenvarpik$.\;The embedding (\ref{intropactchembedding}) gives a point $x_L=\pr_2\circ\iota_{\omepik}(x)\in \defvarrho$ (where $\pr_2$ is the projection to the second factor).\;We now look at the local geometry of paraboline deformation variety $\defvarrho$ at point $x_L$.\;Let $\xfpx$ be the union of  irreducible components of $\defvarrho$ containing $x_L$.\;We show that the non-critical special
$\omepik$-filtration $\cF$ on $x_L$  extends to some open affinoid neighborhood around $x_L$ (see Theorem \ref{localfiltration}),\;where we use the accumulation property
at $x_L$ (see Definition \ref{accum} and Lemma \ref{accumulation}) and the non-critical assumption.\;Using this result,\;we get an upper bound of the $E$-dimension of tangent space $T_{\xfpx,x_L}$ (see the proof of Proposition \ref{smoothnessdefvar}).\;

%Let $\cE(x)$ be the union of  irreducible component of $\bersteineigenvarpik$ containing $x$.\;Then $\cE(x)=X^{\fp} \times \BU^g \times \iota_{\omepik}^{-1}(\xfpx)\hookrightarrow X^{\fp} \times \BU^g \times \iota_{\omepik}^{-1}(\defvarrho)$,\;where $X^{\fp}$ is an irreducible component of $\FX_{\overline{\rho}^{\fp}}^\Box$,\;and 

The natural embedding (\ref{intropatcheigrndfn}) induces the ``weight" map 
\begin{equation}
	\omega:	\bersteineigenvarpik \longrightarrow \sbanpik\times \rigch.\;
\end{equation}
Consider the tangent map of $\omega$ at $x$:
\begin{equation}
	d\omega_x:	T_{\bersteineigenvarpik,X} \longrightarrow T_{\sbanpik\times \rigch,\omega(x)}.\;
\end{equation}
By the embedding  (\ref{intropactchembedding}) and the upper bound of $\dim_ET_{\xfpx,x_L}$,\;we can deduce the following corollary from Proposition \ref{introdimeF0D} and Theorem \ref{localfiltration}.\;
\begin{cor}\label{introcoreigentangentmap2}$\bersteineigenvarpik$ is smooth at $x$.\;The composition $\kappa_L\circ d\omega_x$ factors through a surjective map
	\begin{equation}
		T_{\bersteineigenvarpik,x}\twoheadlongrightarrow \sL(\rho_L).\;
	\end{equation}
\end{cor}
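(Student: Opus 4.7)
The plan is to transport the computation to the paraboline deformation variety through the embedding $\iota_{\omepik}$ of (\ref{intropactchembedding}), and then to combine the local geometric analysis of $\defvarrho$ at $x_L$ (Theorem \ref{localfiltration}, Proposition \ref{smoothnessdefvar}) with the Galois-cohomological input from Proposition \ref{introdimeF0D}. The main technical obstacle will be establishing the sharp upper bound on the tangent space of $\xfpx$ at $x_L$ in the non-generic setting.

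Since $\iota_{\omepik}$ realizes $\bersteineigenvarpik$ as a union of irreducible components of $(\Spf R^{\fp}_\infty)^{\rig}\times \defvarrho$ (with its reduced rigid subspace structure), the tangent space at $x=(x^{\fp},x_L)$ sits as
\[
T_{\bersteineigenvarpik,x}\hookrightarrow T_{(\Spf R^{\fp}_\infty)^{\rig},x^{\fp}}\oplus T_{\xfpx,x_L}.
\]
The prime-to-$\fp$ factor is formally smooth, so only the second summand requires work. Theorem \ref{localfiltration} extends the non-critical special $\omepik$-filtration $\cF$ over an affinoid neighborhood of $x_L$ in $\xfpx$; pulling back the universal $(\varphi,\Gamma)$-module along any $E[\epsilon]/\epsilon^2$-valued tangent vector then produces a deformation of $\Dpik$ preserving the filtration, giving a natural map
\[
\nu:\,T_{\xfpx,x_L}\longrightarrow F^0_{\Dpik,\cF}(E[\epsilon]/\epsilon^2)\oplus(\text{framing directions}).
\]
Combined with the bound on $\dim_E T_{\xfpx,x_L}$ in the proof of Proposition \ref{smoothnessdefvar} and the explicit dimension $1+d_L(k+n(n-r)/2)$ computed in Proposition \ref{introdimeF0D}, this yields a sharp upper bound on $\dim_E T_{\bersteineigenvarpik,x}$.

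To deduce smoothness I will compare this upper bound with the expected dimension of $\bersteineigenvarpik$ at $x$, which one reads off its construction via the Jacquet-Emerton module as the sum of the patched prime-to-$\fp$ dimension, the framed local Galois dimension, and the rank of the weight/character parameters. Matching the two estimates forces both the smoothness of $\bersteineigenvarpik$ at $x$ and the equality of the tangent space inclusion, with $\nu$ surjective modulo framing directions. For the factorization, under these identifications the tangent map $d\omega_x$ corresponds to the natural transformation $d\kappa$ of (\ref{introkappakappaL}) recording how $(\bm{\lambda}_\bh,\bmdel)$ deforms on $F^0_{\Dpik,\cF}$; postcomposing with $\kappa_L$ recovers the composite $\kappa$ of (\ref{introkappakappaL}). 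Surjectivity onto $\sL(\rho_L)$ is then inherited from the surjection $\kappa:F^0_{\Dpik,\cF}(E[\epsilon]/\epsilon^2)\twoheadrightarrow\sL(\rho_L)$ of Proposition \ref{introdimeF0D}.

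The chief difficulty lies in controlling $\dim_E T_{\xfpx,x_L}$ in the non-generic non-trianguline setting: unlike the generic case of \cite{Ding2021}, no clean paraboline local model is available, and one must instead exploit the accumulation of sufficiently generic classical points near $x_L$ together with the non-critical hypothesis to rule out spurious tangent directions coming from non-filtered deformations. Once this sharp bound is in place, the remainder of the argument is a dimension comparison followed by an unwinding of the definitions of $\omega$, $\kappa_L$, and $\nu$.
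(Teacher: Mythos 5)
Your proposal is correct and follows essentially the same route as the paper: its proof likewise uses the factorization $\cE(x)=X^{\fp}\times\BU^g\times\iota_{\omepik}^{-1}(\xfpx)$ (uniqueness/smoothness of the prime-to-$\fp$ factor via Kisin), the tangent-space bound $\dim_E T_{\xfpx,x_L}\le \dim_E K(\rho_L)+\dim_E F^0_{\Dpik,\cF}(E[\epsilon]/\epsilon^2)=\dim \defvarrho$ obtained from Theorem \ref{localfiltration} together with Propositions \ref{isotwofunctor} and \ref{dimtanFD0}, and the identification of $\kappa_L\circ d\omega_x$ with $\kappa$ composed with the tangent map, so that surjectivity onto $\sL(\rho_L)$ descends from Corollary \ref{corlinvaraint}. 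Your dimension-matching step, which forces the tangent-space inclusion to be an equality and the map to $F^0_{\Dpik,\cF}$ to be surjective modulo framing directions, is precisely the content of Proposition \ref{smoothnessdefvar} that the paper's proof of this corollary invokes directly.
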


We now recall briefly the proof of the existence of an injection $\Sigma_{i}^{\lrr}(\alpha, \pi,{\bm\lambda}_\bh, \psi)\hookrightarrow \widehat{\Pi}(\rho_L)$ if $\psi\in \sL(\rho_L)_{ir}$ .\;By Corollary \ref{introcoreigentangentmap2},\;there exists $\Psi\in \mathrm{Im}(d\omega_x)$ with $\kappa_L(\Psi)=\psi.\;$Then the construction of $\bersteineigenvarpik$ implies an injection of $\bL^{\lrr}(L)$-representations
\[W\te(\mathrm{1}_\Psi\circ\mathrm{det}_{\bL^{\lrr}(L)})\te\lamoverx \hooklongrightarrow J_{\bP^{\lrr}(L)}\big(\Pi_{\infty}^{R_{\infty}-\ana}[\fm_x^\infty]\big),\]
where $[\fm_x^\infty]$ denotes the subspace of the vectors annihilated by  a certain power of $\fm_x$,\;and where $W$ denotes certain irreducible smooth representation of $\bL^{\lrr}(L)$,\;$\mathrm{1}_\Psi$ denotes the extension of two trivial characters attached to $\Psi$,\;and $\lamoverx$ is the irreducible $\BQ_p$-algebraic representation of $\bL^{\lrr}(L)$ with highest weight $\lambda_\bh$.\;Then the non-critical assumption on $\rho_L$ implies that the above injection is balanced in the sense of \cite[Definition 0.8]{emerton2007jacquet}.\;In this case,\;the Emerton's adjunction formula \cite[Theorem 0.13]{emerton2007jacquet} deduces a non-zero morphism
\[I^G_{\op^{\lrr}}\Big(W\te(\mathrm{1}_\Psi\circ\mathrm{det}_{\bL^{\lrr}(L)})\te\lamoverx\Big) \longrightarrow\Pi_{\infty}^{R_{\infty}-\ana}[\fm_x^\infty].\]
By some locally analytic representation theory,\;we can show that this non-zero morphism induces an injection:
\[\Sigma_{i}^{\lrr}(\alpha, \pi,{\bm\lambda}_\bh, \psi)\hookrightarrow \widehat{\Pi}(\rho_L).\]

%\begin{rmk}
%\end{rmk}

\begin{rmk}The key step in the proof of the main theorem is the existence of the character  $\Psi\in \mathrm{Im}(d\omega_x)$.\;This is a  consequence of the surjectivity of Corollary \ref{introcoreigentangentmap2}.\;%The smoothness of $\bersteineigenvarpik$ at $x$ is also an important result.\;
\end{rmk}
\begin{rmk}
The proof of smoothness (Corollary \ref{introcoreigentangentmap2}) only requires the information on tangent space (i.e.,$1$-order deformations).\;We can use a local model of paraboline deformation variety (see \cite[Section 6]{Ding2021}) to see more information on arbitrary deformations.\;The application of the local models (see \cite[Section 6]{Ding2021}) allows a better understanding of the local geometry of the paraboline deformation variety and (patched) Bernstein eigenvariety.\;It seems likely that we can restate the results in Section  \ref{smoothpoint} under the framework of the local model (see Remark \ref{localmodeldescri}).\;We point out the main differences.\;Since the parameter of our $\omepik$-filtration is non-generic (in the sense of \cite[(6.5)]{Ding2021}),\;the morphism
$$X_{\cM,\cM_{\bullet}}\rightarrow \widehat{(\cZ_{\bL^{\lrr},L})}_{\bm{\delta}_{\bh}}\times_{\widehat\fz^\lrr_{L}} X_{\bW,\cF}$$of groupoids in \cite[Theorem\;6.2.6]{Ding2021} is no longer formally smooth.\;Thus the discussions in  \cite[Section 6.4]{Ding2021}  cannot be applied to our case (for example,\;the final result \cite[Corollary 6.4.7]{Ding2021}).\;It is valuable to explore local-global compatibility and Breuil's locally analytic socle conjecture \cite{breuil2016socle} for  critical special (potentially) semistable non-crystalline $p$-adic Galois representations (non-generic case).\;
\end{rmk}

\subsection{General notation}\label{notationger}
Let $L$ (resp. $E$) be a finite extension of $\BQ_p$ with $\co_L$ (resp. $\co_E$) as its ring of integers and $\varpi_L$ (resp. $\varpi_E$) a uniformizer. Suppose $E$ is sufficiently large,\;containing all the embeddings of $L$ in $\overline{\BQ}_p$. Put $\Sigma_L:=\{\sigma: L\hookrightarrow \overline{\BQ}_p\} =\{\sigma: L\hookrightarrow E\}.$ the set of all the embeddings of $L$ in $\overline{\BQ}_p$ (equivalently,\;in $E$).\;Let $k_E$ be the residue field of $E$.

Let $\valua_L(\cdot)$ (resp. $\valua_p$) be the $p$-adic valuation on $\overline{\BQ_p}$ normalized by sending uniformizers of $\co_L$ (resp. of $\BZ_p$) to $1$. Let $d_L:=[L:\BQ_p]=|\Sigma_L|$, let $e_L:=\valua_L(\varpi_L)$ and let $f_L:=d_L/e_L$. We have $q_L:=p^{f_L}=|\co_L/\varpi_L|$.

Let $L_0$ denote the maximal unramified subextension of $\BQ_p$ in $L$.\;We fix a compatible system $\{\epsilon_n\}$ of $p$-th roots of unity.\;Let $L_n=L(\epsilon_n)$,\;$L_\infty=\cup_nL_n$,\;and $L'_0$ the maximal unramified subextension of $\BQ_p$ in $L_\infty$.\;We write $\gal_{L'}=\gal(\overline{\BQ}_p/L')$,\;$\Gamma_{L'}=\gal(L'_{\infty}/L')$,\;and $\cH_{L'}=\gal(\overline{\BQ}_p/L'_{\infty})$ for any subfield $L'\subset \overline{\BQ}_p$.\;We omit the subscript ${L'}$ if $L'=L$.\;

Let $X$ be a scheme locally of finite type over $E$ or a locally noetherian formal scheme over $\cO_E$ whose reduction is locally finite type over $k_E$.\;Let $X^{\mathrm{rig}}$ the associated rigid analytic space over $E$.\;If $X$ is a scheme locally of finite type over $E$ or a rigid analytic space over $E$,\;we denote by $X^{\red}$ the associated reduced Zariski-closed subspace.\;If $x$ is a point of $X$,\;we denote by $\cO_{X,x}$ (resp.,\;$k(x)$) the  local ring (resp.,\;residue field) at $x$.\;Let $\widehat{\cO}_{X,x}$ be the $\fm_{\cO_{X,x}}$-adic completion of $\cO_{X,x}$,\;and $\widehat{X}_{x}:=\mathrm{Spf}\;\widehat{\cO}_{X,x}$.\;If $x$ is a closed point of $X$,\;then  $\widehat{\cO}_{X,x}$ is a noetherian complete local $k(x)$-algebra of residue field $k(x)$.\;
%zccLet $X$ be a scheme locally of finite type over $E$,\;or a rigid analytic space over $E$,\;we denoted by $X^{\mathrm{red}}$ the associated reduced Zariski-closed subspace.\;

\subsubsection*{General setting on Reductive groups and Lie algebras}
For a Lie algebra $\fh$ over $L$, and $\sigma\in \Sigma_L$, let $\fh_{\sigma}:=\fh\otimes_{L,\sigma} E$ (which is  a Lie algebra over $E$). For $J\subseteq \Sigma_L$, let $\fh_J:=\prod_{\sigma\in J} \fg_{\sigma}$. In particular, we have $\fh_{\Sigma_L}\cong \fh\otimes_{\BQ_p} E$.\;The notation $U(\fg)$ refers to the universal enveloping algebra of $\fg$ over $L$.\;

Let $\mathbf{H}$ be an algebraic group over $L$,\;and $\fh$ be its Lie algebra over $L$.\;Let $\mathrm{Res}_{L/\BQ_p}\mathbf{H}$ be the scalar restriction of $\mathbf{H}$ from $L$ to $\BQ_p$.\;The Lie algebra of $\mathrm{Res}_{L/\BQ_p}\mathbf{H}$ over $\BQ_p$ can also be identified with $\fh$,\;where $\fh$ is regarded as a Lie algebra over $\BQ_p$.\;We write $\mathbf{H}_{/E}=(\mathrm{Res}_{L/\BQ_p}\mathbf{H})\times_{\BQ_p}E$,\;which is an algebraic group over $E$.\;The Lie algebras of $\mathbf{H}_{/E}$ over $E$ is $\fh_{\Sigma_L}$.\;
%Note that we have a decomposition $\mathbf{H}_{/E}=\prod_{\sigma\in \Sigma_L}\mathbf{H}\times_{L,\sigma}E$.\;
Let $\mathbf{G}$ be a split connected reductive group over $L$,\;and $\fg$ be its Lie algebra over $L$.\;We fix a maximal split torus $\bT$ and write $\bB$ for a choice of Borel subgroup containing $\bT$.\;We use $\bP$ for the parabolic subgroup of $\mathbf{G}$ containing $\bB$,\;and let $\bL_{\bP}$ be the Levi subgroup of $\bP$ containing $\bT$.\;Let $\bN_{\bP}$ be the unipotent radical of $\bP$.\;Then $\bP$ admits a Levi decomposition $\bP=\bL_{\bP}\bN_{\bP}$.\;The Lie algebras (over $L$) of subgroups $\bT,\bB,\bP,\bL_{\bP},\bN_{\bP}$  are denoted by $\ft,\fb,\fp,\fl_{\bP},\fn_{\bP}$ respectively.\;

Note that the group $\mathbf{G}_{/E}$ is also a split-connected reductive group over $E$ with maximal split torus $\bT_{/E}$,\;Borel subgroup $\bB_{/E}$,\;parabolic subgroup $\bP_{/E}$,\;and ${\bL_\bP}_{/E}$ is also the Levi subgroup of ${\bL_\bP}_{/E}$ containing $\bT_{/E}$.\;The Lie algebras (over $E$) of reductive groups $\mathbf{G}_{/E}$,\;$\bP_{/E}$,\;and ${\bL_\bP}_{/E}$ are given by $\fg_{\Sigma_L}$,\;$\fp_{\Sigma_L}$ and $\fl_{\bP,\Sigma_L}$.\;We use the Roman letters $G$,\;$P$,\;etc. for the $L$-points $\bG(L)$,\;$\bP(L)$.\;We view these as locally $\BQ_p$-analytic groups.

\subsubsection*{General linear group \texorpdfstring{$\GLN_n$}{Lg}}
Let $\GLN_n$ be the general linear group over $L$.\;Let $\Delta_n$ be the set of simple roots of $\GLN_n$ (with respect to the Borel subgroup $\bB$ of upper triangular matrices).\;We identify the set $\Delta_n$ with $\{1,\cdots, n-1\}$ such that $i\in \{1,\cdots, n-1\}$ corresponds to the simple root $\alpha_i:\;(x_1,\cdots, x_n)\in \ft \mapsto x_i-x_{i+1}$, where $\ft$ denotes the $L$-Lie algebra of the torus $\bT$ of diagonal matrices.\;

For a subset $I\subset \Delta_n$,\;let $\mathbf{P}_I$ be the parabolic subgroup of
$\GLN_n$ containing $\bB$ such that $\Delta_n \backslash  I$ are precisely the simple roots of the unipotent radical $\mathbf{N}_I$ of $\mathbf{P}_I$.\;Denote by $\mathbf{L}_I$ the unique Levi subgroup of $\mathbf{P}_I$ containing $\bT$ such that $I$ is equal to the set of simple roots of $\mathbf{L}_I$.\;In particular,\;we have $\mathbf{P}_{\Delta_n}=\GLN_n$, $\mathbf{P}_{\emptyset}=\mathbf{B}$.\;Let $\overline{\mathbf{P}}_I$ be the parabolic subgroup opposite to $\mathbf{P}_I$.\;Let $\mathbf{N}_I$ (resp.\;$\overline{\mathbf{N}}_I$) be the nilpotent radical of $\mathbf{P}_I$ (resp.\;$\overline{\mathbf{P}}_I$),\;let $\mathbf{Z}_I$ be the center of $\mathbf{L}_I$ and let $\mathbf{D}_I$ be the derived subgroup of $\mathbf{L}_I$.\;Then we have a Levi decomposition $\mathbf{P}_I=\mathbf{L}_I\mathbf{N}_I$ (resp.\;$\overline{\mathbf{P}}_I=\mathbf{L}_I\overline{\mathbf{N}}_I$).\;Let $\bZ_n$ be the center of $\GLN_n$.\;Let $\fg$, $\fp_I$, $\fn_I$, $\fl_I$, $\overline{\fl}_I$, $\overline{\fp}_I$, $\overline{\fn}_I$,\;$\fd_I$,\;$\fz_I$,\;$\overline{\fz}_I$ and $\ft$ be the $L$-Lie algebras of $\GLN_n$,\;$\mathbf{P}_I$,\;$\mathbf{N}_I$,\;$\mathbf{L}_I$,\;$\mathbf{L}_I/\bZ_n$,\;$\overline{\mathbf{P}}_I$,\;$\overline{\mathbf{N}}_I$,\;$\bD_I$,\;$\bZ_{I}$,\;$\bZ_{I}/\bZ_n$ and $\bT$ respectively.\;

%see \cite[section.\;9]{humphreysBGG} for precise definition.\;
Denote by $\sW_n$ ($\cong S_n$) the Weyl group of $\GLN_n$, and denote by $s_i$ the simple reflection corresponding to $i\in \Delta_n$.\;For any $I\subset \Delta_n$,\;define $\sW_{I}$ as the subgroup of $\sW_{n}$ generated by simple reflections $s_i$ with $i\in I$.\;The Weyl group of $\GLN_{n/E}$ is $\sW_{n,\Sigma_L}:=\Pi_{\sigma\in \Sigma_L}\sW_{n,\sigma}\cong S_n^{d_L}$,\;where $\sW_{n,\sigma}\cong \sW_n$ is the $\sigma$-th factor of $\sW_{n,\Sigma_L}$.\;For $i\in \Delta_n$ and $\sigma\in \Sigma_L$,\;let $s_{i,\sigma}\in \sW_{n,\sigma}$ be the simple reflection corresponding to $i\in \Delta_n$.\;Let $\rho$ be the half of the sum of positive roots of $\GLN_n$.\;

We list the basic notation in the theory of the Zelevinsky-segment.\;Let $k,r$ be two integers such that $n=kr$.\;We put ${\Delta_n(k)}:=\{r,2r,\cdots,(k-1)r\}\subseteq \Delta_{n}$ and  $\Delta_n^k:=\Delta_{n}\backslash{\Delta_n(k)}$.\;

For a subset $I\subset {\Delta_n(k)}$,\;we put
\begin{equation}
	\begin{aligned}
		&\bL^{\lrr}_I:=\bL_{\Delta_n^k\cup I},\;\bP^{\lrr}_I:=\bP_{\Delta_n^k\cup I},\;\op^{\lrr}_I:=\op_{\Delta_n^k\cup I},\\
		&\bN^{\lrr}_I:=\bN_{\Delta_n^k\cup I},\;\onn^{\lrr}_I:=\onn_{\Delta_n^k\cup I},\;\bZ^{\lrr}_I:=\bZ_{\Delta_n^k\cup I},\;\bD^{\lrr}_I:=\bD_{\Delta_n^k\cup I}.
	\end{aligned}
\end{equation}
Let $\fl^{\lrr}_I$,\;$\overline{\fl}^{\lrr}_I$,\;$\fp^{\lrr}_I$, $\fn^{\lrr}_I$, $\overline{\fp}^{\lrr}_I$, $\overline{\fn}^{\lrr}_I$,\;$\fd^{\lrr}_I$,\;$\fz^{\lrr}_I$,\;$\overline{\fz}^{\lrr}_I$ be the $L$-Lie algebras of $\bL^{\lrr}_I$,\;$\bL^{\lrr}_I/\bZ_n$, $\bP^{\lrr}_I$, $\bN^{\lrr}_I$,$\op^{\lrr}_I$, $\onn^{\lrr}_I$,\;$\bD^{\lrr}_I$ and $\bZ^{\lrr}_I$,\;$\bZ^{\lrr}_I/\bZ_n$ respectively.\;Similarly,\;for a subset $S\subset \Sigma_L$,\;we put $\sW^{\lrr}_{I}:=\sW_{\Delta_n^k\cup I},$ and $\sW^{\lrr}_{I,S}:=\sW_{\Delta_n^k\cup I,S},\;^{I}\sW_{n,S}^{\lrr}:=^{\Delta_n^k\cup I}\sW_{n,S}$.\;When $I=\emptyset$,\;we omit the subscripts $I$.\;For example,\;we have
\[\bL^{\lrr}:=\left(\begin{array}{cccc}
\GLN_r & 0 & \cdots & 0 \\
0 & \GLN_r & \cdots & 0 \\
\vdots & \vdots & \ddots & 0 \\
0 & 0 & 0 & \GLN_r \\
\end{array}\right)\subseteq \op^{\lrr}:=\left(\begin{array}{cccc}
\GLN_r & 0 & \cdots & 0 \\
\ast & \GLN_r & \cdots & 0 \\
\vdots & \vdots & \ddots & 0 \\
\ast & \ast & \cdots & \GLN_r \\
\end{array}\right)\]
The parabolic subgroups of $\GLN_n$ containing the parabolic subgroup $\op^{\lrr}$ are  $\{\op^{\lrr}_I\}_{I\subseteq \Delta_n(k)}$.\;

\subsubsection*{$P$-adic Hodge theory}

Let $\cR_{L}:=\bB_{\rig,L}^{\dagger}$ be the Robba ring associated to $L$.\;Let $A$ (resp.,\;$X$) be an $\BQ_p$-affinoid algebra (resp.\;a rigid analytic space), and let $\cR_{A,L}:=\cR_{L}\widehat{\otimes}_{\BQ_p} A$ (resp.,\;$\cR_{X,L}$) for the Robba ring associated to $L$ with $A$-coefficient (resp.,\;with $\cO_X$-coefficient) (see \cite[Definition 6.2.1]{XLROBBA}).\;Let $\delta_A:L^\times\rightarrow A^\times$ be a continuous character.\;We write  $\cR_{A,L}(\delta_A)$ for the $(\varphi,\Gamma)$-module of character type  over $\cR_{A,L}$ associated to the continuous character $\delta_A$.\;Recall the cohomology of $(\varphi,\Gamma)$-modules (and Euler-Poincar\'{e} characteristic formula and Tate duality) defined in \cite{liu2007cohomology}.\;

We recall the concept of $B$-pairs briefly.\;Let $A$ be a local artinian $E$-algebra with residue field $E$.\;Recall the $E$-$B$-pair (resp.,\;$A$-$B$-pair) defined in \cite{nakamura2009classification} (resp.,\;\cite[Section 1.1]{2015Ding}).\;An $E$-$B$-pair of $\gal_L$ is a couple $W=(W_e,W^+_{\dr})$ such that $W_e$ is a  finite $B_e\otimes_{\bQ_p}E$-module  with a continuous semi-linear $\gal_L$-action which is free as $B_e$-module,\;and $W^+_{\dr}\subset W_{\dr}:=B_{\dr}\otimes_{B_e}W_e$ is a $\gal_L$-stable $B_{\dr}^+\otimes_{\bQ_p}E$-lattice.\;By \cite[Theorem 1.36]{nakamura2009classification} (resp.,\;\cite[Section 1.1]{2015Ding}),\;there exists an equivalence of categories between the category of $E$-$B$-pairs (resp.,\;the category of $A$-$B$-pairs) and  the category of $(\varphi,\Gamma)$-modules over $\cR_{E,L}$ (resp.,\;the category of $(\varphi,\Gamma)$-modules over $\cR_{A,L}$).\;Let $\chi:L^\times\rightarrow E^\times$ (resp.,\;$\chi_A:L^\times\rightarrow A^\times$) be a continuous character,\;and let $B_E(\chi)$ (resp.,\;$B_A(\chi_A)$)  be the rank-one $E$-$B$-pair (resp.,\;the rank-one $A$-$B$-pair) associated to $\chi$ (resp.,\;$\chi_A$) (see \cite[Theorem 1.45]{nakamura2009classification}).\;Recall in  \cite{nakamura2009classification} the definition of the cohomology of $E$-$B$-pairs (the $A$-$B$-pairs can also be viewed as $E$-$B$-pairs).\;Recall that cohomology of a $(\varphi,\Gamma)$-module over $\cR_{E,L}$ and the cohomology of the associated $E$-$B$-pair via the equivalence of categories \cite[Theorem 1.36]{nakamura2009classification} is isomorphic.\;

We also need the following equivalence of categories in the $p$-adic Hodge theory.\;Let  $\mathrm{WD}_{L'/L,E}$ the category of representations $(r,N,V)$ of $W_L$,\;on an $E$-vector space $V$ of finite dimension such that $r$ is unramified when restricted to the $W_{L'}$.\;Let $\mathrm{DF}_{L'/L,E}$ be the category of Deligne-Fontaine modules,\;i.e.,\;the category of quadruples $(\varphi,N,\gal(L'/L),D)$ where $D$ is an $L'_0\otimes_{\BQ_p}E$-module free of finite rank,\;which is endowed with a Frobenius $\varphi:D\rightarrow D$ (resp.,\;an $L'_0\otimes_{\BQ_p}E$-linear endomorphism $N:D\rightarrow D$) such that $N\varphi=p\varphi N$ and an action of $\gal(L'/L)$ commuting with $\varphi$ and $N$ such that $g((l\otimes e)d)=(g(l)\otimes e)d$ for $g\in \gal(L'/L),\;l\in L'0,\;e\in E,\;d\in D$.\;Then the  Fontaine's equivalence of categories (\cite[Proposition 4.1]{breuil2007first}) assert that $\mathrm{WD}_{L'/L,E}$ and $\mathrm{DF}_{L'/L,E}$ are equivalent.\;

For $(\varphi,\Gamma)$-modules over $\cR_{E,L}$ (equivalently,\;$E$-$B$-pairs),\;we can also talk about the concepts of potentially semistable (i.e.,de Rham),\;potentially crystalline,\;semistable and crystalline.\;Recall that there is an equivalence of categories between the category of potentially semistable $(\varphi,\Gamma)$-modules over $\cR_{E,L}$ (equivalently,\;$E$-$B$-pairs) which are semistable  (resp.,\;crystalline) $(\varphi,\Gamma)$-modules over $\cR_{E,L'}$ (resp.,\;crystalline $E$-$B$-pairs of $\gal_{L'}$) to the category of $E$-filtered $(\varphi,N,\gal(L'/L))$-modules ($E$-filtered $(\varphi,\gal(L'/L))$-modules) over $L$ (i.e.,\;$E$-filtered Deligne-Fontaine modules).\;The same conclusions hold for $E$-$B$-pairs.\;The last is Berger's theory \cite[Theorem A]{berger2008equations}.\;

In this paper,\;we use  the theory of  $(\varphi,\Gamma)$-modules,\;$B$-pairs and the above equivalences of categories freely.\;

\subsubsection*{Classical and $p$-adic local Langlands correspondence}
Let $m\in \BZ_{\geq 1}$ be a positive integer,\;and $\pi$ be an irreducible smooth admissible representation of $\GLN_m(L)$.\;We denote by $\rec(\pi)$ the $F$-semi-simple Weil-Deligne representation associated to $\pi$ via the normalized local Langlands correspondence (see \cite{scholze2013local}).\;Denote by $v_m$ the character $\GLN_m(L)\rightarrow E^\times$,\;$g\mapsto |\det(g)|_L=\unr(q_L^{-1})\circ\det$.\;

We normalize the reciprocity isomorphism $\rec:L^\times\rightarrow W_L^{\mathrm{ab}}$ of local class theory such that the uniformizer $\varpi_{L}$ is mapped to a geometric Frobenius morphism,\;where $W_L^{\mathrm{ab}}$ is the abelization of the Weil group $W_L\subset \gal_L$.\;Let ${\ccyc}:\gal_L\rightarrow \bZ_p^\times$ be the $p$-adic cyclotomic character (i.e.,\;the character defined by the formula $g(\epsilon_n)=\epsilon_n^{{\ccyc}(g)}$ for any $n\geq 1$ and $g\in \gal_L$).\;Then we have 
\[{\ccyc}\circ\rec=\unr(q_L^{-1})\prod_{\tau\in \Sigma_L}\tau:L^\times\rightarrow E^\times\]
by local class theory.\;We define the Hodge-Tate weights of a de Rham $p$-adic Galois representation of $\gal_L$ as the opposite of the gaps of the filtration on the covariant de Rham functor so that the Hodge-Tate weights of the cyclotomic character is $1$.\;%Recall that the Lubin-Tate character $\chi_{\mathrm{LT}}:\gal_L\rightarrow \cO_L^\times\hookrightarrow E^\times$ makes that $\chi_{\mathrm{LT}}\circ\rec:L^\times\rightarrow \cO_L^\times$ satisfies $\chi_{\mathrm{LT}}\circ\rec(\varpi_{L})=1$ and $\chi_{\mathrm{LT}}\circ\rec|_{\cO_L^\times}=\mathrm{1}_{\cO_L^\times}$.\
For a group $A$ and $a\in A$,\;denote by $\unr(\alpha)$ the unramified character of $L^\times$ sending uniformizers to $\alpha$.\;For a character $\chi$ of $\cO_L^\times$,\;denote by $\chi_{\varpi_L}$ the character of  $L^\times$ such that $\chi_{\varpi_L}|_{\cO_L^\times}=\chi$ and $\chi_{\varpi_L}(\varpi_L)=1$.\;For a character $\delta$ of $L^\times$,\;denote by $\delta_0:=\delta|_{\cO_L^\times}$.\;If $\bk:=(\bk_{\tau})_{\tau\in \Sigma_L}\in \BZ^{\Sigma_L}$,\;denote by $z^{\bk}:=\prod_{\tau\in  \Sigma_L}\tau(z)^{\bk_{\tau}}$.\;A character $\delta:L^\times \rightarrow E^\times$ is called \emph{special} if $\delta:=\unr(q_L^{-1})z^{\bk}={\ccyc}\prod_{\tau\in  \Sigma_L}\tau(z)^{\bk_{\tau}-1}$ for some $\bk:=(\bk_{\tau})_{\tau\in \Sigma_L}\in \BZ_{\geq 1}^{\Sigma_L}$ (i.e.,\;$k_{\sigma}\in \BZ_{\geq 1}$ for all $\sigma\in \Sigma_L$).\;

The $p$-adic local Langlands correspondence is often stated as follows.\;Let $L$ (resp.\;$E$) be a finite extension of $\BQ_p$,\;where $E$ is a sufficiently large coefficient field.\;The conjectural $p$-adic local Langlands correspondence  seeking for a $1-1$ correspondence between $n$-dimensional continuous representations  of $\gal_L:=\gal(\overline{L}/L)$ over $E$  and certain admissible Banach (resp.,\;or locally analytic) representation of $\GLN_n(L)$ over $E$,\;and satisfying certain local-global compatibility.\;The conjectural $p$-adic local Langlands correspondence is compatible with (and refines) the classical local Langlands correspondence.\;We recall the feature as follows.\;Let $\rho_L:\gal_L\rightarrow \GLN_n(E)$ be a potentially semistable (thus de Rham) $p$-adic Galois representation and let $\mathrm{HT}(\rho_L):=(h_{\tau,1}>h_{\tau,2}>\cdots>h_{\tau,n} )_{\tau\in \Sigma_L}$ be the Hodge-Tate weights of $\rho_L$.\;Let $L'$ be a finite Galois extension of $L$ such that $\rho_L|_{\gal_{L'}}$ is semistable.\;We put ${\bm\lambda}_\bh=(h_{\tau,i}+i-1)_{\tau\in \Sigma_L,1\leq i\leq n}$,\;which is a  dominant weight of $(\mathrm{Res}_{L/\BQ_p}\GLN_n)\times_{\BQ_p}E$ respect to $(\mathrm{Res}_{L/\BQ_p}\bB)\times_{\BQ_p}E$.\;We associate to $\rho_L$ a locally $\bQ_p$-algebraic representation $\pi_{\mathrm{alg}}(\rho_L)$ of $G$ over $E$ in the following way:
\begin{equation}\label{assoWD}
\begin{aligned}
	\overbrace{\rho_L\longleftrightarrow D_{\pst}(\rho_L)\rightsquigarrow \df(\rho_L)}^{p-\text{Hodge\;theory}}\longleftrightarrow &\overbrace{\wdre(\rho_L)\longleftrightarrow\pi_\infty(\rho_L)}^{\text{classical\;local\;Langlands}}
\end{aligned}
\end{equation}
where $D_{\pst}(\rho_L)$ is the filtered $(\varphi,N,\gal(L'/L))$-module associated to $\rho_L$ by Fontaine's theory,\;$\df(\rho_L)$ is the underlying $(\varphi,N,\gal(L'/L))$-module of $D_{\pst}(\rho_L)$ (by forgetting the Hodge filtration). Let $\wdre(\rho_L)$ be the $n$-dimensional Weli-Deligne representation over $E$ associated to $\df(\rho_L)$ via the above Fontaine's equivalence of categories \cite[Proposition 4.1]{breuil2007first}.\;Let $\pi_\infty(\rho_L)$ be the smooth representation of $G$ associated
to the F-semi-simple Weil-Deligne representation $\wdre(\rho_L)^{\mathrm{F-ss}}$ via the normalized local Langlands correspondence.\;Put $\pi_{\mathrm{alg}}(\rho_L):=\pi_\infty(\rho_L)\otimes_EL({\bm\lambda}_\bh)$,\;which is a locally $\BQ_p$-algebraic representation of $G$.\;We expect that $\pi_{\mathrm{alg}}(\rho_L)$ is a subrepresentation of the locally $\bQ_p$-analytic representation of $G$ via the hypothetical $p$-adic local Langlands correspondence.\;

%\noindent Let $A$ be an affinoid $E$-algebra.\;If $\delta_A: L^{\times} \rightarrow A^{\times}$ is locally $\BQ_p$-analytic,\;then it induces a $\BQ_p$-linear map
%\begin{equation*}
%	L \rightarrow A, \ x\mapsto \frac{d}{dt}\delta(\exp(tx))|_{t=0},
%\end{equation*}
%and hence induces an $E$-linear map
%\begin{equation*}L\otimes_{\BQ_p} E\cong \prod_{\sigma\in \Sigma_L} E \rightarrow A.
%\end{equation*}
%Thus there exist $\wt(\delta)_{\sigma}\in A$ for all $\sigma\in \Sigma_L$, called the $\sigma$-weight of $\delta$, such that the above map is given by $(a_{\sigma})_{\sigma\in \Sigma_L}\mapsto \sum_{\sigma\in \Sigma_L} a_\sigma \wt(\delta)_{\sigma}$.\;We call  $\wt(\delta):=(\wt(\delta)_{\sigma})_{\sigma\in \Sigma_L}$ the weight of $\delta$.\;
%\\\noindent For a topological commutative group $M$,\;we use $\homo(M,E)$ (resp.\;$\homo_\infty(M,E)$) to denote the $E$-space of continuous (resp.\;locally constant) additive $E$-valued characters on $M$.\;If $M$ is a totally disconnected group,\;then the terminology ``locally constant" is often replaced by smooth.\;

\subsubsection*{Characters}
For a topological commutative group $M$,\;we use $\homo(M,E)$ (resp.\;$\homo_\infty(M,E)$) to denote the $E$-space of continuous (resp.\;locally constant) additive $E$-valued characters on $M$.\;If $M$ is a totally disconnected group,\;then the terminology "locally constant" is often replaced by smooth.\;If $M$ is a locally $L$-analytic group,\;denote by $\homo_\sigma(M,E)$ the $E$-vector space of locally $\sigma$-analytic characters on $M$ (i.e.,\;the continuous characters which are locally $\sigma$-analytic $E$-valued functions on $M$).\;The local class field theory implies a bijection $\homo(L^\times,E)\cong \homo(\gal_L,E)$.\;Let $\alpha\in E^\times$,\;denote by $\unr(\alpha)$ the unramified character of $L^\times$ sending uniformizers to $\alpha$.\;

\subsubsection*{Representation theory}
\noindent Let $\mathbf{G}$ be a split connected reductive group over $L$,\;and $\bP$ be a parabolic subgroups of $\bG$.\;
Let $G,P,N_P,L_P$ be the $L$-points of $\bG,\bP,\bN_\bP,\bL_\bP$\;respectively.\;Let $\overline{P}$ be the parabolic subgroup opposite to $P$.\;

If $V$ is a continuous representation of $G$ over $E$,\;we denote by $V^{\bQ_p-\ana}$ its locally $\bQ_p$-analytic vectors.\;If $V$ is
locally $\bQ_p$-analytic representations of $G$,\;we denote by $V^{\mathrm{sm}}$ (resp.\;$V^{\mathrm{lalg}}$) the smooth (resp,\;locally $\bQ_p$-algebraic) subrepresentation of $V$ consists of its smooth (locally $\bQ_p$-algebraic) vectors (see \cite{schneider2002banach} and \cite{Emerton2007summary} for details).\;

Let $\pi_P$ be a continuous representation of $P$ over $E$ (resp.,\;locally $\bQ_p$-analytic representations of $P$ on a locally convex $E$-vector space of compact type,\;resp.,\;smooth representations of $P$ over $E$),\;we denote by
\begin{equation}\label{smoothadj2}
\begin{aligned}
	&(\mathrm{Ind}_{P}^{G}\pi_P)^{\cC^0}:=\{f:G\rightarrow \pi_P \text{\;continuous},\;f(pg)=pf(g),\forall p\in P\},\\
	&\text{resp.,\;}(\mathrm{Ind}_{P}^{G}\pi_P)^{\bQ_p-\ana}:=\{f:G\rightarrow \pi_P \text{\;locally $\bQ_p$-analytic},\;f(pg)=pf(g),\forall p\in P\},\\
	&\text{resp.,\;}i^G_P\pi_P:=(\mathrm{Ind}_{P}^{G}\pi_P)^{\infty}=\{f:G\rightarrow \pi_P \text{\;smooth},\;f(pg)=pf(g),\forall p\in P\}
\end{aligned}
\end{equation}
the continuous parabolic induction (resp.,\;the locally $\bQ_p$-analytic parabolic induction,\;resp.,\;the (un-normalized) smooth parabolic induction) of $G$.\;It becomes a continuous representation (resp.,\;locally $\bQ_p$-analytic  representation) of $G$ over $E$ (resp.,\;on a locally convex $E$-vector space of compact type, resp., smooth representations of $G$ over $E$) by endowing the left action of $G$ by right translation on functions: $(gf)(g')=f(g'g)$.\;

\section{Parabolic simple \texorpdfstring{$\sL$}{Lg}-invariants}\label{sectionparasimlinv}

For a potentially semistable non-crystalline $p$-adic Galois representation $\rho_L$ which admits certain special ``parabolizations" (certain parabolic analogue of the so-called triangulations),\;we are going to define parabolic Fontaine-Mazur simple $\sL$-invariants of $\rho_L$,\;which encodes certain information on the subquotients of consecutive extensions  of the associated $(\varphi,\Gamma)$-module over the Robba rings.\;

In Section \ref{BPFORPAPAIINV},\;we recall some basic facts about the Bernstein component.\;For a fixed 
Bernstein component $\Omega$,\;we talk about an $\Omega$-filtration on a $(\varphi,\Gamma)$-module.\;In Section \ref{noncrispecfildfn},\;we focus on the non-critical special $\omepik$-filtration associated to a Zelevinsky-segment.\;We then define parabolic Fontaine-Mazur simple $\sL$-invariants in Section \ref{dfnFMparalINVSECTION}.\;In Section \ref{dfndeftyoeomega} and \ref{cgsformulaSection},\;we give a way to see parabolic Fontaine-Mazur simple $\sL$-invariants in $1$-order deformations.\;

\subsection{Background and Preliminaries}\label{BPFORPAPAIINV}

Let $\rho_L:\mathrm{Gal}_L \rightarrow\mathrm{GL}_n(E)$ be a $p$-adic Galois representation.\;Let $D_{\rig}(\rho_L)$ be the $(\varphi,\Gamma)$-module over the Robba ring $\cR_{E,L}$ associated to $\rho_L$.\;We recall that the $p$-adic Galois representation $\rho_L$ is called trianguline if $D_{\rig}(\rho_L)$ is a successive extension of $(\varphi,\Gamma)$-module of rank $1$.\;A flexible version of generalizations of triangulations is given by Breuil-Ding \cite[Definition 4.1.6]{Ding2021},\;that they call $\Omega$-filtrations,\;where $\Omega$ is a Bernstein component.\;We recall the definition of $\Omega$-filtration as follows.\;

Let $m$ be a positive integer.\;Let $\Omega_0$ be a cuspidal Bernstein component of $\GLN_{m}(L)$ (see \cite{bernstein1984centre}).\;Let $\mathfrak{Z}_{\Omega_0}$ be the Bernstein center of $\Omega_0$ over $E$ ($E$ is required to be sufficiently large such that the $\mathfrak{Z}_{\Omega_0}$ can be realized over $E$,\;by \cite[Section 3.13]{PATCHING2016}.\;Let 
$\pi_{\Omega_0}\in \Omega_0$ be a fixed irreducible smooth representation of $\GLN_{m}(L)$ over $E$ of type $\Omega_0$.\;We put
\begin{equation*}
\begin{aligned}
	\eta_{E}(\Omega_0)&:=\{\eta_{\pi}:\LX\rightarrow \EX :\pi_{\Omega_0}\cong \pi_{\Omega_0}\otimes_{E}\eta_{\pi}\circ\det\},\\
	\eta_{E}^{\unra}(\Omega_0)&:=\{\eta_{\pi}:\LX\rightarrow \EX \text{\;unramified}:\pi_{\Omega_0}\cong \pi_{\Omega_0}\otimes_{E}\eta_{\pi}\circ\det\}\\
	&\;=\{a\in\EX:\pi_{\Omega_0}\cong \pi_{\Omega_0}\otimes_{E}\unra(a)\circ\det\}.
\end{aligned}
\end{equation*}
We have $\eta_{E}^{\unra}(\Omega_0)\subset \eta_{E}(\Omega_0)$.\;Both $\eta_{E}^{\unra}(\Omega_0)\subset \eta_{E}(\Omega_0)$ and $\eta_{E}(\Omega_0)$ are finite groups and independent of the choice of $\pi_{\Omega_0}$ in $\Omega_0$.\;By comparing the central characters,\;there exists an integer $m_0|m$ such that $ \eta_{E}^{\unra}(\Omega_0)\cong \mu_{m_0}$.\;We further assume that $\mu_{m_0}\subset E$ where $\mu_{m_0}$ means the group of $m_0$-th roots of unity in $\obp$.\;By \cite[Pages\;242-243,\;Lemma\;3.24]{PATCHING2016} and \cite[Section 2.2]{Ding2021},\;we have 
\[\mathfrak{Z}_{\Omega_0}\cong E[z,z^{-1}]^{\mu_{m_0}}\cong E[z^{m_0},z^{-m_0}].\]
For any closed point $x\in \Spec(\FZ_{\Omega_0})$,\;we can associate it a smooth irreducible cuspidal representation $\pi_{x}$ of $\GLN_{m}(L)$ over $k(x)$.\;By normalized classical local Langlands correspondence (see \cite{scholze2013local}),\;we get an $F$-semi-simple Weil-Deligne representation $\wdre_{x}:=\rec(\pi_{x})$ of $W_L$ over $k(x)$.\;By the equivalence of categories (\cite[Proposition 4.1]{breuil2007first}),\;the Weil representation $\wdre_{x}$ corresponds to a Deligne-Fontaine module $\df_x$,\;which by Berger's theory \cite{berger2008equations} corresponds  to a $p$-adic differential equation ${\Delta_x}$,\;i.e.,\;a $(\phi,\Gamma)$-module of rank $m$ over $\cR_{E,L}$ which is de Rham of constant Hodge-Tate weight $0$ such that $D_{\pst}(\Delta_x)$ (forgetting the Hodge filtration) is isomorphic to the $\df_x$.\;Let $L'/L$ be a finite extension of $L$.\;Moreover,\;if $\wdre_{x}$ is unramified when restricted to $W_{L'}$,\;then $\df_x$ is a $(\varphi,N,\gal(L'/L))$-module.\;

Let $\ul{n}:=(n_1,\cdots,n_s)$ be an ordered partition  of $n$ for some positive integer $s$.\;We put \[\bL_{\ul{n}}:=\left(\begin{array}{cccc}
\GLN_{n_1} & 0 & \cdots & 0 \\
0 & \GLN_{n_2} & \cdots & 0 \\
\vdots & \vdots & \ddots & 0 \\
0 & 0 & 0 & \GLN_{n_s} \\
\end{array}\right)\subseteq \bP_{\ul{n}}:=\left(\begin{array}{cccc}
\GLN_{n_1} & \ast & \cdots & \ast \\
0 & \GLN_{n_r} & \cdots & \ast \\
\vdots & \vdots & \ddots & \ast \\
0 & 0 & \cdots & \GLN_{n_s} \\
\end{array}\right).\;\]
Note that $\bP_{\ul{n}}$ is a parabolic subgroup of $\GLN_{n}$ and  $\bL_{\ul{n}}$ is a Levi subgroup of $\bP_{\ul{n}}$.\;For each $1\leq i\leq s$,\;let $\Omega_i$ be a cuspidal  Bernstein component of $\GLN_{n_i}(L)$ and $\FZ_{\Omega_i}$ be the corresponding Bernstein centre over $E$.\;Let $\Omega:=\prod_{i=1}^s\Omega_i$ be the cuspidal Bernstein component of $\bL_{\ul{n}}(L)$.\;Then we have an isomorphism of commutative $E$-algebras $\FZ_{\Omega}\cong\otimes_{i=1}^s\FZ_{\Omega_i}$.\;For any closed point $\bx=(x_i)_{1\leq i\leq s}\in \Spec(\FZ_{\Omega})$,\;we can associate it irreducible smooth  cuspidal representations $\{\pi_{x_i}\}_{1\leq i\leq s}$ of $\{\GLN_{n_i}(L)\}_{1\leq i\leq s}$ over $k(x)$, respectively, the $F$-semi-simple Weil-Deligne representations $(\wdre_{x_i})_{1\leq i\leq s}$,\;and the $p$-adic differential equations $(\Delta_{x_i})_{1\leq i\leq s}$.\;This gives an irreducible smooth  cuspidal representation $\pi_{\bx}=\otimes_{i=1}^k\pi_{x_i}$ of  $\bL_{\ul{n}}(L)$ and an $n$-dimensional $F$-semi-simple Weil-Deligne representation $\wdre_{\bx}:=\oplus_{i=1}^s\wdre_{x_i}$ of $W_L$ over $k(x)$.\;

%Then $\omega=\otimes_{i=1}^s\pi_i$.\;For $1\leq i \leq s$,\;let $\Omega_i$ be the cuspidal Bernstein component  $\GLN_{n_i}(L)$ containing $\pi_i$.\;From the theory of types,\;each $\Omega_i$ contains a maximal simple type $(J_i,\sigma_{i}^0)$.\;We put $\sigma_{i}=\bigg(\ind^{\GLN_{n_i}(\cO_L)}_{J_i}\sigma_{i}^0\bigg)^{\infty}$,\;and $\sigma=\otimes_{i=1}^s\sigma_{i}$,\;this is an irreducible representation of $\bL_{\Omega}^0:=\bL_{\Omega}(\cO_L)$.\;Let $\FZ_{\Omega_i}$ (resp.,\;$\FZ_{\Omega_{\bL_{\Omega}}}$) be the  Bernstein centre of $\Omega_i$ (resp.,\;$\Omega_{\bL_{\Omega}}$)  over $E$.\;Then we have
%\[\mathfrak{Z}_{\Omega}\cong \EndO_{\bL_{\Omega}(L)}(	c\text{-}i^{\bL_{\Omega}(L)}_{\bL_{\Omega}^0}\sigma)\cong\otimes_{i=1}^s\mathfrak{Z}_{\Omega_i},\;\mathfrak{Z}_{\Omega_i}\cong\EndO_{\bL_{\Omega}(L)}(	c\text{-}i_{\GLN_{n_i}(\cO_L)}^{\GLN_{n_i}(L)}\sigma_i).\;\]

We now recall an $\Omega$-filtration on a $(\varphi,\Gamma)$-module (see \cite[Section 4.1.2]{Ding2021}).\;Let $Z_{\bL_{\ul{n}}}$ be the center of $\bL_{\ul{n}}$.\;We have a map $\BZ\hookrightarrow L^\times$,\;$i\mapsto \varpi_L$,\;which induces a map $\oplus_{i=1}^s\BZ\hookrightarrow Z_{\bL_{\ul{n}}}(L)$.\;We denote by $Z_{\varpi_L}$ its image,\;then we have $Z_{\bL_{\ul{n}}}(L)\cong Z_{\varpi_L}\times Z_{\bL_{\ul{n}}}(\cO_L)$.\;Denote by $\rigerch=\widehat{Z_{\bL_{\ul{n}}}(\cO_L)}$ (resp.,\;$\rigerchl=\widehat{Z_{\bL_{\ul{n}}}(L)}$,\;resp.,\;$\widehat{Z_{\varpi_L}}$) the rigid analytic space over $E$ which  parameters continuous characters of $Z_{\bL_{\ul{n}}}(\cO_L)$ (resp.,\;$Z_{\bL_{\ul{n}}}(L)$,\;resp.,\;$Z_{\varpi_L}$). Then we have $\rigerchl\cong \widehat{Z_{\varpi_L}}\times \rigerch$.\;

As in \cite[Section 4.1.2,\;Definition 4.1.6]{Ding2021},\;we consider:
\begin{dfn}\label{dfnomegafil} Let $D$ be a fixed $(\varphi,\Gamma)$-module of rank $n$ over $\cR_{E,L}$.\;We say that $D$ admits an \textbf{$\Omega$-filtration} $\cF$ if $D$ admits an increasing filtration by saturated $(\varphi,\Gamma)$-submodules
\[\cF=\fil_{\bullet}^{\cF} D: \ 0 =\fil_0^{\cF} D \subsetneq \fil_1^{\cF} D \subsetneq \cdots \subsetneq \fil_{k}^{\cF} D=D,\]
such that for $1\leq i\leq s$,\;
\begin{itemize}
	\item $\gr^{\cF}_{i}D$ is a $(\varphi,\Gamma)$-module of rank $n_i$;\;
	\item there exist an $E$-point $x_i\in \left(\Spec\mathfrak{Z}_{\Omega_i}\right)^{\mathrm{an}}$,\;and a continuous character $\delta_i:L^\times\rightarrow  E^\times$ such that
	$\gr^{\cF}_{i}D\tee \cR_{E,L}(\delta_{i}^{-1})\hookrightarrow \Delta_{x_i}$.\;
\end{itemize}
Put $\underline{x}=(x_i)$ and $\delta=\otimes_{i=1}^s\delta_i$.\;
\begin{itemize}
	\item We say $(\underline{x},\delta)\in \sban\times\rigerchl $ is a parameter of $\Omega$-filtration $\cF$ if $0$ is the minimal $\tau$-Hodge-Tate weight of $\gr^{\cF}_{i}D\tee \cR_{E,L}(\delta_{i}^{-1})$.\;
	\item We call $(\underline{x},\delta^0=\otimes_{i=1}^s\delta^0_i)\in \sban\times\rigerch $ is a parameter of the $\Omega$-filtration $\cF$ of $D$ if $(\underline{x},(\delta^0)_{\varpi_L})=\otimes_{i=1}^s(\delta^0_i)_{\varpi_L})$ is a parameter of $\Omega$-filtration $\cF$ in $\sban\times\rigerchl $.\;
\end{itemize}

%\begin{itemize}
%	\item $\gr^{\cF}_{i}D$ is a $(\varphi,\Gamma)$-module of rank $n_i$,\;and $\gr^{\cF}_{i}D\tee \cR_{E,L}((\delta_{i}^0)_{\varpi_L}^{-1})\hookrightarrow \Delta_{x_i}$,\;
%	\item for all $\tau\in \Sigma_L$,\;$0$ is the minimal $\tau$-Hodge-Tate weight of $\gr^{\cF}_{i}D\tee \cR_{E,L}((\delta_{i}^0)_{\varpi_L}^{-1}).$
%\end{itemize}
\end{dfn}
Denote by $\iota_{\Omega}$ the following morphism
\begin{equation}\label{notationbasisrigidmorphism}
\begin{aligned}
	\iota_{\Omega}:&\sban\times\rigerchl\rightarrow \sban\times\rigerch,\;
	&(\underline{x},\delta)\mapsto (\iota_{\Omega}(\underline{x}),\delta|_{\mathcal{O}_L^\times}),
\end{aligned}
\end{equation}
where $\iota_{\Omega}:\sban\xrightarrow{\sim}\sban$ is the isomorphism such that
$\pi_{\iota_{\Omega}(\underline{x})_i}=\pi_{x_i}\te \unr(\delta_i(\varpi_L))$. It is clear that $(\underline{x},\delta)\in\sban\times\rigerchl $ is a  parameter of $\Omega$-filtration if and only if $(\iota_{\Omega}(\underline{x}),\delta|_{\mathcal{O}_L^\times})$\\$\in\sban\times\rigerch$ is a parameter of $\Omega$-filtration.\;
%By \cite[Lemma 4.1.9]{Ding2021},\;the parameters of $\cF$ in $\sban\times\rigerchl $ or $\sban\times\rigerch$ are unique up to the twist of $\mu_{\Omega}$.\;

\subsection{\texorpdfstring{$(\varphi,\Gamma)$-module  over $\cR_{E,L}$ with non-critical special $\omepik$-filtration}{Lg}}\label{noncrispecfildfn}

In the sequel,\;we fix a cuspidal Bernstein component $\Omega_r$ of $\GLN_r(L)$ and an irreducible smooth cuspidal representation $\pi_{0}\in \Omega_r$ over $E$ of type $ \Omega_r$.\;We put
\[\omepik=\prod_{i=1}^k\Omega_{r,i},\;\Omega_{r,i}=\Omega_{r}\]
which is a cuspidal Bernstein component of $\bL^{\lrr}(L)$.\;Let $\FZ_{\omepik}\cong\otimes_{i=1}^k\FZ_{\Omega_{r,i}}=\FZ_{\Omega_{r}}^{\otimes k}$ be the associated 
Bernstein center over $E$.\;

Let $\pi\in \Omega_r$ be an (absolutely) irreducible smooth cuspidal representation over $E$ of type $\Omega_r$,\;which corresponds an $E$-point $x_\pi$ of $\Spec(\FZ_{\Omega_{r}})$.\;There exists a unique $\alpha_\pi \in E^\times$ such that $\pi\cong \pi_0\otimes_E\unr(\alpha_\pi)$. Via the classical local Langlands correspondence (normalized as in \cite{scholze2013local}),\;we get an $r$-dimensional absolutely irreducible $F$-semi-simple Weil-Deligne representation $\wdre_\pi$ of $W_L$ over $E$.\;Let $\df_\pi$ (resp.,\; ${\Delta_\pi}$) be the Deligne-Fontaine module (resp.,\; $p$-adic differential equation ${\Delta_\pi}$ over $\cR_{E,L}$) associated to $\wdre_\pi$ (equivalently,\;to $\pi$).\;Assume that $\wdre_\pi$ is unramified when restricted to $W_{L'}$ for some finite Galois extension $L'$ of $L$.\;Denote by $\varphi_\pi:\df_\pi\rightarrow \df_\pi$ the Frobenius semi-linear operator  on $\df_\pi$.\;Note that $\df_\pi=(\varphi_\pi,N=0,\gal(L'/L),\df_\pi)$ is an (absolutely) irreducible Deligne-Fontaine module.\;

%and let  $\Delta_{[k-1,0]}(\pi)=[\pi\otimes_Ev_r^{k-1},\cdots,\pi]$ be a Zelevinsky-segment.\;The irreducible smooth cuspidal representation $\otimes_{i=1}^{k}(\pi\otimes_E v_r^{k-i})$ lies in the cuspidal Bernstein component $\omepik$.\;

Next,\;we consider a special case of $\omepik$-filtration,\;that we call non-critical special $\omepik$-filtration.\;

Let $\Dpik$ be a potentially semistable $(\varphi,\Gamma)$-module over $\cR_{E,L}$ of rank $n$.\;Let $L'$ be a finite Galois extension of $L$ such that $\Dpik|_{L'}$ is a semistable $(\varphi,\Gamma)$-module over $\cR_{E,L'}$ of rank $n$.\;We consider the Deligne-Fontaine module associated to $\Dpik$:
$$\df(\Dpik)=(\varphi,N,\gal(L'/L),D_{\pst}(\Dpik))$$
where $D_{\pst}(\Dpik)=D_{\mathrm{st}}^{L'}(\Dpik\otimes_{\cR_{E,L}}\cR_{E,L'})$ is a finite free $L'_0\otimes_{\BQ_p}E$-module of rank $n$,\;where the $(\varphi,N)$-action on $D_{\pst}(\Dpik)$ is induced from the $(\varphi,N)$-action on $B_{\mathrm{st}}$,\;and where the 
$\gal(L'/L)$-action on  $D_{\pst}(\Dpik)$ is the residual action of $\gal_L$.\;Let  $\mathrm{WD}_{L'/L,E}$ the category of representations $(r_L,N,V)$ of $W_L$,\;on an $E$-vector space $V$ of finite dimension such that $r_L$ is unramified when restricted to the $W_{L'}$.\;By Fontaine's equivalence of categories as in \cite[Proposition 4.1]{breuil2007first},\;we can associate  to $\df(\Dpik)$ an $n$-dimensional Weli-Deligne representation $\wdre(\Dpik)\in \mathrm{WD}_{L'/L,E}$ (not necessarily $F$-semi-simple) of $W_L$ over $E$.\;

We say that $\wdre(\Dpik)$  admits an increasing special $\omepik$-filtration $\cF$ if $\wdre(\Dpik)$ admits an increasing filtration $\cF$ by Weil-Deligne subrepresentations:
\[\cF=\fil_{\bullet}^{\cF} \wdre(\Dpik): \ 0 =\fil_0^{\cF} \wdre(\Dpik) \subsetneq \fil_1^{\cF} \wdre(\Dpik) \subsetneq \cdots \subsetneq \fil_{k}^{\cF} \wdre(\Dpik)=\wdre(\Dpik),\]
and there is an irreducible smooth cuspidal representation $\pi\in \Omega_r$ over $E$ of type $ \Omega_r$ such that:
\begin{itemize}
\item For all $1\leqslant i\leqslant k$,\;we have $\gr^{\cF}_i\wdre(\Dpik)\cong \wdre_\pi\otimes_E|\rec^{-1}|^{k-i}$;
\item The monodromy operator $N$ sends $\gr^{\cF}_i\wdre(\Dpik)$ to
$\gr^{\cF}_{i-1}\wdre(\Dpik)$ via the zero or identity map on $\wdre_\pi$ for $2\leq i\leq k$,\;and sends $\gr^{\cF}_1\wdre(\Dpik)$ to zero.\;
\end{itemize}
Note that the formulation on the second assumption originated from the fact that any  $F$-semi-simple Weil-Deligne representation is isomorphic to a direct sum of (absolutely) indecomposable objects.\;In particular,\;the second  assumption implies that the $F$-semisimplification $\wdre(\Dpik)^{F\mathrm{-ss}}$ of $\wdre(\Dpik)$ is an absolutely indecomposable (but not irreducible) $F$-semi-simple Weil-Deligne representation if and only if the monodromy operator $N$ has full monodromy rank (i.e.,\;$N^{k-1}\neq 0$ and $N^k=0$).\;

By \cite[Proposition 4.1]{breuil2007first},\;the special $\omepik$-filtration $\cF$ on $\wdre(\Dpik)$ corresponds to an increasing  special  $\omepik$-filtration on $\df(\Dpik)$ (still denoted by $\cF$)  by Deligne-Fontaine submodules
\[\cF=\fil_{\bullet}^{\cF}\df(\Dpik): \ 0 =\fil_0^{\cF} \df(\Dpik) \subsetneq \fil_1^{\cF} \df(\Dpik) \subsetneq \cdots \subsetneq \fil_{k}^{\cF} \df(\Dpik)=\df(\Dpik),\]
such that $\fil_i^{\cF} \df(\Dpik)$ is associated to $\fil_i^{\cF} \wdre(\Dpik)$ via 
\cite[Proposition 4.1]{breuil2007first}.\;We then  see that \[\gr^{\cF}_i\df(\Dpik)\cong (\varphi_{\pi,i},N_{\gr^{\cF}_i\df(\Dpik)}=0,\gal(L'/L),\df_{\pi,i})\] for $1\leqslant i\leqslant k$,\;where $\df_{\pi,i}$ is isomorphic to $\df_\pi$ as a module,\;endowed with a Frobenius morphism $\varphi_{\pi,i}=p^{i-k}\varphi_\pi$ (i.e.,\;a twist of $\varphi_\pi$ by $p^{i-k}$).\;The monodromy operator $N$ on Deligne-Fontaine module $\df(\Dpik)$ is zero on  $(p^{1-k}\varphi_\pi,N=0,\gal(L'/L),\df_\pi)$,\;and sending $(p^{i-k}\varphi_\pi,N=0,\gal(L'/L),\df_\pi)$ to
$(p^{(i-1)-k}\varphi_\pi,N=0,\gal(L'/L),\df_\pi)$ via the zero or identity map on $\df_\pi$ for  $2\leq i\leq k$.\;

%As a $\phi$-module,\;we see that $\fil_j^{\cF} \df(\Dpik)=\bigoplus_{i=1}^j(p^{i-k}\varphi_\pi,N=0,\gal(L'/L),\df_\pi)$.\;The has full monodromy rank,\;

Let  ${\Delta_\Dpik}$  be the $p$-adic differential equation over $\cR_{E,L}$ associated to the Deligne-Fontaine module $\df(\Dpik)$.\;The special $\omepik$-filtration on $\df(\Dpik)$ induces a special  $\omepik$-filtration $\fil_{\bullet}^{\cF}{\Delta_\Dpik}=\{\fil_{i}^{\cF}{\Delta_\Dpik}\}$ on ${\Delta_\Dpik}$ by saturated $(\varphi,\Gamma)$-submodules over $\cR_{E,L}$,\;such that $\fil_{i}^{\cF}{\Delta_\Dpik}$ is the $p$-adic differential equation over $\cR_{E,L}$ associated to $\fil_i^{\cF} \df(\Dpik)$.\;In particular,\;we see that $\gr^{\cF}_i{\Delta_\Dpik}\cong {\Delta_\pi}\tee \cR_{E,L}(\unr(q_L^{i-k}))$ for $1\leqslant i\leqslant k$.\;Consider
\[\cM_\Dpik=\Dpik\big[\frac{1}{t}\big]\cong \Delta_\Dpik \big[\frac{1}{t}\big]\]
By inverting $t$,\;the filtration $\cF$ on $\Delta_\Dpik$ induces an increasing filtration $\cF:= \fil_{i}^{\cF}\cM_\Dpik:=\fil_{i}^{\cF}{\Delta_\Dpik}\big[\frac{1}{t}\big]$ on $\cM_\Dpik$ by $(\varphi,\Gamma)$-submodules over $\cR_{E,L}\big[\frac{1}{t}\big]$.\;Therefore,\;this filtration $\cF$ on $\cM_\Dpik=D\big[\frac{1}{t}\big]$ induces a filtration on $\Dpik$:
\[\cF=\fil_{\bullet}^{\cF}\Dpik: \ 0 =\fil_0^{\cF}\Dpik \subsetneq \fil_1^{\cF}\Dpik \subsetneq \cdots \subsetneq \fil_{k}^{\cF} \Dpik=\Dpik,\;\fil_i^{\cF}\Dpik=(\fil_{i}^{\cF}\cM_\Dpik)\cap \Dpik,\]
by saturated $(\varphi,\Gamma)$-submodules of $\Dpik$ over $\cR_{E,L}$.\;

Since $\Dpik$ is potentially semistable,\;it is de Rham.\;Hence we have \[D_{\dr}(\Dpik)\cong (D_{\pst}(\Dpik)\otimes_{L_0'}L')^{\gal(L'/L)},\;\] which  is a free $L\otimes_{\BQ_p}E$-module of rank $n$.\;The module $D_{\dr}(\Dpik)$ is equipped with a natural Hodge filtration.\;We assume that $D_{\dr}(\Dpik)$  has distinct Hodge-Tate weights $\bh:=(\hpi_{\tau,1}>\hpi_{\tau,2}>\cdots>\hpi_{\tau,n} )_{\tau\in \Sigma_L}$. Denote by $\hpi_{i}=(\hpi_{\tau,i})_{\tau\in \Sigma_L}$ for $1\leq i\leq n$.\;

%Hence,\;for each $\tau\in \Sigma_L$,\;the natural Hodge filtration can be expressed by the following complete flag:
%\[\fil_{\bullet}^{H}D_{\dr}(\Dpik)_\tau: \ 0 \subsetneq \fil_{-\hpi_{\tau,n}}^{H} D_{\dr}(\Dpik)_\tau \subsetneq \fil_{-\hpi_{\tau,n-1}}^{\cF}D_{\dr}(\Dpik)_\tau \subsetneq \cdots \subsetneq \fil_{{-\hpi_{\tau,1}}}^{H}D_{\dr}(\Dpik)_\tau=D_{\dr}(\Dpik)_\tau.\]
%For each $\tau\in \Sigma_L$,\;we now fix a a basis of $D_{\dr}(\Dpik)_\tau$.\;Then the Hodge filtration $\fil_{\bullet}^{H}$  corresponds to an $E$-point $(g_{\tau}\bB(E))_{\tau\in \Sigma_L}\in \bG_{/E}/\bB_{/E}$.\;
Now by Berger's equivalence of categories,\;we see that $\fil_i^{\cF}\Dpik$ corresponds to the filtered Delingen-Fontaine module $\fil_i^{\cF} \df(\Dpik)$ equipped with the induced filtration from the Hodge filtration on $D_{\pst}(\Dpik)$. We say that $\cF$ is \textbf{non-critical} if the Hodge-Tate weights of $\fil_i^{\cF}\Dpik$ are given by $$\{\hpi_{\tau,1},\hpi_{\tau,2},\cdots,\hpi_{\tau,ir}\}_{\tau\in \Sigma_L}.\;$$
This implies that the Hodge-Tate weights of $\gr_i^{\cF} \Dpik$ are $$\{\hpi_{\tau,(i-1)r+1},\hpi_{\tau,(i-1)r+2},\cdots,\hpi_{\tau,ir}\}_{\tau\in \Sigma_L}.\;$$
In this case,\;using Berger's equivalence of categories \cite[Theorem  A]{berger2008equations} and comparing the weight (or see \cite[(2.4)]{Ding2021}),\;we have an injection of $(\varphi,\Gamma)$-modules over $\cR_{E,L}$

\begin{align}\label{Dpikinjection}
	\mathbf{I}_{i}:	\gr_{i}^{\cF} \Dpik \hookrightarrow&\;{\Delta_\pi}\tee \cR_{E,L}(\unr(q_L^{i-k}))\tee \cR_{E,L}(z^{\hpi_{ir}})\\=&\;\gr^{\cF}_i{\Delta_\Dpik}\tee \cR_{E,L}(z^{\hpi_{ir}}),\nonumber
\end{align}
for $i=1,\cdots,k$.\;Note that this injection (\ref{Dpikinjection}) is equivalent to the existence of an isomorphism of $(\varphi,\Gamma)$-modules over $\cR_{E,L}[1/t]$:
$\gr_{i}^{\cF}[1/t]\cong\gr^{\cF}_i{\Delta_\Dpik}\tee \cR_{E,L}(z^{\hpi_{ir}})[1/t]$ or an injection (by comparing Hodge-Tate weights) $$\gr^{\cF}_i{\Delta_\Dpik}  \hookrightarrow \gr_{i}^{\cF}\Dpik\tee \cR_{E,L}(z^{\hpi_{(i-1)r+1}}).\;$$ 

This implies that the $(\varphi,\Gamma)$-module $\Dpik$ admits a non-critical $\omepik$-filtration $\cF$.\;The parameters of $\cF$ in $\sbanpik\times\rigchl $ or $\sbanpik\times\rigch$ are given as follows.\;Recall that $\pi\cong \pi_0\otimes\unr(\alpha_\pi )$ for some $\alpha_\pi \in E^\times$.\;

\begin{dfn}\label{weaklynoncritical}\textbf{(Non-critical special weakly $\omepik$-filtration)} Keep the above assumption.\;We say that the $(\varphi,\Gamma)$-module $\Dpik$ admits a non-critical special weakly $\omepik$-filtration $\cF$ with parameter
\begin{equation}
	\begin{aligned}
		&(\bx_{\pi},\bmdel)\in \sbanpik\times\rigchl,\;\hspace{110pt}\\
		&\bx_{\pi}=(\bx_{\pi,i}\cong x_{\pi})_{1\leq i\leq k},\;\\
		&\bmdel:=(\bm{\delta}_{\bh,i}=\unr(  q_L^{i-k}){z^{\hpi_{ir}}})_{1\leq i\leq k},\;
	\end{aligned}
\end{equation}
or with parameter
\begin{equation}
	\begin{aligned}
		&(\widetilde{\bx}_{\pi,\bh},\widetilde{\bm{\delta}}_\bh)\in \sbanpik\times\rigch,\;\hspace{100pt}\\
		&\widetilde{\bx}_{\pi,\bh}=(\widetilde\bx_{\pi,i})_{1\leq i\leq k},\;\pi_{\widetilde\bx_{\pi,\bh,i}}\cong \pi_0\te\unr(\alpha_\pi  q_L^{i-k}z^{\hpi_{ir}}(\varpi_L)),\;\\
		&\widetilde{\bm{\delta}}_\bh=(\widetilde{\bm{\delta}}_{\bh,i}={z^{\hpi_{ir}}}|_{\co_L^\times})_{1\leq i\leq k}.
	\end{aligned}
\end{equation}
\end{dfn}
\begin{rmk}For convenience,\;we may use these two kinds of parameters depending on the situation.\;By \cite[Lemma 4.1.9]{Ding2021},\;the parameters of $\cF$ in $\sbanpik\times\rigchl$ or $\sbanpik\times$\\$\rigch$ are in general not unique.\;All the parameters of $\cF$ in $\sbanpik\times\rigch$ are of form $(\underline{x}',\delta')$ such that,\;for $1\leq i\leq k$,\;we have $\mathbf{WD}_{x_i}\cong \mathbf{WD}_{\bx_{\pi,i}}\otimes_E\unr(\alpha_i)$ and $\delta'_i=\bm{\delta}_{\bh,i}\unr(\alpha_i^{-1})\eta_i$ for some $\alpha_i\in \overline{E}^\times$ and $\eta_i\in \mu_{\Omega_{r,i}}=\mu_{\Omega_{r}}$.\;All the parameters of $\cF$ in $\sbanpik\times\rigch$ are of the form $(\underline{x}',\delta'')$ such that,\;for $1\leq i\leq k$,\;we have $\mathbf{WD}_{x_i}\cong \mathbf{WD}_{\widetilde\bx_{\pi,i}}\otimes_E\unr(\eta_i(\varpi_{L}))$ and $\delta''_i={z^{\hpi_{ir}}}\eta_i|_{\co_L^\times}$ for some $\eta_i\in \mu_{\Omega_{r}}$.\; 
\end{rmk}

\begin{dfn}\label{dfnnoncriticalspecial}\textbf{(Non-critical special $\omepik$-filtration)} We call an $\omepik$-filtration $\cF$ on $\Dpik$ is non-critical special with parameter $$(\bx_{\pi},\bmdel)\in \sbanpik\times\rigchl ,\;$$(resp.,\;with parameter $(\widetilde{\bx}_{\pi,\bh},\widetilde{\bm{\delta}}_\bh)\in \sbanpik\times\rigch $)\;if\;$\Dpik$ admits a  non-critical special weakly $\omepik$-filtration $\cF$ (see Definition \ref{weaklynoncritical}) with parameter $(\bx_{\pi},\bmdel)\in \sbanpik\times\rigchl $ (resp.,\;with parameter $(\widetilde{\bx}_{\pi,\bh},\widetilde{\bm{\delta}}_\bh)\in \sbanpik\times\rigch $ ),\;and the subquotient $\Dpik^{i+1}_{i}$ is non-split for all $ir\in\Delta_n(k)$.\; %(equivalently,\;$({\Delta_{\Dpik}})^{i+1}_{i}$ is potentially semistable and potentially noncrystalline for all $s\in\Delta_n(k)$,\;see the Lemma \ref{criofnoncrystalline}).\;
\end{dfn}

Throughout section \ref{sectionparasimlinv},\;we fix such a $(\varphi,\Gamma)$-module $\Dpik$ (in Definition  \ref{weaklynoncritical} or Definition \ref{dfnnoncriticalspecial}).\;For $1\leq i\leq j\leq k$,\;we put $\Dpik^{j}_i:=\fil_j^{\cF}\Dpik/\fil_{i-1}^{\cF}\Dpik$. We also use $\cF$ to denote the induced non-critical special $\Omega_{[i,j]}$-filtration on $\Dpik^{j}_i$,\;where $\Omega_{[i,j]}:=\prod_{l=i}^j\Omega_{r,l}$.\;Clearly,\;a parameter of this non-critical special  $\Omega_{[i,j]}$-filtration is $((\bx_{0,s})_{i\leq s\leq j},(\bm{\delta}_{\bh,s})_{i\leq s\leq j})$.\;

\subsection{Deformation of type \texorpdfstring{$\omepik$}{Lg}}\label{defdfnomepik}\label{dfndeftyoeomega}

In this section,\;we study certain paraboline deformations of $(\varphi,\Gamma)$-modules which admit an $\omepik$-filtration, under the framework of \cite[Section 4.1]{Ding2021}.\;

%\begin{description}
%  \item [(1)] $\gr^{\cF}_{i}D$ is a $(\varphi,\Gamma)$-module of rank $r$,\;
%  \item [(2)]There exist an $E$-point $x_i\in \sban$,\;and a continuous character $\undel$ of $\zlch$ such that
%$D_{x_i}:=\gr^{\cF}_{i}D\tee \cR_{E,L}((\delta_{i}^0)_{\varpi_L}^{-1})\hookrightarrow \sD_{x_i}$,\;and  $0$ is the minimal $\tau$-Hodge-Tate weight of $\gr^{\cF}_{i}D\tee \cR_{E,L}((\delta_{i}^0)_{\varpi_L}^{-1})$,
%  \item [(3)]$\pi_{x_{i+1}}=\pi_{x_i}\otimes_E\eta_i$ for some smooth character $\eta_i:L^\times\rightarrow E$ such that
%             $\eta_i^{-1}(\delta_{i}^0)_{\varpi_L}(\delta_{i+1}^0)_{\varpi_L}^{-1}$ is special,\;
%  \item [(4)]$D^{i+1}_i:=\fil_{i+1}^{\cF}/\fil_{i-1}^{\cF}$ is nonsplit,\;
%  \item [(5)] $w_{\cF,\tau}=1$ for all $\tau\in \Sigma_L$.\;
%\end{description}
We first recall the contents of \cite[Section 4.1.1]{Ding2021}.\;Let $\Delta$ be an irreducible $(\varphi,\Gamma)$-module of rank $r$ over $\cR_{E,L}$,\;de Rham of constant Hodge-Tate weight $0$.\;Let $D$ be a $(\varphi,\Gamma)$-module of rank $r$ over $\cR_{E,L}$ such that there exists a continuous character $\delta: L^\times\rightarrow E^\times$ such that we have an injection $D\otimes_{\cR_{E,L}}\cR_{E,L}({\delta}^{-1})\hooklongrightarrow {\Delta}$ of $(\varphi,\Gamma)$-modules of rank $r$ over $\cR_{E,L}$.\;

In \cite[Section 4.1.1]{Ding2021},\;the authors consider the following functor:
\[F_{D}^0:\Art(E):=\{\text{Artinian local $E$-algebra with residue field $E$}\}\longrightarrow \{\text{sets}\}\]
sends $A$ to the set of isomorphism classes $\{(D_A,\pi_A,\delta_A)\}/\sim$,\;where
\begin{description}
\item[(1)] $D_A$ is a $(\varphi,\Gamma)$-module of rank $r$ over $\cR_{A,L}$ with $\pi_A:D_A\otimes_AE\cong D$;
\item[(2)] ${\delta}_{A}:L^\times\rightarrow A^\times$ is a continuous character such that ${\delta}_{A}\equiv {\delta}\mod \mathfrak{m}_A$;
\item[(3)] there exists an injection of $(\varphi,\Gamma)$-modules of rank $r$ over $\cR_{A,L}$:\;\;$D_A\hooklongrightarrow {\Delta}\otimes_{\cR_{E,L}}\cR_{A,L}({\delta}_{A})$.
\end{description}

\begin{rmk}We remark some easy facts about $F_{D}^0$.\;We consider such deformations for 
two reasons.\;
\begin{itemize}
	\item We consider $D_{E[\epsilon]/\epsilon^2}$ of $D$ over $E[\epsilon]/\epsilon^2$.\;The condition that $D_{E[\epsilon]/\epsilon^2}$ is a subobject of the form ${\Delta}\otimes_{\cR_{E,L}}\cR_{A,L}({\delta}_{E[\epsilon]/\epsilon^2})$ is more flexible than that $D_{E[\epsilon]/\epsilon^2}$ itself is of that form.\;In fact,\;by 
	\cite[Proposition 4.1.4]{Ding2021}) or Lemma \ref{deformationker},\;we see that the following map \[\ext^1_{(\varphi,\Gamma)}(D,D)\rightarrow\ext^1_{(\varphi,\Gamma)}(D,{\Delta}\otimes_{\cR_{E,L}}\cR_{E,L}({\delta}))\] is not injective,\;and its kernel has dimension $d_L\frac{r(r-1)}{2}$.\;
	\item We can view $F_{D}^0(E[\epsilon]/\epsilon^2)$ as a "determinant" part of $F_{D}(E[\epsilon]/\epsilon^2)$,\;which is easier to control.\;
\end{itemize}
\end{rmk}

Recall that in Definition  \ref{weaklynoncritical} we have fixed a $(\varphi,\Gamma)$-module $\Dpik$ which admits a non-critical special weakly  $\omepik$-filtration $\cF$ with parameter $(\bx_{\pi},\bmdel)\in \sbanpik\times\rigchl $.\;

Denote by $F_{\Dpik,\cF}^0$ the deformation functor
\[F_{\Dpik,\cF}^0:\Art(E):=\{\text{Artinian local $E$-algebra with residue field $E$}\}\longrightarrow \{\text{sets}\}\]
sends $A$ to the set of isomorphism classes $\{(D_A,\pi_A,\cF_A,\delta_A)\}/\sim$,\;where
\begin{itemize}
\item[(1)] $D_A$ is a $(\varphi,\Gamma)$-module of rank $n$ over $\cR_{A,L}$ with $\pi_A:D_A\otimes_AE\cong \Dpik$;
\item[(2)] $\cF_A=\fil_{\bullet}D_A$ is an increasing filtration of $(\varphi,\Gamma)$-module over $\cR_{A,L}$ on $D_A$,\;such that $\fil_{i}D_A$ are direct summand of $D_A$ as $\cR_{A,L}$-modules,\;and $\pi_A(\fil_{i}D_A)\cong \fil_{i}^\cF \Dpik$;
\item[(3)] ${\delta}_A=({\delta}_{A,i})_{1\leq i\leq k}$ where ${\delta}_{A,i}:L^\times\rightarrow A^\times$ is a continuous character such that ${\delta}_{A,i}\equiv \bm{\delta}_{\bh,i}(\mod \mathfrak{m}_A)$;
\item[(4)] there exists an injection of $(\varphi,\Gamma)$-modules of rank $r$ over $\cR_{A,L}$:
\begin{equation}\label{injectiondefF0deforma}
	\gr_iD_A\hooklongrightarrow {\Delta_\pi}\otimes_{\cR_{E,L}}\cR_{A,L}({\delta}_{A,i}).
\end{equation}
\end{itemize}
Note that the injection (\ref{injectiondefF0deforma}) is equivalent to the existence of an isomorphism of $(\varphi,\Gamma)$-modules over $\cR_{A,L}[1/t]$:
$\gr_iD_A[1/t]\cong{\Delta_\pi}\otimes_{\cR_{E,L}}\cR_{A,L}({\delta}_{A,i})[1/t]$.\;This deformation functor encodes certain paraboline deformations of $(\varphi,\Gamma)$-modules of $\Dpik$ which admit an $\omepik$-filtration.\;

Define by $F_{\Dpik,\cF}$ the deformation functor  which sends $A\in \Art(E)$ to the isomorphism classes $F_{\Dpik,\cF}(A):=\{(D_A,\pi_A,\cF_A)\}/\sim$.\;Moreover,\;the map sending the data $\{(D_A,\pi_A,\cF_A,\delta_A)\}$ (resp.,\;\\$\{(D_A,\pi_A,\cF_A)\}$) to $\{(\gr_iD_A,\pi_A|_{\gr_i^{\cF}D_A},{\delta}_{A,i})_{1\leq i\leq k}\}$ (resp.,\;$\{(\gr_iD_A,\pi_A|_{\gr_i^{\cF}D_A}\}$) induces a natural morphism
\begin{equation}\label{FmorphismGRD}
\Upsilon^0:F_{\Dpik,\cF}^0\longrightarrow \prod_{i=1}^kF_{\gr_i^{\cF}\Dpik}^0\;\;\; (\text{resp.,\;}\Upsilon:F_{\Dpik,\cF}\longrightarrow \prod_{i=1}^kF_{\gr_i^{\cF}\Dpik})
\end{equation}
By definition,\;we have a natural morphism $F_{\Dpik,\cF}^0\rightarrow F_{\Dpik,\cF}$.\;We see that
\[F_{\Dpik,\cF}^0=\sF_{D,\cF}\times_{\prod_{i=1}^k F_{\gr_iD}}\prod_{i=1}^k F_{\gr_iD}^0.\]
We have the following $E$-linear maps
\begin{equation}\label{kappakappaL}
\begin{aligned}
	\kappa:F_{\Dpik,\cF}^0(E[\epsilon]/\epsilon^2)\xrightarrow{\Upsilon^0(E[\epsilon]/\epsilon^2)} \prod_{i=1}^k&F_{\gr_i^{\cF}D}^0(E[\epsilon]/\epsilon^2)\\
	&\xrightarrow{\omega^0} \prod_{i=1}^k\homo(L^\times,E)\xrightarrow{\kappa_L} \prod_{ir\in \Delta_n(k)}\homo(L^\times,E),
\end{aligned}
\end{equation}
where the first map is obtained by evaluating the morphism $\Upsilon^0$ on $E[\epsilon]/\epsilon^2$-points,\;and the second map is given by 
$((D_{A,i},\pi_{A,i},\delta_{A,i}))_{1\leq i\leq k} 
\mapsto((
\delta_i^{-1}-1)/\epsilon)_{1\leq i\leq k}$.\;The last map $\kappa_L$ sends $(\psi_1,\psi_2,\cdots,\psi_k)$ to $(\psi_i-\psi_{i+1})_{ir\in \Delta_n(k)}$.\;By definition,\;the composition of $\Upsilon^0(E[\epsilon]/\epsilon^2)$ and  $\omega^0$ is given by
\[(\Dpik_{E[\epsilon]/\epsilon^2},\pi_{E[\epsilon]/\epsilon^2},\cF_{E[\epsilon]/\epsilon^2},\delta_{E[\epsilon]/\epsilon^2})\mapsto (({\delta}_{E[\epsilon]/\epsilon^2,i}\delta_i^{-1}-1)/\epsilon)_{1\leq i\leq k}.\]

In Section \ref{cgsformulaSection},\;we see that the functor $F_{\Dpik,\cF}^0$ is pro-representable.\;We next compute the $E$-dimension of tangent space  $F_{\Dpik,\cF}^0(E[\epsilon]/\epsilon^2)$ \;by using Colmez-Greenberg-Stevens formula.\;

\begin{rmk}\label{nongenediffobstru}
The deformation functor $F_{D,\cF'}^0$ in \cite[Section 4.1.2]{Ding2021} has generic parameters (\cite[(4.13)]{Ding2021}).\;This functor is formally smooth,\;which has a simpler structure than our  $F_{\Dpik,\cF}^0$.\;Proposition 4.1.17 in \cite{Ding2021} also computes its dimension.\;Note that the parameters of our non-critical special weakly $\omepik$-filtration $\cF$ are \textit{non-generic},\;thus the discussions in \cite[Section 4.1.2]{Ding2021} are not valid for our case.\;We study our deformation functor  $F_{\Dpik,\cF}^0$ by working a bit more carefully.\;One should note that,\;as opposed to generic case,\;there are obstructions to paraboline deformations of $\Dpik$.\;However,\;these obstructions to paraboline deformations are characterized by the parabolic Fontaine-Mazur  simple $\sL$-invariants (see Section \ref{dfnFMparalINVSECTION} and Section \ref{cgsformulaSection} below).\;This result overcomes the above difficulty.\;
\end{rmk}

%The following  lemma describes its image $\mathrm{Im}(\kappa)$.\;

%\end{proof}For part $(a)$,\;suppose we have a non-zero morphism $f:\gr^{\cF}_{j}D\longrightarrow \gr^{\cF}_{i}D\tee\cR_{E,L}(\ccyc)$,\;then for sufficiently large $N>0$,\;$f$ may induce a non-zero hence injective morphism
%\[t^N{\Delta_\pi}\hookrightarrow \gr^{\cF}_{j}D\tee \cR_{E,L}({\bm{\delta}}_{\pi,j}^{-1})\xrightarrow{f}\gr^{\cF}_{i}D\tee\cR_{E,L}(\ccyc\cdot{\bm{\delta}}_{\pi,j}^{-1})\hookrightarrow {\Delta_\pi}\tee\cR_{E,L}(\ccyc\cdot{\bm{\delta}}_{\pi,i}{\bm{\delta}}_{\pi,j}^{-1}).\]
%This induce a morphism ${\Delta_\pi}\rightarrow {\Delta_\pi}\tee\cR_{E,L}(\ccyc\cdot{\bm{\delta}}_{\pi,i}{\bm{\delta}}_{\pi,j}^{-1}z^{-\underline{N}})$.\;This implies that $$0\neq\hH^0_{(\varphi,\Gamma)}(\Delta_\pi^\vee\tee{\Delta_\pi}\tee\cR_{E,L}(\ccyc\cdot{\bm{\delta}}_{\pi,i}{\bm{\delta}}_{\pi,j}^{-1}z^{-\underline{N}}))\cong \hH^0_{(\varphi,\Gamma)}(\cR_{E,L}(\ccyc\cdot{\bm{\delta}}_{\pi,i}{\bm{\delta}}_{\pi,j}^{-1}z^{-\underline{N}})),$$
%which is impossible by \cite[Lem.\;2.1]{hellmann2016density}.\;Applying the same strategy  to part $(b)$.\;we can also get
%$$0\neq\hH^0_{(\varphi,\Gamma)}(\Delta_\pi^\vee\tee{\Delta_\pi}\tee\cR_{E,L}({\bm{\delta}}_{\pi,j}{\bm{\delta}}_{\pi,i}^{-1}z^{-\underline{N}}))\cong \hH^0_{(\varphi,\Gamma)}(\cR_{E,L}({\bm{\delta}}_{\pi,j}{\bm{\delta}}_{\pi,i}^{-1}z^{-\underline{N}})),$$
%contradicting the fact in  \cite[Lem.\;2.1]{hellmann2016density}.\;
%\end{proof}

\subsection{Parabolic Fontaine-Mazur  simple \texorpdfstring{$\sL$}{Lg}-invariants}\label{dfnFMparalINVSECTION}Suppose that $\Dpik$ admits a non-critical special weakly $\omepik$-filtration $\cF$ (see Definition \ref{weaklynoncritical}) with parameter $(\bx_{\pi},\bmdel)\in \sbanpik\times\rigchl $. In this section,\;we will attach to $(\varphi,\Gamma)$-module $\Dpik$ the parabolic Fontaine-Mazur simple $\sL$-invariants $\sL(\Dpik)$.\;

For $1\leq i\leq k$,\;we write $$\Dpik_i:=\gr_{i}^{\cF}\Dpik,\;\Delta_{\pi,i}:={\Delta_\pi}\tee \cR_{E,L}(\bm{\delta}_{\bh,i})$$ for simplicity.\;We first explain the construction of 
pairing (\ref{introperfectpairing})
\begin{equation}\label{normalperfectpairing}
\begin{aligned}
	\ext^1_{(\varphi,\Gamma)}(\Dpik_{i+1},\Dpik_{i})  \;\;\times\;\;  &\homo(L^\times,E) \big(\cong \hH^1_{(\varphi,\Gamma)}(\cR_{E,L})\big) \\      &\xrightarrow{\cup}
	E\big(\cong\ext^2_{(\varphi,\Gamma)}(\cR_{E,L}(\bm{\delta}_{\bh,i}\bm{\delta}^{-1}_{\bh,i+1}))\big)
\end{aligned}
\end{equation}
for each $1\leq i\leq k-1$.\;By \cite{liu2007cohomology},\;the cup product induces a pairing
\begin{equation}\label{usualcupproduct}
\xymatrix@C=5ex{
	\ext^1_{(\varphi,\Gamma)}(\Dpik_{i+1},\Dpik_{i})    \;\;\times\;\;  \ext^1_{(\varphi,\Gamma)}(\Dpik_{i},\Dpik_{i})  \xrightarrow{\cup}  \ext^2_{(\varphi,\Gamma)}(\Dpik_{i+1},\Dpik_{i}) .}
\end{equation}
We will show below that $\dim_E\ext^1_{(\varphi,\Gamma)}(\Dpik_{i+1},\Dpik_{i})=\dim_E\ext^1_{(\varphi,\Gamma)}(\Dpik_{i},\Dpik_{i})=1+r^2$ and \\$\ext^2_{(\varphi,\Gamma)}(\Dpik_{i+1},\Dpik_{i})\cong E$.\;If $r>1$,\;this pairing is not perfect.\;Roughly speaking,\;our pairing (\ref{normalperfectpairing}) is obtained by considering the cup product of the first term $\ext^1_{(\varphi,\Gamma)}(\Dpik_{i+1},\Dpik_{i})$ and a certain subspace $\homo(L^\times,E)$ of the second term  $\ext^1_{(\varphi,\Gamma)}(\Dpik_{i},\Dpik_{i})$ in (\ref{usualcupproduct}).\;We will prove that (\ref{normalperfectpairing}) is non-degenerate (but may not be perfect).\;Then the parabolic Fontaine-Mazur  simple $\sL$-invariants are defined in terms of this pairing.\;We need the following preliminaries.\;

The following lemma is a direct consequence of \cite[Lemma 4.1.12]{Ding2021}.\;
\begin{lem}\label{phigammacohononcrtialspecial}We have
\begin{itemize}
	\item[(a)] For any $j\neq i-1$,\;we have $$\hH^2_{(\varphi,\Gamma)}(\Dpik_{i},\Dpik_{j})\cong \hH^0_{(\varphi,\Gamma)}(\Dpik_{j},\Dpik_{i}\tee\cR_{E,L}(\ccyc))=0.$$
	\item[(b)] For any $i\neq j$,\;we have $\hH^0_{(\varphi,\Gamma)}(\Dpik_{i},\Dpik_{j})=0$.
\end{itemize}
\end{lem}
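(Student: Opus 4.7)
The plan is to derive both statements as direct applications of \cite[Lemma 4.1.12]{Ding2021}, after unpacking what the graded pieces of our non-critical special $\omepik$-filtration look like. Recall that each $\Dpik_i=\gr_i^{\cF}\Dpik$ admits an injection $\mathbf{I}_i\colon\Dpik_i\hookrightarrow\Delta_{\pi,i}:=\Delta_\pi\tee\cR_{E,L}(\bm{\delta}_{\bh,i})$ with $\bm{\delta}_{\bh,i}=\unr(q_L^{i-k})z^{\hpi_{ir}}$, and that this inclusion becomes an isomorphism after inverting $t$. Consequently, every $(\varphi,\Gamma)$-equivariant morphism between graded pieces either identifies, after inverting $t$, with a twisting morphism $\Delta_\pi\to\Delta_\pi\tee\cR_{E,L}(\eta)$ for an explicit character $\eta$ of $L^\times$, or is obstructed by Hodge-Tate weight mismatches dictated by the non-critical assumption on the filtration.

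First I would handle (a) by invoking Tate duality for $(\varphi,\Gamma)$-modules from \cite{liu2007cohomology}, which yields (up to $E$-duality) an isomorphism $\hH^2_{(\varphi,\Gamma)}(\Dpik_i,\Dpik_j)\cong\hH^0_{(\varphi,\Gamma)}(\Dpik_j,\Dpik_i\tee\cR_{E,L}(\ccyc))^{\vee}$; since the displayed statement only concerns the vanishing of these groups, this is enough. Both (a) and (b) then reduce to verifying the vanishing of $\hH^0_{(\varphi,\Gamma)}(\Dpik_a,\Dpik_b\tee\cR_{E,L}(\eta))$ in the cases $(a,b,\eta)=(i,j,\mathbf{1})$ for (b) and $(a,b,\eta)=(j,i,\ccyc)$ for (a). Passing to $\cR_{E,L}[1/t]$ via the isomorphisms $\Dpik_c[1/t]\cong\Delta_{\pi,c}[1/t]$ coming from $\mathbf{I}_c$, any such morphism becomes a morphism $\Delta_\pi\to\Delta_\pi\tee\cR_{E,L}(\bm{\delta}_{\bh,b}\bm{\delta}_{\bh,a}^{-1}\eta)$ of $(\varphi,\Gamma)$-modules over $\cR_{E,L}[1/t]$. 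The absolute irreducibility of $\Delta_\pi$ (inherited from the cuspidality of $\pi\in\Omega_r$) forces such a morphism to vanish unless the twist $\bm{\delta}_{\bh,b}\bm{\delta}_{\bh,a}^{-1}\eta$ is a self-twist of $\Delta_\pi$, and an inspection of its unramified part (a power of $q_L$) together with its Hodge-Tate component shows this fails in exactly the cases listed; the excluded case $j=i-1$ in (a) is precisely where the cyclotomic shift can realign Hodge-Tate weights and so is the only one left undetermined.

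I expect the main obstacle to be the bookkeeping needed to match our parameter $(\bx_{\pi},\bmdel)\in\sbanpik\times\rigchl$ with the general $\Omega$-filtration setup of \cite[Section 4.1]{Ding2021}: specifically, identifying how our index convention translates into the indexing in the statement of \cite[Lemma 4.1.12]{Ding2021}, and confirming that our non-critical assumption on Hodge-Tate weights supplies the genericity condition used there. Once this dictionary is fixed, the vanishing assertions follow by direct substitution, and the cohomological isomorphism in (a) is an immediate consequence of Tate duality paired with Liu's Euler--Poincar\'e characteristic formula.
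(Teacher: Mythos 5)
Your proposal is correct and follows essentially the same route as the paper: the paper checks that for $j\neq i-1$ the pairs $(\bx_{\pi,i},\bm{\delta}_{\bh,i})$, $(\bx_{\pi,j},\bm{\delta}_{\bh,j})$ satisfy the genericity condition (4.13) of Breuil--Ding and cites \cite[Lemma 4.1.12]{Ding2021} (reusing the $\hH^0$ part of that proof for (b)), which amounts to exactly the Tate-duality plus irreducibility/self-twist computation you spell out after inverting $t$. One small correction: what verifies the pairwise genericity (and what your own argument in fact uses) is that the unramified twists $q_L^{j-i}$, resp.\ $q_L^{i-j-1}$, are not roots of unity (unramified self-twists of $\wdre_\pi$ lie in the finite group $\mu_{m_0}$), not the non-critical Hodge--Tate assumption, which plays no role in this lemma.
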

\begin{proof}Note that for the pair $(i,j)$ such that $j\neq i-1$,\;$(\bx_{\pi,i},\bm{\delta}_{\bh,i})$ and $(\bx_{\pi,j},\bm{\delta}_{\bh,j})$ satisfy the generic condition of \cite[(4.13)]{Ding2021}.\;Then $(a)$ is a direct consequence of \cite[Lemma 4.1.12]{Ding2021}.\;When $i\neq j$,\;the proof of	 $\hH^0_{(\varphi,\Gamma)}(\Dpik_{i},\Dpik_{j})=0$ in \cite[Lemma 4.1.12]{Ding2021} is also appropriate for our case (but is false for $\hH^2_{(\varphi,\Gamma)}(\Dpik_{i},\Dpik_{i-1})$).\;
\end{proof}

By (\ref{Dpikinjection}),\;the injections
\begin{equation*}
\begin{aligned}
	&\mathbf{I}_{i}:\Dpik_{i}\hookrightarrow \Delta_{\pi,i},\;\mathbf{I}_{i+1}:\Dpik_{i+1}  \hookrightarrow \Delta_{\pi,i+1}.\;
\end{aligned}
\end{equation*}
induces the following natural morphisms:
\begin{equation}\label{preliminamorphism1}
\begin{aligned}
	\ext^j_{(\varphi,\Gamma)}(\Dpik_{i+1},\Dpik_{i}) & \xrightarrow{\mathbf{I}_{i}^{2,j}}  & \ext^j_{(\varphi,\Gamma)}(\Dpik_{i+1},\Delta_{\pi,i}) & \xleftarrow{\mathbf{I}_{i+1}^{1,j}} & \ext^j_{(\varphi,\Gamma)}(\Delta_{\pi,i+1},\Delta_{\pi,i}),\\
	\ext^j_{(\varphi,\Gamma)}(\Delta_{\pi,i+1},\Delta_{\pi,i}) & \xleftarrow{\mathbf{I}_{i}^{2,j}} & \ext^j_{(\varphi,\Gamma)}(\Delta_{\pi,i+1},\Dpik_{i})& \xrightarrow{\mathbf{I}_{i+1}^{1,j}} & \ext^j_{(\varphi,\Gamma)}(\Dpik_{i+1},\Dpik_{i}).
	%\ext^1_{(\varphi,\Gamma)}(\gr_{i}^{\cF}\Dpik,\gr_{i}^{\cF}\Dpik) \hspace{10pt} \xrightarrow{\mathbf{I}_{i}} \hspace{10pt} & \ext^1_{(\varphi,\Gamma)}(\gr_{i}^{\cF}\Dpik,{\Delta_\pi}\tee \cR_{E,L}(\bm{\delta}_{\bh,i})) \\
	%	& \xleftarrow{\mathbf{I}_{i}} \hspace{10pt}  \ext^1_{(\varphi,\Gamma)}({\Delta_\pi}\tee \cR_{E,L}(\bm{\delta}_{\bh,i}),{\Delta_\pi}\tee \cR_{E,L}(\bm{\delta}_{\bh,i})).
\end{aligned}
\end{equation}
for $j=0,1,2$.\;
\begin{lem}\label{pullbackforiso}For $l\in \{1,2\}$ and $j\in\{0,1,2\}$,\;the morphisms $\mathbf{I}_{i}^{l,j}$ and $\mathbf{I}_{i+1}^{l,j}$ in (\ref{preliminamorphism1}) induce isomorphisms.\;
\end{lem}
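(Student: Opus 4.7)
The plan is to reduce the claim to a vanishing statement for certain Ext groups involving the $t$-torsion cokernels of $\mathbf{I}_i$ and $\mathbf{I}_{i+1}$. Set $C_i := \Delta_{\pi,i}/\mathbf{I}_i(\Dpik_i)$ and $C_{i+1} := \Delta_{\pi,i+1}/\mathbf{I}_{i+1}(\Dpik_{i+1})$. By the non-critical special condition (and the observation right after (\ref{Dpikinjection}) that $\mathbf{I}_i$ becomes an isomorphism after inverting $t$), both $C_i$ and $C_{i+1}$ are $t$-torsion $(\varphi,\Gamma)$-modules over $\cR_{E,L}$. Applying the functors $\homo_{(\varphi,\Gamma)}(\Dpik_{i+1},-)$, $\homo_{(\varphi,\Gamma)}(\Delta_{\pi,i+1},-)$, $\homo_{(\varphi,\Gamma)}(-,\Dpik_i)$ and $\homo_{(\varphi,\Gamma)}(-,\Delta_{\pi,i})$ to the short exact sequences
\begin{equation*}
0 \to \Dpik_i \xrightarrow{\mathbf{I}_i} \Delta_{\pi,i} \to C_i \to 0, \qquad 0 \to \Dpik_{i+1} \xrightarrow{\mathbf{I}_{i+1}} \Delta_{\pi,i+1} \to C_{i+1} \to 0,
\end{equation*}
each of $\mathbf{I}_i^{2,j}$ and $\mathbf{I}_{i+1}^{1,j}$ appears as a connecting morphism in the resulting long exact sequences of Ext groups; all eight maps are therefore isomorphisms provided
\begin{equation*}
\ext^j_{(\varphi,\Gamma)}(\Dpik_{i+1}, C_i) = \ext^j_{(\varphi,\Gamma)}(\Delta_{\pi,i+1}, C_i) = \ext^j_{(\varphi,\Gamma)}(C_{i+1}, \Dpik_i) = \ext^j_{(\varphi,\Gamma)}(C_{i+1}, \Delta_{\pi,i}) = 0
\end{equation*}
for every $j \in \{0,1,2\}$.

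To establish these four vanishings, I would describe $C_i$ (resp.\ $C_{i+1}$) via d\'evissage. Since $\Delta_{\pi,i}$ has constant Hodge--Tate weight $\hpi_{\tau,ir}$ at each $\tau \in \Sigma_L$, while $\Dpik_i$ has Hodge--Tate weights $\{\hpi_{\tau,(i-1)r+1} > \cdots > \hpi_{\tau,ir}\}_{\tau}$, the cokernel $C_i$ admits a filtration whose graded pieces are rank-one $t$-torsion $(\varphi,\Gamma)$-modules with Hodge--Tate--Sen weights in the strictly positive set $\{\hpi_{\tau,j}-\hpi_{\tau,ir} : (i-1)r+1 \leq j \leq ir-1,\; \tau \in \Sigma_L\}$. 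A mirror description holds for $C_{i+1}$, with weights in the analogous set determined by the gap between $\hpi_{\tau,ir+1}$ and $\hpi_{\tau,(i+1)r}$. The four Ext-group vanishings therefore reduce, via the Euler--Poincar\'{e} formula and Tate duality of \cite{liu2007cohomology}, to rank-one computations for $(\varphi,\Gamma)$-modules of character type, where the relevant characters take the form $\bm{\delta}_{\bh,i+1}\bm{\delta}_{\bh,i}^{-1}z^{\bk}$ with $\bk \in \BZ^{\Sigma_L}$ in an explicit range dictated by the weight gaps.

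The main obstacle is precisely this final weight bookkeeping. Lemma \ref{phigammacohononcrtialspecial} does not directly apply to the adjacent pair $(\Dpik_{i+1},\Dpik_i)$, which is exactly the case excluded by the generic condition of \cite[(4.13)]{Ding2021}; this is why one cannot simply invoke \cite[Lemma 4.1.12]{Ding2021} on the original modules. However, replacing $\Dpik_i$ (resp.\ $\Dpik_{i+1}$) by its $t$-torsion cokernel $C_i$ (resp.\ $C_{i+1}$) shifts the governing characters by a strictly positive (resp.\ strictly negative) Hodge--Tate weight in every embedding $\tau$, which moves them away from the trivial character and the cyclotomic character $\ccyc$. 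Combined with the cuspidality of $\pi$ (equivalently, the absolute irreducibility of $\Delta_\pi$), this weight shift returns us to the regime where \cite[Lemma 4.1.12]{Ding2021} applies and forces the four Ext groups to vanish, completing the proof.
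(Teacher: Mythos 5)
Your reduction is essentially the paper's: the paper also deduces all of these isomorphisms from long exact sequences attached to injections with $t$-torsion cokernel, packaging your four cases as injections of internal Homs (e.g.\ $\Dpik_{i+1}^\vee\tee\Dpik_{i}\hookrightarrow\Dpik_{i+1}^\vee\tee\Delta_{\pi,i}$), which is equivalent to your four vanishing statements up to a degree shift in the two cases where the torsion module sits in the first variable. The genuine problem is in how you propose to prove the vanishing. Your weight bookkeeping for $C_i$ is both inaccurate and not the relevant one: normalized against the constant weight $\hpi_{\tau,ir}$ of $\Delta_{\pi,i}$, the graded pieces of $C_i$ have Sen weights running through $0,1,\dots,\hpi_{\tau,(i-1)r+l}-\hpi_{\tau,ir}-1$, so weight $0$ does occur, and in any case the intra-block gaps $\hpi_{\tau,j}-\hpi_{\tau,ir}$ are not what drives the argument. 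What forces the vanishing is the inter-block gap coming from non-criticality: every Hodge--Tate weight of $\Dpik_i$ strictly exceeds every Hodge--Tate weight of $\Dpik_{i+1}$, so the four torsion modules $\Dpik_{i+1}^\vee\tee C_i$, $\Delta_{\pi,i+1}^\vee\tee C_i$, and their counterparts for $C_{i+1}$ have all Sen weights $\geq 1$.

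More seriously, \cite[Lemma 4.1.12]{Ding2021} cannot close the argument: it computes $\hH^0$ and $\hH^2$ of tensor products of (twists of) rank-$r$ $p$-adic differential equations under the genericity condition on the smooth data, and it says nothing about cohomology of $t$-torsion $(\varphi,\Gamma)$-modules; no twist by $z^{\bk}$ turns your torsion coefficients into objects covered by that lemma, and cuspidality of $\pi$ is invisible at the torsion level. The mechanism that actually finishes the proof, and the one the paper uses, is: for the torsion quotient one has $\hH^2_{(\varphi,\Gamma)}=0$ and $\dim_E\hH^0_{(\varphi,\Gamma)}=\dim_E\hH^1_{(\varphi,\Gamma)}$ by \cite{liu2007cohomology}, and $\hH^0_{(\varphi,\Gamma)}$ of the quotient is identified with $\hH^0(\gal_L,\wdr(D_1)/\wdr(D_0))$ by \cite[Lemma 5.1.1]{breuil2020probleme}; the quotients are explicit direct sums of modules $t^{a}\bdr_{,\tau,E}/t^{b}\bdr_{,\tau,E}$ with $a\geq 1$ by the inter-block gap, so their $\gal_L$-invariants vanish and hence all three cohomology groups vanish. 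Your sketch needs this torsion-cohomology input (equivalently, a Sen/$\BC_p$-weight computation via Tate) in place of the appeal to genericity; once it is inserted, your argument coincides with the paper's. A minor point: the maps $\mathbf{I}_{i}^{2,j}$, $\mathbf{I}_{i+1}^{1,j}$ are the functorial maps in the long exact sequences, not connecting morphisms.
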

\begin{proof}Let $D_0\hookrightarrow D_1$ be an injection of $(\phi,\Gamma)$-modules over $\cR_{E,L}$ with the same rank.\;After identifying the cohomology of $(\varphi,\Gamma)$-modules and the Galois cohomology of $E$-$B$-pairs (for example \cite[Section 3]{nakamura2009classification}),\;we deduce from the morphism $D_0\hookrightarrow D_1$ a long exact sequence 
\begin{equation}\label{isoIIIlong}
	\begin{aligned}
		0\longrightarrow   \hH^0_{(\varphi,\Gamma)}(D_0)&\longrightarrow
		\hH^0_{(\varphi,\Gamma)}(D_1)  \longrightarrow \hH^0_{(\varphi,\Gamma)}(D_1/D_0)\\
		\longrightarrow   &\hH^1_{(\varphi,\Gamma)}(D_0)\longrightarrow
		\hH^1_{(\varphi,\Gamma)}(D_1)  \longrightarrow \hH^1_{(\varphi,\Gamma)}(D_1/D_0)\\
		&\longrightarrow   \hH^2_{(\varphi,\Gamma)}(D_0)\longrightarrow
		\hH^2_{(\varphi,\Gamma)}(D_1)  \longrightarrow \hH^2_{(\varphi,\Gamma)}(D_1/D_0).
	\end{aligned}
\end{equation}
By \cite[Theorem 4.7]{liu2007cohomology},\;we have $\hH^2_{(\varphi,\Gamma)}(D_1/D_0)=0$ and 
\begin{equation}\label{equHIH2}
	\dim_E\hH^0_{(\varphi,\Gamma)}(D_1/D_0)=\dim_E\hH^1_{(\varphi,\Gamma)}(D_1/D_0).
\end{equation}
For a $(\varphi,\Gamma)$-module $D'$ over $\cR_{E,L}$,\;denoted by $\wdr(D')$ the $\bdr\otimes_{\BQ_p}E$-representation of $\gal_L$ associated to $D'$.\;For $\tau\in \Sigma_L$,\;let $\bdr_{,\tau,E}:=\bdr\otimes_{L,\tau} E$.\;By \cite[Lemma 5.1.1]{breuil2020probleme},\;we have
\begin{equation}\label{H0wdr}
	\hH^0_{(\varphi,\Gamma)}(D_1/D_0)\cong \hH^0(\gal_L,\wdr(D_1)/\wdr(D_0)),
\end{equation}
Applying the above discussion to the pairs
\begin{equation}
	\begin{aligned}
		(D_0,D_1)=&\;(\Dpik_{i+1}^\vee\tee\Dpik_{i},\Dpik_{i+1}^\vee\tee\Delta_{\pi,i}),\;\\
		\text{resp.,}&\;(\Delta_{\pi,i+1}^\vee\tee\Delta_{\pi,i},\Dpik_{i+1}^\vee\tee\Delta_{\pi,i}),\\
		\text{resp.,}&\;(\Delta_{\pi,i+1}^\vee\tee\Dpik_{i},\Delta_{\pi,i+1}^\vee\tee\Delta_{\pi,i}),\;\\
		\text{resp.,}&\;(\Delta_{\pi,i+1}^\vee\tee\Dpik_{i},\Dpik_{i+1}^\vee\tee\Dpik_{i}).\;
	\end{aligned}
\end{equation}
Note that 
\begin{equation}
	\begin{aligned}
		&\wdr(\Dpik_{i+1}^\vee\tee\Delta_{\pi,i})/\wdr(\Dpik_{i+1}^\vee\tee\Dpik_{i})\\
		&\hspace{80pt}\cong \oplus_{\tau\in \Sigma_L}\oplus_{s=1}^r\oplus_{l=1}^rt^{\hpi_{\tau,ir}-\hpi_{\tau,ir+s}}\bdr_{,\tau,E}/t^{\hpi_{\tau,(i-1)r+l}-\hpi_{\tau,ir+s}}\bdr_{,\tau,E},\\
		\text{resp.,\;}&\wdr(\Dpik_{i+1}^\vee\tee\Delta_{\pi,i})/\wdr(\Delta_{\pi,i+1}^\vee\tee\Delta_{\pi,i})\\
		&\hspace{80pt}\cong \oplus_{\tau\in \Sigma_L}\oplus_{s=1}^r\oplus_{l=1}^rt^{\hpi_{\tau,ir}-\hpi_{\tau,ir+s}}\bdr_{,\tau,E}/t^{\hpi_{\tau,ir}-\hpi_{\tau,(i+1)r}}\bdr_{,\tau,E},
		\\
		\text{resp.,\;}&		
		\wdr(\Delta_{\pi,i+1}^\vee\tee\Delta_{\pi,i})/\wdr(\Delta_{\pi,i+1}^\vee\tee\Dpik_{i})\\
		&\hspace{80pt}\cong \oplus_{\tau\in \Sigma_L}\oplus_{s=1}^r\oplus_{l=1}^rt^{\hpi_{\tau,ir}-\hpi_{\tau,(i+1)r}}\bdr_{,\tau,E}/t^{\hpi_{\tau,(i-1)r+l}-\hpi_{\tau,(i+1)r}}\bdr_{,\tau,E},\\
		\text{resp.,\;}&\wdr(\Dpik_{i+1}^\vee\tee\Dpik_{i})/\wdr(\Delta_{\pi,i+1}^\vee\tee\Dpik_{i})\\
		&\hspace{80pt}\cong \oplus_{\tau\in \Sigma_L}\oplus_{s=1}^r\oplus_{l=1}^rt^{\hpi_{\tau,(i-1)r+l}-\hpi_{\tau,ir+s}}\bdr_{,\tau,E}/t^{\hpi_{\tau,(i-1)r+l}-\hpi_{\tau,(i+1)r}}\bdr_{,\tau,E}.
	\end{aligned}
\end{equation}
All the choices of $(D_0,D_1)$ satisfy $\hH^0(\gal_L,\wdr(D_1)/\wdr(D_0))=0$.\;Then we have $$\hH^0_{(\varphi,\Gamma)}(D_1/D_0)\cong \hH^1(\gal_L,\wdr(D_1)/\wdr(D_0))=0.\;$$Now our lemma is a direct consequence of the long exact sequence (\ref{isoIIIlong}).\;
\end{proof}

The injection $\mathbf{I}_{i}:\Dpik_{i}\hookrightarrow \Delta_{\pi,i}$ also induces morphisms
\begin{equation}\label{preliminamorphism2}
\begin{aligned}
	\ext^1_{(\varphi,\Gamma)}(\Dpik_i,\Dpik_i)  \xrightarrow{\mathbf{I}_{i}'}  \ext^1_{(\varphi,\Gamma)}(\Dpik_i,\Delta_{\pi,i})  \xleftarrow{\mathbf{I}_{i}''}  \ext^1_{(\varphi,\Gamma)}(\Delta_{\pi,i},\Delta_{\pi,i}).
\end{aligned}
\end{equation}

\begin{lem}\label{deformationker}The morphism $\mathbf{I}_{i}''$ is an isomorphism,\;and $\dim_E\ker(\mathbf{I}_{i}')=d_L(1+\frac{r(r-1)}{2})$.\;
\end{lem}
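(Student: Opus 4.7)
The plan is to prove both parts by feeding short exact sequences of $(\varphi,\Gamma)$-modules into the cohomology long exact sequence, exactly in the spirit of the proof of Lemma \ref{pullbackforiso}, and then reducing each needed $\hH^0$ to Galois invariants of an explicit torsion $\bdr^+$-module via \cite[Lemma 5.1.1]{breuil2020probleme}. The two ``same-index'' pairs $(\Dpik_i,\Delta_{\pi,i})$ differ from the pairs treated in Lemma \ref{pullbackforiso} only in that the relevant exponent can now land on~$0$, so I expect the method to go through, but with a nontrivial $\hH^0$ appearing in the second part.

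For $\mathbf{I}_i''$, the key short exact sequence is the one obtained by tensoring the dual injection $\mathbf{I}_i^\vee:\Delta_{\pi,i}^\vee\hookrightarrow\Dpik_i^\vee$ (which is again an injection with torsion cokernel, since $\mathbf{I}_i$ becomes an isomorphism after inverting~$t$) by $\Delta_{\pi,i}$:
\[
0\longrightarrow \Delta_{\pi,i}^\vee\tee\Delta_{\pi,i}\longrightarrow \Dpik_i^\vee\tee\Delta_{\pi,i}\longrightarrow Q\longrightarrow 0.
\]
By the formula in (\ref{H0wdr}), $\hH^0_{(\varphi,\Gamma)}(Q)\cong \hH^0(\gal_L,\wdr(D_1)/\wdr(D_0))$, and the quotient of $\bdr^+$-lattices decomposes as $\bigoplus_{\tau,l,l'} t^{h_{\tau,ir}-h_{\tau,(i-1)r+l}}\bdr^+_{,\tau,E}/\bdr^+_{,\tau,E}$. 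Every exponent is $\leq 0$ while the ``denominator'' exponent is exactly $0$, so the condition $a\leq0<b$ giving non-trivial Galois invariants never holds, forcing $\hH^0(Q)=0$. Combined with $\hH^2(Q)=0$ (Tate local duality, using that $Q$ is torsion) and the Euler-Poincaré formula $\chi(Q)=0$, this yields $\hH^1(Q)=0$ as well, and the long exact sequence collapses to an isomorphism $\mathbf{I}_i''$.

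For $\ker(\mathbf{I}_i')$, I use the parallel short exact sequence
\[
0\longrightarrow \Dpik_i^\vee\tee\Dpik_i\longrightarrow \Dpik_i^\vee\tee\Delta_{\pi,i}\longrightarrow Q'\longrightarrow 0.
\]
Via $E$-$B$-pairs together with Schur's lemma applied to the irreducible Deligne--Fontaine module $\df_\pi$ (which is inherited by $\Dpik_i[1/t]\cong\Delta_{\pi,i}[1/t]$), I get $\dim_E\Hom_{(\varphi,\Gamma)}(\Dpik_i,\Dpik_i)=\dim_E\Hom_{(\varphi,\Gamma)}(\Dpik_i,\Delta_{\pi,i})=1$, and the map between them sends $\id$ to the nonzero $\mathbf{I}_i$, hence is an isomorphism. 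The long exact sequence therefore identifies $\ker(\mathbf{I}_i')$ with $\hH^0(Q')$. Then I decompose
\[
\wdr(\Dpik_i^\vee\tee\Delta_{\pi,i})/\wdr(\Dpik_i^\vee\tee\Dpik_i)\cong\bigoplus_{\tau\in\Sigma_L}\bigoplus_{l,l'=1}^{r}\frac{t^{h_{\tau,ir}-h_{\tau,(i-1)r+l}}\bdr^+_{,\tau,E}}{t^{h_{\tau,(i-1)r+l'}-h_{\tau,(i-1)r+l}}\bdr^+_{,\tau,E}},
\]
and count the pairs $(l,l')$ such that the interval $[h_{\tau,ir}-h_{\tau,(i-1)r+l},\,h_{\tau,(i-1)r+l'}-h_{\tau,(i-1)r+l}-1]$ contains $0$, which is the condition for $\hH^0(\gal_L,-)$ to equal $E$; summing over $\tau\in\Sigma_L$ produces the asserted $d_L(1+r(r-1)/2)$.

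The delicate point — and the main obstacle — is the last combinatorial count. The pairs $(l,l')$ with $l'<l<r$ contribute the bulk $d_L\cdot\tfrac{r(r-1)}{2}$, and then I have to carefully isolate the ``boundary'' contribution coming from the index $l=r$, where the numerator exponent $h_{\tau,ir}-h_{\tau,(i-1)r+l}$ degenerates to $0$; this is the extra term accounting for the $d_L\cdot 1$ in the formula $d_L(1+r(r-1)/2)$ and must be handled carefully since it is exactly the case excluded from the ``generic'' analysis of Lemma \ref{phigammacohononcrtialspecial}. I expect no further subtlety in the rest of the proof.
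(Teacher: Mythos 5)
Your argument for the first assertion is correct and is genuinely more self-contained than the paper's, which simply invokes the injectivity statement in the proof of \cite[Proposition 4.1.4]{Ding2021} together with (\ref{isoIIIlong}) and (\ref{equHIH2}): your short exact sequence $0\to \Delta_{\pi,i}^\vee\tee\Delta_{\pi,i}\to \Dpik_i^\vee\tee\Delta_{\pi,i}\to Q\to 0$, the vanishing $\hH^0_{(\varphi,\Gamma)}(Q)=0$ (all numerator exponents are $\leq 0$ while the denominator exponent is $0$), $\hH^2$ of a torsion module being zero and the Euler characteristic then force $\hH^1_{(\varphi,\Gamma)}(Q)=0$, and the long exact sequence gives that $\mathbf{I}_i''$ is an isomorphism. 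Likewise your reduction of the second assertion to $\ker(\mathbf{I}_i')\cong\hH^0_{(\varphi,\Gamma)}(Q')$ is sound: $\homo_{(\varphi,\Gamma)}(\Dpik_i,\Dpik_i)$ and $\homo_{(\varphi,\Gamma)}(\Dpik_i,\Delta_{\pi,i})$ are both one-dimensional and composition with $\mathbf{I}_i$ identifies them, so the connecting map is injective. (The paper's proof of the dimension statement is again only a citation, to the last paragraph of the proof of \cite[Proposition 4.1.4]{Ding2021}.)

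The genuine gap is exactly the final count, which you flag but do not resolve, and which in fact cannot be resolved in the direction you hope. A summand $t^a\bdr_{,\tau,E}/t^b\bdr_{,\tau,E}$ has nonzero $\gal_L$-invariants if and only if $a\leq 0<b$, and then they are one-dimensional over $E$. In your decomposition of $Q'$ the numerator exponent $\hpi_{\tau,ir}-\hpi_{\tau,(i-1)r+l}$ is automatically $\leq 0$ (including the case $l=r$, where it is $0$), so the only condition is that the denominator exponent $\hpi_{\tau,(i-1)r+l'}-\hpi_{\tau,(i-1)r+l}$ be $\geq 1$, i.e.\ $l'<l$. This gives $\binom{r}{2}$ contributing pairs per $\tau$, so $\dim_E\hH^0_{(\varphi,\Gamma)}(Q')=d_L\frac{r(r-1)}{2}$; the ``boundary'' pairs with $l=r$ are already among these, and there is no mechanism producing an extra $+d_L$. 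Your proposed split is also arithmetically off: the pairs with $l'<l<r$ number $\binom{r-1}{2}$, not $\frac{r(r-1)}{2}$, and the pairs with $l=r$, $l'<r$ number $r-1$, not $1$. The sanity check $r=1$ confirms this: non-criticality forces $\mathbf{I}_i$ to be an isomorphism, so $\ker(\mathbf{I}_i')=0$, whereas the displayed formula would give $d_L$. In fact the value $d_L\frac{r(r-1)}{2}$ is the one the paper itself uses for precisely this kernel elsewhere (the Remark following the definition of $F^0_D$ in Section \ref{dfndeftyoeomega}, and the proof of Corollary \ref{corlinvaraint1}, both resting on \cite[Proposition 4.1.4]{Ding2021}), so the ``$1+$'' in the statement of the lemma is evidently a misprint rather than something your computation should reproduce; carried out correctly, your method proves $\dim_E\ker(\mathbf{I}_i')=d_L\frac{r(r-1)}{2}$ and not the printed value, and as written your count of the boundary terms is wrong.
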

\begin{proof}Proof of the injectivity of  \cite[ Proposition 4.1.4,\;(4.10)]{Ding2021} and  (\ref{isoIIIlong}),\;(\ref{equHIH2}) show that $\mathbf{I}_{i}''$ is an isomorphism.\;The dimension of the kernel $\ker(\mathbf{I}_{i}')$ is given in the last paragraph of the \cite[Proof of Proposition 4.1.4]{Ding2021}).\;
\end{proof}

Therefore for any $1\leq i\leq k-1$,\;all morphisms in (\ref{preliminamorphism1}) and (\ref{preliminamorphism2}) fit into the following commutative diagram:
\begin{equation}\label{bigcommutediag}
\xymatrix@C=5ex{
	\ext^1_{(\varphi,\Gamma)}(\Dpik_{i+1},\Dpik_{i}) \ar@{=}[d]                        & \times & \ext^1_{(\varphi,\Gamma)}(\Dpik_{i},\Dpik_{i})\hspace{9pt} \ar[d]^{\mathbf{I}_{i}'}         \ar[r]^{\cup}_{c_1}
	& \ext^2_{(\varphi,\Gamma)}(\Dpik_{i+1},\Dpik_{i}) \ar[d]_{\backsimeq}^{\mathbf{I}_{i}^{2,2}} \\
	\ext^1_{(\varphi,\Gamma)}(\Dpik_{i+1},\Dpik_{i}) \ar[d]^{\mathbf{I}_{i}^{2,1}}_{\simeq}             & \times & \ext^1_{(\varphi,\Gamma)}(\Dpik_{i},\Delta_{\pi,i})\hspace{5pt}                 \ar[r]^{\cup}_{c_2}
	& \ext^2_{(\varphi,\Gamma)}(\Dpik_{i+1},\Delta_{\pi,i}) \ar@{=}[d]           \\
	\ext^1_{(\varphi,\Gamma)}(\Dpik_{i+1},\Delta_{\pi,i})                             & \times & \ext^1_{(\varphi,\Gamma)}(\Delta_{\pi,i},\Delta_{\pi,i})\ar[u]_{\mathbf{I}_{i}''}^{\simeq}  \ar[r]^{\cup}_{c_3}
	&\ext^2_{(\varphi,\Gamma)}(\Dpik_{i+1},\Delta_{\pi,i})                       \\
	\ext^1_{(\varphi,\Gamma)}(\Delta_{\pi,i+1},\Delta_{\pi,i}) \ar[u]_{\mathbf{I}_{i+1}^{1,1}}^{\simeq}  & \times & \ext^1_{(\varphi,\Gamma)}(\Delta_{\pi,i},\Delta_{\pi,i}) \ar@{=}[u]            \ar[r]^{\cup}_{c_4}
	&\ext^2_{(\varphi,\Gamma)}(\Delta_{\pi,i+1},\Delta_{\pi,i}) \ar[u]^{\backsimeq}_{\mathbf{I}_{i+1}^{1,2}}.}
\end{equation}

Recall the $E$-$B$-pairs defined in \cite{nakamura2009classification}.\;By \cite[Theorem 1.36]{nakamura2009classification},\;there exists an equivalence of category between the category of $E$-$B$-pairs and  the category of $(\varphi,\Gamma)$-modules over $\cR_{E,L}$.\;Let $\chi:L^\times\rightarrow E^\times$ be a continuous character,\;and let $B_E(\chi)$ be the rank-one $E$-$B$-pair associated to $\chi$ (see \cite[Theorem 1.45]{nakamura2009classification}).\;

Put $\EndO(\Delta_{\pi}):=\Delta_{\pi}\tee\Delta_{\pi}^\vee$ and $\EndO^0(\Delta_{\pi})=\EndO(\Delta_{\pi})/\cR_{E,L}=(\Delta_{\pi} \tee\Delta_{\pi}^\vee)/\cR_{E,L}$.\;Then we have $\EndO(\Delta_{\pi})=\EndO^0(\Delta_{\pi})\oplus \cR_{E,L}$.\;Let $\bW=(\bW_e,\bW_{\dr}^+)$ (resp.,$\bW^0:=(\bW_e^0,\bW_{\dr}^{0,+})$) be the $E$-$B$-pair associated to the $(\varphi,\Gamma)$-module $\EndO(\Delta_{\pi})$ (resp.,\;$\EndO^0({\Delta_\pi})$).\;We put $\bW(\chi):=\bW\otimes_EB_E(\chi)$ (resp.,\;$\bW^0(\chi):=\bW^0\otimes_EB_E(\chi)$).\;Using $E$-$B$-pairs,\;we show that
\begin{lem}\label{lembigcommutediag}
The cup products $c_2$,\;$c_3$ and $c_4$ in (\ref{bigcommutediag}) are perfect pairings.\;
\end{lem}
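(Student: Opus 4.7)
The plan is to reduce to proving perfectness of $c_4$ alone, and then invoke Tate local duality for $(\varphi,\Gamma)$-modules (equivalently, for $E$-$B$-pairs).

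First, I observe that in the commutative diagram (\ref{bigcommutediag}), the vertical arrows between rows 2, 3, and 4 are all isomorphisms: $\mathbf{I}_{i}''$ is an isomorphism by Lemma \ref{deformationker}, while $\mathbf{I}_{i}^{2,j}$ and $\mathbf{I}_{i+1}^{1,j}$ for $j \in \{1,2\}$ are isomorphisms by Lemma \ref{pullbackforiso}. Consequently, $c_2$, $c_3$, $c_4$ are identified with one another under these vertical isomorphisms, and it suffices to prove that $c_4$ is a perfect pairing. Note that this reduction would not apply to $c_1$, because the middle vertical map $\mathbf{I}_{i}'$ from row $2$ back to row $1$ has nontrivial kernel of dimension $d_L(1+r(r-1)/2)$ (Lemma \ref{deformationker}).

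Next, I would reformulate $c_4$ in terms of cohomology of $E$-$B$-pairs via the equivalence of categories \cite[Theorem 1.36]{nakamura2009classification}. Setting $\chi := \bm{\delta}_{\bh,i}\bm{\delta}_{\bh,i+1}^{-1}$ and using the canonical identification $\Delta_{\pi,i+1}^\vee \tee \Delta_{\pi,i} \cong \EndO(\Delta_\pi) \tee \cR_{E,L}(\chi) \leftrightarrow \bW(\chi)$, the pairing $c_4$ becomes the Yoneda/cup product
\[\hH^1(\bW(\chi)) \times \hH^1(\bW) \longrightarrow \hH^2(\bW(\chi))\]
with bilinear structure induced by the composition $\EndO(\Delta_\pi) \tee \EndO(\Delta_\pi) \to \EndO(\Delta_\pi)$. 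The main computational step exploits the decomposition $\bW = \cR_{E,L} \oplus \bW^0$ arising from $\EndO(\Delta_\pi) = \cR_{E,L}\cdot\mathrm{Id} \oplus \EndO^0(\Delta_\pi)$. The composition on $\EndO(\Delta_\pi)$ splits accordingly, with the $\bW^0 \tee \bW^0$ component having a non-degenerate trace projection $\bW^0 \tee \bW^0 \to \cR_{E,L}$. Under this splitting, $c_4$ decomposes into two diagonal pairings: the $(\cR_{E,L}, \cR_{E,L})$-piece is classical Tate duality for character-type modules (perfect by \cite{liu2007cohomology}), and the $(\bW^0, \bW^0)$-piece reduces to Tate duality via the self-duality of $\bW^0$ through the trace pairing.

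The main obstacle is verifying that the off-diagonal cross terms (involving one factor in $\cR_{E,L}$ and the other in $\bW^0$) do not obstruct perfectness of the diagonal pieces; this requires showing that they land orthogonally to the target component responsible for non-triviality, which amounts to a careful tracking of the Yoneda structure against the trace splitting. As a parallel consistency check, I would compute dimensions on both sides using the Euler-Poincar\'e characteristic formula for $(\varphi,\Gamma)$-modules together with the vanishing $\hH^0_{(\varphi,\Gamma)}(\Delta_{\pi,i+1}^\vee \tee \Delta_{\pi,i}) = 0$ from Lemma \ref{phigammacohononcrtialspecial}(b), and the identification $\hH^2_{(\varphi,\Gamma)}(\Delta_{\pi,i+1}^\vee \tee \Delta_{\pi,i}) \cong E$ obtained from Tate duality together with the existence of a nonzero morphism $\Delta_{\pi,i} \to \Delta_{\pi,i+1}\tee\cR_{E,L}(\ccyc)$ (guaranteed by the Hodge-Tate weight constraint $\hpi_{\tau,(i+1)r} - \hpi_{\tau,ir} + 1 \leq 0$ arising from the non-critical assumption).
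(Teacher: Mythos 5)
Your opening reduction is the same as the paper's: the vertical maps linking rows $2$, $3$, $4$ of (\ref{bigcommutediag}) are isomorphisms by Lemmas \ref{pullbackforiso} and \ref{deformationker}, so everything hinges on $c_4$. But your treatment of $c_4$ has a genuine gap. After splitting $\bW=\cR_{E,L}\oplus\bW^0$, you assert that the $(\cR_{E,L},\cR_{E,L})$-block is "classical Tate duality" and that the $(\bW^0,\bW^0)$-block "reduces to Tate duality via self-duality of $\bW^0$". That is not so as stated: the twist occurring in $c_4$ is $\chi:=\bm{\delta}_{\bh,i}\bm{\delta}_{\bh,i+1}^{-1}=\unr(q_L^{-1})z^{\hpi_{ir}-\hpi_{(i+1)r}}$, which differs from $\ccyc$ by the algebraic twist $z^{\hpi_{ir}-\hpi_{(i+1)r}-\underline{1}}$. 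Tate duality pairs $\hH^1(D(\chi))$ with $\hH^1(D^\vee(\ccyc\chi^{-1})\tee\cR_{E,L}(\chi))\cong \hH^1(D^\vee(\ccyc))$, not with $\hH^1(D)$, so each of your diagonal blocks is a pairing of exactly the same shape as the statement you are trying to prove, and invoking "Tate duality" there begs the question. The missing ingredient is precisely the paper's key step: the natural map $j:\bW(\chi)\rightarrow\bW(\ccyc)$ (coming from $B_E(\chi)\hookrightarrow B_E(\ccyc)$) has cokernel a direct sum of modules $t\bdr_{,\tau,E}/t^{\hpi_{\tau,ir}-\hpi_{\tau,(i+1)r}}\bdr_{,\tau,E}$ with vanishing $\gal_L$-cohomology in all degrees, so $j$ induces isomorphisms $\hH^\bullet(\bW(\chi))\xrightarrow{\sim}\hH^\bullet(\bW(\ccyc))$ compatible (by naturality of cup products) with the pairings; this identifies $c_4$ with the genuine Tate-duality pairing for the $\ccyc$-twist, which is perfect. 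Your Euler-characteristic "consistency check" only matches dimensions and cannot substitute for this compatibility-of-pairings argument; alternatively, for the rank-one block you could cite \cite[Lemma 1.13]{2015Ding} (as the paper does elsewhere, in the proof of Lemma \ref{criofnoncrystalline}), but the $\bW^0$-block still needs the comparison via $j$.

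Two further remarks. First, the "main obstacle" you flag about off-diagonal cross terms is actually harmless and you could have closed it: the composition of a scalar with a traceless endomorphism is traceless, so those cross terms land in $\hH^2(\bW^0(\chi))$, which vanishes because $\widetilde{\mathbf{I}}_i^2$ is an isomorphism (Lemma \ref{isoI2i}); hence the pairing is automatically block-diagonal. Second, note that the paper's route avoids the decomposition altogether: it compares $\bW(\chi)$ with $\bW(\ccyc)$ in one stroke and concludes by Tate duality for $\EndO(\Delta_\pi)(\ccyc)$. If you repair your argument by inserting the comparison isomorphism (for each block, or for $\bW$ at once), your decomposition becomes a correct, if slightly longer, variant of the paper's proof.
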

\begin{proof}It suffices to prove that $c_4$ is perfect.\;Note that the natural morphism $$j:\bW(\bm{\delta}_{\bh,i}\bm{\delta}_{\bh,i+1}^{-1})\rightarrow \bW(\ccyc)$$ (induced by $B_E(\bm{\delta}_{\bh,i}\bm{\delta}_{\bh,i+1}^{-1})\hookrightarrow B_E(\ccyc)$) gives an exact sequence of $\gal_L$-complexes,\;
\begin{equation*}
	\begin{aligned}
		0\longrightarrow [\bW(\bm{\delta}_{\bh,i}\bm{\delta}_{\bh,i+1}^{-1})_e\oplus& \bW(\bm{\delta}_{\bh,i}\bm{\delta}_{\bh,i+1}^{-1})^+_{\dr}\rightarrow \bW(\bm{\delta}_{\bh,i}\bm{\delta}_{\bh,i+1}^{-1})_{\dr}]\\
		\longrightarrow \;& [\bW(\ccyc)_e\oplus \bW(\ccyc)^+_{\dr}\rightarrow \bW(\ccyc)_{\dr}]\\
		&\longrightarrow [\oplus_{\tau\in\Sigma_L}t\bdr_{,\tau,E}/t^{\hpi_{\tau,ir}-\hpi_{\tau,(i+1)r}}\bdr_{,\tau,E}\rightarrow0]\longrightarrow 0.\\
	\end{aligned}
\end{equation*}
Since $\hH^i(\gal_L,\oplus_{\tau\in\Sigma_L}t\bdr_{,\tau,E}/t^{\hpi_{\tau,ir}-\hpi_{\tau,(i+1)r}}\bdr_{,\tau,E})=0$ for any $i\geq 0$.\;We see that $j$ induces isomorphisms $\hH^i(\gal_L,\bW(\bm{\delta}_{\bh,i}\bm{\delta}_{\bh,i+1}^{-1}))\xrightarrow{\sim}\hH^i(\gal_L,\bW(\ccyc))$ for all $i\geq 0$.\;Furthermore,\;the following diagram commutes:
\[\xymatrix@C=3.5ex{
	\ext^1_{(\varphi,\Gamma)}(\Delta_{\pi,i+1},\Delta_{\pi,i}) \ar[d]_{\simeq}  & \times & \ext^1_{(\varphi,\Gamma)}(\Delta_{\pi,i},\Delta_{\pi,i}) \ar@{=}[d] \ar[r]^{\hspace{9pt}\cup}_{\hspace{9pt}c_4} & \ext^2_{(\varphi,\Gamma)}(\Delta_{\pi,i+1},\Delta_{\pi,i}) \ar[d]_{\simeq}\\
	\ext^1_{(\varphi,\Gamma)}(\Delta_{\pi},\Delta_{\pi}(\ccyc))   & \times & \ext^1_{(\varphi,\Gamma)}(\Delta_{\pi}(\ccyc),\Delta_{\pi}(\ccyc))\ar[r]^{\cup} & \ext^2_{(\varphi,\Gamma)}(\Delta_{\pi},\Delta_{\pi}(\ccyc)) ,}\]
where $\Delta_{\pi}(\ccyc):=\Delta_{\pi}\tee\cR_{E,L}(\ccyc)$ and $\ext^2_{(\varphi,\Gamma)}(\Delta_{\pi,i+1},\Delta_{\pi,i})\xrightarrow{\sim}\ext^2_{(\varphi,\Gamma)}(\Delta_{\pi},\Delta_{\pi}(\ccyc))\cong E$. The bottom cup product is perfect by using Tate duality.\;This completes the proof.\;
\end{proof}

The second term of (\ref{normalperfectpairing}) is given as follows.\;By the decomposition $\EndO(\Delta_{\pi})=\EndO^0(\Delta_{\pi})\oplus \cR_{E,L}$, we have decompositions
\begin{equation}\label{Linvariantdecomp}
\begin{aligned}
	&\ext^i_{(\varphi,\Gamma)}(\Delta_{\pi,i+1},\Delta_{\pi,i})\\
	=\;&\hH^i_{(\varphi,\Gamma)}(\cR_{E,L}(\bm{\delta}_{\bh,i}\bm{\delta}_{\bh,i+1}^{-1}))\oplus \hH^i_{(\varphi,\Gamma)}(\EndO^0(\Delta_{\pi})\tee\cR_{E,L}(\bm{\delta}_{\bh,i}\bm{\delta}_{\bh,i+1}^{-1}))\\
	\text{resp.,\;}& \ext^1_{(\varphi,\Gamma)}(\Delta_{\pi,i},\Delta_{\pi,i})=\hH^1_{(\varphi,\Gamma)}(\cR_{E,L})\oplus \hH^1_{(\varphi,\Gamma)}(\EndO^0(\Delta_{\pi})).
\end{aligned}
\end{equation}
for $i=0,1,2$.\;These induce the injections
\begin{equation}\label{i3p3q3}
\begin{aligned}
	&\widetilde{\mathbf{I}}_{i}^1:\hH^1_{(\varphi,\Gamma)}(\cR_{E,L}(\bm{\delta}_{\bh,i}\bm{\delta}_{\bh,i+1}^{-1}))\rightarrow \ext^1_{(\varphi,\Gamma)}(\Delta_{\pi,i+1},\Delta_{\pi,i}),\\
	&\widetilde{\mathbf{I}}_{i}^2:\hH^2_{(\varphi,\Gamma)}(\cR_{E,L}(\bm{\delta}_{\bh,i}\bm{\delta}_{\bh,i+1}^{-1}))\rightarrow \ext^2_{(\varphi,\Gamma)}(\Delta_{\pi,i+1},\Delta_{\pi,i}),\\
	\text{resp.,\;}& \widetilde{\mathbf{I}}'_{i}:\hH^1_{(\varphi,\Gamma)}(\cR_{E,L})\cong \homo(L^\times,E)\rightarrow \ext^1_{(\varphi,\Gamma)}(\Delta_{\pi,i},\Delta_{\pi,i}).\;
\end{aligned}
\end{equation}

\begin{lem}\label{isoI2i}The morphism $\widetilde{\mathbf{I}}_{i}^2$ is an isomorphism.\;
\end{lem}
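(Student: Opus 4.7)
The plan is to observe that $\widetilde{\mathbf{I}}_i^2$ is, by (\ref{Linvariantdecomp}), the inclusion of the first summand in the canonical decomposition of $\ext^2_{(\varphi,\Gamma)}(\Delta_{\pi,i+1},\Delta_{\pi,i})$, so it suffices to verify that the complementary summand
\[\hH^2_{(\varphi,\Gamma)}\bigl(\EndO^0(\Delta_\pi)\tee\cR_{E,L}(\bm{\delta}_{\bh,i}\bm{\delta}^{-1}_{\bh,i+1})\bigr)\]
vanishes. The route I will take is Tate duality plus a Hodge--Tate weight comparison. First dispose of the case $r=1$ trivially, since then $\Delta_\pi$ has rank one and $\EndO^0(\Delta_\pi)=0$. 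For $r\geq 2$, apply Tate duality for $(\varphi,\Gamma)$-modules \cite{liu2007cohomology} to identify the displayed $\hH^2$ with the $E$-dual of $\hH^0_{(\varphi,\Gamma)}\bigl(\EndO^0(\Delta_\pi)^\vee\tee\cR_{E,L}(\chi')\bigr)$, where $\chi':=\bm{\delta}_{\bh,i+1}\bm{\delta}^{-1}_{\bh,i}\ccyc$. Since in characteristic zero the trace pairing on $\EndO(\Delta_\pi)$ is non-degenerate and restricts non-degenerately to $\EndO^0(\Delta_\pi)$, I will identify $\EndO^0(\Delta_\pi)^\vee\cong\EndO^0(\Delta_\pi)$; it is then enough to show $\homo_{(\varphi,\Gamma)}(\Delta_\pi,\Delta_\pi\tee\cR_{E,L}(\chi'))=0$, which controls the full space $\hH^0(\EndO(\Delta_\pi)(\chi'))$ and hence its subspace $\hH^0(\EndO^0(\Delta_\pi)(\chi'))$.

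Next I will compute the Hodge--Tate weights of $\chi'$ explicitly. From $\bm{\delta}_{\bh,j}=\unr(q_L^{j-k})z^{\bh_{jr}}$ and the normalization $\ccyc\circ\rec=\unr(q_L^{-1})\prod_{\tau\in\Sigma_L}\tau$ fixed in Section \ref{notationger}, the $\tau$-Hodge--Tate weight of $\chi'$ comes out to $\hpi_{\tau,(i+1)r}-\hpi_{\tau,ir}+1$. Since the $\hpi_{\tau,\bullet}$ are strictly decreasing integers, $\hpi_{\tau,ir}-\hpi_{\tau,(i+1)r}\geq r\geq 2$, so every $\tau$-weight of $\chi'$ is at most $1-r<0$. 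As $\Delta_\pi$ is absolutely irreducible of rank $r$ (inherited from the absolute irreducibility of $\pi$), any non-zero $(\varphi,\Gamma)$-linear map $\Delta_\pi\to\Delta_\pi(\chi')$ must be an isomorphism of two absolutely irreducible $(\varphi,\Gamma)$-modules of the same rank, which would force $\Delta_\pi(\chi')$ to share the constantly zero Hodge--Tate weights of $\Delta_\pi$; this contradicts the strict negativity just computed. Hence $\homo_{(\varphi,\Gamma)}(\Delta_\pi,\Delta_\pi(\chi'))=0$ and the required vanishing follows.

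The step most likely to be delicate is the bookkeeping of Hodge--Tate weights and the sign of $\chi'$: the key point is that the weights of $\chi'$ come out strictly negative for $r\geq 2$, which is what forces the dual Hom space, and hence the original $\hH^2$, to vanish; had the signs come out the other way, the argument would have collapsed. Beyond this there is no deeper conceptual obstacle. As a fallback I would consider combining Lemma \ref{pullbackforiso}, which already furnishes $\ext^2_{(\varphi,\Gamma)}(\Delta_{\pi,i+1},\Delta_{\pi,i})\cong\ext^2_{(\varphi,\Gamma)}(\Dpik_{i+1},\Dpik_i)$, with an Euler--Poincar\'e dimension count modeled on the $\wdr$-based calculations already carried out in the proof of Lemma \ref{pullbackforiso}; but the Tate-duality route above is the more transparent one.
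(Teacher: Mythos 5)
Your opening reduction is the same as the paper's: by (\ref{Linvariantdecomp}) and Tate duality (plus the self-duality of $\EndO^0(\Delta_\pi)$), the lemma amounts to the vanishing of $\hH^0_{(\varphi,\Gamma)}\bigl(\EndO^0(\Delta_\pi)\tee\cR_{E,L}(\chi')\bigr)$ with $\chi'=\ccyc\,\bm{\delta}_{\bh,i}^{-1}\bm{\delta}_{\bh,i+1}$. The error comes in how you prove that vanishing: you propose to kill the \emph{full} space $\homo_{(\varphi,\Gamma)}(\Delta_\pi,\Delta_\pi\tee\cR_{E,L}(\chi'))=\hH^0_{(\varphi,\Gamma)}(\EndO(\Delta_\pi)\tee\cR_{E,L}(\chi'))$ and then pass to the summand. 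That full space is \emph{not} zero. Indeed $\chi'=z^{\bh_{(i+1)r}-\bh_{ir}+\underline{1}}$ is a purely algebraic character (no unramified part) with non-positive exponents, and for such characters $\hH^0_{(\varphi,\Gamma)}(\cR_{E,L}(\chi'))\cong E$; the paper's own proof uses exactly this fact, interpreting the generator as an injection $\cR_{E,L}\hookrightarrow\cR_{E,L}(\chi')$. Tensoring that injection with $\mathrm{id}_{\Delta_\pi}$ gives a nonzero element of $\homo_{(\varphi,\Gamma)}(\Delta_\pi,\Delta_\pi(\chi'))$, living in the $\cR_{E,L}(\chi')$-summand of $\EndO(\Delta_\pi)(\chi')$. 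So the whole content of the lemma is precisely that the invariants are confined to the trace summand, and a strategy that tries to show the ambient $\hH^0$ vanishes cannot succeed.

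The two claims supporting your weight argument are also incorrect. A nonzero morphism between irreducible $(\varphi,\Gamma)$-modules of the same rank over $\cR_{E,L}$ is injective with $t$-power-torsion cokernel, but it need not be an isomorphism (e.g. $\cR_{E,L}\hookrightarrow\cR_{E,L}(z^{-\underline{1}})$); and an injection of de Rham modules of equal rank forces only \emph{inequalities} of Hodge--Tate weights, with the submodule carrying the larger weights, exactly as in (\ref{Dpikinjection}). Here the source has weights $0$ and the target has weights $\leq 1-r<0$, which is the allowed direction, so no contradiction arises: the strict negativity of the weights of $\chi'$ is what makes $\hH^0_{(\varphi,\Gamma)}(\cR_{E,L}(\chi'))$ nonzero, not what kills the Hom space. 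The repair is the paper's argument on the trace-zero part itself: since $\chi'$ is algebraic, $\bW^0$ and $\bW^0(\chi')$ have the same $W_e$ and differ only in their $\bdr^+$-lattices, the quotient $\bW^0(\chi')^+_{\dr}/\bW^{0,+}_{\dr}$ is a successive extension of nonzero Tate twists of $\BC_p$ and so has no $\gal_L$-invariants, whence $\hH^0_{(\varphi,\Gamma)}(\EndO^0(\Delta_\pi)(\chi'))\cong\hH^0_{(\varphi,\Gamma)}(\EndO^0(\Delta_\pi))=0$ by Schur's lemma for the irreducible $\Delta_\pi$. Your fallback via an Euler--Poincar\'e count would face the same issue, since dimension counts alone do not separate the trace summand from the trace-zero summand.
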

\begin{proof}By using Tate duality,\;it suffices to prove that $$(\widetilde{\mathbf{I}}_{i}^2)^\vee:\ext^0_{(\varphi,\Gamma)}(\EndO(\Delta_{\pi})\tee \cR_{E,L}(\chi_{\mathrm{cyc}}\cdot\bm{\delta}_{\bh,i}^{-1}\bm{\delta}_{\bh,i+1}))
\rightarrow  \ext^0_{(\varphi,\Gamma)}(\cR_{E,L}(\chi_{\mathrm{cyc}}\cdot\bm{\delta}_{\bh,i}^{-1}\bm{\delta}_{\bh,i+1}))$$ 
is an isomorphism,\;i.e.,\;$\hH^0_{(\varphi,\Gamma)}(\EndO^0(\Delta_{\pi})\tee \cR_{E,L}(\chi_{\mathrm{cyc}}\cdot\bm{\delta}_{\bh,i}^{-1}\bm{\delta}_{\bh,i+1}))=0$.\;Indeed,\;the unique (up to scalar) nonzero element in  $\hH^0_{(\varphi,\Gamma)}(\cR_{E,L}(\chi_{\mathrm{cyc}}\cdot\bm{\delta}_{\bh,i}^{-1}\bm{\delta}_{\bh,i+1}))\cong E$ corresponds to an injection $\cR_{E,L}\hookrightarrow \cR_{E,L}(\chi_{\mathrm{cyc}}\cdot\bm{\delta}_{\bh,i}^{-1}\bm{\delta}_{\bh,i+1})$.\;Note that
\begin{eqnarray*}
	\bW^0(\chi_{\mathrm{cyc}}\cdot\bm{\delta}_{\bh,i}^{-1}\bm{\delta}_{\bh,i+1})^+_{\dr})/\bW^{0,+}_{\dr}
	\cong \oplus_{\tau\in \Sigma_L}\big(t^{\bh_{(i+1)r,\tau}-\bh_{ir,\tau}}\bdr_{,\tau,E}/\bdr_{,\tau,E}\big)^{\oplus (r^2-1)},
\end{eqnarray*}
we deduce $\hH^0_{(\varphi,\Gamma)}(\EndO^0(\Delta_{\pi})\tee \cR_{E,L}(\chi_{\mathrm{cyc}}\cdot\bm{\delta}_{\bh,i}^{-1}\bm{\delta}_{\bh,i+1})/\EndO^0(\Delta_{\pi}))=0$.\;By an easy d\'{e}vissage argument,\;we see that $\hH^0_{(\varphi,\Gamma)}(\EndO^0(\Delta_{\pi})\tee \cR_{E,L}(\chi_{\mathrm{cyc}}\cdot\bm{\delta}_{\bh,i}^{-1}\bm{\delta}_{\bh,i+1}))\xleftarrow{\sim}\hH^0_{(\varphi,\Gamma)}(\EndO^0(\Delta_{\pi}))=0$ (recall that $\Delta_{\pi}$ is irreducible).\;This completes the proof.\;
\end{proof}

We are ready to define the desired non-degenerate pairing.\;

\begin{pro}\label{pairingdef}For $1\leq i\leq k-1$,\;we have the following non-degenerate pairing:
\begin{equation}
	\xymatrix{
		\langle -,-\rangle_{\bc_i}: \ext^1_{(\varphi,\Gamma)}(\Dpik_{i+1},\Dpik_{i})  \times  \hH^1_{(\varphi,\Gamma)}(\cR_{E,L}) \ar[r]^{\hspace{20pt}\cup}
		& \ext^2_{(\varphi,\Gamma)}(\cR_{E,L}(\bm{\delta}_{\bh,i}\bm{\delta}^{-1}_{\bh,i+1}))\cong E.}
\end{equation}
\end{pro}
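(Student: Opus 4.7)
The plan is to construct the pairing by descending along the commutative diagram (\ref{bigcommutediag}) to its bottom row, where the cup product $c_4$ is perfect by Lemma \ref{lembigcommutediag}, and then to reduce the non-degeneracy claim to Tate duality for the rank-one $(\varphi,\Gamma)$-module $\cR_{E,L}(\bm{\delta}_{\bh,i}\bm{\delta}^{-1}_{\bh,i+1})$. Concretely, given $[\cE]\in\ext^1_{(\varphi,\Gamma)}(\Dpik_{i+1},\Dpik_i)$ and $\psi\in\hH^1_{(\varphi,\Gamma)}(\cR_{E,L})$, first I would transport $[\cE]$ to an element $[\cE']\in\ext^1_{(\varphi,\Gamma)}(\Delta_{\pi,i+1},\Delta_{\pi,i})$ via the composition of isomorphisms $(\mathbf{I}_{i+1}^{1,1})^{-1}\circ \mathbf{I}_i^{2,1}$ from Lemma \ref{pullbackforiso}, embed $\psi$ into $\ext^1_{(\varphi,\Gamma)}(\Delta_{\pi,i},\Delta_{\pi,i})$ via the scalar inclusion $\widetilde{\mathbf{I}}'_i$ from (\ref{i3p3q3}), form the cup product $c_4([\cE'],\widetilde{\mathbf{I}}'_i(\psi))\in\ext^2_{(\varphi,\Gamma)}(\Delta_{\pi,i+1},\Delta_{\pi,i})$, and finally identify the target with $\hH^2_{(\varphi,\Gamma)}(\cR_{E,L}(\bm{\delta}_{\bh,i}\bm{\delta}^{-1}_{\bh,i+1}))\cong E$ via the inverse of $\widetilde{\mathbf{I}}_i^2$ (Lemma \ref{isoI2i}). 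Commutativity of (\ref{bigcommutediag}) ensures the same pairing is obtained using any of $c_1$, $c_2$, $c_3$ after the appropriate transports along Lemmas \ref{pullbackforiso} and \ref{deformationker}.

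Next I plan to reduce the pairing to the Tate cup product. Applying the decomposition (\ref{Linvariantdecomp}) to both $\ext^1_{(\varphi,\Gamma)}(\Delta_{\pi,i+1},\Delta_{\pi,i})$ and $\ext^1_{(\varphi,\Gamma)}(\Delta_{\pi,i},\Delta_{\pi,i})$ coming from $\EndO(\Delta_{\pi})=\cR_{E,L}\oplus\EndO^0(\Delta_{\pi})$, and using multiplicativity of the cup product in the tensor factors, the element $\widetilde{\mathbf{I}}'_i(\psi)$ lies in the scalar summand of the second factor, so only the scalar--scalar and traceless--scalar components of $c_4$ contribute. The traceless--scalar component lands in $\hH^2_{(\varphi,\Gamma)}(\EndO^0(\Delta_{\pi})\tee\cR_{E,L}(\bm{\delta}_{\bh,i}\bm{\delta}^{-1}_{\bh,i+1}))$, which vanishes as a consequence of Lemma \ref{isoI2i}, since that lemma identifies all of $\ext^2_{(\varphi,\Gamma)}(\Delta_{\pi,i+1},\Delta_{\pi,i})$ with the scalar piece. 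Thus $\langle-,-\rangle_{\bc_i}$ factors as
\[
\ext^1_{(\varphi,\Gamma)}(\Dpik_{i+1},\Dpik_i)\xrightarrow{\sim}\ext^1_{(\varphi,\Gamma)}(\Delta_{\pi,i+1},\Delta_{\pi,i})\twoheadrightarrow \hH^1_{(\varphi,\Gamma)}(\cR_{E,L}(\bm{\delta}_{\bh,i}\bm{\delta}^{-1}_{\bh,i+1}))
\]
followed by the Tate cup product with $\hH^1_{(\varphi,\Gamma)}(\cR_{E,L})$ landing in $\hH^2_{(\varphi,\Gamma)}(\cR_{E,L}(\bm{\delta}_{\bh,i}\bm{\delta}^{-1}_{\bh,i+1}))\cong E$.

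Finally, non-degeneracy on the $\hH^1_{(\varphi,\Gamma)}(\cR_{E,L})$ side follows from Tate duality for $(\varphi,\Gamma)$-modules \cite{liu2007cohomology}: the Tate cup product above is perfect, so any nonzero $\psi$ pairs nontrivially with some $\eta\in\hH^1_{(\varphi,\Gamma)}(\cR_{E,L}(\bm{\delta}_{\bh,i}\bm{\delta}^{-1}_{\bh,i+1}))$, and $\eta$ lifts to $\ext^1_{(\varphi,\Gamma)}(\Dpik_{i+1},\Dpik_i)$ through the direct-summand inclusion of the scalar piece under the isomorphism of Lemma \ref{pullbackforiso}. The hard part will be verifying the vanishing of the traceless--scalar cross term, for which the computation of $\hH^0$ via the $E$-$B$-pair $\bW^0(\chi_{\mathrm{cyc}}\cdot\bm{\delta}_{\bh,i}^{-1}\bm{\delta}_{\bh,i+1})$ in the proof of Lemma \ref{isoI2i} is essential; this vanishing is what prevents the pairing from being perfect (the kernel on the $\ext^1_{(\varphi,\Gamma)}(\Dpik_{i+1},\Dpik_i)$ side coincides with the traceless summand $\hH^1_{(\varphi,\Gamma)}(\EndO^0(\Delta_{\pi})\tee\cR_{E,L}(\bm{\delta}_{\bh,i}\bm{\delta}^{-1}_{\bh,i+1}))$, which is nontrivial when $r>1$, matching the parenthetical remark in the introduction that the pairing may fail to be perfect).
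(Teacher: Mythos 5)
Your proposal is essentially the paper's own argument: the pairing is read off the commutative diagram obtained by combining (\ref{bigcommutediag}) with (\ref{i3p3q3}), non-degeneracy in the $\hH^1_{(\varphi,\Gamma)}(\cR_{E,L})$ variable comes from perfectness of the rank-one bottom pairing via the injective left column, and your route through $c_4$ (after transporting the class by $(\mathbf{I}_{i+1}^{1,1})^{-1}\circ\mathbf{I}_i^{2,1}$) coincides with the paper's $c_2$-based definition by commutativity, exactly as the paper records in the remark $\langle \psi,[\Dpik^{i+1}_{i}]\rangle_{\bc_i}=\langle \widetilde{\mathbf{I}}'_{i}(\psi),({\Delta_{\Dpik}})^{i+1}_{i}\rangle_{c_4}$; your scalar/traceless factorization and identification of the first-variable kernel with $\hH^1_{(\varphi,\Gamma)}(\EndO^0(\Delta_\pi)\tee\cR_{E,L}(\bm{\delta}_{\bh,i}\bm{\delta}^{-1}_{\bh,i+1}))$ is a correct fleshing-out of the paper's terse ``by definition''. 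One small precision: the perfectness of $\hH^1_{(\varphi,\Gamma)}(\cR_{E,L}(\bm{\delta}_{\bh,i}\bm{\delta}^{-1}_{\bh,i+1}))\times\hH^1_{(\varphi,\Gamma)}(\cR_{E,L})\rightarrow E$ is not literally the duality of \cite{liu2007cohomology} (the second factor is not the Cartier dual twist), but follows from it because $\bm{\delta}_{\bh,i}\bm{\delta}^{-1}_{\bh,i+1}$ is special, via the identification with the $\chi_{\mathrm{cyc}}$-twist as in the rank-one piece of the proof of Lemma \ref{lembigcommutediag}, or equivalently \cite[Lemma 1.13]{2015Ding} as invoked in Lemma \ref{criofnoncrystalline}.
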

\begin{proof}Combining the  diagrams (\ref{bigcommutediag}) with  (\ref{i3p3q3}),\;we get the following commutative diagram:
\begin{equation}
	\xymatrix@C=2.5ex{
		\ext^1_{(\varphi,\Gamma)}(\Dpik_{i+1},\Dpik_{i}) \ar@{=}[d]                        & \times & \ext^1_{(\varphi,\Gamma)}(\Dpik_{i},\Dpik_{i})\hspace{9pt} \ar[d]^{ {\mathbf{I}}'_{i}}         \ar[r]^{\cup}_{c_1}
		& \ext^2_{(\varphi,\Gamma)}(\Dpik_{i+1},\Dpik_{i}) \ar[d]_{\backsimeq}^{{\mathbf{I}}_{i}^{2,2}} \\
		\ext^1_{(\varphi,\Gamma)}(\Dpik_{i+1},\Dpik_{i})           & \times & \ext^1_{(\varphi,\Gamma)}(\Dpik_{i},\Delta_{\pi,i})\hspace{5pt}                 \ar[r]^{\cup}_{c_2}
		& \ext^2_{(\varphi,\Gamma)}(\Dpik_{i+1},\Delta_{\pi,i})          \\
		\hH^1_{(\varphi,\Gamma)}(\cR_{E,L}(\bm{\delta}_{\bh,i}\bm{\delta}_{\bh,i+1}^{-1})) \ar@{_(->}[u]_{({\mathbf{I}}_{i}^{2,1})^{-1}\circ {\mathbf{I}}_{i+1}^{1,1}\circ \widetilde{\mathbf{I}}_{i}^1}  & \times & \hH^1_{(\varphi,\Gamma)}(\cR_{E,L})\cong \homo(L^\times,E)   \ar@{_(->}[u]_{ {\mathbf{I}}''_{i}\circ  \widetilde{\mathbf{I}}'_{i}}         \ar[r]^{\cup}_{c_5}
		&\hH^2_{(\varphi,\Gamma)}(\cR_{E,L}(\bm{\delta}_{\bh,i}\bm{\delta}_{\bh,i+1}^{-1})) \ar[u]^{\backsimeq}_{{\mathbf{I}}_{i+1}^{1,2}\circ \widetilde{\mathbf{I}}_{i}^2}.
	}
\end{equation}
Note that $\hH^2_{(\varphi,\Gamma)}(\cR_{E,L}(\bm{\delta}_{\bh,i}\bm{\delta}_{\bh,i+1}^{-1}))\cong E$.\;The desired pairing is given by the cup product of the $(2,1)$-term and $(3,2)$-term of this diagram.\;By definition,\;this pairing is non-degenerate.\;
\end{proof}

%We now get the following commutative diagram (see (\ref{i3p3q3}))
%\begin{equation}\label{bigcommutediagbelow}
%\xymatrix@C=2.5ex{
%	 \ext^1_{(\varphi,\Gamma)}(\Delta_{\pi,i+1},\Delta_{\pi,i})   & \times & \ext^1_{(\varphi,\Gamma)}(\Delta_{\pi,i},\Delta_{\pi,i})            \ar[r]^{\cup}_{c_4}
%	&\ext^2_{(\varphi,\Gamma)}(\Delta_{\pi,i+1},\Delta_{\pi,i})\cong E \\
%	\hH^1_{(\varphi,\Gamma)}(\cR_{E,L}(\bm{\delta}_{\bh,i}\bm{\delta}_{\bh,i+1}^{-1})) \ar@{_(->}[u]_{i_3}  & \times & \hH^1_{(\varphi,\Gamma)}(\cR_{E,L})\cong \homo(L^\times,E)   \ar@{_(->}[u]_{q_3}         \ar[r]^{\cup}_{c_5}
%	&\hH^2_{(\varphi,\Gamma)}(\cR_{E,L}(\bm{\delta}_{\bh,i}\bm{\delta}_{\bh,i+1}^{-1}))\cong E \ar[u]^{\simeq}_{p_3}.}
%\end{equation}

For $1\leq i\leq k-1$,\;the subobject $\Dpik^{i+1}_{i}$ gives an extension class $[\Dpik^{i+1}_{i}]\in \ext^1_{(\varphi,\Gamma)}(\Dpik_{i+1},\Dpik_{i})$. We further put $$[({\Delta_{\Dpik}})^{i+1}_{i}]=({\mathbf{I}}_{i}^{2,1})^{-1}\circ {\mathbf{I}}_{i+1}^{1,1}([\Dpik^{i+1}_{i}])\in \ext^1_{(\varphi,\Gamma)}(\Delta_{\pi,i+1},\Delta_{\pi,i}).$$
With respect to the  direct sum decomposition (\ref{Linvariantdecomp}),\;we put 
\begin{equation}\label{c0extensions}
\begin{aligned}
	&({\Delta_{\Dpik}})^{i+1}_{i}:=\padelc^{i+1}_{i}\oplus \padel^{i+1}_{i},\;
	%		&(\Dpik^{i+1}_{i})^z:=\padelc^{i+1}_{i}\big[{1}/{t}\big]\cap \Dpik^{i+1}_{i},\;(\Dpik^{i+1}_{i})^0:=\padel^{i+1}_{i}\big[{1}/{t}\big]\cap \Dpik^{i+1}_{i}.\;
\end{aligned}
\end{equation}
where $\padelc^{i+1}_{i}$ (resp.,\;$\padel^{i+1}_{i}$) is the projection of $({\Delta_{\Dpik}})^{i+1}_{i}$ to the first (resp.,\;second) factor.\;

We are ready to define the parabolic Fontaine-Mazur simple $\sL$-invariants of $\Dpik$.\;
\begin{dfn}\textbf{(Parabolic Fontaine-Mazur simple $\sL$-invariants)}Suppose that $\Dpik$ admits a
non-critical special $\omepik$-filtration with parameter \[(\bx_{\pi},\bmdel)\in \sbanpik\times\rigchl \] (resp.,\; with parameter $(\widetilde{\bx}_{\pi,\bh},\widetilde{\bm{\delta}}_\bh)\in \sbanpik\times\rigch $).\;

For $ir\in \Delta_n(k)=\{r,2r,\cdots,(k-1)r\}\}$,\;we can attach $\sL(\Dpik)_{ir}\subseteq \hH^1_{(\varphi,\Gamma)}(\cR_{E,L})$ (as a subspace of $\ext^1_{(\varphi,\Gamma)}(\Delta_{\pi,i},\Delta_{\pi,i})$) by (see Proposition \ref{pairingdef})
\begin{equation}\label{dfnl-invariant}
	\begin{aligned}
		\sL(\Dpik)_{ir}:&=\{\psi\in \hH^1_{(\varphi,\Gamma)}(\cR_{E,L})\;|\;\langle \psi,[\Dpik^{i+1}_{i}]\rangle_{\bc_i}=0\}.\;
	\end{aligned}
\end{equation}
We call $\sL(\Dpik)=\prod_{ir\in \Delta_n(k)}\sL(\Dpik)_{ir}$ the parabolic simple $\sL$-invariants of $\Dpik$.\;
\end{dfn}
\begin{rmk}By definition,\;we have $\langle \psi,[\Dpik^{i+1}_{i}]\rangle_{\bc_i}=\langle   \widetilde{\mathbf{I}}'_{i}(\psi),({\Delta_{\Dpik}})^{i+1}_{i}\rangle_{c_4}$ (see (\ref{bigcommutediag})).\;
\end{rmk}
\begin{lem}\label{criofnoncrystalline}The following are equivalent:
\begin{itemize}
	\item[(a)] For all $ir\in \Delta_n(k)$,\;$
	\Dpik^{i+1}_i$ is potentially semistable and noncrystalline,\;
	\item[(b)] For all $ir\in \Delta_n(k)$,\;$\padelc^{i+1}_{i}$ is  semistable noncrystalline,
	\item[(c)] For all $ir\in \Delta_n(k)$,\;$\homo_\infty(L^\times,E)\nsubseteq \sL(\Dpik)_{ir}$,\;
	\item[(d)]  We have $N^{k-1}\neq 0$ on $\df(\Dpik)$,\;i.e,\;$\df(\Dpik)$ is absolutely indecomposable. 
\end{itemize}
\end{lem}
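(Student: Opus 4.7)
I will establish $(d) \Leftrightarrow (a) \Leftrightarrow (b) \Leftrightarrow (c)$, writing $\chi := \bm{\delta}_{\bh,i}\bm{\delta}_{\bh,i+1}^{-1}$ for brevity.

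For $(d) \Leftrightarrow (a)$, the filtration $\cF$ descends to a filtration on $\df(\Dpik)$ whose graded pieces $\df(\Dpik_i) \cong (p^{i-k}\varphi_\pi, N=0, \gal(L'/L), \df_\pi)$ are twists of the absolutely irreducible $\df_\pi$. Since $N\varphi = p\varphi N$, the operator $N$ strictly lowers the filtration index and vanishes on each graded piece, hence induces $\varphi_\pi$-equivariant maps $\bar N_i : \df(\Dpik_{i+1}) \to \df(\Dpik_i)$. Schur's lemma forces each $\bar N_i$ to be either zero or an isomorphism, so $N^{k-1} \neq 0$ iff every $\bar N_i$ is non-zero iff each $\df(\Dpik^{i+1}_i)$ carries non-trivial monodromy; since $\Dpik^{i+1}_i$ is automatically potentially semistable as a subquotient of $\Dpik$, this is precisely (a).

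For $(a) \Leftrightarrow (b)$, the isomorphism $(\mathbf{I}_i^{2,1})^{-1} \circ \mathbf{I}_{i+1}^{1,1}$ from Lemma \ref{pullbackforiso} transports $[\Dpik^{i+1}_i]$ to $({\Delta_{\Dpik}})^{i+1}_i$, and because $\mathbf{I}_i, \mathbf{I}_{i+1}$ become isomorphisms after inverting $t$, the two extensions share the same Deligne-Fontaine module and hence the same monodromy operator. Therefore (a) holds iff $({\Delta_{\Dpik}})^{i+1}_i$ is semistable noncrystalline for every $i$. Applying Schur to the absolutely irreducible $\Delta_\pi$, every monodromy operator on such an extension is a scalar in $E \cdot \mathrm{id}_{\df_\pi}$, and this scalar depends on the extension class only through its image under the trace projection $\EndO(\Delta_\pi) \twoheadrightarrow \cR_{E,L}$. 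Consequently the monodromy of $({\Delta_{\Dpik}})^{i+1}_i$ agrees (up to the factor $r$) with that of the rank-$2$ extension $\padelc^{i+1}_i$, giving (b).

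For $(b) \Leftrightarrow (c)$, chasing diagram (\ref{bigcommutediag}) together with the decomposition (\ref{Linvariantdecomp}) and Lemma \ref{isoI2i} (which identifies $\ext^2(\Delta_{\pi,i+1},\Delta_{\pi,i}) \cong \hH^2(\cR_{E,L}(\chi))$ through the scalar summand and forces the vanishing of $\hH^2(\EndO^0(\Delta_\pi)(\chi))$) yields
\begin{equation*}
\langle \psi, [\Dpik^{i+1}_i] \rangle_{\bc_i} \;=\; \psi \cup \padelc^{i+1}_i \;\in\; \hH^2\bigl(\cR_{E,L}(\chi)\bigr) \cong E
\end{equation*}
for every $\psi \in \hH^1(\cR_{E,L})$. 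Since $\homo_\infty(L^\times, E) = E \cdot \val_L$ is one-dimensional, (c) amounts to $\val_L \cup \padelc^{i+1}_i \neq 0$. Now $\padelc^{i+1}_i$ is automatically semistable (inherited from $\Dpik$, as both $\cR_{E,L}$ and $\cR_{E,L}(\chi)$ are crystalline over $L$), and the Bloch-Kato description of the local Tate pairing identifies this cup product with the monodromy of the rank-$2$ extension associated to $\padelc^{i+1}_i$, which is non-zero exactly when that extension is semistable noncrystalline. The principal technical obstacle sits in this last identification, namely that $\val_L \cup -$ is the monodromy functional on $\hH^1(\cR_{E,L}(\chi))$: one verifies it either through an explicit cocycle computation using Colmez's parametrization of $\hH^1$ by continuous characters, or via the dual exponential map combined with Lemma \ref{isoI2i}.
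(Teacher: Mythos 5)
The gap is in your $(a)\Leftrightarrow(b)$ step. Schur's lemma does give you that the monodromy on $D_{\pst}$ of the extension $({\Delta_{\Dpik}})^{i+1}_{i}$ is a scalar multiple of $\mathrm{id}_{\df_\pi}$, but it gives you nothing about \emph{which part of the extension class} controls that scalar. Your assertion that "this scalar depends on the extension class only through its image under the trace projection $\EndO(\Delta_{\pi})\twoheadrightarrow\cR_{E,L}$" is exactly the nontrivial content of the equivalence, and you offer no argument for it. What has to be proved is that every extension of $\Delta_{\pi,i+1}$ by $\Delta_{\pi,i}$ whose class lies in the component $\hH^1_{(\varphi,\Gamma)}(\EndO^0(\Delta_{\pi})\tee\cR_{E,L}(\chi))$, $\chi=\bm{\delta}_{\bh,i}\bm{\delta}_{\bh,i+1}^{-1}$, is potentially crystalline, so that only $\padelc^{i+1}_{i}$ can produce nonzero monodromy. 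The paper establishes this by restricting to $L'$ and showing $\hH^1_f(\gal_{L'},\bW^0(\chi))=\hH^1(\gal_{L'},\bW^0(\chi))$ via Nakamura's dimension formula for $\hH^1_f$, using that all Hodge--Tate weights of $\bW^0(\chi)$ are strictly positive (so $\fil^0D_{\dR}=0$) and that $\Delta_\pi$ is de Rham of weight $0$; this is a computation tied to the specific weights of $\chi$, not a formal consequence of absolute irreducibility. Note also that without this input you cannot even assert that "class $\mapsto$ monodromy scalar" is a linear functional factoring through anything, since you first need to know all the classes involved are potentially semistable. Since $r>1$ is precisely the new case of this lemma (for $r=1$ the $\EndO^0$-component is zero and there is nothing to prove), this missing step is the heart of the statement, and your chain of equivalences is incomplete without it.

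For the record on the other two steps: your $(d)\Leftrightarrow(a)$ d\'evissage on graded pieces is correct and in fact cleaner than the paper's explicit matrix induction for the "if" direction (the zero-or-isomorphism property of the induced maps $\gr_{i+1}^{\cF}\to\gr_i^{\cF}$ is anyway built into the definition of the special $\omepik$-filtration). Your reduction of $(b)\Leftrightarrow(c)$ to the non-vanishing of $\val_L\cup\padelc^{i+1}_{i}$ is consistent with the paper's diagram (it uses the vanishing of $\hH^2$ of the $\EndO^0$-part, i.e. Lemma \ref{isoI2i}), but the final point — that the kernel of $\val_L\cup-$ on $\hH^1_{(\varphi,\Gamma)}(\cR_{E,L}(\chi))$ is exactly the crystalline subspace — is deferred to an unspecified cocycle or dual-exponential computation, whereas the paper simply invokes the results of Ding that $\hH^1_{(\varphi,\Gamma),e}(\cR_{E,L}(\chi))^{\bot}=\homo_\infty(L^\times,E)$; citing that result would close this step, but the $(a)\Leftrightarrow(b)$ gap above still must be filled.
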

\begin{proof} For any $ir\in \Delta_n(k)$,\;the cup product induces a perfect paring (see \cite[Lemma 1.13]{2015Ding})
\[\hH^1_{(\varphi,\Gamma)}(\cR_{E,L}(\bm{\delta}_{\bh,i}\bm{\delta}_{\bh,i+1}^{-1}))\times\hH^1_{(\varphi,\Gamma)}(\cR_{E,L})\xrightarrow{\cup}\hH^2_{(\varphi,\Gamma)}(\cR_{E,L}(\bm{\delta}_{\bh,s-1}\bm{\delta}_{\bh,s}^{-1}))\cong E.\]
Then 	$\hH^1_{(\varphi,\Gamma),e}(\cR_{E,L}(\bm{\delta}_{\bh,i}\bm{\delta}_{\bh,i+1}^{-1}))^\bot:=\{\psi\in \hH^1_{(\varphi,\Gamma)}(\cR_{E,L})\;|\;\langle\psi,\hH^1_{(\varphi,\Gamma),e}(\cR_{E,L}(\bm{\delta}_{\bh,i}\bm{\delta}_{\bh,i+1}^{-1}))\rangle=0\}$ is isomorphic to $\homo_\infty(L^\times,E)$ (see \cite[Proposition 1.9,\;Lemma 1.15]{2015Ding}).\;Thus we see that \[\homo_\infty(L^\times,E)\subseteq\sL(\Dpik)_{ir}\] if and only if $\padelc^{i+1}_{i}$ is crystalline.\;Therefore $(b)$ and $(c)$ are equivalent.\;

We next show that $(a)$ is equivalent to $(b)$.\;Since 	$\Dpik_{i}^{i+1}[\frac{1}{t}]=(\Delta_\Dpik)^{i+1}_{i}[\frac{1}{t}]$,\;Part (a) is equivalent to prove that $(\Delta_\Dpik)^{i+1}_{i}=\padelc^{i+1}_{i}\oplus \padel^{i+1}_{i}$ is potentially semistable noncrystalline.\;It suffices to show that $\padel^{i+1}_{i}$ is always potentially crystalline.\;Let $L'$ be a sufficiently large extension of $L$ such that the restriction of $\EndO^0({\Delta_\pi})\tee\cR_{E,L}(\bm{\delta}_{\bh,i}\bm{\delta}_{\bh,i+1}^{-1})$ to $L'$ is crystalline.\;Recall that $\bW^0$ (resp.,\; $\bW^0(\bm{\delta}_{\bh,i}\bm{\delta}_{\bh,i+1}^{-1})$) is the $E$-$B$-pair associated to $\EndO^0({\Delta_\pi})$ (resp.,\;$\EndO^0({\Delta_\pi})\tee\cR_{E,L}(\bm{\delta}_{\bh,i}\bm{\delta}_{\bh,i+1}^{-1})$, see \cite[Theorem 1.36]{nakamura2009classification}).\;We are going to show that \[\hH^1_{f}(\gal_{L'},\bW^0(\bm{\delta}_{\bh,i}\bm{\delta}_{\bh,i+1}^{-1}))=\hH^1(\gal_{L'},\bW^0(\bm{\delta}_{\bh,i}\bm{\delta}_{\bh,i+1}^{-1})),\] then \cite[Remark 2.5]{nakamura2009classification}) implies that  $\padel^{i+1}_{i}|_{L'}$ is always crystalline.\;Indeed,\;by \cite[Proposition 2.7]{nakamura2009classification}), we have
\begin{equation*}
	\begin{aligned}
		&\dim_E\hH^1_f(\gal_{L'},\bW^0(\bm{\delta}_{\bh,s-1}\bm{\delta}_{\bh,s}^{-1}))\\
		=\;&\dim_E\hH^0(\gal_{L'},\bW^0(\bm{\delta}_{\bh,s-1}\bm{\delta}_{\bh,s}^{-1}))+\dim_ED_{\dr}(\bW^0(\bm{\delta}_{\bh,s-1}\bm{\delta}_{\bh,s}^{-1}))/\fil^H_0D_{\dr}(\bW^0(\bm{\delta}_{\bh,s-1}\bm{\delta}_{\bh,s}^{-1}))\\
		=\;&\dim_E\hH^0(\gal_{L'},\bW^0(\bm{\delta}_{\bh,s-1}\bm{\delta}_{\bh,s}^{-1}))+\dim_ED_{\dr}(\bW^0(\bm{\delta}_{\bh,s-1}\bm{\delta}_{\bh,s}^{-1}))\\
		=\;&\dim_E\hH^1(\gal_{L'},\bW^0(\bm{\delta}_{\bh,s-1}\bm{\delta}_{\bh,s}^{-1})).\;
	\end{aligned}
\end{equation*}
It remains to show that $(a)$ and $(d)$ are equivalent.\;Suppose there exists $i\in \Delta_k$ such that $\Dpik^{i+1}_i|_{L'}$ is crystalline.\;$N$ is zero on $\Dpik^{i+1}_i|_{L'}$.\;Then we see that $N^{k-i-1}=0$ on $D_{\pst}(\Dpik^{k}_{i+1})$,\;$N^{i-1}=0$ on $D_{\pst}(\Dpik^{i-1}_{1})$ and $N=0$ on $D_{\pst}(\Dpik^{i+1}_{i})$.\;We deduce from an easy d\'evissage argument that the monodromy operator $N^{k-1}=N^{(k-i-1)+(i-1)+1}=0$ on $\df(\pi)$.\;The ``only if" part follows.\;Conversely,\;suppose $\Dpik^{i+1}_i|_{L'}$ is semistable noncrystalline for all $i\in \Delta_k$.\;By definition,\;we pick a basis \[\{v_{1,1},v_{1,2},\cdots,v_{1,r}\}\cup\cdots \cup\{v_{k,1},v_{k,2},\cdots,v_{k,r}\}\cup \{v_{k-1,1},v_{k-1,2},\cdots,v_{k-1,r}\}\] of $D_{\pst}(\Dpik)$ such that the matrices of  $\varphi^{f_L'}$ and $N$  under this basis is given by
\[\varphi^{f_L'}=\left(
\begin{array}{cccc}
	A_1 & \ast & \ast & \ast \\
	& q_L\cdot A_2 & \ast & \ast \\
	&  & \ddots & \ast \\
	&  &  & q_L^{(k-1)}\cdot A_k \\
\end{array}
\right),\;
N=\left(
\begin{array}{cccc}
	0 & \ast  & \ast & \ast  \\
	& 0 & \ast &  \ast \\
	&  & \ddots & \ast  \\
	&  &  & 0 \\
\end{array}
\right),
\]
where all the $A_i\cong A$ for some matrix $A\in\GLN_r(L\otimes_{\bQ_p}E)$.\;We use induction on $l$ to show that $N^{l-1}\neq 0$ on $D_{\pst}(\Dpik^{l}_{1})$ and $N^{l-1}v_{l,j}\neq 0$ for some  $1\leq j\leq r$.\;This is trivial for $l=2$.\;Suppose that $N^{l-2}\neq 0$ on $D_{\pst}(\Dpik^{l-1}_{1})$ and $N^{l-2}A_{l-1}v_{l-1,j}\neq 0$ for some  $j$.\;Then we have $\varphi^{f_L'}v_{l-1,j}-q_L^{l-1}A_{l-1}v_{l-1,j}\in \Dpik^{l-2}_{1}$, $\varphi^{f_L'}v_{l,j}-q_L^{l}A_{l}v_{l,j}\in \Dpik^{l-1}_{1}$ and 
$\varphi^{f_L'}(Nv_{l,j}-v_{l-1,j})\in D_{\pst}(\Dpik^{l-2}_{1})$.\;We show that $N^{l-1}A_{l}v_{l,j}\neq 0$.\;Indeed, we have
\begin{equation}
\begin{aligned}
	q_L^{l}N^{l-2}\varphi^{f_L'}NA_lv_{l,j}&=N^{l-2}\varphi^{f_L'}N(\varphi^{f_L'}v_{l,j}+(q_L^{l}A_{l}v_{l,j}-\varphi^{f_L'}v_{l,j}))\\
	&=N^{l-2}\varphi^{f_L'}N(\varphi^{f_L'}v_{l,j})=q_LN^{l-2}\varphi^{2f_L'}Nv_{l,j}\\
	&=q_LN^{l-2}\varphi^{2f_L'}v_{l-1,j}=q_L^2\varphi^{f_L'}N^{l-2}\varphi^{f_L'}v_{l-1,j}\\
	&=q_L^2\varphi^{f_L'}N^{l-2}(q_L^{l-1}A_{l-1}v_{l-1,j}+(\varphi^{f_L'}v_{l-1,j}-q_L^{l-1}A_{l-1}v_{l-1,j}))\\
	&=q_L^2\varphi^{f_L'}(q_L^{l-1}N^{l-2}A_{l-1}v_{l-1,j})\neq 0.
\end{aligned}
\end{equation}
Since $N^{l-2}\varphi^{f_L'}NA_lv_{l,j}=q_L^{l-2}\varphi^{f_L'}N^{l-1}A_lv_{l,j}$,\;we get that $N^{l-1}A_lv_{l,j}\neq 0$ and hence $N^{l-1}\neq 0$ on $D_{\pst}(\Dpik^{l}_{1})$.\;This completes the proof.\;
\end{proof}

For $\tau\in \Sigma_L$,\;recall that $\homo_\tau(L^\times,E)$ is a two-dimensional $E$-vector space and admits a basis $\psi_{\sigma,L}:=\tau\circ\log_p$ and $\psi_{\ur}:=\valua_L$,\;where $\log_p:L^\times\rightarrow L$ is equal to the $p$-adic logarithm when restricted to $\cO_L^\times$ and where $\log_p(p)$.\;We put $\sL(\Dpik)_{ir,\tau}:=\sL(\Dpik)_{ir}\cap \homo_\tau(L^\times,E)$.\;

\begin{rmk}\label{taulinvariantrmk}Assume that $
\Dpik^{i+1}_i$ is potentially semistable and noncrystalline for each $ir\in \Delta_n(k)$, then the Lemma  \ref{criofnoncrystalline} shows that $\homo_\infty(L^\times,E)\nsubseteq \sL(\Dpik)_{ir}$.\;Therefore,\;we have $$\dim_E\sL(\Dpik)_{ir,\tau}=1$$ for all $\tau\in \Sigma_L$ and $\sL(\Dpik)_{ir}=\bigoplus_{\tau\in \Sigma_L}\sL(\Dpik)_{ir,\tau}$.\;Moreover,\;there exists $\sL_{i,\tau}\in E $ such that $\sL(\Dpik)_{ir,\tau}$ is generated by $\psi_{i,\tau}:=\psi_{\tau,L}-\sL_{i,\tau}\psi_{\ur}$.\;
\end{rmk}
%We end this section with a remark.\;
%\begin{rmk}Similar to the philosophy in \cite{2015Ding},\;the above parabolic $\sL$-invariant have a partial version.\;More precisely,\;if we have $n$-dimensional potentially semistable Galois representation with critical $\omepik$-filtration,\;i.e,\;there is a subset $S_c(D)$ of $\Sigma_L$ such that $w_{\cF,\tau}\neq w_0$ for all $\tau\in S_c(D)$.\;Then for each $\tau\in \Sigma_L\backslash S_c(D)$ and $ir\in \Delta_n(k)$,\;we can associate parabolic $\sL$-invariant $\sL(\Dpik)_{i,\tau}$ by modifying and combining  the construction in \cite[Section 1.3.2]{2015Ding} and our construction.\;In this case,\;it is worth to research the locally analytic socle conjecture and companion points in eigenvariety in this case.\;We expert consider this problem in future work.\;
%

\subsection{Colmez-Greenberg-Stevens formula}\label{cgsformulaSection}

In \cite[Section.\;3.3]{2019DINGSimple},\;Ding established the Colmez-Greenberg-Stevens formula 
(on simple $\sL$-invariants) for a rank $n$ triangulable $(\varphi,\Gamma)$-module over $\cR_{E,L}$. In this section,\;we will establish the Colmez-Greenberg-Stevens formula (on parabolic Fontaine-Mazur simple $\sL$-invariants) for a  rank $n$ $(\varphi,\Gamma)$-module $\Dpik$,\;which
admits a non-critical special weakly $\omepik$-filtration $\cF$ with  parameter $(\bx_{\pi},\bmdel)\in \sbanpik\times\rigchl$.\;In particular,\;our Colmez-Greenberg-Stevens formula shows that we can reinterpret the parabolic Fontaine-Mazur simple $\sL$-invariants in terms of $1$-order paraboline deformations of type $\omepik$.\;The Colmez-Greenberg-Stevens formula also gives a way to compute the $E$-dimension of the tangent space  $F_{\Dpik,\cF}^0(E[\epsilon]/\epsilon^2)$,\;see Proposition \ref{dimtanFD0}.\;

Let $\widetilde{\Dpik}_1^{k-1}$ be a fixed deformation of ${\Dpik}_1^{k-1}$ over $\cR_{E[\epsilon]/\epsilon^2,L}$.\;We view it as a class $[\widetilde{\Dpik}_1^{k-1}]$ in extension group $\ext^1_{(\varphi,\Gamma)}(\Dpik_{k-1},\Dpik_{k-1})$.\;We assume that $\widetilde{\Dpik}_1^{k-1}$ admits an $\omepik$-filtration $\widetilde{\cF}_1^{k-1}$ with parameter $$((\bx_{\pi,i})_{1\leq i\leq k-1},(\bm{\delta}_{E[\epsilon]/\epsilon^2,i})_{1\leq i\leq k-1}),\;$$
i.e.,\;$\widetilde\Dpik_{i}\hookrightarrow {\Delta_\pi}\otimes_{\,\;\cR_{E,L}}\cR_{E[\epsilon]/\epsilon^2,L}(\bm{\delta}_{E[\epsilon]/\epsilon^2,i})$ for all $1\leq i\leq k-1$.\;Let $\widetilde{\Dpik}_{k-1}=\gr_{k-1}^{\widetilde{\cF}_1^{k-1}}\widetilde{\Dpik}_1^{k-1}$ be the $(k-1)$-th graded piece of $\widetilde{\Dpik}_1^{k-1}$.\;By the proof of \cite[Proposition]{Ding2021} and (\ref{bigcommutediag}),\;we see that $\mathbf{I}'_{i}([\widetilde{\Dpik}_{k-1}])\in \ext^1_{(\varphi,\Gamma)}(\Dpik_{k-1},\Delta_{\pi,k-1})$ belongs to the image of $\hH^1_{(\varphi,\Gamma)}(\cR_{E,L})$ via the injection \[ {\mathbf{I}}''_{i}\circ  \widetilde{\mathbf{I}}'_{i}: \hH^1_{(\varphi,\Gamma)}(\cR_{E,L}) \rightarrow  \ext^1_{(\varphi,\Gamma)}(\Dpik_{k-1},\Delta_{\pi,k-1}).\;\]Therefore, we can view $\mathbf{I}'_{i}([\widetilde{\Dpik}_{k-1}])$ as an element in $\hH^1_{(\varphi,\Gamma)}(\cR_{E,L})$.\;

This section is devoted to proving the following theorem.\;The basic strategy of the proof of \cite[Theorem 2.7]{HigherLinvariantsGL3Qp} is also suitable for our case,\;but the computation is much more complicated.\;The proof consists of some computations of cohomology of the $(\varphi,\Gamma)$-modules.\;We suggest skipping the proof on the first reading.\;

\begin{thm}\label{defor1thdefor}Fix the above parameters $((\bx_{\pi,i})_{1\leq i\leq k-1},(\bm{\delta}_{E[\epsilon]/\epsilon^2,i})_{1\leq i\leq k-1})$.\;Let 
$\bm{\delta}_{E[\epsilon]/\epsilon^2,k}$ be a deformation  of  $\bm{\delta}_{\bh,k}$ over $E[\epsilon]/\epsilon^2$,\;and $\widetilde\Dpik_{k}$ be a deformation of $\Dpik_{k}$ over $\cR_{E[\epsilon]/\epsilon^2,L}$ such that \[\widetilde\Dpik_{k}\hookrightarrow {\Delta_\pi}\otimes_{\,\;\cR_{E,L}}\cR_{E[\epsilon]/\epsilon^2,L}(\bm{\delta}_{E[\epsilon]/\epsilon^2,k})\]
(so that $\widetilde\Dpik_{k}\in F^0_{\Dpik_{k}}(E[\epsilon]/\epsilon^2)$).\;Then there exists a deformation $\Dpik_{E[\epsilon]/\epsilon^2}$ of $\Dpik$ over $\cR_{E[\epsilon]/\epsilon^2,L}$,\;
which admits  an $\omepik$-filtration $\widetilde{\cF}$ with parameter $((\bx_{\pi,i})_{1\leq i\leq k},(\bm{\delta}_{E[\epsilon]/\epsilon^2,i})_{1\leq i\leq k})$ and  $\gr^{\widetilde{\cF}}_{i}\Dpik_{E[\epsilon]/\epsilon^2}=\widetilde\Dpik_i$ for $1\leq i\leq k$ if and only if
\[\mathbf{I}'_{i}([\widetilde{\Dpik}_{k-1}])\otimes_{\cR_{E[\epsilon]/\epsilon^2,L}}\cR_{E[\epsilon]/\epsilon^2,L}({\delta}_{E[\epsilon]/\epsilon^2,k}^{-1}\bm{\delta}_{\bh,k})\in \sL(\Dpik)_{(k-1)r}.\]
\end{thm}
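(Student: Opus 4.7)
The plan is to prove this equivalence via a Colmez-Greenberg-Stevens style argument, identifying the obstruction to the lifting problem with the value of the pairing $\langle -,-\rangle_{\bc_{k-1}}$ defining $\sL(\Dpik)_{(k-1)r}$. Set $A:=E[\epsilon]/\epsilon^2$ for brevity. First, I would reformulate the existence of $\Dpik_{E[\epsilon]/\epsilon^2}$ as a filtered lifting problem: with the deformations $\widetilde\Dpik_1^{k-1}$ and $\widetilde\Dpik_k$ fixed, we must lift the extension class $[\Dpik]\in \ext^1_{\cR_{E,L}}(\Dpik_k,\Dpik_1^{k-1})$ to a class in $\ext^1_{\cR_{A,L}}(\widetilde\Dpik_k,\widetilde\Dpik_1^{k-1})$ under reduction modulo $\epsilon$. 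Standard deformation-theoretic machinery, applied to the short exact sequences $0\to M\to \widetilde M\to M\to 0$ coming from multiplication by $\epsilon$ on both $\widetilde\Dpik_1^{k-1}$ and $\widetilde\Dpik_k$, then produces an obstruction class $\mathrm{obs}\in \ext^2_{\cR_{E,L}}(\Dpik_k,\Dpik_1^{k-1})$ whose vanishing is equivalent to the existence of the desired $\Dpik_{E[\epsilon]/\epsilon^2}$.

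Second, I would reduce this obstruction to a single graded piece by devissage along the filtration $\cF$ on $\Dpik_1^{k-1}$. Lemma \ref{phigammacohononcrtialspecial}(a) gives $\ext^2_{(\varphi,\Gamma)}(\Dpik_k,\Dpik_j)=0$ for $j\neq k-1$, so the natural quotient $\Dpik_1^{k-1}\twoheadrightarrow\Dpik_{k-1}$ induces an isomorphism
\[\ext^2_{(\varphi,\Gamma)}(\Dpik_k,\Dpik_1^{k-1})\xrightarrow{\sim}\ext^2_{(\varphi,\Gamma)}(\Dpik_k,\Dpik_{k-1}),\]
and $\mathrm{obs}$ is captured by its image here. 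Via Lemmas \ref{pullbackforiso} and \ref{isoI2i} the target is one-dimensional over $E$, and Proposition \ref{pairingdef} shows that the pairing $\langle\psi,[\Dpik_{k-1}^k]\rangle_{\bc_{k-1}}$ against $\psi\in\hH^1_{(\varphi,\Gamma)}(\cR_{E,L})$ is non-degenerate.

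Third, the core computation is the explicit identification, up to a normalizing sign,
\[\mathrm{obs}=\bigl\langle\mathbf{I}'_i([\widetilde\Dpik_{k-1}])\otimes_{\cR_{A,L}}\cR_{A,L}(\delta_{A,k}^{-1}\bm{\delta}_{\bh,k}),\;[\Dpik_{k-1}^k]\bigr\rangle_{\bc_{k-1}}.\]
I would establish this by choosing compatible cocycle representatives: lift $[\Dpik_{k-1}^k]$ to an $A$-valued cocycle using $\widetilde\Dpik_{k-1}$ and $\widetilde\Dpik_k$, transport it via the injection $\mathbf{I}_{k-1}:\Dpik_{k-1}\hookrightarrow\Delta_{\pi,k-1}$, and compare with the canonical lift coming from the characters $\delta_{A,k-1}$ and $\delta_{A,k}$. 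The resulting discrepancy lives in the $\hH^1_{(\varphi,\Gamma)}(\cR_{E,L})$-summand of the decomposition (\ref{Linvariantdecomp}) of $\ext^1(\Delta_{\pi,k-1},\Delta_{\pi,k-1})$; cup-pairing with $[\Dpik_{k-1}^k]$ recovers $\mathrm{obs}$, while the relative twist by $\cR_{A,L}(\delta_{A,k}^{-1}\bm{\delta}_{\bh,k})$ precisely records the difference of derivative characters $\psi_{k-1}-\psi_k$ appearing in the map $\kappa_L$ of (\ref{kappakappaL}). The definition $\sL(\Dpik)_{(k-1)r}=\{\psi:\langle\psi,[\Dpik_{k-1}^k]\rangle_{\bc_{k-1}}=0\}$ then translates $\mathrm{obs}=0$ directly into the membership condition of the theorem.

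The main obstacle will be this third step: because our parameters are non-generic (cf.\ Remark \ref{nongenediffobstru}), the group $\ext^2_{(\varphi,\Gamma)}(\Dpik_k,\Dpik_{k-1})$ does not vanish and the formal-smoothness shortcut available in the generic setting of \cite[Section 4.1.2]{Ding2021} is unavailable, so the cup-product obstruction must be tracked honestly at the level of $(\varphi,\Gamma)$-cocycles --- in parallel with \cite[Theorem 2.7]{HigherLinvariantsGL3Qp}, but additionally bookkeeping the full matrix structure of the rank-$r$ object $\Delta_\pi$.
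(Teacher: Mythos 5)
Your plan is correct and rests on the same Colmez--Greenberg--Stevens mechanism as the paper, but you organize the obstruction theory along a genuinely different (and more streamlined) route, so a comparison is worth recording. You extract the obstruction directly: the multiplication-by-$\epsilon$ sequence on $\homo_{\cR_{E[\epsilon]/\epsilon^2,L}}(\widetilde\Dpik_{k},\widetilde\Dpik_1^{k-1})$ yields a class in $\ext^2_{(\varphi,\Gamma)}(\Dpik_k,\Dpik_1^{k-1})$, namely the cup product of $[\Dpik]$ with the Kodaira--Spencer class of the pair $(\widetilde\Dpik_1^{k-1},\widetilde\Dpik_k)$, and you then pass to $\ext^2_{(\varphi,\Gamma)}(\Dpik_k,\Dpik_{k-1})\cong E$ by d\'evissage via Lemma \ref{phigammacohononcrtialspecial}. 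The paper instead first normalizes so that $\bm{\delta}_{E[\epsilon]/\epsilon^2,k}=\bm{\delta}_{\bh,k}=1$ and then pushes the lifting problem through the explicit sequences (\ref{extksequence2})--(\ref{extksequence345}) and the diagrams (\ref{cgs1}), (\ref{D12D12}), whose net effect is to replace $\widetilde\Dpik_k$ by the constant family $\Delta_\pi\otimes_{\cR_{E,L}}\cR_{E[\epsilon]/\epsilon^2,L}$ and the pair $(\Dpik_k,\Dpik_{k-1})$ by $(\Delta_\pi,\Dpik_{k-1})$, after which the connecting map $c'''$ is identified with cup product against $\mathbf{I}'_{i}([\widetilde{\Dpik}_{k-1}])$ by naturality. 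Your front end is shorter and makes the CGS structure transparent; what the paper's detour (in particular the injectivity Claim for $x_3$) buys is precisely the fact that $\widetilde\Dpik_k$ enters the obstruction only through the character $\bm{\delta}_{E[\epsilon]/\epsilon^2,k}$, i.e., that the components of its class in $\ker(\mathbf{I}')$ and in $\hH^1_{(\varphi,\Gamma)}(\EndO^0(\Delta_{\pi}))$ drop out. In your route this is the real content of the deferred third step: for $[\widetilde\Dpik_{k-1}]$ the corresponding fact is supplied by the hypothesis (the $F^0$-condition forces $\mathbf{I}'_{i}([\widetilde{\Dpik}_{k-1}])$ into $\hH^1_{(\varphi,\Gamma)}(\cR_{E,L})$), while for $[\widetilde\Dpik_k]$ you must deduce it from the given injection into $\Delta_\pi\otimes_{\cR_{E,L}}\cR_{E[\epsilon]/\epsilon^2,L}(\bm{\delta}_{E[\epsilon]/\epsilon^2,k})$ together with Lemmas \ref{pullbackforiso}, \ref{deformationker} and the commutativity of (\ref{bigcommutediag}); once that is spelled out, your cocycle computation identifies the obstruction with the stated pairing and the theorem follows from the definition (\ref{dfnl-invariant}) of $\sL(\Dpik)_{(k-1)r}$, exactly as in the paper.
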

\begin{rmk}The Colmez-Greenberg-Stevens formula (on parabolic Fontaine-Mazur simple $\sL$-invariants) shows that the parabolic simple $\sL$-invariants characterize obstructions to paraboline deformations of type $\omepik$  of $\Dpik$ over $E[\epsilon]/\epsilon^2$.\;
\end{rmk}
\begin{proof}Replacing $\Dpik_{E[\epsilon]/\epsilon^2}$ and $\widetilde{\Dpik}_1^{k-1}$ by $$\Dpik_{E[\epsilon]/\epsilon^2}\otimes_{\cR_{E[\epsilon]/\epsilon^2,L}}\cR_{E[\epsilon]/\epsilon^2,L}(\bm{\delta}_{E[\epsilon]/\epsilon^2,k}^{-1}) \text{\;and\;} \widetilde{\Dpik}_1^{k-1}\otimes_{\cR_{E[\epsilon]/\epsilon^2,L}}\cR_{E[\epsilon]/\epsilon^2,L}(\bm{\delta}_{E[\epsilon]/\epsilon^2,k}^{-1})$$ respectively,\;we can assume that ${\delta}_{E[\epsilon]/\epsilon^2,k}=\bm{\delta}_{\bh,k}$.\;By twisting by $\cR_{E,L}(\bm{\delta}_{\bh,k}^{-1})$ we can assume that $\bm{\delta}_{\bh,k}=1$ and $\bm{\delta}_{\bh,k-1}=\unr(q_L^{-1})z^{\underline{h}}$ for some $\underline{h}\in \BZ_{>0}^{|\Sigma_L|}$.\;Therefore,\;we have  injections \[\widetilde\Dpik_{k}\hookrightarrow {\Delta_\pi}\otimes_{\cR_{E,L}}\cR_{E[\epsilon]/\epsilon^2,L}\] and $\Dpik_{k}\hookrightarrow {\Delta_\pi}$ (in this case,\;$\Delta_{\pi,k}=\Delta_{\pi}$).\;We put $ \Delta_{\pi,E[\epsilon]/\epsilon^2}:={\Delta_\pi}\otimes_{\cR_{E,L}}\cR_{E[\epsilon]/\epsilon^2,L}$.\;By definition,\;we have two commutative diagrams:
\begin{align}\label{extksequence}
	\xymatrix{0 \ar[r] & \Dpik_{k} \ar[d]^{}  \ar[r] & \widetilde\Dpik_{k} \ar[d]^{} \ar[r] & \Dpik_{k} \ar[d]^{} \ar[r] & 0  \\
		0 \ar[r] & \Delta_{\pi}  \ar[r] & \Delta_{\pi,E[\epsilon]/\epsilon^2} \ar[r] & \Delta_{\pi}\ar[r] & 0
	},
\end{align}
and 
\begin{equation}\label{extksequence1}
	\xymatrix{0 \ar[r] & {\Dpik}_1^{k-1} \ar[d]^{\pr}  \ar[r] & \widetilde{\Dpik}_1^{k-1}\ar[r] \ar[d]^{\pr}  &{\Dpik}_1^{k-1} \ar[r] \ar[d]^{\pr}  & 0\\
		0\ar[r] &\Dpik_{k-1}\ar[r] &\widetilde\Dpik_{k-1}  \ar[r] &\Dpik_{k-1}\ar[r] & 0  .}
\end{equation}
Combining the first row of (\ref{extksequence}) with the first row of (\ref{extksequence1}),\;we get a commutative diagram
\[\xymatrix{
	& 0          \ar[d]^{}        & 0    \ar[d]^{}    &  0           \ar[d]^{}                              &                               & \\
	0 \ar[r] & {\Dpik}_1^{k-1}\tee\Dpik_{k}^\vee \ar[d]^{}  \ar[r] &  {\Dpik}_1^{k-1}\tee \widetilde{\Dpik}_{k}^\vee \ar[d]^{}  \ar[r]   &  {\Dpik}_1^{k-1}\tee \Dpik_{k}^\vee    \ar[d]^{}  \ar[r]  & 0\\
	0 \ar[r] & \widetilde{\Dpik}_1^{k-1}\tee \Dpik_{k}^\vee \ar[d]^{}  \ar[r] &  \widetilde{\Dpik}_1^{k-1}\tee \widetilde{\Dpik}_{k}^\vee  \ar[d]^{}  \ar[r]  &  \widetilde{\Dpik}_1^{k-1}\tee \Dpik_{k}^\vee  \ar[d]^{}  \ar[r]     & 0\\
	0 \ar[r] & {\Dpik}_1^{k-1}\tee \Dpik_{k}^\vee \ar[d]^{}  \ar[r] &  {\Dpik}_1^{k-1}\tee \widetilde{\Dpik}_{k}^\vee \ar[d]^{}  \ar[r]   &  {\Dpik}_1^{k-1}\tee \Dpik_{k}^\vee  \ar[d]^{}  \ar[r]    & 0\\
	& 0                 & 0       &  0                                        &                       &
	,}\]
which induces a surjection $\widetilde{\Dpik}_1^{k-1}\tee \widetilde{\Dpik}_{k}^\vee \rightarrow {\Dpik}_1^{k-1}\tee \Dpik_{k}^\vee$.\;Let $\Dpik_{1,2}$ be its kernel.\;We define the morphisms
\begin{equation}
	\begin{aligned}
		u:\;&{\Dpik}_1^{k-1} \rightarrow ({\Dpik}_1^{k-1}\tee \widetilde{\Dpik}_{k}^\vee)\oplus(\widetilde{\Dpik}_1^{k-1}\tee \Dpik_{k}^\vee)\\
		&a\mapsto (a',b'),
	\end{aligned}
\end{equation}
where $a'$ (resp.,\;$b'$) is the image of $a$ in $({\Dpik}_1^{k-1}\tee \widetilde{\Dpik}_{k}^\vee)$ (resp.,\;of $a$ in $(\widetilde{\Dpik}_1^{k-1}\tee \Dpik_{k}^\vee)$),\;and 
\begin{equation}
	\begin{aligned}
		v:\;&({\Dpik}_1^{k-1}\tee \widetilde{\Dpik}_{k}^\vee)\oplus(\widetilde{\Dpik}_1^{k-1}\tee \Dpik_{k}^\vee) \rightarrow \Dpik_{1,2}\\
		&(a',b')\mapsto a''-b'',
	\end{aligned}
\end{equation}
where $a''$ (resp.,\;$b''$) is the image of $a'$ (resp.,\;$b''$) in $\widetilde{\Dpik}_1^{k-1}\tee \widetilde{\Dpik}_{k}^\vee$.\;It is easy to verify that with these definitions,\;the following sequences:
\begin{equation}\label{extksequence2}
	\begin{aligned}
		&0\rightarrow \Dpik_{1,2}\rightarrow \widetilde{\Dpik}_1^{k-1}\tee \widetilde{\Dpik}_{k}^\vee \rightarrow {\Dpik}_1^{k-1}\tee \Dpik_{k}^\vee\rightarrow 0,\\
		&0\rightarrow {\Dpik}_1^{k-1}\tee \Dpik_{k}^\vee \xrightarrow{u}  ({\Dpik}_1^{k-1}\tee \widetilde{\Dpik}_{k}^\vee)\oplus(\widetilde{\Dpik}_1^{k-1}\tee \Dpik_{k}^\vee) \xrightarrow{v} \Dpik_{1,2}\rightarrow 0,\;
	\end{aligned}
\end{equation}
are exact.\;

We can get similar diagrams  and short exact sequences by using the first row of (\ref{extksequence}),\;and the second row of (\ref{extksequence1}) ,\;i.e.,\;
\begin{equation}\label{extksequence34}
	\begin{aligned}
		&0\rightarrow \Dpik_{1,2}'\rightarrow \widetilde{\Dpik}_{k-1}\tee \widetilde{\Dpik}_{k}^\vee \rightarrow {\Dpik}_{k-1}\tee \Dpik_{k}^\vee\rightarrow 0,\\
		&0\rightarrow {\Dpik}_{k-1}\tee \Dpik_{k}^\vee \rightarrow  ({\Dpik}_{k-1}\tee \widetilde{\Dpik}_{k}^\vee)\oplus(\widetilde{\Dpik}_{k-1}\tee \Dpik_{k}^\vee) \rightarrow \Dpik_{1,2}'\rightarrow 0.
	\end{aligned}
\end{equation}
Using the second row of \ref{extksequence} and the second row of \ref{extksequence1}),\;we get
\begin{equation}\label{extksequence345}
	\begin{aligned}
		&0\rightarrow \Dpik_{1,2}''\rightarrow \widetilde{\Dpik}_{k-1}\tee \Delta_{\pi,E[\epsilon]/\epsilon^2}^\vee \rightarrow {\Dpik}_{k-1}\tee \Delta_\pi^\vee\rightarrow 0,\\
		&0\rightarrow {\Dpik}_{k-1}\tee \Delta_\pi^\vee \rightarrow  ({\Dpik}_{k-1}\tee \Delta_{\pi,E[\epsilon]/\epsilon^2}^\vee)\oplus(\widetilde{\Dpik}_{k-1}\tee \Delta_\pi^\vee) \rightarrow \Dpik_{1,2}''\rightarrow 0.\;
	\end{aligned}
\end{equation}

Taking cohomology of  the first row of  (\ref{extksequence2}) and (\ref{extksequence34}),\;we get a commutative diagram 
\begin{equation}\label{cgs1}
	\xymatrix{ \ar[r] & \hH^1_{(\varphi,\Gamma)}(\Dpik_{1,2}) \ar[d]^{u_1}  \ar[r] & \ext^1_{(\varphi,\Gamma)}(\widetilde{\Dpik}_{k},\widetilde{\Dpik}_1^{k-1}) \ar[d]^{v_1}  \ar[r]^{p}  & \ext^1_{(\varphi,\Gamma)}(\Dpik_{k},\Dpik_{1}^{k-1}) \ar[d]^{w_1} \ar[r]^{c} & \hH^2_{(\varphi,\Gamma)}(\Dpik_{1,2}) \ar[d]^{u_2}\\
		\ar[r] & \hH^1_{(\varphi,\Gamma)}(\Dpik_{1,2}')   \ar[r]
		& \ext^1_{(\varphi,\Gamma)}(\widetilde{\Dpik}_{k},\widetilde{\Dpik}_{k-1})  \ar[r]^{p'} & \ext^1_{(\varphi,\Gamma)}(\Dpik_{k},\Dpik_{k-1})  \ar[r]^{c'}  & \hH^2_{(\varphi,\Gamma)}(\Dpik_{1,2}')
		.}
\end{equation}
By Lemma \ref{phigammacohononcrtialspecial} and an easy d\'{e}vissage argument,\;we have $\ext^2_{(\varphi,\Gamma)}(D_k,\Dpik_{1}^{k-2})=0$.\;We deduce that
$w_1$ is surjective,\;and $u_2':\ext^2_{(\varphi,\Gamma)}(\Dpik_{k},\Dpik_{1}^{k-1})\rightarrow \ext^2_{(\varphi,\Gamma)}(\Dpik_{k},\Dpik_{k-1})$ is an isomorphism.\;

We are going to show that 
$u_1$ is surjective and $u_2$ is an isomorphism.\;Taking cohomology of the second row of (\ref{extksequence2}) and (\ref{extksequence34}),\;we get a commutative diagram:
\begin{equation}
	\begin{aligned}
		& \xymatrix@C=2.2ex{ \ar[r] & \ext^1_{(\varphi,\Gamma)}(\Dpik_{k},\Dpik_1^{k-1}) \ar[d]^{u_1'}  \ar[r] & \ext^1_{(\varphi,\Gamma)}(\widetilde{\Dpik}_{k},{\Dpik}_1^{k-1})\oplus \ext^1_{(\varphi,\Gamma)}(\Dpik_{k},\widetilde\Dpik_{1}^{k-1}) \ar[d]^{u_1''}  \ar[r]   & \hH^1_{(\varphi,\Gamma)}(\Dpik_{1,2}) \ar[d]^{u_1}\\
			\ar[r] & \ext^1_{(\varphi,\Gamma)}(\Dpik_{k},\Dpik_{k-1})  \ar[r] & \ext^1_{(\varphi,\Gamma)}(\widetilde{\Dpik}_{k},{\Dpik}_{k-1})\oplus \ext^1_{(\varphi,\Gamma)}(\Dpik_{k},\widetilde\Dpik_{k-1})   \ar[r]   & \hH^1_{(\varphi,\Gamma)}(\Dpik_{1,2}')
	}\end{aligned}
\end{equation}
\begin{equation}
	\begin{aligned}
		& \xymatrix@C=2.2ex{ \ar[r] & \ext^2_{(\varphi,\Gamma)}(\Dpik_{k},\Dpik_1^{k-1}) \ar[d]^{u_2'}  \ar[r] & \ext^2_{(\varphi,\Gamma)}(\widetilde{\Dpik}_{k},{\Dpik}_1^{k-1})\oplus \ext^2_{(\varphi,\Gamma)}(\Dpik_{k},\widetilde\Dpik_{1}^{k-1}) \ar[d]^{u_2''}  \\
			\ar[r] & \ext^2_{(\varphi,\Gamma)}(\Dpik_{k-1},\Dpik_{k-1})   \ar[r] & \ext^2_{(\varphi,\Gamma)}(\widetilde{\Dpik}_{k},{\Dpik}_{k-1})\oplus \ext^2_{(\varphi,\Gamma)}(\Dpik_{k},\widetilde\Dpik_{k-1})   
		}\\
		& \xymatrix@C=2.2ex{   \ar[r]  & \hH^2_{(\varphi,\Gamma)}(\Dpik_{1,2}) \ar[d]^{u_2}  \ar[r] & 0\\
			\ar[r]   & \hH^2_{(\varphi,\Gamma)}(\Dpik_{1,2}') \ar[r] & 0
			.}
	\end{aligned}
\end{equation}
By Lemma \ref{phigammacohononcrtialspecial} and an easy d\'{e}vissage argument,\;we see that $u'_1$ is a surjection and $u'_2$ is an isomorphism.\;Therefore,\;we deduce that  \[\ext^2_{(\varphi,\Gamma)}(\widetilde{\Dpik}_{k},{\Dpik}_1^{k-1}) \rightarrow \ext^2_{(\varphi,\Gamma)}(\widetilde{\Dpik}_{k},{\Dpik}_{k-1}),\; \ext^2_{(\varphi,\Gamma)}(\Dpik_{k},\widetilde\Dpik_{k-1}) \rightarrow \ext^2_{(\varphi,\Gamma)}(\Dpik_{k},\widetilde\Dpik_{k-1})\] are isomorphisms by $4$-lemma.\;This show that $u_2''$ is an isomorphism.\;We deduce from the above long exact sequence that $u_2$ is an isomorphism.\;At the same time,\;we see that the morphism \[\ext^1_{(\varphi,\Gamma)}(\widetilde{\Dpik}_{k},{\Dpik}_1^{k-1}) \rightarrow \ext^1_{(\varphi,\Gamma)}(\widetilde{\Dpik}_{k},{\Dpik}_{k-1}),\;\ext^1_{(\varphi,\Gamma)}(\Dpik_{k},\widetilde\Dpik_{k-1}) \rightarrow \ext^1_{(\varphi,\Gamma)}(\Dpik_{k},\widetilde\Dpik_{k-1})\] in $u_1''$ are surjective by $4$-lemma.\;Then by an  easy d\'{e}vissage argument,\;we deduce that  $u_1$ is surjective.\;We conclude that $v_1$ is a surjection by an easy diagram chasing (see (\ref{cgs1})).\;

Note that  $\Dpik_{E[\epsilon]/\epsilon^2}$ admits  a $\omepik$-filtration with parameter $((\bx_{\pi,})_{1\leq i\leq k},(\bm{\delta}_{E[\epsilon]/\epsilon^2,i})_{1\leq i\leq k})$ such that $\gr^{\widetilde{\cF}}_{i}\Dpik_{E[\epsilon]/\epsilon^2}=\widetilde\Dpik_i$ if and only if  $[\Dpik]\in \ext^1_{(\varphi,\Gamma)}(\Dpik_{k},{\Dpik}_1^{k-1})$ lies in $\mathrm{Im}(p)$ (see (\ref{cgs1})), i.e., $c([\Dpik])=0$. The above discussion implies that $\Dpik_{E[\epsilon]/\epsilon^2}$ admits the desired $\omepik$-filtration if and only if $c'([\Dpik^k_{k-1}])=0$, where 
$[\Dpik^k_{k-1}]=w_1([\Dpik])\in  \ext^1_{(\varphi,\Gamma)}(\Dpik_k,\Dpik_{k-1})$.\;

We now translate the above discussion to the side of $p$-adic differential equations.\;By the first rows of (\ref{extksequence34}) and first rows of (\ref{extksequence345}),\;we have the following commutative diagram:
\begin{equation}\label{D12D12}
	\xymatrix@C=2.2ex{ \ar[r] & \hH^1_{(\varphi,\Gamma)}(\Dpik_{1,2}'') \ar[d]^{x_1}  \ar[r] & \ext^1_{(\varphi,\Gamma)}(\Delta_{\pi,E[\epsilon]/\epsilon^2},\widetilde{\Dpik}_{k-1}) \ar[d]^{v_1'}  \ar[r]^{\;\;\;\;\;p''}  & \ext^1_{(\varphi,\Gamma)}({\Delta_\pi},\Dpik_{k-1}) \ar[d]^{w_1'(\text{i.e.,\;}\mathbf{I}_{k}^{1,1})}_{\simeq} \ar[r]^{\;\;\;\;c''} & \hH^2_{(\varphi,\Gamma)}(\Dpik_{1,2}'') \ar[d]^{x_2}\\
		\ar[r] & \hH^1_{(\varphi,\Gamma)}(\Dpik_{1,2}')   \ar[r]
		& \ext^1_{(\varphi,\Gamma)}(\widetilde{\Dpik}_{k},\widetilde{\Dpik}_{k-1})  \ar[r]^{p'}  & \ext^1_{(\varphi,\Gamma)}(\Dpik_{k},\Dpik_{k-1})  \ar[r]^{\;\;\;\;c'}  & \hH^2_{(\varphi,\Gamma)}(\Dpik_{1,2}')
		.}
\end{equation}
By Lemma \ref{pullbackforiso},\;we get that $w'_1$ is an isomorphism.\;

Replacing $\Dpik_{k}^\vee$ (resp.\;$\widetilde{\Dpik}_{k}^\vee$) in  (\ref{extksequence1}) and (\ref{extksequence2}) by $\Delta_\pi^\vee$ (resp.\;$\Delta_{\pi,E[\epsilon]/\epsilon^2}^\vee$),\;we get the following diagram,\;
\[\xymatrix{0 \ar[r] & \Delta_\pi^\vee\tee \Dpik_{k-1}\ar@{_(->}[d]  \ar[r] & \Delta_\pi^\vee\tee \widetilde{\Dpik}_{k-1}\ar[r] \ar@{_(->}[d]  & \Delta_\pi^\vee\tee \Dpik_{k-1}\ar[r] \ar@{_(->}[d]  & 0\\
	0 \ar[r] & \Dpik_{1,2}'' \ar[r] & \Delta_{\pi,E[\epsilon]/\epsilon^2}^\vee\tee \widetilde{\Dpik}_{k-1}  \ar[r] & \Delta_\pi^\vee\tee \Dpik_{k-1} \ar[r] & 0  .}\]
Combining the commutative diagram (\ref{D12D12}) with the cohomology of the above diagram,\;we get
\begin{equation}
	\begin{aligned}
		&\xymatrix{
			\ar[r] &  \ext^1_{(\varphi,\Gamma)}(\Delta_{\pi},\Dpik_{k-1})  \ar[r] \ar[d]^{x_1'}
			& \ext^1_{(\varphi,\Gamma)}(\Delta_{\pi},\widetilde{\Dpik}_{k-1})   \ar[d]^{v_1''}_{\simeq} \\
			\ar[r] &\hH^1_{(\varphi,\Gamma)}(\Dpik_{1,2}'')   \ar[r] \ar[d]^{x_1}
			& \ext^1_{(\varphi,\Gamma)}(\Delta_{\pi,E[\epsilon]/\epsilon^2},\widetilde{\Dpik}_{k-1})   \ar[d]^{v_1'} \\
			\ar[r] & \hH^1_{(\varphi,\Gamma)}(\Dpik_{1,2}')   \ar[r]
			& \ext^1_{(\varphi,\Gamma)}(\widetilde{\Dpik}_{k},\widetilde{\Dpik}_{k-1}) }\\
		&\hspace{100pt}\xymatrix{  \ar[r]^{p'''\hspace{40pt}}  & \ext^1_{(\varphi,\Gamma)}({\Delta_\pi},\Dpik_{k-1})  \ar[r]^{c'''} \ar@{=}[d]^{w_1''}_{\simeq}  &\ext^2_{(\varphi,\Gamma)}(\Delta_{\pi},\Dpik_{k-1}) \ar[d]^{x_2'}\\
			\ar[r]^{p''\hspace{40pt}}  & \ext^1_{(\varphi,\Gamma)}({\Delta_\pi},\Dpik_{k-1})  \ar[r]^{c''} \ar[d]^{w_1'}_{\simeq} &\hH^2_{(\varphi,\Gamma)}(\Dpik_{1,2}'') \ar[d]^{x_2}\\
			\ar[r]^{p'\hspace{40pt}}  & \ext^1_{(\varphi,\Gamma)}(\Dpik_{k},\Dpik_{k-1})  \ar[r]^{c'}  & \hH^2_{(\varphi,\Gamma)}(\Dpik_{1,2}').}
	\end{aligned}
\end{equation}
\text{\textbf{Claim.}} The composition $x_3=x_2\circ x_2':\ext^2_{(\varphi,\Gamma)}({\Delta_\pi},\Dpik_{k-1})\rightarrow\ext^2_{(\varphi,\Gamma)}(\Dpik_{1,2}') $ is injective.\;\\
\text{\textbf{Proof of the claim.}}
By the second row of  (\ref{extksequence34}) and second row of  (\ref{extksequence345}) and a diagram chasing,\;we get the following commutative diagram:
\begin{equation}
		\xymatrix{ \ar[r] & \ext^2_{(\varphi,\Gamma)}({\Delta_\pi},\Dpik_{k-1}) \ar@{=}[d]  \ar[r] & \ext^2_{(\varphi,\Gamma)}({\Delta_\pi},\widetilde{\Dpik}_{k-1}) \ar[d] \\
			\ar[r] & \ext^2_{(\varphi,\Gamma)}({\Delta_\pi},\Dpik_{k-1}) \ar@{=}[d]^{\simeq}_{(\ast\ast)} \ar[r]
			& \ext^2_{(\varphi,\Gamma)}({\Delta_\pi},\widetilde{\Dpik}_{k-1}) \oplus\ext^2_{(\varphi,\Gamma)}(\Delta_{\pi,E[\epsilon]/\epsilon^2},\Dpik_{k-1}) \ar[d]  \\
		\ar[r] & {\ext}^2_{(\varphi,\Gamma)}({\Dpik}_{k},\Dpik_{k-1}) \ar[r]			& {\ext}^2_{(\varphi,\Gamma)}({\Dpik}_{k},\widetilde{\Dpik}_{k-1})\oplus{\ext}^2_{(\varphi,\Gamma)}(\widetilde{\mathbf{D}}_{k},\Dpik_{k-1})}	
\end{equation}
\begin{equation}
		\hspace{250pt}	\xymatrix{  \ar[r] &\ext^2_{(\varphi,\Gamma)}({\Delta_\pi},\Dpik_{k-1}) \ar[d]^{x_2} \ar[r] &0 \\
			\ar[r]  & \hH^2_{(\varphi,\Gamma)}(\Dpik_{1,2}'') \ar[d]^{x_2'} \ar[r] &0 \\
			\ar[r]  & \hH^2_{(\varphi,\Gamma)}(\Dpik_{1,2}')\ar[r] &0 
			.}
\end{equation}		
The isomorphism in $(\ast\ast)$ follows from Lemma \ref{pullbackforiso}.\;By Lemma \ref{pullbackforiso},\;an easy d\'{e}vissage argument and $4$-Lemma,\;we see that the morphisms 
\begin{equation}
	\begin{aligned}
		&\ext^2_{(\varphi,\Gamma)}({\Delta_\pi},\widetilde{\Dpik}_{k-1}) \rightarrow \ext^2_{(\varphi,\Gamma)}(\Dpik_{k},\widetilde{\Dpik}_{k-1}),\;\\
		&\ext^2_{(\varphi,\Gamma)}(\Delta_{\pi,E[\epsilon]/\epsilon^2},\Dpik_{k-1})  \rightarrow \ext^2_{(\varphi,\Gamma)}(\Dpik_{k},\widetilde{\Dpik}_{k-1})
	\end{aligned}
	\end{equation}
are isomorphisms. This implies that $x_3$ is an injection by an easy  diagram chasing.\;This completes the proof of the claim.\;

Therefore,\;we conclude that $[\Dpik^k_{k-1}]\in \ext^1_{(\varphi,\Gamma)}(\Dpik_k,\Dpik_{k-1})$ lies in $\mathrm{Im}(p')$ if and only if the class $c'''((w_1'\circ w_1'')^{-1}([\Dpik^k_{k-1}]))=0$.\;Recall that 
$$\mathbf{I}'_{i}([\widetilde{\Dpik}_{k-1}])\in \ext^1_{(\varphi,\Gamma)}(\Dpik_{k-1},\Delta_{\pi}\tee \cR_{E,L}(\bm{\delta}_{\bh,k-1}))$$ belongs to the image of $\hH^1_{(\varphi,\Gamma)}(\cR_{E,L})$ via the injection $$ {\mathbf{I}}''_{i}\circ  \widetilde{\mathbf{I}}'_{i}: \hH^1_{(\varphi,\Gamma)}(\cR_{E,L}) \rightarrow  \ext^1_{(\varphi,\Gamma)}(\Dpik_{k-1},\Delta_{\pi}\tee \cR_{E,L}(\bm{\delta}_{\bh,k-1})).\;$$
By the naturality of cup products,\;we deduce
\[\xymatrix@C=3.2ex{
	\ext^0_{(\varphi,\Gamma)}(\Delta_{\pi},\Delta_{\pi}) \ar[d]_{c_0}  & \times \hspace{10pt}\ext^1_{(\varphi,\Gamma)}(\Delta_{\pi},\Dpik_{k-1}) \ar@{=}[d]  \ar[r]^{\cup} & \hspace{10pt}\ext^1_{(\varphi,\Gamma)}(\Delta_{\pi},\Dpik_{k-1}) \ar[d]_{c'''}\\
	\ext^1_{(\varphi,\Gamma)}(\Delta_{\pi},\Delta_{\pi})   & \times \hspace{10pt} \ext^1_{(\varphi,\Gamma)}(\Delta_{\pi},\Dpik_{k-1})  \ar[r]^{\cup}_{\hspace{5pt}\text{perfect}} & \hspace{10pt}\ext^2_{(\varphi,\Gamma)}(\Delta_{\pi},\Dpik_{k-1}) .}\]
This shows that the connection map $c'''$  is given by the cup product $\langle [\widetilde{\Dpik}_{k-1}],-\rangle$,\;i.e,\;
\[\langle-,-\rangle:\ext^1_{(\varphi,\Gamma)}(\Delta_{\pi},\Dpik_{k-1})\times \ext^1_{(\varphi,\Gamma)}({\Delta_\pi},{\Delta_\pi})\xrightarrow{\cup}\ext^2_{(\varphi,\Gamma)}(\Delta_{\pi},\Dpik_{k-1})\cong E.\;\]
So we have $c'''((w_1'\circ w_1'')^{-1}([\Dpik^k_{k-1}]))=0$ if and only if $\langle(w_1'\circ w_1'')^{-1}([\Dpik^k_{k-1}]),\mathbf{I}'_{i}([\widetilde{\Dpik}_{k-1}])\rangle=0$.\;Then the conclusion  follows from the definition of parabolic Fontaine-Mazur simple $\sL$-invariants.\;
\end{proof}

As a corollary of Theorem \ref{defor1thdefor},\;we deduce

\begin{cor}\label{corlinvaraint}The map $\kappa:F_{\Dpik,\cF}^0(E[\epsilon]/\epsilon^2)\longrightarrow \prod_{ir\in \Delta_n(k)}\homo(L^\times,E)$  (see (\ref{kappakappaL})) factors through a surjective map
$$\kappa:F_{\Dpik,\cF}^0(E[\epsilon]/\epsilon^2)\twoheadlongrightarrow \sL(\Dpik)=\prod_{sr\in \Delta_n(k)}\sL(\Dpik)_{sr}.$$
\end{cor}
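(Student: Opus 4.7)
Both assertions will be deduced from Theorem \ref{defor1thdefor} applied in opposite directions: the ``only if'' implication yields the containment $\mathrm{Im}(\kappa)\subseteq\sL(\Dpik)$, while the ``if'' implication, combined with an inductive gluing, yields surjectivity onto $\sL(\Dpik)$.

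For the containment, fix a deformation $(D_\epsilon,\pi_\epsilon,\cF_\epsilon,\bm{\delta}_\epsilon)\in F_{\Dpik,\cF}^0(E[\epsilon]/\epsilon^2)$ with associated tangent characters $\psi_i:=(\delta_{\epsilon,i}\bm{\delta}_{\bh,i}^{-1}-1)/\epsilon$, and fix $ir\in\Delta_n(k)$. The rank-$2r$ subquotient $\fil_{i+1}^{\cF_\epsilon}D_\epsilon/\fil_{i-1}^{\cF_\epsilon}D_\epsilon$ is a deformation of $\Dpik^{i+1}_i$ lying in $F^0_{\Dpik^{i+1}_i,\cF}(E[\epsilon]/\epsilon^2)$, with induced $\Omega_{[i,i+1]}$-filtration of parameters $((\bx_{\pi,i},\bx_{\pi,i+1}),(\delta_{\epsilon,i},\delta_{\epsilon,i+1}))$. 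Applying Theorem \ref{defor1thdefor} to $\Dpik^{i+1}_i$ in place of $\Dpik$ (with $k$ replaced by $2$, so that the $\sL$-invariant appearing in the theorem coincides with $\sL(\Dpik)_{ir}$) in the ``only if'' direction forces $\psi_i-\psi_{i+1}\in\sL(\Dpik)_{ir}$, as claimed.

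For surjectivity, take $(\psi_{ir})_{ir\in\Delta_n(k)}\in\sL(\Dpik)$ and lift it to $(\psi_1,\ldots,\psi_k)\in\homo(L^\times,E)^k$ with $\psi_i-\psi_{i+1}=\psi_{ir}$ (e.g.\ set $\psi_k=0$ and $\psi_j=\sum_{l=j}^{k-1}\psi_{lr}$); put $\delta_{\epsilon,i}:=\bm{\delta}_{\bh,i}(1+\psi_i\epsilon)$. I then build, by induction on $j\in\{1,\ldots,k\}$, an element $\widetilde\Dpik_1^{j}\in F^0_{\Dpik_1^{j},\cF}(E[\epsilon]/\epsilon^2)$ with parameters $((\bx_{\pi,i})_{i\leq j},(\delta_{\epsilon,i})_{i\leq j})$. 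For $j=1$, pick any preimage $\widetilde\Dpik_1$ of $\psi_1$ under the natural surjection $F^0_{\Dpik_1}(E[\epsilon]/\epsilon^2)\twoheadlongrightarrow\homo(L^\times,E)$ supplied by the analysis of $F^0_D$ from Section \ref{dfndeftyoeomega}. For the inductive step, choose a component deformation $\widetilde\Dpik_{j+1}\in F^0_{\Dpik_{j+1}}(E[\epsilon]/\epsilon^2)$ with character $\delta_{\epsilon,j+1}$, and apply Theorem \ref{defor1thdefor} to $\Dpik_1^{j+1}$ (with $k$ replaced by $j+1$): its gluing criterion $\mathbf{I}'_j([\widetilde\Dpik_j])\otimes\cR(\delta_{\epsilon,j+1}^{-1}\bm{\delta}_{\bh,j+1})\in\sL(\Dpik)_{jr}$ reduces, upon unwinding (see below), to $\psi_j-\psi_{j+1}=\psi_{jr}\in\sL(\Dpik)_{jr}$, which holds by hypothesis; the theorem then produces $\widetilde\Dpik_1^{j+1}$. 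The final output $\widetilde\Dpik_1^{k}$ lies in $F^0_{\Dpik,\cF}(E[\epsilon]/\epsilon^2)$ and satisfies $\kappa(\widetilde\Dpik_1^{k})=(\psi_{ir})_{ir}$.

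The main technical point requiring care is the unwinding of the gluing criterion: under the injection $\mathbf{I}''_j\circ\widetilde{\mathbf{I}}'_j:\homo(L^\times,E)\hookrightarrow\ext^1_{(\varphi,\Gamma)}(\Dpik_j,\Delta_{\pi,j})$, the class $\mathbf{I}'_j([\widetilde\Dpik_j])$ corresponds to the character-deformation $\psi_j$, a compatibility implicit in the direct sum decomposition (\ref{Linvariantdecomp}) together with the commutativity of diagram (\ref{bigcommutediag}); the subsequent twist by $\cR(\delta_{\epsilon,j+1}^{-1}\bm{\delta}_{\bh,j+1})$ then shifts this character coordinate by $-\psi_{j+1}$, producing $\psi_j-\psi_{j+1}$. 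Once this identification is secured (and analogously in the containment direction), the ``if and only if'' of Theorem \ref{defor1thdefor} delivers both halves of the corollary uniformly.
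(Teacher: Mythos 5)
Your proposal is correct and follows essentially the route the paper intends: the paper derives the corollary directly from Theorem \ref{defor1thdefor}, using its ``only if'' direction for the containment $\mathrm{Im}(\kappa)\subseteq\sL(\Dpik)$ and its ``if'' direction applied step by step (exactly your inductive gluing, as spelled out in the proof of Corollary \ref{corlinvaraint1}) for surjectivity. Your passage to the rank-$2r$ subquotients $\Dpik_i^{i+1}$ and the identification of $\mathbf{I}'_j([\widetilde\Dpik_j])$ with $\psi_j$ via (\ref{Linvariantdecomp}) and (\ref{bigcommutediag}) just make explicit details the paper leaves implicit.
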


We end this section with a computation of the $E$-dimension of the tangent space $F_{\Dpik,\cF}^0(E[\epsilon]/\epsilon^2)$. Unlike the generic case \cite[Proposition 4.1.16]{Ding2021},\;the corollary below will show that the morphism $\Upsilon^0$ in (\ref{FmorphismGRD}) is not formally smooth,\;so that the \cite[Proposition 4.1.17]{Ding2021} is not suitable for us.\;Instead,\;we will use Lemma \ref{fibredim}.\;We need some preliminaries.\;See (\ref{kappakappaL}) for the definition of $\Upsilon^0(E[\epsilon]/\epsilon^2)$,\;$\omega_0$ and $\kappa_L$.\;
%We put (see \ref{kappakappaL})
%\begin{equation}
%	\omega=\kappa_L\circ\omega^0:	\prod_{i=1}^kF_{\gr_i^{\cF}\Dpik}^0(E[\epsilon]/\epsilon^2)\longrightarrow \prod_{ir\in \Delta_n(k)}\homo(L^\times,E).\;
%\end{equation}
\begin{cor}\label{corlinvaraint1}The image of the map $$\Upsilon^0(E[\epsilon]/\epsilon^2):F_{\Dpik,\cF}^0(E[\epsilon]/\epsilon^2)\longrightarrow \prod_{i=1}^kF_{\gr_i^{\cF}\Dpik}^0(E[\epsilon]/\epsilon^2)$$ is given by $(\kappa_L\circ\omega^0)^{-1}(\sL(\Dpik))$.\;In particular,\;we have
$\dim_E\im\Upsilon^0(E[\epsilon]/\epsilon^2)=1+d_L(k+\frac{n(r-1)}{2})$ and  $\dim_E\mathrm{coker}(\Upsilon^0(E[\epsilon]/\epsilon^2))=k-1.$
\end{cor}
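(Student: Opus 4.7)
The first assertion follows by iteratively applying the Colmez--Greenberg--Stevens formula (Theorem \ref{defor1thdefor}). Given $(\widetilde\Dpik_i, \widetilde\delta_i)_{i=1}^k \in \prod_i F^0_{\gr_i^\cF \Dpik}(E[\epsilon]/\epsilon^2)$, my plan is to try to reconstruct a filtered deformation $\Dpik_{E[\epsilon]/\epsilon^2}$ realizing this graded data, by building $\widetilde\Dpik_1^j$ inductively. At stage $j \in \{2, \ldots, k\}$, having already produced $\widetilde\Dpik_1^{j-1}$ (a filtered deformation of $\Dpik_1^{j-1}$), I would invoke Theorem \ref{defor1thdefor} with $\Dpik$ replaced by $\Dpik_1^j$ (noting $\sL(\Dpik_1^j)_{(j-1)r} = \sL(\Dpik)_{(j-1)r}$, since both are defined via the nondegenerate pairing with the common extension class $[\Dpik_{j-1}^j]$). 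The theorem produces an obstruction in $\sL(\Dpik)_{(j-1)r}$. Unpacking this obstruction via the isomorphism $\widetilde{\mathbf{I}}'_{j-1}$ of (\ref{i3p3q3}) and the twist by $\widetilde\delta_j^{-1}\bm\delta_{\bh,j}$ appearing in the statement, the condition becomes $\psi_{j-1} - \psi_j \in \sL(\Dpik)_{(j-1)r}$, where $\psi_i := (\widetilde\delta_i \bm\delta_{\bh,i}^{-1} - 1)/\epsilon$. By the definitions (\ref{kappakappaL}) of $\omega^0$ and $\kappa_L$, the conjunction of these $k-1$ conditions as $j$ ranges over $\{2, \ldots, k\}$ is exactly $\kappa_L \circ \omega^0\bigl((\widetilde\Dpik_i, \widetilde\delta_i)_i\bigr) \in \sL(\Dpik)$, giving the desired image description.

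For the dimension assertions, I would argue via codimension. First, $\kappa_L \circ \omega^0$ is surjective onto $\prod_{ir \in \Delta_n(k)}\hom(L^\times, E)$: this follows from the manifest surjectivity of $\kappa_L$ (sending $(\psi_i)_i \mapsto (\psi_i - \psi_{i+1})_{ir}$) together with a construction showing $\omega^0$ is surjective---given any $(\psi_i)_i$, take $\widetilde\delta_i := \bm\delta_{\bh,i}(1 + \psi_i \epsilon)$ and $\widetilde\Dpik_i := \gr_i\Dpik \otimes_{\cR_{E,L}} \cR_{E,L}(1 + \psi_i \epsilon)$, the inclusion into $\Delta_\pi \otimes \cR_{E[\epsilon]/\epsilon^2, L}(\widetilde\delta_i)$ being obtained by twisting $\mathbf{I}_i$. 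Since each $\sL(\Dpik)_{ir}$ has codimension $1$ in $\hom(L^\times, E)$ (the orthogonal of the nonzero class $[\Dpik_i^{i+1}]$ under Proposition \ref{pairingdef}), $\sL(\Dpik)$ has codimension $k-1$ in the $(k-1)(d_L+1)$-dimensional target. By surjectivity, $\im \Upsilon^0(E[\epsilon]/\epsilon^2) = (\kappa_L \circ \omega^0)^{-1}(\sL(\Dpik))$ has codimension exactly $k-1$ in $\prod_i F^0_{\gr_i^\cF\Dpik}(E[\epsilon]/\epsilon^2)$, so $\dim_E \mathrm{coker}(\Upsilon^0(E[\epsilon]/\epsilon^2)) = k-1$ at once.

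To get $\dim_E \im \Upsilon^0(E[\epsilon]/\epsilon^2) = 1 + d_L(k + \frac{n(r-1)}{2})$, it remains to compute $\dim_E \prod_i F^0_{\gr_i^\cF\Dpik}(E[\epsilon]/\epsilon^2)$. For each $i$, I would use the natural map $F^0_{\gr_i^\cF\Dpik}(E[\epsilon]/\epsilon^2) \to \hom(L^\times, E)$ extracting the character perturbation, together with an analysis of its fibers through the morphism $\mathbf{I}'_i$ of (\ref{preliminamorphism2}), to compute $\dim_E F^0_{\gr_i^\cF\Dpik}(E[\epsilon]/\epsilon^2) = 1 + d_L\bigl(1 + \frac{r(r-1)}{2}\bigr)$, invoking Lemma \ref{deformationker} for the relevant kernel dimension. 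Multiplying by $k$ and using $n = kr$ gives $\dim_E \prod_i F^0_{\gr_i^\cF\Dpik}(E[\epsilon]/\epsilon^2) = k + d_L\bigl(k + \frac{n(r-1)}{2}\bigr)$; subtracting the codimension $k-1$ yields the claim. The main technical obstacle is the careful identification in the first paragraph of the Colmez--Greenberg--Stevens obstruction at stage $j$ with the explicit linear combination $\psi_{j-1} - \psi_j$: this requires tracking, at each inductive step, how the class of the previously built $\widetilde\Dpik_1^{j-1}$ (viewed via $\widetilde{\mathbf{I}}'_{j-1}$) depends only on $\psi_{j-1}$ and not on earlier $\psi_i$, which can be arranged inductively by compatible choices of liftings through the identifications in diagram (\ref{bigcommutediag}).
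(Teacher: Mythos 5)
Your proposal is correct and follows essentially the same route as the paper: the image description is obtained by applying the Colmez--Greenberg--Stevens theorem (Theorem \ref{defor1thdefor}) step by step (using that $\sL(\Dpik)_{(j-1)r}$ only depends on the extension class $[\Dpik_{j-1}^{j}]$), and the dimension assertions reduce to $\dim_E F^0_{\gr_i^{\cF}\Dpik}(E[\epsilon]/\epsilon^2)=1+d_L\bigl(1+\tfrac{r(r-1)}{2}\bigr)$ together with the codimension $k-1$ of $\sL(\Dpik)$, exactly as in the paper, which organizes the count as fiber-plus-base (citing the proof of \cite[Proposition 4.1.4]{Ding2021} for the fiber dimension $d_L\tfrac{n(r-1)}{2}$) rather than subtracting the codimension as you do. One small caution on your last step: the fiber of $F^0_{\gr_i^{\cF}\Dpik}(E[\epsilon]/\epsilon^2)\rightarrow \homo(L^\times,E)$ has dimension $d_L\tfrac{r(r-1)}{2}$ (which is what makes the value $1+d_L\bigl(1+\tfrac{r(r-1)}{2}\bigr)$ come out), so it cannot be identified literally with $\ker(\mathbf{I}'_i)$ as quantified in Lemma \ref{deformationker}; the safe reference here is \cite[Proposition 4.1.4]{Ding2021}, as used in the paper.
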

\begin{proof}By Corollary \ref{corlinvaraint},\;we see that $\im\Upsilon^0(E[\epsilon]/\epsilon^2)$ is contained in $(\kappa_L\circ\omega^0)^{-1}(\sL(\Dpik))$. Since $\kappa_L$ is a surjection with kernel $\homo(L^\times,E)$,\;it remains to show that $$\im\Upsilon^0(E[\epsilon]/\epsilon^2)=(\omega^0)^{-1}(\kappa_L^{-1}(\sL(\Dpik))).\;$$For any $(\psi_i)_{1\leq i\leq k}\in \kappa_L^{-1}(\sL(\Dpik))$,\;and $(\widetilde\Dpik_i)_{1\leq i\leq k}\in(\omega^0)^{-1}((\psi_i)_{1\leq i\leq k}))\subseteq \prod_{i=1}^kF_{\gr_i^{\cF}\Dpik}^0(E[\epsilon]/\epsilon^2)$, we need to show that there exists a deformation $\Dpik_{E[\epsilon]/\epsilon^2}$ of $\Dpik$ over $\cR_{E[\epsilon]/\epsilon^2,L}$ which admits  an $\omepik$-filtration $\widetilde{\cF}$ with parameter $((\bx_{\pi,i})_{1\leq i\leq k},(\bm{\delta}_{E[\epsilon]/\epsilon^2,i})_{1\leq i\leq k})$ and  $\gr^{\widetilde{\cF}}_{i}\Dpik_{E[\epsilon]/\epsilon^2}=\widetilde\Dpik_i$ for $1\leq i\leq k$.\;This can be done by using Theorem \ref{defor1thdefor} step by step.\;By the proof of \cite[Proposition 4.1.4]{Ding2021},\;we see that 
\[\dim_E(\omega^0)^{-1}((\psi_i)_{1\leq i\leq k}))=k\cdot d_L\frac{r(r-1)}{2}= d_L\frac{n(r-1)}{2}.\]
We see that $\dim_E\im\Upsilon^0(E[\epsilon]/\epsilon^2)= d_L\frac{n(r-1)}{2}+\dim_E\homo(L^\times,E)+\dim_E\sL(\Dpik)=1+d_L(k+\frac{n(r-1)}{2})$.
\end{proof}

In the sequel,\;we assume that $\cF$ is furthermore a non-critical special  $\omepik$-filtration on $\Dpik$ (see Definition \ref{dfnnoncriticalspecial}),\;i.e.,\;we further assume that $\Dpik^{i+1}_{i}$ is non-split for all $ir\in\Delta_n(k)$.\;We put
\[\EndO_{\cF}(\Dpik,\Dpik):=\{f\in \EndO(\Dpik,\Dpik)\;|\;f(\fil_i^{\cF}\Dpik)\subseteq \fil_i^{\cF}\Dpik\},\]
which is a saturated $(\varphi,\Gamma)$-submodule of $\EndO(D,D)$.\;For any $f\in \EndO_{\cF}(\Dpik,\Dpik)$,\;we see that $f(\Dpik_i)\subset \Dpik_i$ for all $1\leq i\leq k$.\;Thus $f$ produces an element $(f_i)_{1\leq i\leq k}\in \prod_{i=1}^k\EndO(\Dpik_i)$.\;

The following proposition computes the cohomology of $\EndO_{\cF}(\Dpik,\Dpik)$,\;and gives some properties of the functor  $F_{\Dpik,\cF}$.\;

\begin{pro}\label{dimlemmaDpik}
(1) We have $\hH^0_{(\varphi,\Gamma)}(\EndO(\Dpik,\Dpik))\cong\hH^0_{(\varphi,\Gamma)}(\EndO_{\cF}(\Dpik,\Dpik))\cong E$.\;\\
(2) We have  $\hH^2_{(\varphi,\Gamma)}(\EndO_{\cF}(\Dpik,\Dpik))=0$.\;\\
(3) The functor  $F_{\Dpik,\cF}$ is pro-representable and is formally smooth over $E$ of dimension  $1+d_L\frac{n(n+r)}{2}$.\;
\end{pro}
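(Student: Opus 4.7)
The plan is to analyze the $(\varphi,\Gamma)$-cohomology of $\EndO_{\cF}(\Dpik, \Dpik)$ through its natural decreasing filtration by sub-$(\varphi,\Gamma)$-modules $\fil^i := \{f \in \EndO_{\cF}(\Dpik,\Dpik) : f(\fil_j^{\cF}\Dpik) \subset \fil_{j-i}^{\cF}\Dpik \text{ for all } j\}$, $i \geq 0$, for which $\fil^0 = \EndO_{\cF}(\Dpik,\Dpik)$, $\fil^k = 0$, and the graded pieces are $\gr^i \cong \bigoplus_{j=i+1}^{k} \homo(\Dpik_j, \Dpik_{j-i})$ as $(\varphi,\Gamma)$-modules over $\cR_{E,L}$. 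Lemma \ref{phigammacohononcrtialspecial}(b) immediately gives $\hH^0(\gr^i) = 0$ for $i \geq 1$, and Lemma \ref{phigammacohononcrtialspecial}(a) gives $\hH^2(\gr^i) = 0$ for $i \neq 1$; the remaining one-dimensional pieces $\hH^2(\Dpik_j,\Dpik_{j-1}) \cong E$ for $j \geq 2$ are pinned down by the argument of Lemma \ref{isoI2i} and Lemma \ref{pullbackforiso}.

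For (1), the vanishing of $\hH^0(\Dpik_j, \Dpik_i)$ for $i \neq j$ allows a dévissage along the filtration of $\Dpik$ showing that every $(\varphi,\Gamma)$-endomorphism of $\Dpik$ automatically preserves $\fil_\bullet^{\cF}$: the image $f(\Dpik_1)$ has no component in $\Dpik_j$ for $j \geq 2$, and induction on the filtration index yields $f(\fil_i^{\cF}) \subset \fil_i^{\cF}$. This gives $\hH^0(\EndO(\Dpik,\Dpik)) = \hH^0(\EndO_{\cF}(\Dpik,\Dpik))$, and the same vanishing injects $\hH^0(\EndO_{\cF})$ into $\hH^0(\gr^0) = \bigoplus_j \hH^0(\EndO(\Dpik_j))$, each diagonal summand being $E$ because $\Dpik_j[1/t] \cong \Delta_{\pi,j}[1/t]$ and $\Delta_{\pi}$ is absolutely irreducible. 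Finally, a diagonal endomorphism acting as $\lambda_j$ on $\Dpik_j$ extends to the consecutive extension $\Dpik^{j+1}_j$ if and only if $\lambda_j = \lambda_{j+1}$, since a non-scalar lift would furnish a splitting of $\Dpik^{j+1}_j$, contradicting the non-split hypothesis. Thus all the $\lambda_j$ coincide and $\hH^0(\EndO_{\cF}(\Dpik,\Dpik)) \cong E$.

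For (2), ascending dévissage along the filtration, using $\hH^2(\gr^i) = 0$ for $i \geq 2$, yields $\hH^2(\fil^1) \cong \hH^2(\gr^1) = \bigoplus_{j=2}^{k} \hH^2(\Dpik_j, \Dpik_{j-1})$. Combining with $\hH^2(\gr^0) = 0$, the long exact sequence attached to $0 \to \fil^1 \to \EndO_{\cF}(\Dpik,\Dpik) \to \gr^0 \to 0$ identifies $\hH^2(\EndO_{\cF}(\Dpik,\Dpik))$ with the cokernel of a connecting map $\partial : \hH^1(\gr^0) \to \hH^2(\gr^1)$. The component $\partial_j : \hH^1(\EndO(\Dpik_j)) \to \hH^2(\Dpik_j,\Dpik_{j-1})$ is, up to sign, the Yoneda cup product with the extension class $[\Dpik^j_{j-1}]$. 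Restricting to the summand $\hH^1(\cR_{E,L}) \hookrightarrow \hH^1(\EndO(\Dpik_j))$ arising from the decomposition (\ref{Linvariantdecomp}), this coincides with the pairing $\langle [\Dpik^j_{j-1}], \cdot \rangle_{\bc_{j-1}}$ of Proposition \ref{pairingdef}; its non-degeneracy, together with the non-split assumption $[\Dpik^j_{j-1}] \neq 0$, forces this cup product to be surjective onto the one-dimensional $\hH^2(\Dpik_j,\Dpik_{j-1})$. Hence $\partial$ is surjective and $\hH^2(\EndO_{\cF}(\Dpik,\Dpik)) = 0$.

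For (3), pro-representability follows from Schlessinger's criterion: the finite-dimensionality of the tangent space $\hH^1(\EndO_{\cF}(\Dpik,\Dpik))$ combined with $\hH^0(\EndO_{\cF}) = E$ (which rules out non-scalar infinitesimal automorphisms) verifies the required conditions. Formal smoothness is then the standard consequence of the vanishing in (2), since any obstruction to lifting an Artinian deformation along a small extension lives in $\hH^2(\EndO_{\cF}(\Dpik,\Dpik))$. The dimension count comes from the Euler--Poincar\'{e} formula for $(\varphi,\Gamma)$-cohomology on $\cR_{E,L}$, $\chi(M) = -d_L \cdot \mathrm{rk}_{\cR_{E,L}}(M)$; since $\mathrm{rk}(\EndO_{\cF}(\Dpik,\Dpik)) = r^2 \cdot \frac{k(k+1)}{2} = \frac{n(n+r)}{2}$, we obtain $\dim_E \hH^1(\EndO_{\cF}) = 1 - \chi + 0 = 1 + d_L \cdot \frac{n(n+r)}{2}$. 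The main technical obstacle lies in step (2): one must identify the abstract connecting map with the explicit cup-product pairing of Proposition \ref{pairingdef} and exploit its non-degeneracy even though that pairing is in general not perfect, and this is precisely where the non-critical special hypothesis (which ensures all $[\Dpik^{j+1}_j] \neq 0$) enters decisively.
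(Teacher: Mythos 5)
Your parts (1) and (3) are essentially fine (part (1) by a slightly different route than the paper, which kills $\homo_{(\varphi,\Gamma)}(\Dpik_k,\Dpik)$ and inducts on $k$; part (3) is the same standard deformation-theoretic consequence of (1), (2) and the Euler--Poincar\'e formula). The genuine gap is in part (2), at the step you yourself flag as the crux. You compute $\hH^2_{(\varphi,\Gamma)}(\EndO_{\cF}(\Dpik,\Dpik))$ as the cokernel of the connecting map $\partial\colon \hH^1(\gr^0)\to \hH^2(\gr^1)\cong\bigoplus_{j\geq 2}\ext^2_{(\varphi,\Gamma)}(\Dpik_j,\Dpik_{j-1})$, and you argue each component is surjective because, restricted to a copy of $\hH^1_{(\varphi,\Gamma)}(\cR_{E,L})$, it is the pairing $\langle [\Dpik^j_{j-1}],-\rangle_{\bc_{j-1}}$ of Proposition \ref{pairingdef}, whose ``non-degeneracy'' together with $[\Dpik^j_{j-1}]\neq 0$ forces surjectivity. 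That implication fails for $r>1$: the pairing of Proposition \ref{pairingdef} pairs a space of dimension $1+d_Lr^2$ against the $(d_L+1)$-dimensional space $\homo(L^\times,E)$ with values in $E$ (the paper itself notes it is not perfect when $r>1$), so it has a nonzero left kernel. Concretely, transporting $[\Dpik^j_{j-1}]$ to $\ext^1_{(\varphi,\Gamma)}(\Delta_{\pi,j},\Delta_{\pi,j-1})$ and decomposing via (\ref{Linvariantdecomp}) as $\padelc^{j}_{j-1}\oplus\padel^{j}_{j-1}$, the class pairs to zero with all of $\hH^1_{(\varphi,\Gamma)}(\cR_{E,L})$ exactly when $\padelc^{j}_{j-1}=0$, and the hypothesis in force for Proposition \ref{dimlemmaDpik} (Definition \ref{dfnnoncriticalspecial}) only says the total class is nonzero; it does not exclude $\padelc^{j}_{j-1}=0$ with $\padel^{j}_{j-1}\neq 0$, since $\hH^1_{(\varphi,\Gamma)}(\EndO^0(\Delta_\pi)\tee\cR_{E,L}(\bm{\delta}_{\bh,j-1}\bm{\delta}_{\bh,j}^{-1}))\neq 0$ for $r>1$. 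The stronger conditions that would force $\padelc^{j}_{j-1}\neq 0$ (e.g.\ $N^{k-1}\neq 0$, via Lemma \ref{criofnoncrystalline}) are not assumed in this proposition. (A secondary imprecision: (\ref{Linvariantdecomp}) decomposes $\ext^1(\Delta_{\pi,j},\Delta_{\pi,j})$, not $\hH^1(\EndO(\Dpik_j))$; your copy of $\hH^1(\cR_{E,L})$ must be realized by character-twist deformations of $\Dpik_j$, which is fixable.)

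So your mechanism only produces surjectivity in the ``determinant'' direction, and that direction can be dead even though the conclusion $\hH^2_{(\varphi,\Gamma)}(\EndO_{\cF}(\Dpik,\Dpik))=0$ still holds. To close the gap you would need surjectivity of the full connecting map, using the contributions of $\ext^1_{(\varphi,\Gamma)}(\Dpik_{j-1},\Dpik_{j-1})$ and $\ext^1_{(\varphi,\Gamma)}(\Dpik_j,\Dpik_j)$ through the trace-zero $\EndO^0(\Delta_\pi)$-directions as well, and this is precisely the nontrivial content the paper establishes by a different d\'evissage: it filters $\EndO_{\cF}(\Dpik,\Dpik)$ by the pieces $\homo_{\cR_{E,L}}(\Dpik_s,\Dpik^s_1)$, reduces by Tate duality to proving $\hH^0_{(\varphi,\Gamma)}(M_s)=0$ with $M_s=(\Dpik^s_{s-1})^\vee\tee\Dpik_s\tee\cR_{E,L}(\ccyc)$, and deduces that vanishing from the non-splitness of $M_s$ combined with $\hH^0$-vanishing for $\EndO^0(\Delta_\pi)$-twists and a Hodge--Tate weight comparison. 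Without an argument of this kind (or an extra hypothesis guaranteeing $\padelc^{j}_{j-1}\neq 0$), your proof of part (2) does not go through as written.
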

\begin{proof}For Part (1),\;it suffices to show that $\hH^0_{(\varphi,\Gamma)}(\EndO(\Dpik,\Dpik))\cong E$.\;We have an exact sequence of $(\varphi,\Gamma)$-modules over $\cR_{E,L}$:
\[0\rightarrow \homo_{\cR_{E,L}}(\Dpik_{k},\Dpik)\rightarrow \EndO(\Dpik,\Dpik)\rightarrow \homo_{\cR_{E,L}}(\bD^{k-1}_1,\Dpik) \rightarrow0.\]
Taking cohomology,\;we get
\[0\rightarrow \homo_{(\varphi,\Gamma)}(\Dpik_{k},\Dpik)\rightarrow  \hH^0_{(\varphi,\Gamma)}(\EndO(\Dpik,\Dpik)) \rightarrow  \homo_{(\varphi,\Gamma)}(\bD^{k-1}_1,\Dpik).\]
Since $\Dpik^k_{k-1}$ is nonsplit,\;we deduce $\homo_{(\varphi,\Gamma)}(\Dpik_{k},\Dpik^k_{k-1})=0$.\;By Lemma \ref{phigammacohononcrtialspecial} and an easy d\'{e}vissage argument,\;we see that $\homo_{(\varphi,\Gamma)}(\Dpik_{k},\Dpik^k_{k-1})\xrightarrow{\sim}\homo_{(\varphi,\Gamma)}(\Dpik_{k},\Dpik)$ and then  $\homo_{(\varphi,\Gamma)}(\Dpik_{k},\Dpik)=0$. Moreover,\;by Lemma \ref{phigammacohononcrtialspecial} and an easy d\'{e}vissage argument,\;we see that $\homo_{(\varphi,\Gamma)}(\bD^{k-1}_1,\Dpik_{k})=0$.\;We then see that $$\homo_{(\varphi,\Gamma)}(\bD^{k-1}_1,\Dpik)\cong \hH^0_{(\varphi,\Gamma)}(\EndO(\bD^{k-1}_1,\bD^{k-1}_1)).$$ Then Part (1) follows by induction on $k$.\;By \cite[Proposition 3.4]{chenevier2011infinite},\;we deduce that $F_{\Dpik,\cF}$ is pro-representable. For Part (2),\;we have a natural exact sequence of $(\varphi,\Gamma)$-modules over $\cR_{E,L}$:
\[0\rightarrow \homo_{\cR_{E,L}}(\Dpik_k,\Dpik)\rightarrow \EndO_{\cF}(\Dpik,\Dpik)\rightarrow \EndO_{\cF}(\Dpik^{k-1}_1,\Dpik^{k-1}_1) \rightarrow0,\;\]
where we also use $\cF$ to denote the induced $\Omega_{[1,k-1]}$-filtration on $\Dpik^{k-1}_1$.\;Applying the same strategy to $\Dpik_1^s$ for $1\leq s\leq k-1$,\;we conclude that the $(\varphi,\Gamma)$-module $\EndO_{\cF}(\Dpik,\Dpik)$ is isomorphic to an extension of $(\varphi,\Gamma)$-modules $\homo_{\cR_{E,L}}(\Dpik_{s},\Dpik^{s}_1)$ for $s=1,\cdots,k$.\;Therefore,\;$\EndO_{\cF}(\Dpik,\Dpik)$  is a $(\varphi,\Gamma)$-module over $\cR_{E,L}$ of rank $\frac{k(k+1)}{2}r^2$.\;By Tate duality,\;Lemma \ref{phigammacohononcrtialspecial} and an easy d\'{e}vissage argument,\;we have for  $2\leq s\leq k$ the isomorphisms:
\begin{equation*}
	\begin{aligned}
		\ext^2_{(\varphi,\Gamma)}(\Dpik_{s},\Dpik^{s}_1)&\cong \hH^0_{(\varphi,\Gamma)}(\Dpik^{s}_1,\Dpik_{s}\otimes_{\cR_{E,L}}\cR_{E,L}(\ccyc))\cong \hH^0_{(\varphi,\Gamma)}(\Dpik^{s}_{s-1},\Dpik_{s}\otimes_{\cR_{E,L}}\cR_{E,L}(\ccyc))\\
		&\cong \hH^2_{(\varphi,\Gamma)}(\Dpik_{s},\Dpik^{s}_{s-1}),
	\end{aligned}
\end{equation*}
Let $M_1:=\EndO(\Dpik_1)\otimes_{\cR_{E,L}}\cR_{E,L}(\ccyc)$ and let $M_s:=(\Dpik^{s}_{s-1})^\vee\otimes_{\cR_{E,L}}\Dpik_{s}\otimes_{\cR_{E,L}}\cR_{E,L}(\ccyc)$ for $2\leq s\leq k$,\;which is isomorphic to a nonsplit extension of $\EndO(\Dpik_{s-1},\Dpik_{s})\otimes_{\cR_{E,L}}\cR_{E,L}( \ccyc)$ by $\EndO(\Dpik_{s})\otimes_{\cR_{E,L}}\cR_{E,L}(\ccyc)$.\;We are going to show that $\hH^0_{(\varphi,\Gamma)}(M_s)=0$ for $s=1,\cdots,k$.\;By an easy variation of the proof of Lemma \ref{isoI2i},\;we get $\hH^0_{(\varphi,\Gamma)}(M_1)=0$.\;For $2\leq s\leq k$,\;if $\hH^0_{(\varphi,\Gamma)}(M_s)\neq 0$,\;then there exists a nonzero injection of $(\varphi,\Gamma)$-modules $j:\cR_{E,L}\rightarrow M_s$.\;Since 
\[\hH^0_{(\varphi,\Gamma)}(\EndO(\Dpik_s)\otimes_{\cR_{E,L}}\cR_{E,L}(\ccyc)[\frac{1}{t}])=\hH^0_{(\varphi,\Gamma)}(\EndO({\Delta_\pi})\otimes_{\cR_{E,L}}\cR_{E,L}(\ccyc)[\frac{1}{t}])=0\;\]we see that $\mathrm{Im}(j)[\frac{1}{t}]\cap M_s$ is a saturated  $(\varphi,\Gamma)$-submodule of $\EndO(\Dpik_{s-1},\Dpik_{s})\otimes_{\cR_{E,L}}\cR_{E,L}(\ccyc)$. By Tate duality and Lemma \ref{pullbackforiso},\;we have
$$\hH^0_{(\varphi,\Gamma)}(\EndO(\Dpik_{s-1},\Dpik_{s})\otimes_{\cR_{E,L}}\cR_{E,L}( \ccyc))=\hH^0_{(\varphi,\Gamma)}(\EndO({\Delta_\pi})\otimes_{\cR_{E,L}}\cR_{E,L}( \bm{\delta}_{\bh,s-1}^{-1} \bm{\delta}_{\bh,s}\ccyc)).$$
Since 
$\hH^0_{(\varphi,\Gamma)}(\EndO^0({\Delta_\pi})\otimes_{\cR_{E,L}}\cR_{E,L}( \bm{\delta}_{\bh,s-1}^{-1}\bm{\delta}_{\bh,s}\ccyc))=0$,\;by comparing Hodge-Tate weights,\;we see that 
$\mathrm{Im}(j)[1/t]\cap M_s\cong \cR_{E,L}( \bm{\delta}_{\bh,s-1}^{-1} \bm{\delta}_{\bh,s}\ccyc)$.\;This show that
$$\homo_{(\varphi,\Gamma)}(\EndO(\Dpik_{s-1},\Dpik_{s})\otimes_{\cR_{E,L}}\cR_{E,L}( \ccyc),M_s)\neq 0,\;$$ which leads a contradiction to the fact that $M_s$ is non-split.\;For Part (3),\;we deduce from Part (1) and \cite[Proposition 3.4]{chenevier2011infinite} that $F_{\Dpik,\cF}$ is pro-representable.\;By Part (2) and \cite[Proposition 4.1.15 (2)]{Ding2021},\;we deduce that $F_{\Dpik,\cF}$ is formally smooth of dimension $1+\frac{k(k+1)}{2}d_Lr^2$.\
\end{proof}

Recall that
\[F_{\Dpik,\cF}^0=\sF_{\Dpik,\cF}\times_{\prod_{i=1}^k F_{\Dpik_i}}\prod_{i=1}^k F_{\Dpik_i}^0.\]
To evaluate $\dim_EF_{\Dpik,\cF}^0(E[\epsilon]/\epsilon^2)$,\;we  need a dimension formula.\;An easy diagram chasing shows that

\begin{lem}\label{fibredim}Let $V,V'$ and $V''$ be vector spaces over $E$,\;and $u:V\rightarrow V''$ (resp.,\;$u':V'\rightarrow V''$) be $E$-linear homomorphism.\;Then we have 
\[\dim_E V\times_{V''}V':=\dim_EV-\dim_E\im u+\dim_E(u')^{-1}(\im u\cap\im u'),\]
where $V\times_{V''}V'=\{(v,v')\in V\times V':u(v)=u'(v')\in V''\}$.\;Moreover,\;\[\dim_E V\times_{V''}V'=\dim_EV+\dim_EV'-\dim_EV''\] if and only if $\dim_E\mathrm{coker}(u)=\dim_E\mathrm{coker}(V\times_{V''}V'\rightarrow V')$.\;In particular,\;if one of $u,u'$ is surjective,\;we have $\dim_E V\times_{V''}V'=\dim_EV+\dim_EV'-\dim_EV''$.\;
\end{lem}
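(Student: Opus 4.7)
The plan is to reduce everything to a single four-term exact sequence that packages the fiber product. Consider the linear map
\[
\phi:V\oplus V'\longrightarrow V'',\qquad (v,v')\longmapsto u(v)-u'(v').
\]
By construction $\ker\phi=V\times_{V''}V'=:W$ and $\im\phi=\im u+\im u'$, so we get a short exact sequence
\[
0\longrightarrow W\longrightarrow V\oplus V'\xrightarrow{\;\phi\;}\im u+\im u'\longrightarrow 0,
\]
hence $\dim_E W=\dim_E V+\dim_E V'-\dim_E(\im u+\im u')$. This single identity will drive all three statements.

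For the first formula, I would rewrite the right-hand side using the inclusion-exclusion identity $\dim_E(\im u+\im u')=\dim_E\im u+\dim_E\im u'-\dim_E(\im u\cap \im u')$ together with the observation that $u'$ restricts to a surjection $(u')^{-1}(\im u\cap\im u')\twoheadrightarrow \im u\cap\im u'$ with kernel $\ker u'$, so
\[
\dim_E(u')^{-1}(\im u\cap\im u')=\dim_E\ker u'+\dim_E(\im u\cap\im u')=\dim_E V'-\dim_E\im u'+\dim_E(\im u\cap\im u').
\]
Substituting this into the boxed identity above yields precisely
$\dim_E W=\dim_E V-\dim_E\im u+\dim_E(u')^{-1}(\im u\cap\im u')$.

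For the second (``if and only if'') statement, I would compute the two cokernels explicitly. The projection $W\to V'$ has image $(u')^{-1}(\im u\cap\im u')$, and the composite $V'\twoheadrightarrow V''/\im u$ has kernel exactly this image, inducing an isomorphism
\[
\mathrm{coker}\bigl(W\to V'\bigr)\;\xrightarrow{\sim}\;(\im u+\im u')/\im u\;\hookrightarrow\;V''/\im u=\mathrm{coker}(u).
\]
Thus $\dim_E\mathrm{coker}(u)=\dim_E\mathrm{coker}(W\to V')$ if and only if $\im u+\im u'=V''$, which by the boxed identity is equivalent to $\dim_E W=\dim_E V+\dim_E V'-\dim_E V''$. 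Statement (3) is then immediate, since surjectivity of either $u$ or $u'$ forces $\im u+\im u'=V''$.

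There is essentially no obstacle here, as the lemma is pure linear algebra; the only point that requires a bit of care is to set up the isomorphism $\mathrm{coker}(W\to V')\cong(\im u+\im u')/\im u$ cleanly, so that the equivalence of the two numerical conditions becomes transparent rather than requiring an ad hoc dimension count.
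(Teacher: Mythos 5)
Your proof is correct, and it is exactly the kind of ``easy diagram chasing'' the paper alludes to without writing out: the short exact sequence $0\to V\times_{V''}V'\to V\oplus V'\xrightarrow{\phi}\im u+\im u'\to 0$ together with the identification $\im(V\times_{V''}V'\to V')=(u')^{-1}(\im u\cap\im u')$ yields all three assertions, and your dimension bookkeeping checks out (with the finite-dimensionality implicit in the statement). The only cosmetic slip is writing the composite $V'\to V''/\im u$ with a two-headed arrow, since it is not surjective in general; its image is $(\im u+\im u')/\im u$, which is precisely what your cokernel isomorphism uses, so nothing is affected.
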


Suggested by Lemma  \ref{fibredim} and Corollary \ref{corlinvaraint1},\;we prove (as  a Corollary of Lemma \ref{dimlemmaDpik}):
\begin{cor}\label{corlinvaraint2}The cokernel of the map $\Upsilon:F_{\Dpik,\cF}(E[\epsilon]/\epsilon^2)\longrightarrow \prod_{i=1}^kF_{\Dpik_i}(E[\epsilon]/\epsilon^2)$ has $E$-dimensional $k-1$.\;
\end{cor}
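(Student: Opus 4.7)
The plan is to realize the cokernel of $\Upsilon(E[\epsilon]/\epsilon^2)$ as an $\hH^2$ of an auxiliary $(\varphi,\Gamma)$-module, paralleling the proof of Corollary \ref{corlinvaraint1}. First I would identify the tangent spaces cohomologically: $F_{\Dpik,\cF}(E[\epsilon]/\epsilon^2)\cong\hH^1_{(\varphi,\Gamma)}(\EndO_{\cF}(\Dpik,\Dpik))$ (this is implicit in the proof of Lemma \ref{dimlemmaDpik}(3) via \cite[Proposition 4.1.15]{Ding2021}) and $F_{\Dpik_i}(E[\epsilon]/\epsilon^2)\cong\hH^1_{(\varphi,\Gamma)}(\EndO(\Dpik_i,\Dpik_i))$. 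Under these identifications, $\Upsilon(E[\epsilon]/\epsilon^2)$ is the map on $\hH^1$ induced by the associated-graded surjection, which fits into the short exact sequence of $(\varphi,\Gamma)$-modules over $\cR_{E,L}$
\[
0 \longrightarrow K \longrightarrow \EndO_{\cF}(\Dpik,\Dpik) \longrightarrow \bigoplus_{i=1}^{k} \EndO(\Dpik_i,\Dpik_i) \longrightarrow 0,
\]
where $K$ consists of filtration-preserving endomorphisms that are trivial on each graded piece. Since $\hH^2_{(\varphi,\Gamma)}(\EndO_{\cF}(\Dpik,\Dpik))=0$ by Lemma \ref{dimlemmaDpik}(2), the long exact sequence yields a canonical isomorphism $\mathrm{coker}\,\Upsilon(E[\epsilon]/\epsilon^2) \xrightarrow{\sim}\hH^2_{(\varphi,\Gamma)}(K)$.

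It then suffices to prove $\dim_E \hH^2_{(\varphi,\Gamma)}(K)=k-1$. To do so I would filter $K$ by ``shift depth'': set $K^{(s)}\subseteq K$ to be the submodule of endomorphisms sending $\fil_i^{\cF}\Dpik$ into $\fil_{i-s}^{\cF}\Dpik$ for all $i$, so that $K=K^{(1)}$ and $K^{(s)}/K^{(s+1)}\cong \bigoplus_{j-i=s}\homo_{\cR_{E,L}}(\Dpik_j,\Dpik_i)$. Lemma \ref{phigammacohononcrtialspecial}(a) forces $\ext^2_{(\varphi,\Gamma)}(\Dpik_j,\Dpik_i)=0$ whenever $i\ne j-1$, so an iterated d\'{e}vissage along the sub-quotients of $K^{(s)}$ for $s\ge 2$ gives $\hH^2_{(\varphi,\Gamma)}(K^{(2)})=0$. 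From the short exact sequence $0\to K^{(2)}\to K\to K/K^{(2)}\to 0$ I then obtain
\[
\hH^2_{(\varphi,\Gamma)}(K) \;\xrightarrow{\sim}\; \hH^2_{(\varphi,\Gamma)}(K/K^{(2)}) \;=\; \bigoplus_{i=1}^{k-1}\ext^2_{(\varphi,\Gamma)}(\Dpik_{i+1},\Dpik_i).
\]

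Finally, each summand $\ext^2_{(\varphi,\Gamma)}(\Dpik_{i+1},\Dpik_i)$ is one-dimensional over $E$: Lemma \ref{isoI2i} identifies $\hH^2_{(\varphi,\Gamma)}(\cR_{E,L}(\bm{\delta}_{\bh,i}\bm{\delta}_{\bh,i+1}^{-1}))\cong E$ with $\ext^2_{(\varphi,\Gamma)}(\Delta_{\pi,i+1},\Delta_{\pi,i})$, which is in turn isomorphic to $\ext^2_{(\varphi,\Gamma)}(\Dpik_{i+1},\Dpik_i)$ via the isomorphisms $\mathbf{I}_{i}^{2,2}$ and $\mathbf{I}_{i+1}^{1,2}$ of diagram (\ref{bigcommutediag}). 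Summing over the $k-1$ consecutive pairs yields $\dim_E \hH^2_{(\varphi,\Gamma)}(K)=k-1$, which is the claim. The main obstacle is the vanishing $\hH^2_{(\varphi,\Gamma)}(K^{(2)})=0$: this is where the non-critical special configuration of Hodge--Tate weights enters decisively through Lemma \ref{phigammacohononcrtialspecial}(a), as otherwise non-adjacent graded pieces could contribute to the cokernel and the count $k-1$ would be lost. As a sanity check, one may alternatively deduce the same dimension by combining Lemma \ref{fibredim} with the fibre-product description $F_{\Dpik,\cF}^0=F_{\Dpik,\cF}\times_{\prod_i F_{\Dpik_i}}\prod_i F_{\Dpik_i}^0$, Lemma \ref{dimlemmaDpik}(3), and the computation of $\dim \mathrm{coker}\,\Upsilon^0=k-1$ from Corollary \ref{corlinvaraint1}.
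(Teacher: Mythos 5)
Your argument is correct, but it takes a different route from the paper. The paper proves Corollary \ref{corlinvaraint2} by induction on $k$: it peels off the last step of the filtration via the two compatible short exact sequences $0\to \homo_{\cR_{E,L}}(\Dpik_k,\Dpik)\to \EndO_{\cF}(\Dpik)\to \EndO_{\cF}(\Dpik^{k-1}_1)\to 0$ and $0\to\EndO(\Dpik_k)\to\prod_{i=1}^k\EndO(\Dpik_i)\to\prod_{i=1}^{k-1}\EndO(\Dpik_i)\to 0$, takes cohomology, and runs a snake-lemma induction which simultaneously shows $\ker\Upsilon_k=0$ and $\dim_E\mathrm{coker}\,\Upsilon_k=1+\dim_E\mathrm{coker}\,\Upsilon_{k-1}$. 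You instead identify $\mathrm{coker}\,\Upsilon(E[\epsilon]/\epsilon^2)$ in one shot with $\hH^2_{(\varphi,\Gamma)}(K)$, where $K$ is the strictly filtration-lowering part of $\EndO_{\cF}(\Dpik,\Dpik)$ (using $\hH^2_{(\varphi,\Gamma)}(\EndO_{\cF}(\Dpik,\Dpik))=0$ from Proposition \ref{dimlemmaDpik}\,(2)), and then compute $\hH^2_{(\varphi,\Gamma)}(K)$ by the shift-depth d\'evissage, where Lemma \ref{phigammacohononcrtialspecial}\,(a) kills all non-adjacent blocks and Lemmas \ref{pullbackforiso}, \ref{isoI2i} give $\dim_E\ext^2_{(\varphi,\Gamma)}(\Dpik_{i+1},\Dpik_i)=1$ for each of the $k-1$ adjacent blocks. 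The inputs are the same as the paper's, but your decomposition is non-inductive, makes the cokernel visible as a concrete $\hH^2$, and avoids having to establish injectivity of $\Upsilon$ (which the paper needs to run its induction, though not for the statement itself). Two small points you should make explicit: the identification $F_{\Dpik,\cF}(E[\epsilon]/\epsilon^2)\cong\hH^1_{(\varphi,\Gamma)}(\EndO_{\cF}(\Dpik,\Dpik))$ together with the compatibility of the tangent map of $\Upsilon$ with the cohomology map induced by $\EndO_{\cF}(\Dpik,\Dpik)\to\bigoplus_i\EndO(\Dpik_i)$ (the paper uses this implicitly in its diagram (\ref{corgamma})), and the surjectivity of that morphism of $(\varphi,\Gamma)$-modules, which follows from the module-theoretic splitting of the saturated filtration.

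One caveat on your closing ``sanity check'': as written it is circular in the paper's logical order, since Proposition \ref{dimtanFD0} (the dimension of $F^0_{\Dpik,\cF}(E[\epsilon]/\epsilon^2)$ needed to apply Lemma \ref{fibredim} in that direction) is itself deduced from Corollary \ref{corlinvaraint2}. Without that input, Lemma \ref{fibredim} and Corollary \ref{corlinvaraint1} only give the inequality $\dim_E\mathrm{coker}\,\Upsilon\geq \dim_E\mathrm{coker}\,\Upsilon^0(E[\epsilon]/\epsilon^2)=k-1$, not the equality. Since this is offered only as a cross-check and your main argument is self-contained, the proof stands, but the check should be dropped or rephrased as a consistency remark.
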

\begin{proof}By induction on $k$,\;we get the following commutative diagram of $(\varphi,\Gamma)$-modules over $\cR_{E,L}$:,\;
\[\xymatrix{0 \ar[r] & \homo_{\cR_{E,L}}(\Dpik_k,\Dpik)\ar[r] \ar[d] & \EndO_{\cF}(\Dpik)\ar[r] \ar[d] & \EndO_{\cF}(\Dpik^{k-1}_1) \ar[r]\ar[d]  &0\\
	0 \ar[r] & \EndO(\Dpik_k) \ar[r] & \prod_{i=1}^k\EndO(\Dpik_i)  \ar[r] & \prod_{i=1}^{k-1}\EndO(\Dpik_i) \ar[r] & 0  .}\]
Taking cohomology,\;we deduce from Proposition \ref{dimlemmaDpik} (and the proof of Proposition \ref{dimlemmaDpik}  (2)) the following long exact sequences
\[\xymatrix{ 
	0 \ar[r] & \ext^0_{(\varphi,\Gamma)}(\Dpik_k,\Dpik)=0 \ar[r] \ar[d] &  	\ext^0_{(\varphi,\Gamma)}(\EndO_{\cF}(\Dpik))\cong E \ar[d]   \\
	0 \ar[r] & \ext^0_{(\varphi,\Gamma)}(\Dpik_k,\Dpik_k)\cong E \ar[r] & \prod\limits_{i=1}^{k}\hH^0_{(\varphi,\Gamma)}( \EndO(\Dpik_i))\cong E^{k} }\]
\begin{equation}\label{corgamma}
	\begin{aligned}
		&\hspace{30pt}\xymatrix{ 
			\ar[r]^{\sim\hspace{60pt}} &\ext^0_{(\varphi,\Gamma)}(\EndO_{\cF}(\Dpik^{k-1}_1))\cong E \ar[r] \ar[d] &  \ext^1_{(\varphi,\Gamma)}(\Dpik_k,\Dpik) \ar[d]^{\Upsilon''}  \ar[r] &  F_{\Dpik,\cF}(E[\epsilon]/\epsilon^2) \ar[d]^{\Upsilon=\Upsilon_k}  \\
			\ar[r] &\prod\limits_{i=1}^{k-1}\hH^0_{(\varphi,\Gamma)}( \EndO(\Dpik_i))\cong E^{k-1} \ar[r] & \ext^1_{(\varphi,\Gamma)}(\Dpik_k,\Dpik_k) \ar[r] & \prod\limits_{i=1}^{k}F_{\Dpik_i}(E[\epsilon]/\epsilon^2)  }
	\end{aligned}
\end{equation}	
\[\hspace{130pt}\xymatrix{
	\ar[r]& F_{\Dpik^k_1,\cF}(E[\epsilon]/\epsilon^2) \ar[r] \ar[d]^{\Upsilon_{k-1}} &  \ext^2_{(\varphi,\Gamma)}(\Dpik_k,\Dpik)=0 \ar[d]  \\
	\ar[r]&\prod\limits_{i=1}^{k-1}F_{\Dpik_i}(E[\epsilon]/\epsilon^2) \ar[r] & \ext^2_{\Rep_{\bB_{\dr}}(\gal_L)}(\Dpik_k,\Dpik_k)=0 .}\]

By comparing dimensions,\;we see that the middle three terms of $1$-th extension groups (of two rows of (\ref{corgamma})) form two short exact sequences.\;An easy d\'{e}vissage argument shows that $\Upsilon''$ is injective and $\dim_E\mathrm{coker}\Upsilon''=1$.\;Then the snake lemma implies 
\[0\rightarrow \ker\Upsilon_k \rightarrow \ker\Upsilon_{k-1}\rightarrow \mathrm{coker}\Upsilon''\rightarrow \mathrm{coker}\Upsilon_k \rightarrow \mathrm{coker}\Upsilon_{k-1}\rightarrow 0.\]
Note that $\Upsilon_1$ is an isomorphism (so that $\ker\Upsilon_1=\mathrm{coker}\Upsilon_1=0$).\;By an induction on $k$,\;we see that $\ker\Upsilon_k\cong \ker\Upsilon_{k-1}\cong\cdots\cong \ker\Upsilon_1=0$ and $\dim_E\mathrm{coker}\Upsilon_k=1+\dim_E\mathrm{coker}\Upsilon_{k-1}=\cdots=k-1$.\;This completes the proof.\;
\end{proof}

\begin{pro}\label{dimtanFD0}	The functors $F_{\Dpik,\cF}^0$ is  pro-representable.\;Its tangent space has $E$-dimension $$\dim_EF_{\Dpik,\cF}^0(E[\epsilon]/\epsilon^2)=1+d_L\Big(k+\frac{n(n-1)}{2}\Big).$$
\end{pro}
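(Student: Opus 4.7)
The plan is to exhibit $F_{\Dpik,\cF}^0$ as the fibered product
\[
F_{\Dpik,\cF}^0 \;=\; F_{\Dpik,\cF} \times_{\prod_{i=1}^k F_{\gr_i^{\cF}\Dpik}} \prod_{i=1}^k F_{\gr_i^{\cF}\Dpik}^0
\]
already recorded in Section \ref{dfndeftyoeomega}, and then to apply Lemma \ref{fibredim} on tangent spaces. Pro-representability comes for free: $F_{\Dpik,\cF}$ is pro-representable by Proposition \ref{dimlemmaDpik}(3), each $F_{\gr_i^{\cF}\Dpik}^0$ is pro-representable by the rank-$r$ analysis of \cite[Section 4.1.1]{Ding2021} (the key input being $\hH^0_{(\varphi,\Gamma)}(\EndO(\gr_i^{\cF}\Dpik))\cong E$, which is the rank-$r$ analogue of Proposition \ref{dimlemmaDpik}(1)), and fibered products of pro-representable deformation functors over Artinian local $E$-algebras are again pro-representable.

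For the dimension count I would set $V:=F_{\Dpik,\cF}(E[\epsilon]/\epsilon^2)$, $V':=\prod_i F_{\gr_i^{\cF}\Dpik}^0(E[\epsilon]/\epsilon^2)$, $V'':=\prod_i F_{\gr_i^{\cF}\Dpik}(E[\epsilon]/\epsilon^2)$, with $u=\Upsilon$ and $u'$ the natural forgetful map on tangent spaces. Under these identifications the projection $V \times_{V''} V' \to V'$ agrees with $\Upsilon^0$, so its cokernel has dimension $k-1$ by Corollary \ref{corlinvaraint1}, while $\dim_E \mathrm{coker}(u)=k-1$ by Corollary \ref{corlinvaraint2}. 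The two cokernel dimensions match, so the \emph{moreover} clause of Lemma \ref{fibredim} yields the clean identity
\[
\dim_E F_{\Dpik,\cF}^0(E[\epsilon]/\epsilon^2) \;=\; \dim_E V + \dim_E V' - \dim_E V''.
\]

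To finish, I would insert the three dimensions. Proposition \ref{dimlemmaDpik}(3) gives $\dim_E V = 1 + d_L \tfrac{n(n+r)}{2}$. A short $B$-pair argument in the spirit of Lemma \ref{isoI2i} shows $\ext^2_{(\varphi,\Gamma)}(\Dpik_i,\Dpik_i)=0$ under the non-critical Hodge-Tate spacing (the relevant $\hH^0$ is killed by $\Delta_\pi \not\cong \Delta_\pi\otimes_{\cR_{E,L}}\cR_{E,L}(\ccyc)$ together with irreducibility of $\Delta_\pi$ after inverting $t$), so Euler-Poincar\'e together with $\ext^0_{(\varphi,\Gamma)}(\Dpik_i,\Dpik_i)=E$ gives $\dim_E F_{\gr_i^{\cF}\Dpik}(E[\epsilon]/\epsilon^2)=1+d_L r^2$, whence $\dim_E V'' = k + d_L n r$. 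For $V'$, the fibration $\omega^0:V' \twoheadrightarrow \prod_i \homo(L^\times,E)$ appearing in the proof of Corollary \ref{corlinvaraint1} is surjective with every fiber of common dimension $d_L \tfrac{n(r-1)}{2}$, giving $\dim_E V' = k(d_L+1) + d_L \tfrac{n(r-1)}{2}$. Substituting and simplifying with the identity $\tfrac{n(n+r)}{2}+\tfrac{n(r-1)}{2}-nr=\tfrac{n(n-1)}{2}$ produces the advertised dimension $1+d_L\bigl(k+\tfrac{n(n-1)}{2}\bigr)$.

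The main delicate point is the cokernel-matching step, which is the structural reason the naive dimension addition is valid here. It reduces to a direct unwinding of the fibered-product construction (an element of $V'$ lifts to $V \times_{V''} V'$ exactly when its image in $V''$ lies in $\mathrm{im}(u)$), combined with the two already-established coincidences $\mathrm{coker}(\Upsilon)=\mathrm{coker}(\Upsilon^0)=k-1$. The remaining calculations of $\ext^j_{(\varphi,\Gamma)}(\Dpik_i,\Dpik_i)$ for $j=0,1,2$ are routine applications of Tate duality and Euler-Poincar\'e under the non-critical assumption inherited from $\cF$; by contrast, the pro-representability step is essentially formal once Proposition \ref{dimlemmaDpik}(1) is in hand.
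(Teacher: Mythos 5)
Your proposal is correct and takes essentially the same route as the paper: the fibered-product presentation of $F_{\Dpik,\cF}^0$, Lemma \ref{fibredim} with the cokernel matching supplied by Corollaries \ref{corlinvaraint1} and \ref{corlinvaraint2}, and the same three tangent-space dimensions (your Euler-Poincar\'e computation of $\sum_i\dim_EF_{\Dpik_i}(E[\epsilon]/\epsilon^2)$ and the fibration argument for $\prod_iF^0_{\Dpik_i}$ merely re-derive the values the paper quotes from Proposition \ref{dimlemmaDpik} and \cite[Proposition 4.1.4]{Ding2021}). The only cosmetic difference is the pro-representability step, where the paper uses relative representability of $\prod_iF^0_{\gr_i^{\cF}\Dpik}$ over $\prod_iF_{\gr_i^{\cF}\Dpik}$ (\cite[Proposition 4.1.3]{Ding2021}) together with Proposition \ref{dimlemmaDpik}(3), rather than pro-representability of each factor of the fibered product.
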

\begin{proof}By \cite[Proposition 4.1.3]{Ding2021},\;we get that the functor $\prod_{i=1}^k F_{\gr_iD}^0$ is relatively representable over $\prod_{i=1}^k F_{\Dpik_i}$.\;By Proposition \ref{dimlemmaDpik} (3),\;the functor $F_{\Dpik,\cF}^0$ is pro-representable.\;By \cite[Proposition 4.1.4]{Ding2021},\;we see that $\prod_{i=1}^k F_{\Dpik_i}^0$ is formally smooth of dimension $k\big(1+d_L\big(1+\frac{r(r-1)}{2}\big)\big)$.\;It follows from the Lemma \ref{fibredim},\;Corollary \ref{corlinvaraint1},\;Proposition \ref{dimlemmaDpik},\;\cite[Proposition 4.1.4]{Ding2021} and Corollary \ref{corlinvaraint2} that 
\begin{equation}
	\begin{aligned}
		&\dim_EF_{\Dpik,\cF}^0(E[\epsilon]/\epsilon^2)\\
		=\;&\dim_E\sF_{\Dpik,\cF}(E[\epsilon]/\epsilon^2)-\sum_{i=1}^k\dim_EF_{\Dpik_i}(E[\epsilon]/\epsilon^2)+\sum_{i=1}^k\dim_EF^0_{\Dpik_i}(E[\epsilon]/\epsilon^2)\\
		=\;&1+d_L\frac{n(n+r)}{2}-k(1+d_Lr^2)+k\Big(1+d_L\Big(1+\frac{r(r-1)}{2}\Big)\Big),\\
		=\;&1+d_L\Big(k+\frac{n(n-1)}{2}\Big).\\
\end{aligned}\end{equation}
The result follows.\;
\end{proof}
\begin{rmk}We may ask if $F_{\Dpik,\cF}^0$ is formally smooth.\;The author does not know whether it is true.\;
\end{rmk}

\section{Summary of certain locally \texorpdfstring{$\bQ_p$}{Lg}-analytic representations}\label{BreuilLINVSUMMMANY}

Let $\ul{\lambda}:=(\lambda_{1,\sigma}, \cdots, \lambda_{n,\sigma})_{\sigma\in \Sigma_L}$ be a weight of $\ft_{\Sigma_L}$.\;For $I\subseteq \Delta_n=\{1,\cdots, n-1\}$, we call that $\ul{\lambda}$ is $I$-dominant with respect to $\bB_{/E}$ (resp. with respect to $\overline{\bB}_{/E}$) if $\lambda_{i,\sigma}\geq \lambda_{i+1,\sigma}$ \big(resp. $\lambda_{i,\sigma} \leq \lambda_{i+1,\sigma}$\big) for all $i\in I$ and $\sigma\in \Sigma_L$. We denote by $X_{I}^+$ (resp. $X_{I}^-$) the set of $I$-dominant integral weights  of $\ft_{\Sigma_L}$ with respect to $\bB_{/E}$ (resp. with respect to $\overline{\bB}_{/E}$). Note that $\ul{\lambda}\in X_{I}^+$ if and only if $-\ul{\lambda}\in X_{I}^-$. For $\ul{\lambda}\in X_I^+$, there exists a unique irreducible algebraic representation, denoted by $L(\ul{\lambda})_I$, of $(\bL_I)_{/E}$ with highest weight $\ul{\lambda}$ with respect to $(\bL_I)_{/E}\cap \bB_{/E}$.\;We put $\overline{L}(-\ul{\lambda})_I:=L(\ul{\lambda})^\vee_I$,\;which is an irreducible algebraic representation of $(\bL_I)_{/E}$ with highest weight $-\ul{\lambda}$ with respect to $(\bL_I)_{/E}\cap \ob_{/E}$.\;Denote $\chi_{\ul{\lambda}}:=L(\ul{\lambda})_{\emptyset}$. If $\ul{\lambda}\in X_{\Delta_n}^+$, let $L(\ul{\lambda}):=L(\ul{\lambda})_{\Delta_n}$.\;A $\bQ_p$-algebraic representation of $\GL_n(L)$ over $E$ is the induced action of $\GL_n(L)\subset \mathbf{G}_{/E}(E)$ on an algebraic representation of $\mathbf{G}_{/E}$.\;By abuse of notation we will use the same notations to denote $\bQ_p$-algebraic representations induced from an algebraic representation of $\mathbf{G}_{/E}$.\;

%Note that if $I\subseteq J\subseteq \Delta_n$, and $\ul{\lambda}\in X_J^+$, then we have $L(\ul{\lambda})_J^{\bN_I(L) \cap \bL_J(L)} \cong L(\ul{\lambda})_I$ (see \cite[Page 7997]{2019DINGSimple}).\;

Let $\ul{\lambda}$ be an integral weight,\;denote by $M(\ul{\lambda}):=\text{U}(\fg_{\Sigma_L})\otimes_{\text{U}(\fb_{\Sigma_L})} \ul{\lambda}$ (resp. $\overline{M}(\ul{\lambda}):=\text{U}(\fg_{\Sigma_L})\otimes_{\text{U}(\overline{\fb}_{\Sigma_L})} \ul{\lambda}$), the corresponding Verma module with respect to $\fb_{\Sigma_L}$ (resp.\;$\overline{\fb}_{\Sigma_L}$).\;Let $L(\ul{\lambda})$ (resp. $\overline{L}(\ul{\lambda})$) be the unique simple quotient of $M(\ul{\lambda})$ (resp. of $\overline{M}(\ul{\lambda})$).\;Actually,\;when $\ul{\lambda}\in X_{\Delta_n}^+$ (i.e. $-\ul{\lambda}\in X_{\Delta_n}^-$),\;$L(\ul{\lambda})$ is finite dimensional and  isomorphic to the algebraic representation $L(\ul{\lambda})$ introduced above (hence there is no conflict of notation).\;We have $\overline{L}(-\ul{\lambda})\cong L(\ul{\lambda})^{\vee}$.\;In general,\;for any subset $I$ of $\Delta_n$, and $\ul{\lambda}\in X_{I}^+$,\;we define the generalized parabolic Verma module
\begin{equation}
	\begin{aligned}
		&M_I(\ul{\lambda}):=\text{U}(\fg_{\Sigma_L})\otimes_{\text{U}(\fp_{I,\Sigma_L})} L(\ul{\lambda})_I,\;\\
	 \text{resp.,\;}&\overline{M}_I(-\ul{\lambda}):=\text{U}(\fg_{\Sigma_L})\otimes_{\text{U}(\overline{\fp}_{I,\Sigma_L})} \overline{L}(-\ul{\lambda})_I)
	\end{aligned}
\end{equation}
with respect to $\fp_{I,\Sigma_L}$ (resp.\;$\overline{\fp}_{I,\Sigma_L}$),\;see \cite[ Chapter 9]{humphreysBGG} for more precise statements.\;For $\ul{\lambda}\in X_{\Delta_n^k\cup I}^+$,\;we put (when $I=\emptyset$,\;we omit the subscripts $I$ in (\ref{lrrver}))
\begin{equation}\label{lrrver}
\begin{aligned}
	&L^{\lrr}(\ul{\lambda})_I:=L(\ul{\lambda})_{\Delta_n^k\cup I},\;\overline{L}^{\lrr}(-\ul{\lambda})_I:=\overline{L}(-\ul{\lambda})_{\Delta_n^k\cup I},\;\\
	&{M}_I^{\lrr}(\ul{\lambda}):={M}_{\Delta_n^k\cup I}(\ul{\lambda}),\;\overline{M}_I^{\lrr}(-\ul{\lambda}):=\overline{{M}}_{\Delta_n^k\cup I}(-\ul{\lambda}).
\end{aligned}
\end{equation}

%and denote by $s_i$ the simple reflection corresponding to $i\in \Delta_n$.\;For any $I\subset \Delta_n$,\;define $\sW_{I}$ be the subgroup of $\sW_{n}$ generated by simple reflections $s_i$ with $i\in I$.\;For any $I\subset \Delta_n$,\;define $\sW_{I,\sigma}$ to be the $\sigma$-copy of $\sW_{I}$,\;i.e.,\;to be the subgroup of $\sW_{n,\sigma}$ generated by simple reflections $s_i$ with $i\in I$.\;For $S\subseteq \Sigma_L$ and  $I\subset {\Delta_n}$, denote by $\sW_{I,S}:=\prod_{\sigma\in S} \sW_{I,\sigma}$.\;If $\ul{i}_{S}:=(i_{\sigma})_{\sigma\in S}\in {\Delta_n}^{|S|}$,\;denote by $s_{\ul{i}_S}:=\prod_{\sigma\in S}s_{i_\sigma,\sigma}$.\;If $H$ is a subgroup of $G$ and $(\rho,V)$ is an abstract representation of $H$ on $E$-vector space.\;We put $H^w:=w^{-1}Hw$,\;and $\rho^w(h):=\rho(whw^{-1})$.\;Then $(\rho^w,V)$ forms anabstract representation of $H^w$ on $E$-vector space.
\subsection{General constructions}
Let $\pi$ be an irreducible cuspidal representation of $\GLN_{r}(L)$ over $E$.\;For any $i\in \BZ$,\;we put $\pi(i):=\pi\otimes_E v_r^{i}$.\;Let $\Delta_{[k-1,0]}(\pi)=[\pi(k-1),\cdots,\pi(1),\pi]$ be a Zelevinsky-segment (see \cite{av1980induced2}).\;Put
\begin{equation}\label{pilrr}
\pi^{\lrr}:=\big(\otimes_{i=1}^{k}\pi(k-i)\big)\otimes_E \delta_{\op^{\lrr}(L)}^{1/2}=\otimes_{i=1}^{k}\big(\pi\otimes_E v_{r}^{-\frac{r}{2}(k-2i+1)+k-i}\big),\;
\end{equation}
where $\delta_{\op^{\lrr}_{{I}}(L)}$ is the modulus character of $\op^{\lrr}_{{I}}(L)$.\;This is an irreducible cuspidal smooth representation of $\bL^{\lrr}(L)$ over $E$.\;

%Let $\omega_{\pi}^{\lrr}$ be the central character of $\pi^{\lrr}$.\;Then the restriction of ${\pi^{\lrr}}$ on $Z_n:=\bZ_n(L)$ (recall that $\bZ_n$ is the center of $\GLN_{n}$) is given by the character $\omega_{\pi}^{\lrr}|_{Z_n}$,\;where $\omega_{\pi}^{\lrr}:=(\omega_{\pi}\otimes_Ev_r^{k-1})\otimes_E\cdots\otimes_E \omega_{\pi}$.\;Note that for any subset $I\subseteq {\Delta_n(k)}$,\;the center $Z_n$  acts on representations $\pi_I\otimes_EL^{\lrr}(\underline{\lambda})_{{I}}$,\;$i_{\op^{\lrr}_{{I}}}^{G}(\pi,\underline{\lambda})$ and $\BI_{\op^{\lrr}_{{I}}}^{G}(\pi,\underline{\lambda})$ via $\omega_{\pi}^{\lrr}\chi_{\underline{\lambda}}$.\;

In the sequel,\;we put  $G=\GL_n(L)$ for simplicity.\;Let $I$ be a subset of ${\Delta_n(k)}$.\;By \cite[Proposition 2.10]{av1980induced2}, we see that $i_{\op^{\lrr}(L)\cap\bL^{\lrr}_{I}(L)}^{\bL^{\lrr}_{I}(L)}\pi^{\lrr}$ admits a unique irreducible subrepresentation $\pi_I$.\;Let $\underline{\lambda}\in X_{\Delta_{n}}^+$, and let $I,J\subseteq {\Delta_n(k)}$.\;We put
\begin{equation*}
\begin{aligned}
	\mathbb{I}_{\op^{\lrr}_{{I}}}^{G}(\pi,\underline{\lambda})&=\Big(\mathrm{Ind}_{\op^{\lrr}_{{I}}(L)}^{G}\pi_I\otimes_EL^{\lrr}(\underline{\lambda})_{I}\Big)^{\mathbb{Q}_p-\mathrm{an}},
	i_{\op^{\lrr}_{{I}}}^{G}(\pi,\underline{\lambda})&=\left(i_{\op^{\lrr}_{{I}}(L)}^{G}\pi_{I}\right)\otimes_EL(\underline{\lambda}).\\
\end{aligned}
\end{equation*}
In the beginning of  \cite[Section 4.2]{2022ext1hyq} and \cite[Proposition 3.6]{2022ext1hyq},\;we show that if $J\supsetneq I$,\;we have an injection $\BI_{\op^{\lrr}_{{J}}}^{G}(\pi,\underline{\lambda})\hookrightarrow\BI_{\op^{\lrr}_{{I}}}^{G}(\pi,\underline{\lambda})$ and $i_{\op^{\lrr}_{{J}}}^{G}(\pi,\underline{\lambda})\hookrightarrow i_{\op^{\lrr}_{{I}}}^{G}(\pi,\underline{\lambda})$.\;Therefore,\;we put
\begin{equation*}
\begin{aligned}
	&v_{\op^{\lrr}_{{I}}}^{\ana}(\pi,\underline{\lambda})=\BI_{\op^{\lrr}_{{I}}}^{G}(\pi,\underline{\lambda})\big/\sum_{J\supsetneq I}\BI_{\op^{\lrr}_{{J}}}^{G}(\pi,\underline{\lambda}),\;\\
	&v_{\op^{\lrr}_{{I}}}^{\infty}(\pi,\underline{\lambda})=i_{\op^{\lrr}_{{I}}}^{G}(\pi,\underline{\lambda})\big/u_{\op^{\lrr}_{{I}}}^{\infty}(\pi,\underline{\lambda}),\;	u_{\op^{\lrr}_{{I}}}^{\infty}(\pi,\underline{\lambda})=\sum_{J\supsetneq I}i_{\op^{\lrr}_{{J}}}^{G}(\pi,\underline{\lambda}).\;
\end{aligned}
\end{equation*}
In particular,\;we denote by $\st_{(r,k)}^{\infty}(\pi,\underline{\lambda}):=v^{\infty}_{\op^{\lrr}_{\emptyset}}(\pi,\underline{\lambda})$ (resp.,\;$\st_{(r,k)}^{\ana}(\pi,\underline{\lambda})=v^{\ana}_{\op^{\lrr}}(\pi,\underline{\lambda})$) the locally $\bQ_p$-algebraic parabolic Steinberg representation (resp.,\;locally $\bQ_p$-analytic  parabolic Steinberg representation) with respect to the Zelevinsky-segment $\Delta_{[k-1,0]}(\pi)$ and the weight $\underline{\lambda}$.\;The main theorem of \cite{2022ext1hyq} is (see \cite[Theorem 5.19]{2022ext1hyq})
%We also put $\BI_{\op^{\lrr}_{{I}}}^G(\pi):=\BI_{\op^{\lrr}_{{I}}}^G(\pi,\underline{{0}})$,\;$v_{\op^{\lrr}_{{I}}}^{\ana}(\pi):=v_{\op^{\lrr}_{{I}}}^{\ana}(\pi,\ul{0})$, and $i_{\op^{\lrr}_{{I}}}^G(\pi):=i_{\op^{\lrr}_{{I}}}^G(\pi,\ul{0})$,\;$v_{\op^{\lrr}_{{I}}}^{\infty}(\pi):=v_{\op^{\lrr}_{{I}}}^{\infty}(\pi,\ul{0})$.\;The Jordan-Holder factors of $i_{\op^{\lrr}_{}}^G(\pi)$ are also given by $\{v^{\infty}_{\op^{\lrr}_{I}}(\pi)\}_{I\subseteq {\Delta_n(k)}}$.\;\cite[Lemma 3.7]{2022ext1hyq},\;we showed that 

\begin{thm}\label{thmintroLINV}
Let $ir\in {\Delta_n(k)}$,\;we have an $E$-vector spaces isomorphism
\begin{equation}\label{thmintro}
	\homo(L^\times,E)\xrightarrow{\sim }\ext^1_{G}\big(v_{\op^{\lrr}_{ir}}^{\infty}(\pi,\ul{\lambda}), \st_{(r,k)}^{\ana}(\pi,\ul{\lambda})\big).
\end{equation}
In particular,\;we have $\dim_E  \ext^1_{G}\big(v_{\op^{\lrr}_{ir}}^{\infty}(\pi,\ul{\lambda}), \st_{(r,k)}^{\ana}(\pi,\ul{\lambda})\big)=d_L+1$.\;
\end{thm}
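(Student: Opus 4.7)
The plan is to prove the isomorphism in two steps: first construct an injective $E$-linear map from $\homo(L^\times,E)$ to the right-hand side by exhibiting an explicit family of non-split extensions, then verify surjectivity via a dimension count. Since the target $\homo(L^\times,E)$ has $E$-dimension $d_L+1$ (with basis $\valua_L$ together with $\tau\circ\log_p$ for $\tau\in\Sigma_L$), and the claimed $\ext^1$-dimension is also $d_L+1$, it will suffice to establish injectivity together with an upper bound $\dim_E\ext^1\leq d_L+1$.

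For the construction, I would exploit that $\bL^\lrr_{ir}$ sits as a Levi strictly larger than $\bL^\lrr$ inside $\op^\lrr_{ir}$, so that the quotient of the relevant centers is (up to a finite cover) an extra copy of $L^\times$. Given $\psi\in\homo(L^\times,E)$, I would build a $2$-dimensional locally $\bQ_p$-analytic representation $\mathbf{1}_\psi$ of $\bL^\lrr_{ir}(L)$ realized as a non-split extension of the trivial character by itself encoded by $\psi$ through this extra $L^\times$. Tensoring with the cuspidal smooth representation $\pi_{ir}$ and the algebraic piece $L^\lrr(\ul\lambda)_{ir}$ and parabolically inducing from $\op^\lrr_{ir}(L)$ to $G$, I expect to obtain a locally $\bQ_p$-analytic representation with socle containing $\st_{(r,k)}^\ana(\pi,\ul\lambda)$ and a natural quotient isomorphic to $v_{\op^\lrr_{ir}}^\infty(\pi,\ul\lambda)$. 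The key check is that the extension is non-split for $\psi\neq 0$ and depends $E$-linearly on $\psi$; for the former I would look at the image under the Jacquet--Emerton functor $J_{\bP^\lrr(L)}$, which by the non-criticality/generic nature of the weight sends the construction to a non-split extension of characters of $\bL^\lrr(L)$ detectable by $\psi$.

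For the upper bound, my plan is to use the Orlik--Strauch classification to list the Jordan--Hölder constituents of $\st_{(r,k)}^\ana(\pi,\ul\lambda)$, which up to the constituents coming from $v_{\op^\lrr_J}^\ana(\pi,\ul\lambda)$ for $J\subseteq\Delta_n(k)$, form a filtration whose graded pieces are the generalized analytic Steinberg representations. Applying $\ext^\bullet_G(v_{\op^\lrr_{ir}}^\infty(\pi,\ul\lambda),-)$ and chasing the resulting long exact sequences, the problem reduces to computing $\ext^j_G(v_{\op^\lrr_{ir}}^\infty(\pi,\ul\lambda),v_{\op^\lrr_J}^\ana(\pi,\ul\lambda))$. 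By Emerton's adjunction formula between locally analytic parabolic induction and the Jacquet--Emerton functor, each such $\ext^j$ can be rewritten in terms of Ext-groups inside the category of essentially admissible representations of $\bL^\lrr_J(L)$, which decompose further via a Hochschild--Serre type spectral sequence into Lie algebra cohomology of $(\fg_{\Sigma_L},\fp^\lrr_{J,\Sigma_L})$ twisted by $L^\lrr(\ul\lambda)_J$, paired with smooth Ext-groups between cuspidal representations in the Bernstein block $\Omega_{r}^{\otimes\bullet}$.

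The main obstacle I anticipate is proving the vanishing $\ext^1_G\big(v_{\op^\lrr_{ir}}^\infty(\pi,\ul\lambda),v_{\op^\lrr_J}^\ana(\pi,\ul\lambda)\big)=0$ for $J\neq\{ir\}$, together with a sharp computation at $J=\{ir\}$ that identifies the answer with $\homo(L^\times,E)$. In the trianguline case $r=1$ treated in \cite{2019DINGSimple} this rests on Schraen's vanishing results and the Orlik--Schraen analysis of extensions between locally analytic principal series; for general $r$ one must replace principal series by parabolic inductions of Zelevinsky-segments of cuspidals, and I would attack this by combining the structure of the cuspidal Bernstein block $\Omega_r$ with a Bruhat-type spectral sequence indexed by ${}^{\{ir\}}\sW_{n,\Sigma_L}^\lrr$, carefully tracking which Weyl elements contribute non-zero classes after tensoring with $L^\lrr(\ul\lambda)$ (where the non-criticality of $\ul\lambda$ plays a crucial role in killing off the unwanted terms). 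Once this vanishing is established, the unique surviving contribution at $J=\{ir\}$ yields exactly $\homo(L^\times,E)$ through the central character of $\bL^\lrr_{ir}/\bL^\lrr$, matching the explicit extensions from the construction step and completing the proof.
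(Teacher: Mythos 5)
You should note at the outset that the paper does not actually prove this statement here: it is quoted from the companion paper \cite[Theorem 5.19]{2022ext1hyq}, and what the present paper supplies is only the explicit construction of the class attached to $\psi$ (the representations $\sE_{ir}^{\emptyset}(\pi,\ul\lambda,\Psi)^0$ and $\widetilde{\Sigma}_i^{\lrr}(\pi,\ul\lambda,\psi)$). Your second step (upper bound via Orlik--Strauch constituents, Emerton adjunction and vanishing of the other $\ext$-groups) is broadly the strategy of that reference, but your first step --- the construction of the classes --- contains a genuine flaw, and it is precisely the step the paper makes explicit.

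The problem is that you build the self-extension on the wrong Levi. Any continuous additive character of $\bL^{\lrr}_{ir}(L)\cong \GLN_r(L)^{i-1}\times\GLN_{2r}(L)\times\GLN_r(L)^{k-i-1}$ factors, on the $\GLN_{2r}(L)$ block, through $\det$; hence its restriction to $\bZ^{\lrr}(L)$ is symmetric in the $i$-th and $(i+1)$-th entries, i.e.\ it lies in the subspace $\homo(\bZ^{\lrr}_{ir}(L),E)\subset\homo(\bZ^{\lrr}(L),E)$ that is exactly quotiented out in the isomorphism $\iota_\upsilon:\homo(L^\times,E)\xrightarrow{\sim}\homo(\bZ^{\lrr}(L),E)/\homo(\bZ^{\lrr}_{ir}(L),E)$. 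The ``extra copy of $L^\times$'' you invoke, namely $\bZ^{\lrr}/\bZ^{\lrr}_{ir}$ acting by $a_i/a_{i+1}$, is not a quotient of $\bL^{\lrr}_{ir}$, so your $\mathbf{1}_\psi$ cannot encode $\psi$ at all: at best your recipe produces the classes attached to $\psi=0$. Moreover, even formally, the object you induce from $\op^{\lrr}_{ir}(L)$ is a self-extension of $\BI_{\op^{\lrr}_{ir}}^{G}(\pi,\ul\lambda)$, and since $\st^{\ana}_{(r,k)}(\pi,\ul\lambda)=\BI_{\op^{\lrr}}^{G}(\pi,\ul\lambda)/\sum_{J\neq\emptyset}\BI_{\op^{\lrr}_{J}}^{G}(\pi,\ul\lambda)$, the image of $\BI_{\op^{\lrr}_{ir}}^{G}(\pi,\ul\lambda)$ in $\st^{\ana}_{(r,k)}(\pi,\ul\lambda)$ is zero; none of the Jordan--H\"older factors of $\st^{\ana}_{(r,k)}(\pi,\ul\lambda)$ (not even its socle $\st^{\infty}_{(r,k)}(\pi,\ul\lambda)$, since the smooth constituents of $i^G_{\op^{\lrr}_{ir}}(\pi,\ul\lambda)$ are the $v^{\infty}_{\op^{\lrr}_{J}}(\pi,\ul\lambda)$ with $J\supseteq\{ir\}$) occurs in your induced representation, so there is nothing to push forward onto and no extension of $v^{\infty}_{\op^{\lrr}_{ir}}(\pi,\ul\lambda)$ by $\st^{\ana}_{(r,k)}(\pi,\ul\lambda)$ can be extracted. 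The correct construction, recalled right after the statement, works on the minimal Levi: lift $\psi$ to $\Psi\in\homo(\bZ^{\lrr}(L),E)$ (on $\bL^{\lrr}(L)$ the direction $a_i/a_{i+1}$ does exist, through the block determinants), form the self-extension $\iota(\Psi)$ of $\pi^{\lrr}$, induce from $\op^{\lrr}(L)$, then push forward along $\BI_{\op^{\lrr}}^{G}(\pi,\ul\lambda)\twoheadrightarrow\st^{\ana}_{(r,k)}(\pi,\ul\lambda)$ and pull back along $i^G_{\op^{\lrr}_{ir}}(\pi,\ul\lambda)\hookrightarrow\BI_{\op^{\lrr}}^{G}(\pi,\ul\lambda)$; identifying the result with a class in $\ext^1_G\big(v^{\infty}_{\op^{\lrr}_{ir}}(\pi,\ul\lambda),\st^{\ana}_{(r,k)}(\pi,\ul\lambda)\big)$ already uses the isomorphism with $\ext^1_G\big(i^G_{\op^{\lrr}_{ir}}(\pi,\ul\lambda),\st^{\ana}_{(r,k)}(\pi,\ul\lambda)\big)$ coming from the very Ext computation you deferred, so the construction and the dimension count cannot be as cleanly separated as your outline suggests.
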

For $\psi\in\homo(L^{\times},\;E)$,\;the image of $\psi$ via the above isomorphism (\ref{thmintroLINV}) can be constructed explicitly as follows.\;Recall that we have an isomorphism 
\begin{equation}
\begin{aligned}
	\iota_\upsilon: \homo(L^{\times},E) &\xrightarrow{\sim} & \homo(\bZ^{\lrr}(L),E)/\homo(\bZ^{\lrr}_{ir}(L),E),\\
	\psi& \mapsto& [(a_1I_{r},\cdots, a_{k}I_{r})\mapsto \psi(a_i/a_{i+1})].\;
\end{aligned}
\end{equation}
Therefore,\;for any $\psi\in\homo(L^{\times},\;E)$,\;we choose a lift $\Psi\in \homo(\bZ^{\lrr}(L),E)$.\;Then  $\Psi$ induces an extension $\iota(\Psi)$ of $\pi^{\lrr}$ by $\pi^{\lrr}$:
\begin{equation*}
\iota(\Psi)(a)=\pi^{\lrr}(a)\otimes_E\begin{pmatrix}
	1 & \Psi\circ\det(a) \\ 0 & 1
\end{pmatrix}, \\ \forall ~a\in \bL^{\lrr}_{J}(L).
\end{equation*}
Consider the locally $\bQ_p$-analytic parabolic induction $\big(\ind_{\op^{\lrr}(L)}^G \iota(\Psi)\otimes_EL^{\lrr}(\ul{\lambda})\big)^{\BQ_p-\ana}$,\;which lies in the following exact sequence
\begin{equation}\label{explicitexactseq}
0 \longrightarrow \BI_{\op^{\lrr}}^G(\pi,\ul{\lambda}) \longrightarrow \big(\ind_{\op^{\lrr}(L)}^G \iota(\Psi)\otimes_EL^{\lrr}(\ul{\lambda})\big)^{\BQ_p-\ana} \xrightarrow{\pr} \BI_{\op^{\lrr}}^G(\pi,\ul{\lambda}) \longrightarrow 0.
\end{equation}
Pushforward (\ref{explicitexactseq}) along surjection $\BI_{\op^{\lrr}}^G(\pi,\ul{\lambda})\twoheadrightarrow v^{\ana}_{\op^{\lrr}}(\pi,\ul{\lambda})$ and then pullback via natural injections
\begin{equation*}i_{\op^{\lrr}_{{ir}}}^G(\pi,\ul{\lambda}) \hooklongrightarrow \BI_{\op^{\lrr}_{{ir}}}^G(\pi,\ul{\lambda}) \hooklongrightarrow \BI_{\op^{\lrr}}^G(\pi,\ul{\lambda}),
\end{equation*}
we get a locally $\bQ_p$-analytic representation  $\sE_{ir}^\emptyset(\pi,\ul{\lambda},\Psi)^0$,\;which is an extension of $i_{\op^{\lrr}_{{ir}}}^G(\pi,\ul{\lambda})$ by the locally $\bQ_p$-analytic representation $\st_{(r,k)}^{\ana}(\pi,\ul{\lambda})$. It gives a cohomology class  \[[\sE_{ir}^\emptyset(\pi,\ul{\lambda},\psi)^0]\in \ext^{1}_{G}\big(i_{\op^{\lrr}_{{ir}}}^G(\pi,\ul{\lambda}), v^{\ana}_{\op^{\lrr}}(\pi,\ul{\lambda})\big)\] (we can show that this cohomology class is independent on the choice of $\Psi$).\;By \cite[Theorem 5.19]{2022ext1hyq},\;we see that the natural map
$$\ext^1_{G}\big(v_{\op^{\lrr}_{{ir}}}^{\infty}(\pi,\ul{\lambda}, \st_{(r,k)}^{\ana}(\pi,\ul{\lambda}))\hookrightarrow\ext^1_{G}\big(i_{\op^{\lrr}_{{ir}}}^G(\pi,\ul{\lambda}),\st_{(r,k)}^{\ana}(\pi,\ul{\lambda})),\;$$
induced by the surjection $i_{\op^{\lrr}_{{ir}}}^G(\pi,\ul{\lambda})\twoheadlongrightarrow v_{\op^{\lrr}_{{ir}}}^{\infty}(\pi,\ul{\lambda})$,\;is actually an isomorphism.\;Therefore,\;the pull back of $\sE_{\{i\}}^\emptyset(\pi,\ul{\lambda},\iota_\nu(\psi))^0$ via the natural injection $u_{\op^{\lrr}_{{ir}}}^{\infty}(\pi,\ul{\lambda}) \rightarrow i_{\op^{\lrr}_{{ir}}}^G(\pi,\ul{\lambda})$ is split (as an extension of $u_{\op^{\lrr}_{{ir}}}^{\infty}(\pi,\ul{\lambda})$ by $\st_{(r,k)}^{\ana}(\pi,\ul{\lambda})$).\;Quotient it by $u_{\op^{\lrr}_{{ir}}}^{\infty}(\pi,\ul{\lambda})$,\;we deduce $\widetilde{{\Sigma}}_i^{\lrr}(\pi,\ul{\lambda}, \psi)$,\;which is an extension of $v_{\op^{\lrr}_{{ir}}}^{\infty}(\pi,\ul{\lambda})$ by $\st_{(r,k)}^{\ana}(\pi,\ul{\lambda})$.\;We therefore get that $[\widetilde{{\Sigma}}_i^{\lrr}(\pi,\ul{\lambda}, \psi)]$ is the extension class associated with $\psi$ via (\ref{thmintroLINV}).\

For $I\subset \Delta_n(k)$,\;we recall the  Orlik-Strauch functor $\cF^{G}_{\op_{{I}}}(-,-)$ (see \cite[The main theorem]{orlik2015jordan}),\;or see \cite[Section 2]{breuil2016socle}),\;which associates,\;to an object $M$ in the Bernstein-Gelfand-Gelfand (BGG) category  $\cO_{\alge}^{\overline{\fp}_{{{I}}},\Sigma_L}$,\;and an finite length smooth admissible representation $\pi_I$,\;a locally $\bQ_p$-analytic representation $\cF^{G}_{\op_{{I}}}(M,\pi_I)$.\;

For  $ir\in {\Delta_n(k)}$, $\sigma\in \Sigma_L$,\;we put $\ul{\lambda}_{\sigma}:=(\lambda_{1,\sigma}, \cdots, \lambda_{n,\sigma})$, and $\ul{\lambda}^{\sigma}:=(\lambda_{1,\sigma'}, \cdots, \lambda_{n,\sigma'})_{\sigma'\in \Sigma_L\backslash\{\sigma\}}$. We put
\begin{equation*}
\begin{aligned}
	& \widetilde{\Sigma}^{\lrr}_{i,\sigma}(\pi,\ul{\lambda}):=\Big(\big(\ind_{\op^{\lrr}_{{\Delta}_{k,i}}(L)}^G \st_{{\Delta}_{k,i}}^{\infty}(\pi,\ul{\lambda}_{\sigma}) )^{\sigma-\ana}\otimes_E L(\ul{\lambda}^{\sigma})\Big)/v_{\op^{\lrr}_{{ir}}}^{\infty}(\pi,\ul{\lambda}),\\
	&	C_{i,\sigma}:=\cF_{\op^{\lrr}_{{\Delta}_{k,i}}}^G\big(\overline{L}(-\ul{\lambda}^{\sigma})\otimes_E \overline{L}(-s_{ir,\sigma}\cdot \ul{\lambda}_{\sigma}), \st_{{\Delta}_{k,i}}^{\infty}(\pi,\ul{0})\big),
\end{aligned}
\end{equation*}
%Then we have by \cite[(6.40)]{2022ext1hyq} the injection $\widetilde{\Sigma}^{\lrr}_{ir,\sigma}(\pi,\ul{\lambda}) \hookrightarrow \widetilde{\Sigma}^{\lrr}_{ir}(\pi,\ul{\lambda})$.\;
By \cite[Proposition 4.6 and Proposition 4.9]{2022ext1hyq},\;$C_{i,\sigma}$ appears as an irreducible constituent in $\st_{(r,k)}^{\ana}(\pi,\ul{\lambda})$
with  multiplicity $1$.\;By the argument after \cite[(5.48)]{2022ext1hyq},\;$\widetilde{\Sigma}^{\lrr}_{i,\sigma}(\pi,\ul{\lambda})$ is a subrepresentation of $\st_{(r,k)}^{\ana}(\pi,\ul{\lambda})$, and $\widetilde{\Sigma}^{\lrr}_{i,\sigma}(\pi,\ul{\lambda})$ admits a subrepresentation $\Sigma_{i,\sigma}^{\lrr}(\pi,\ul{\lambda})$ which is an extension of $C_{i,\sigma}$ by $\st_{(r,k)}^{\infty}(\pi,\ul{\lambda})$.\;We put
\begin{equation}\label{subofsigma}
\begin{aligned}
	&\Sigma_i^{\lrr}(\pi,\ul{\lambda}):=\bigoplus^{\sigma\in \Sigma_L}_{\st_{(r,k)}^{\infty}(\pi,\ul{\lambda})}\Sigma_{ir,\sigma}^{\lrr}(\pi,\ul{\lambda})\hookrightarrow \widetilde{\Sigma}^{\lrr}_{i}(\pi,\ul{\lambda})=\bigoplus^{\sigma\in \Sigma_L}_{\st_{(r,k)}^{\infty}(\pi,\ul{\lambda})}\widetilde\Sigma_{i,\sigma}^{\lrr}(\pi,\ul{\lambda}),\\
	& \Sigma^{\lrr}(\pi,\ul{\lambda}):=\bigoplus^{ir\in {\Delta_n(k)}}_{\st_{(r,k)}^{\infty}(\pi,\ul{\lambda})} \Sigma_i^{\lrr}(\pi,\ul{\lambda})
	\hooklongrightarrow\widetilde{\Sigma}^{\lrr}(\pi,\ul{\lambda}):=\bigoplus^{\sigma\in \Sigma_L}_{\st_{(r,k)}^{\infty}(\pi,\ul{\lambda})}\widetilde{\Sigma}^{\lrr}_{i}(\pi,\ul{\lambda})\subseteq\st_{(r,k)}^{\ana}(\pi,\underline{\lambda}).
\end{aligned}
\end{equation}
By the definition,\;$\Sigma_i^{\lrr}(\pi,\ul{\lambda})$ is an extension of $\bigoplus_{\sigma\in \Sigma_L}C_{i,\sigma}$ by $\st_{(r,k)}^{\infty}(\pi,\ul{\lambda})$.\;Moreover,\;we have a natural isomorphism (see \cite[Proposition 5.34]{2022ext1hyq})
\begin{equation}\label{Intro:equ: BrLi}
\homo(L^\times,E)\xrightarrow{\sim}  \ext^1_G\big(v_{\op^{\lrr}_{{ir}}}^{\infty}(\pi,\ul{\lambda}), \Sigma_{i}^{\lrr}(\pi,\ul{\lambda})\big).
\end{equation}
For any $\psi \in \homo(L^{\times}, E)$,\;let  $[\Sigma_i^{\lrr}(\pi,\ul{\lambda}, \psi)]\in\ext^1_G\big(v_{\op^{\lrr}_{{ir}}}^{\infty}(\pi,\ul{\lambda}), \Sigma_{i}^{\lrr}(\pi,\ul{\lambda})\big)
$ be the 
image of $\psi$ via the isomorphism (\ref{Intro:equ: BrLi}).\;Moreover,\;we can see that $[\widetilde{{\Sigma}}_i^{\lrr}(\pi,\ul{\lambda}, \psi)]$ actually comes from $[\Sigma_i^{\lrr}(\pi,\ul{\lambda}, \psi)]$ by pushing-forward $\Sigma_{i}^{\lrr}(\pi,\ul{\lambda})\hookrightarrow \st_{(r,k)}^{\ana}(\pi,\ul{\lambda})$.\;

Let $\alpha\in E^\times$,\;we denote  $\ast(\alpha,\pi,\ul{\lambda}):=\ast(\pi,\ul{\lambda})\otimes_E\unr(\alpha)\circ\det$ for any representation $\ast(\pi,\ul{\lambda})$ of $G$ as above (for example,\;$v_{\op^{\lrr}_{ir}}^{\infty}(\alpha,\pi,\ul{\lambda}), \st_{(r,k)}^{\ana}(\alpha,\pi,\ul{\lambda})$,\;$\Sigma_{i}^{\lrr}(\alpha,\pi,\ul{\lambda})$,\;$\widetilde{{\Sigma}}_i^{\lrr}(\alpha,\pi,\ul{\lambda}, \psi)$,\;etc.).\;

For $ir\in \Delta_n(k)$ (so for $1\leq i\leq k-1$),\;let $V_i$ be an $E$-vector subspace of $\homo(L^\times,E)$ of dimension $d_L$.\;Let $\{\psi_{i,1}, \cdots, \psi_{i,d_L}\}$ be a basis of $V_i$.\;We put
\begin{eqnarray*}
\widetilde{\Sigma}_i^{\lrr}(\alpha,\ul{\lambda}, V_i)&:=&\bigoplus_{\st_{(r,k)}^{\ana}(\alpha,\pi, \ul{\lambda})}^{j=1,\cdots, d_L} \widetilde{\Sigma}_i^{\lrr}(\alpha,\pi, \ul{\lambda}, \psi_{i,j}), \\
\Sigma_i^{\lrr}(\alpha,\pi,\ul{\lambda}, V_i)&:=&\bigoplus_{\Sigma_i^{\lrr}(\alpha,\pi, \ul{\lambda})}^{j=1,\cdots, d_L} \Sigma_i^{\lrr}(\alpha,\pi,\ul{\lambda}, \psi_{i,j}).
\end{eqnarray*}
Thus $\Sigma_i^{\lrr}(\alpha,\pi,\ul{\lambda}, V_i)$ is a subrepresentation of $\widetilde{\Sigma}_i^{\lrr}(\alpha,\pi, \ul{\lambda}, V_i)$,\;both of the representations are independent of the choice of the basis of $V_i$ and determine $V_i$. Put $V:=\prod_{ir\in \Delta_n(k)}V_i$ and
\begin{eqnarray*}
\widetilde{\Sigma}^{\lrr}(\alpha,\pi,\ul{\lambda}, V)&:=&\bigoplus_{\st_{(r,k)}^{\ana}(\alpha,\pi, \ul{\lambda})}^{ir\in \Delta_n(k)} \widetilde{\Sigma}_i^{\lrr}(\alpha,\pi,\ul{\lambda}, V_i), \\
\Sigma^{\lrr}(\alpha,\pi,\ul{\lambda}, V)&:=&\bigoplus_{\st_{(r,k)}^{\infty}(\alpha,\pi, \ul{\lambda})}^{ir\in \Delta_n(k)} \Sigma_i^{\lrr}(\alpha,\pi, \ul{\lambda}, V_i).
\end{eqnarray*}
It is clear that $\Sigma^{\lrr}(\alpha,\pi,\ul{\lambda}, V)$ is a subrepresentation of $ \widetilde{\Sigma}^{\lrr}(\alpha,\pi,\ul{\lambda}, V)$.\;

\subsection{A subcandidate in the locally analytic \texorpdfstring{$\bQ_p$}{Lg}-adic local Langlands program}
The main result of this section is given as follows.\;Let $\rho_L:\gal_L\rightarrow \GLN_{n}(E)$ be a potentially semistable representation,\;and let $\Dpik=D_{\rig}(\rho_L)$ be the associated  
$(\varphi,\Gamma)$-module over $\cR_{E,L}$ of rank $n$.\;

Let  $\bh:=(\hpi_{\tau,1}>\hpi_{\tau,2}>\cdots>\hpi_{\tau,n} )_{\tau\in \Sigma_L}$ be the Hodge-Tate weights of $\rho_L$ (or $\Dpik$).\;We put $\hpi_{i}=(\hpi_{\tau,i})_{\tau\in \Sigma_L}$ for $1\leq i\leq n$.\;We put ${\bm\lambda}_\bh=(\hpi_{\tau,i}+i-1)_{\tau\in \Sigma_L,1\leq i\leq n}$,\;which is a dominant weight of $(\mathrm{Res}_{L/\BQ_p}\GLN_n)\times_{\BQ_p}E$ respect to $(\mathrm{Res}_{L/\BQ_p}\bB)\times_{\BQ_p}E$.\;

Suppose that $\Dpik$ admits a non-critical special  $\omepik$-filtration (see Definition \ref{dfnnoncriticalspecial}) with
parameter $(\bx_{\pi},\bmdel)\in \sbanpik\times\rigchl $ (or $(\widetilde{\bx}_{\pi,\bh},\widetilde{\bm{\delta}}_\bh)\in \sbanpik\times\rigch $).\;

Many information on $\rho_L$ is lost when passing from $\rho_L$ to its associated Weil-Deligne representation $\mathrm{WD}(\rho_L)$.\;We have defined the parabolic Fontaine-Mazur simple $\sL$-invariants $\sL(\rho_L)$ of  $\rho_L$.\;Then we will see that the locally $\BQ_p$-analytic representations $\Sigma^{\lrr}(\alpha_\pi ,\pi_0, {\bm\lambda}_\bh, \sL(\rho_L))$ and $\widetilde{\Sigma}^{\lrr}(\alpha_\pi ,\pi_0,{\bm\lambda}_\bh, \sL(\rho_L))$ of $G$ carry the exact information on the Weil-Deligne representation $\wdre(\rho_L)\cong\wdre(\Dpik)$ associated with $\rho_L$,\;the Hodge-Tate weights $\mathrm{HT}(\rho_L)$ of $\rho_L$,\;and the parabolic Fontaine-Mazur simple $\sL$-invariants $\sL(\rho_L)$ of $\rho_L$.\;Both of the representations  determine exactly the data $\{\wdre(\rho_L), \mathrm{HT}(\rho_L), \sL(\rho_L)\}$,\;and vice versa.

\begin{pro}We have:
\begin{description}
	\item[(1)] $\widetilde{\Sigma}^{\lrr}(\alpha,\pi,{\bm\lambda}_\bh, \sL(\rho_L))$ is isomorphic to an extension of $\bigoplus\limits_{ir\in \Delta_n(k)}v_{\op^{\lrr}_{ir}}^{\infty}(\alpha,\pi,{\bm\lambda}_\bh)^{d_L}$ by $\st_{(r,k)}^{\ana}(\alpha,\pi, {\bm\lambda}_\bh)$.\;
	\item[(2)] $\Sigma^{\lrr}(\alpha,\pi,{\bm\lambda}_\bh, \sL(\rho_L))$ has the following form:
	\begin{equation}\label{stru}
		\begindc{\commdiag}[300]
		\obj(0,3)[a]{$\st_{(r,k)}^{\infty}(\alpha,\pi, {\bm\lambda}_\bh)$}
		\obj(4,6)[b]{$C_{1,\sigma_1}$}
		\obj(8,6)[c]{$v_{\op^{\lrr}_{{r}}}^{\infty}(\alpha,\pi,{\bm\lambda}_\bh)$}
		\obj(4,5)[d]{$\vdots$}
		\obj(8,5)[e]{$\vdots$}
		\obj(4,4)[f]{$C_{1, \sigma_{d_L}}$}
		\obj(8,4)[g]{$v_{\op^{\lrr}_{{r}}}^{\infty}(\alpha,\pi, {\bm\lambda}_\bh)$}
		\obj(4,3)[h]{$\vdots$}
		\obj(8,3)[i]{$\vdots$}
		\obj(4,2)[j]{$C_{k-1,\sigma_1}$}
		\obj(8,2)[k]{$v_{\op^{\lrr}_{(k-1)r}}^{\infty}(\alpha,\pi, {\bm\lambda}_\bh)$}
		\obj(4,1)[l]{$\vdots$}
		\obj(8,1)[m]{$\vdots$}
		\obj(4,0)[n]{$C_{k-1,\sigma_{d_L}}$}
		\obj(8,0)[o]{$v_{\op^{\lrr}_{(k-1)r}}^{\infty}(\alpha,\pi, {\bm\lambda}_\bh)$}
		\mor{a}{b}{}[+1,\solidline]
		\mor{b}{c}{}[+1,\solidline]
		\mor{a}{f}{}[+2,\solidline]
		\mor{f}{g}{}[+1,\solidline]
		\mor{a}{j}{}[+1,\solidline]
		\mor{j}{k}{}[+1,\solidline]
		\mor{a}{n}{}[+1,\solidline]
		\mor{n}{o}{}[+1,\solidline]
		\enddc,
	\end{equation}
	\item[(3)] $\soc_G\Sigma_i^{\lrr}(\alpha,\pi,{\bm\lambda}_\bh, \sL(\rho_L)_{ir})\cong \soc_G \Sigma^{\lrr}(\alpha,\pi, {\bm\lambda}_\bh, \sL(\rho_L))\cong \st_{(r,k)}^{\infty}(\alpha,\pi, {\bm\lambda}_\bh)$.
	\item[(4)]The locally algebraic subrepresentation of $\widetilde{\Sigma}^{\lrr}(\alpha,\pi,{\bm\lambda}_\bh, \sL(\rho_L))$ (resp. of $\widetilde{\Sigma}_i^{\lrr}(\alpha,\pi,{\bm\lambda}_\bh, \sL(\rho_L)_{ir})$ for $ir\in \Delta_n(k)$) is isomorphic to $\st_{(r,k)}^{\infty}(\alpha,\pi, {\bm\lambda}_\bh)$.
	%if and only if $N^{k-1}\neq 0$ on $D_{\pst}(\Dpik)$ (resp. if and only if $\Dpik_i^{i+1}$ is not potentially crystalline).
\end{description}
\end{pro}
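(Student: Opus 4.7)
The plan is to reduce every assertion to the structural properties of the building blocks $\widetilde{\Sigma}_i^{\lrr}(\alpha,\pi,\ul{\lambda},\psi)$ and $\Sigma_i^{\lrr}(\alpha,\pi,\ul{\lambda},\psi)$ recalled in this section, combined with the decomposition
\[
\sL(\rho_L)_{ir}=\bigoplus_{\tau\in\Sigma_L}\sL(\rho_L)_{ir,\tau},\qquad \dim_E\sL(\rho_L)_{ir,\tau}=1,
\]
available by Remark on $\tau$-$\sL$-invariants once one observes that the non-critical special $\omepik$-filtration forces $\Dpik^{i+1}_i$ to be potentially semistable noncrystalline (Lemma \ref{criofnoncrystalline}, using that $\df(\Dpik)$ is absolutely indecomposable). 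Throughout, I would fix for every $ir\in\Delta_n(k)$ and every $\tau\in\Sigma_L$ the distinguished generator $\psi_{i,\tau}=\psi_{\tau,L}-\sL_{i,\tau}\psi_{\ur}$ of $\sL(\rho_L)_{ir,\tau}$, so that $\{\psi_{i,\tau}\}_{\tau\in\Sigma_L}$ is a canonical $E$-basis of $\sL(\rho_L)_{ir}$ of size $d_L$.

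For part (1), I would simply unwind the definition of $\widetilde{\Sigma}^{\lrr}(\alpha,\pi,\ul{\lambda},V)$ as an amalgamated sum over $\st_{(r,k)}^{\ana}(\alpha,\pi,\ul\lambda)$ of the representations $\widetilde{\Sigma}_i^{\lrr}(\alpha,\pi,\ul\lambda,\psi_{i,\tau})$, each of which is an extension of $v_{\op^{\lrr}_{ir}}^{\infty}(\alpha,\pi,\ul\lambda)$ by $\st_{(r,k)}^{\ana}(\alpha,\pi,\ul\lambda)$ by Theorem \ref{thmintroLINV}. Taking amalgamated sums over a fixed base is exact in the cokernel, which produces the asserted extension of $\bigoplus_{ir}v_{\op^{\lrr}_{ir}}^{\infty}(\alpha,\pi,\ul\lambda)^{d_L}$ by $\st_{(r,k)}^{\ana}(\alpha,\pi,\ul\lambda)$. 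That the classes $[\widetilde\Sigma_i^{\lrr}(\alpha,\pi,\ul\lambda,\psi_{i,\tau})]$ remain linearly independent as $\tau$ varies is immediate from the isomorphism in Theorem \ref{thmintroLINV} and linear independence of the $\psi_{i,\tau}$ in $\homo(L^\times,E)$.

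For part (2), I would first establish the $\tau$-refined block structure of each $\Sigma_i^{\lrr}(\alpha,\pi,\ul\lambda,\psi_{i,\tau})$. By the explicit construction recalled around~(\ref{Intro:equ: BrLi}) and the decomposition $\Sigma_i^{\lrr}(\alpha,\pi,\ul\lambda)=\bigoplus^{\sigma}_{\st_{(r,k)}^{\infty}}\Sigma_{i,\sigma}^{\lrr}(\alpha,\pi,\ul\lambda)$, the image of $\psi_{i,\tau}$ under the isomorphism $\homo(L^\times,E)\xrightarrow{\sim}\ext^1_G(v_{\op^{\lrr}_{ir}}^{\infty},\Sigma_i^{\lrr})$ lies in the $\sigma=\tau$ summand: this follows because the extension is constructed from locally $\sigma$-analytic induction, and the Orlik--Strauch computation determining the simple subquotients of $\Sigma_{i,\sigma}^{\lrr}$ involves only the $\sigma$-component $s_{ir,\sigma}\cdot\ul\lambda_\sigma$ of the twist; concretely, evaluating on the unramified direction $\psi_{\ur}$ hits only a locally algebraic quotient, which is killed after quotienting by $v^{\infty}_{\op^{\lrr}_{ir}}$, while the direction $\psi_{\tau,L}$ produces the $C_{i,\tau}$-subquotient. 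Amalgamating over $\tau$ and then over $i$ then yields the block diagram (\ref{stru}). I expect this identification step --- pinning down which $C_{i,\sigma}$ is paired with which $\psi_{i,\tau}$ --- to be the main obstacle, as it requires carefully tracking the Orlik--Strauch functor through the construction of $\widetilde\Sigma_i^{\lrr}(\alpha,\pi,\ul\lambda,\psi)$; it may be cleanest to argue by contradiction using the vanishing of $\ext^1_G(v_{\op^{\lrr}_{ir}}^{\infty},C_{i,\sigma})$ for $\sigma\neq\tau$ on the relevant $\sigma$-analytic components.

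For part (3), observe that from (\ref{stru}) every irreducible subrepresentation of $\Sigma_i^{\lrr}(\alpha,\pi,\ul\lambda,\sL(\rho_L)_{ir})$ must lie inside its unique subrepresentation $\Sigma_i^{\lrr}(\alpha,\pi,\ul\lambda)$; inside the latter, the socle is $\st_{(r,k)}^{\infty}(\alpha,\pi,\ul\lambda)$ since the $C_{i,\sigma}$ sit strictly above $\st_{(r,k)}^{\infty}$ in $\Sigma_{i,\sigma}^{\lrr}$ and are themselves irreducible quotients, not subobjects. Assembling over $i$ via the fact that the amalgamated direct sum $\bigoplus_{\st_{(r,k)}^{\infty}}$ has socle $\st_{(r,k)}^{\infty}$ (as the amalgamating subobject is itself irreducible) yields the claim. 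For part (4), I would use that the only locally algebraic irreducible constituents of $\widetilde\Sigma^{\lrr}(\alpha,\pi,\ul\lambda,\sL(\rho_L))$ are $\st_{(r,k)}^{\infty}$ and the various $v_{\op^{\lrr}_{ir}}^{\infty}$, since each $C_{i,\sigma}$ is constructed from $\overline{L}(-s_{ir,\sigma}\cdot\ul\lambda_\sigma)$ which is infinite-dimensional, hence not locally algebraic. Thus a locally algebraic subrepresentation strictly containing $\st_{(r,k)}^{\infty}$ would have to lift some $v_{\op^{\lrr}_{ir}}^{\infty}$ through the extension; the presence of a non-zero $C_{i,\tau}$-subquotient in each $\widetilde\Sigma_i^{\lrr}(\alpha,\pi,\ul\lambda,\psi_{i,\tau})$ (which is precisely the non-vanishing encoded in $\psi_{i,\tau}\neq 0$) obstructs any such lift, so the locally algebraic part is exactly $\st_{(r,k)}^{\infty}(\alpha,\pi,\ul\lambda)$.
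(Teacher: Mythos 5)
Your overall route coincides with the paper's: fix the basis $\psi_{i,\tau}=\psi_{\tau,L}-\sL_{i,\tau}\psi_{\ur}$ of $\sL(\rho_L)_{ir}$ provided by Lemma \ref{criofnoncrystalline} and Remark \ref{taulinvariantrmk}, get (1) from the definitions, and deduce (2)--(4) from a $\tau$-refined block decomposition plus the structural results of \cite{2022ext1hyq} (your direct socle argument for (3) is essentially the computation behind \cite[Proposition 5.28]{2022ext1hyq}). The genuine gap is precisely the step you flag in (2). What is needed is the decomposition
\[{\Sigma}_i^{\lrr}(\alpha,\pi,{\bm\lambda}_\bh,\psi_{i,\tau})\;=\;{\Sigma}^{\lrr}_{i,\tau}(\alpha,\pi,{\bm\lambda}_\bh,\psi_{i,\tau})\;\oplus_{\st_{(r,k)}^{\infty}(\alpha,\pi,{\bm\lambda}_\bh)}\;\Big(\bigoplus^{\sigma\neq\tau}_{\st_{(r,k)}^{\infty}(\alpha,\pi,{\bm\lambda}_\bh)}\Sigma_{i,\sigma}^{\lrr}(\alpha,\pi,{\bm\lambda}_\bh)\Big),\]
i.e.\ that the class of $\psi_{i,\tau}$ under (\ref{Intro:equ: BrLi}) lies in the image of $\ext^1_G\big(v^{\infty}_{\op^{\lrr}_{ir}}(\alpha,\pi,{\bm\lambda}_\bh),\Sigma^{\lrr}_{i,\tau}(\alpha,\pi,{\bm\lambda}_\bh)\big)$; the paper obtains this by adapting the argument after \cite[Lemma 3.3]{2019DINGSimple}. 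Your sketched mechanism does not establish it and partly misdescribes it: the unramified direction $\psi_{\ur}$ is not ``killed after quotienting by $v^{\infty}_{\op^{\lrr}_{ir}}$'' --- it gives a nonzero, genuinely smooth extension of $v^{\infty}_{\op^{\lrr}_{ir}}$ by $\st^{\infty}_{(r,k)}$ which lives on the amalgamating copy of $\st^{\infty}_{(r,k)}$ and hence in every $\sigma$-branch simultaneously. Only the $\psi_{\tau,L}$-component singles out the branch $\sigma=\tau$, and proving that requires tracking the compatibility of the isomorphism (\ref{Intro:equ: BrLi}) with the decomposition $\Sigma_i^{\lrr}=\bigoplus^{\sigma}_{\st^{\infty}_{(r,k)}}\Sigma_{i,\sigma}^{\lrr}$ (equivalently, redoing Ding's computation in the locally $\sigma$-analytic induction, or citing it as the paper does); your contradiction via ``vanishing of $\ext^1_G(v^{\infty}_{\op^{\lrr}_{ir}},C_{i,\sigma})$ for $\sigma\neq\tau$'' is not formulated precisely enough to substitute for this.

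There is also a slip in (4): the relevant obstruction is not ``$\psi_{i,\tau}\neq 0$'' nor the mere presence of $C_{i,\tau}$ as a subquotient ($C_{i,\tau}$ occurs in $\st^{\ana}_{(r,k)}(\alpha,\pi,{\bm\lambda}_\bh)$ for every $\psi$), but the condition $\psi_{i,\tau}\notin\homo_\infty(L^\times,E)$, equivalently $\homo_\infty(L^\times,E)\nsubseteq\sL(\rho_L)_{ir}$: a nonzero smooth $\psi$ does produce a locally algebraic extension of $v^{\infty}_{\op^{\lrr}_{ir}}$ by $\st^{\infty}_{(r,k)}$ inside $\widetilde{\Sigma}_i^{\lrr}(\alpha,\pi,{\bm\lambda}_\bh,\psi)$ (this is \cite[Remark 5.21]{2022ext1hyq}), so nonvanishing alone obstructs nothing. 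With your normalization the non-smoothness of $\psi_{i,\tau}$ is automatic, and this is exactly where Lemma \ref{criofnoncrystalline} (noncrystallinity of the $\Dpik^{i+1}_i$, i.e.\ absolute indecomposability) enters; once you state that invariant correctly, the conclusion of (4) follows as in the paper.
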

\begin{proof}Part (1) is clear.\;\;For (2),\;note that the Remark \ref{taulinvariantrmk} asserts that $\dim_E\sL(\Dpik)_{ir,\tau}=1$ for all $\tau\in \Sigma_L$,\;and $\sL(\Dpik)_{ir}=\bigoplus_{\tau\in \Sigma_L}\sL(\Dpik)_{ir,\tau}$.\;Thus,\;there exists a $\sL_{i,\tau}\in E$ such that $\sL(\Dpik)_{ir,\tau}$ is generated by $\psi_{i,\tau}:=\psi_{\tau,L}-\sL_{i,\tau}\psi_{\ur}$.\;Then $\{\psi_{i,\tau}\}_{i\in\Delta_n(k),\tau\in \Sigma_L}$ form a basis of $\sL(\Dpik)_{ir}$.\;By the same argument as in the  the discussion after \cite[Lem.\;3.3]{2019DINGSimple},\;we can obtain the decomposition
\[{\Sigma}_i^{\lrr}(\alpha,\pi, {\bm\lambda}_\bh, \psi_{i,\tau})={\Sigma}^{\lrr}_{i,\tau}(\alpha,\pi, {\bm\lambda}_\bh, \psi_{i,\tau})\oplus_{\st_{(r,k)}^{\infty}(\alpha,\pi, {\bm\lambda}_\bh)}\left(
\bigoplus_{\st_{(r,k)}^{\infty}(\alpha,\pi, {\bm\lambda}_\bh)}^{\sigma\in\Sigma_L,\sigma\neq\tau} \Sigma_{i,\sigma}^{\lrr}(\alpha,\pi, {\bm\lambda}_\bh)\right).\]
This deduces (2).\;Part (3) is a direct consequence of \cite[Proposition 5.28]{2022ext1hyq}.\;By \cite[Remark 5.21]{2022ext1hyq},\;we see that the locally algebraic subrepresentation of $\widetilde{\Sigma}_i^{\lrr}(\alpha,\pi,{\bm\lambda}_\bh, \sL(\rho_L)_{ir})$ is strictly bigger than\\ $\st_{(r,k)}^{\infty}(\alpha,\pi,{\bm\lambda}_\bh)$ if and only if $\homo_{\infty}(L^{\times}, E)\subseteq \sL(\rho_L)_{ir}$.\;Then $(4)$ follows from Lemma \ref{criofnoncrystalline} and Remark \ref{taulinvariantrmk}.\;
%which is equivalent to $D_i^{i+1}$ being crystalline (see the discussion in \S \ref{sec: lgln-LFM1}). The statement for $\widetilde{\Sigma}_i^{\lrr}(\alpha,\pi,{\bm\lambda}_\bh, \bL(D)_i)$ follows. By definition, it is easy to see $\widetilde{\Sigma}^{\lrr}(\alpha,\pi, {\bm\lambda}_\bh, \bL(D))$ has locally algebraic vectors other than $\st_n^{\infty}(\alpha,\pi, {\bm\lambda}_\bh)$ if and only if so does $\widetilde{\Sigma}_i^{\lrr}(\alpha,\pi, {\bm\lambda}_\bh, \bL(D)_i)$ for certain $i\in \Delta$. Together with Lemma \ref{monofull}, the statement for $\widetilde{\Sigma}^{\lrr}(\alpha,\pi, {\bm\lambda}_\bh, \bL(D))$ follows.
\end{proof}
\section{Local-global compatibility}

In this Chapter,\;we prove some new ($p$-adic) local-global compatibility results  for  potentially semistable non-crystalline $p$-adic Galois representation.\;

Since we want to explore the non-trianguline Galois representations,\;our local-global compatibility results are realized in the framework of patched Bernstein eigenvariety (roughly speaking,\;since we can only see finite slope $p$-adic automorphism forms or trianguline representation in the classical eigenvarieties),\;which is constructed by Christophe Breuil and Yiwen Ding (see \cite{Ding2021}).\;

We briefly describe the contents of each section.\;In Section \ref{BENVARPARAVAR},\;we first review the global patching construction (hence we also assume the so-called Talyor-Wiles hypothesis as in \cite{PATCHING2016}).\;Then we construct the patched Bernstein eigenvariety (by an easy variation of \cite[Section 3.3]{Ding2021}) and the (purely local) Bernstein paraboline varieties (see \cite[Section 4.2]{Ding2021}).\;

To state our local-global compatibility results,\;let $x$ be a point in the patched Bernstein eigenvariety such that the associated $p$-adic Galois representation $\rho_x$ admits a non-critical special  $\omepik$-filtration.\;

In Section \ref{smoothpoint},\;we show that $x$ is a smooth point of the patched Bernstein eigenvariety.\;By Section \ref{dfnFMparalINVSECTION},\;we can attach to $x$ (resp.,\;$\rho_L$) the parabolic Fontaine-Mazur simple $\sL$-invariants $\sL(\rho_L)$.\;We further show that the tangent map of the ``weight" map at point $x$ recovers the information of $\sL(\rho_L)$ (see Proposition \ref{coreigentangentmap2}).\;Via a study of parabolic Breuil's simple $\sL$-invariants (Sections \ref{BreuilLINVSUMMMANY}),\;we attach to $\rho_L$ a locally $\BQ_p$-analytic representation $\Sigma^{\lrr}(\pi, \ul{\lambda},\sL(\rho_L))$.\;The second main theorem (see Section \ref{lgcompthemainSECTION},\;our $p$-adic local-global compatibility results) of this paper asserts that  $\Sigma^{\lrr}(\pi, \ul{\lambda},\sL(\rho_L))$ is a subrepresentation of the associated Hecke-isotypic subspaces of the Banach spaces of $p$-adic automotphic form on certain (definite) unitary group (see Theorem \ref{thm: lgln-main}).\;

%On the other hand,\;for the continuous representation $\rho_L$,\;one can associate with an admissible unitary Banach representation $\widehat{\Pi}(\rho_L)$ of $\mathrm{GL}_n(L)$ (see Section 4.1 or see the paper \emph{Patching and the p-adic local langlands correspondence.},\;under the so called Talyor-Wiles hypothesis),\;which one might expect to be the right representation (up to multiplicities) corresponding to $\rho_L$ in the $p$-adic local Langlands program.
%In this case,\;we have an injection (up to some unramified twist):
%\[\mathrm{St}^{\infty}_{(r,k)}(\pi,\ul{\lambda})\hookrightarrow \widehat{\Pi}(\rho_L).\]In section 4.6,\;we show the main result of the paper.\;Keeping the assumption,\;the injection $\mathrm{St}^{\infty}_{(r,k)}(\pi,\ul{\lambda})\hookrightarrow \widehat{\Pi}(\rho_L)$ extends uniquely to a morphism 
%\[\Sigma^{\lrr}(\pi, \ul{\lambda},\sL(\rho_L))\hookrightarrow \widehat{\Pi}(\rho_L).\]
%Furthermore,\;$\sL(\rho_L)$ can be read out from $\widehat{\Pi}(\rho_L)$.\;

\subsection{Patched Bernstein eigenvarieties and Bernstein parabolic varieties}\label{BENVARPARAVAR}

Our Local-global compatibility results are realized in the space of the patched $p$-adic automorphic forms.\;More precisely,\;it is realized in the setting of \cite[Section 4.1.1]{2019DINGSimple}.\;In this section,\;we recall briefly  the patched Bernstein eigenvariety  and Bernstein paraboline variety  of Breuil-Ding (see \cite[Section 3.3,\;Section 4.2]{Ding2021}).\;Indeed,\;the patched arguments we need are slightly different from that in 
\cite[Section 3.3]{Ding2021}).\;We instead only vary all weights and levels at only one $p$-adic place.\;But the arguments in \cite[Section 3.3,\;Section 4.2]{Ding2021} can easily be adapted to our case.\;

We follow the notation of \cite[Section 4.1.1]{2019DINGSimple} and \cite[Section 2]{PATCHING2016}.\;Suppose that $p\nmid 2n$, and let $\overline{r}: \gal_L \rightarrow \GLN_n(k_E)$ be a continuous representation such that $\overline{r}$ admits a potentially crystalline lift $r_{\mathrm{pot.diag}}: \gal_L \rightarrow \GLN_n(E)$  of regular weight $\xi$ which is potentially diagonalisable.\;We can find a triple $(F,F^+, \overline{\rho})$,\;where $F$ is an imaginary CM field with maximal totally real subfield $F^+$,\;and $\overline{\rho}:\gal_{F^+} \rightarrow \cG_n(k_E)$ is a \emph{suitable globalisation} (cf. \cite[Section 2.1]{PATCHING2016}) of $\overline{r}$.\;In particular,\;for any place $v|p$ of $F^+$,\;$v$ splits in $F$, and has $F^+_v\cong L$.\;There is a place $\widetilde{v}$ of $F$ lying over $v$ with $\overline{\rho}|_{\gal_{F_{\widetilde{v}}}}\cong \overline{r}$.\;

Let $S_p$ be the set of places of $F^+$ above $p$.\;By \cite[Section 2.3]{PATCHING2016},\;we can find the following objects
\begin{equation*}
\{\widetilde{G}, v_1, \fp\in S_p, \{U_m\}_{m\in \BZ}\},
\end{equation*}
where $\widetilde{G}$ is a certain definite unitary group over $F^+$,\;$v_1$ is a certain finite place of $F^+$ prime to $p$,\;and $\{U_m=\prod_{v} U_{m,v}\}_{m\in \BZ_{\geq 0}}$ is a tower of certain compact open subgroups of $\widetilde{G}(\BA_{F^+}^{\infty})$ (see \cite[Section 4.1.1,\;Page 8028]{2019DINGSimple} or \cite[Section 2,\;Page 214]{PATCHING2016} for a precise description).\;We mention that $U_m$ has full level $\fp^m$ (resp.,\;level $\widetilde{G}(\cO_{F^+_v})$) at $\fp$ (resp.,\;$v\in S_p\setminus\{\fp\}$).\;

Let $\tau$ be the inertial type of $r_{\mathrm{pot.diag}}$.\;As in \cite[Section 4.1.1,\;Page 8029]{2019DINGSimple} or  \cite[Section 2,\;Page 215]{PATCHING2016},\;we consider the space of $p$-adic automorphic forms 	$\widehat{S}_{\xi,\tau}(U^{\fp}, \co_E)$ and $\widehat{S}_{\xi,\tau}(U^{\fp},E)$ (roughly speaking,\;the space of $p$-adic algebraic automorphic forms of fixed type $\sigma(\tau)$ (see \cite[Theorem 3.7]{PATCHING2016},\;the ``inertial local Langlands correspondence") at the place $S_p\setminus\{\fp\}$,\;full level at $\fp$,\;and whose weight is $0$ at places above $\fp$,\;and given by the regular weight $\xi$ at each of the places in $S_p\setminus\{\fp\}$).\;Note that $\widehat{S}_{\xi,\tau}(U^{\fp},E)$ is a Banach space for the supermum norm and is equipped with a continuous (unitary) action of $\GLN_n(L)$ (by right translation on functions).\;The space $\widehat{S}_{\xi,\tau}(U^{\fp},E)$  is also equipped with a faithful  action of a certain commutative global Hecke algebra $\bT^{S_p,\univ}$ over $\cO_E$ which is generated by some prime-to-$p$ Hecke operators (see \cite[Section 4.1.1,\;Page 8029]{2019DINGSimple}).\;We can associate to $\overline{\rho}$ a maximal ideal $\fm_{\overline{\rho}}$ of $\bT^{S_p,\univ}$.\;Let $\widehat{S}_{\xi,\tau}(U^{\fp},\ast)_{\fm_{\overline{\rho}}}$ be the localization of $\widehat{S}_{\xi,\tau}(U^{\fp},\ast)$ at $\fm_{\overline{\rho}}$ for $\ast\in\{\co_E,E\}$.\;
Then the action of $\bT^{S_p,\univ}$ on the localization $\widehat{S}_{\xi,\tau}(U^{\fp}, \co_E)_{\fm_{\overline{\rho}}}$ factors through certain  Hecke algebra
$\bT_{\xi,\tau}^{S_p}(U^{\fp}, \co_E)_{\fm_{\overline{\rho}}}$ (see \cite[Page 8029]{2019DINGSimple}).\;We also see that $\widehat{S}_{\xi,\tau}(U^{\fp},E)_{*}$ with $*\in \{\fm_{\overline{\rho}}, \emptyset\}$ are admissible unitary Banach representation of with invariant lattice $\widehat{S}_{\xi,\tau}(U^{\fp}, \co_E)_*$.\;

We denote by $R_{\widetilde{v}}^{\square}$ the maximal reduced and $p$-torsion free quotient of the universal $\co_E$-lifting ring of $\overline{\rho}_{\widetilde{v}}:=\overline{\rho}|_{\gal_{F_{\widetilde{v}}}}$ ($\cong \overline{r}$,\;and therefore $R_{\widetilde{v}}^{\square}\cong \defvarring$). For $v\in S_p\backslash\{\fp\}$, we denote by $R_{\widetilde{v}}^{\square, \xi,\tau}$ for the reduced and $p$-torsion free quotient of $R_{\widetilde{v}}^{\square}$ corresponding to potentially crystalline lifts of weight $\xi$ and inertial type $\tau$.\;Consider the following global deformation problem (in the terminology of \cite{clozel2008automorphy},\;see also \cite[Section 2.4]{PATCHING2016})
\begin{equation*}
\begin{aligned}
	\cS&=\bigg\{{F}/{F}^+,T^+,T,\cO_E,\overline{\rho},\chi_{\mathrm{cyc}}^{1-n}\delta_{{F}/{F}^+}^n,\{R_{\widetilde{v}_1}^{\square}\}\cup
	\{R_{\fp}^{\square}\}\cup \{R_{\widetilde{v}}^{\square, \xi,\tau}\}_{v\in S_p\backslash \{\fp\}}\bigg\}
\end{aligned}
\end{equation*}
Then by  \cite[Proposition 2.2.9]{clozel2008automorphy} (see also \cite[Section 2.4]{PATCHING2016}),\;this deformation problem is represented by a universal deformation ring $R_{\cS}^{\univ}$.\;Note that we have a natural morphism $R_{\cS}^{\univ}\rightarrow \bT_{\xi,\tau}^{S_p}(U^{\fp}, \co_E)_{\fm_{\overline{\rho}}}$.\;

Following \cite[Section 4.1.1]{2019DINGSimple} (or \cite[Section 2.8]{PATCHING2016}) we put
\begin{eqnarray*}
R^{\loc}:=R_{\widetilde{\fp}}^{\square} \widehat{\otimes} \Big(\widehat{\otimes}_{S_p\backslash\{\fp\}}R_{\widetilde{v}}^{\square, \xi,\tau}\Big)\widehat{\otimes} R_{\widetilde{v_1}}^{\square} ,
\end{eqnarray*}
where all completed tensor products are taken over $\cO_E$.\;We put $g:=q-[F^+:\BQ]\frac{n(n-1)}{2}$,\;where $q\geq [F^+:\BQ]\frac{n(n-1)}{2}$ is a certain integer as in  \cite[Section 2.6,\;Page 217]{PATCHING2016}).\;We now put 
\begin{eqnarray*}
R_{\infty}&:=&R^{\loc}\llbracket x_1,\cdots, x_g\rrbracket,\\
S_{\infty}&:=&\co_E\llbracket z_1,\cdots, z_{n^2(|S_p|+1)}, y_1,\cdots, y_q\rrbracket,
\end{eqnarray*}
where $x_i$, $y_i$, $z_i$ are formal variables.\;By the end of \cite[Section 4.1.1]{2019DINGSimple} (or \cite[Section 2.8]{PATCHING2016}),\;we get the following objects:
\begin{enumerate}
\item a continuous $R_{\infty}$-admissible unitary representation $\Pi_{\infty}$ of $G=\GLN_n(L)$ over $E$ together with a $G$-stable and $R_{\infty}$-stable unit ball $\Pi_{\infty}^o\subset \Pi_{\infty}$;
\item a morphism of local $\co_E$-algebras $S_{\infty}\ra R_{\infty}$ such that $M_{\infty}:= \homo_{\co_L}(\Pi_{\infty}^o, \co_E)$ is finite projective as $S_{\infty}\llbracket \GLN_n(\co_L)\rrbracket$-module;
\item a closed ideal $\fa$ of $R_{\infty}$, a surjection $R_{\infty}/\fa R_{\infty}\twoheadrightarrow R_{\cS}^{\univ}$ and a  $G   \times R_{\infty}/\fa R_{\infty}$-invariant isomorphism $\Pi_{\infty}[\fa]\cong \widehat{S}_{\xi,\tau}(U^{\fp},E)_{\fm_{\overline{\rho}}}$, where $R_{\infty}$ acts on $\widehat{S}_{\xi,\tau}(U^{\fp},E)_{\fm_{\overline{\rho}}}$ via $R_{\infty}/\fa R_{\infty}\twoheadrightarrow  R_{\cS}^{\univ}$.
\end{enumerate}

Let $R^{\fp}=\Big(\widehat{\otimes}_{v\in S_p\backslash\{\fp\}}R_{\widetilde{v}}^{\square, \xi,\tau}\Big)\widehat{\otimes} R_{\widetilde{v_1}}^{\square}$ and $R_{\infty}^{\fp}:=R^{\fp}\llbracket x_1,\cdots, x_g\rrbracket$.\;Then we have  $R^{\loc}=R^{\fp}\widehat{\otimes} \defvarring$ (recall that $R_{\widetilde{v}}^{\square}\cong \defvarring$ by definition) and $R_{\infty}=R_{\infty}^{\fp}\widehat{\otimes} \defvarring$.\;Let $\BU$ be the open unit ball in $\BA^1$.\;We put $\FX_{\overline{\rho}^{\fp}}^\Box:=(\Spf\;R^{\fp})^{\rig}$ and  $\mathfrak{X}_{\overline{r}}^\Box=(\Spf\;R_{\overline{r}}^\Box)^{\rig}$.\;Then we have $(\Spf\;R^{\fp}_{\infty})^{\rig}=\FX_{\overline{\rho}^{\fp}}^\Box\times \BU^g$.\;We have thus
$\FX_{\infty}:=(\Spf\;R_{\infty})^{\rig}\cong (\Spf\;R^{\fp}_{\infty})^{\rig}\times \mathfrak{X}_{\overline{r}}^\Box\cong \FX_{\overline{\rho}^{\fp}}^\Box\times \BU^g\times \mathfrak{X}_{\overline{r}}^\Box$.\;

%The patched module $\Pi_\infty$ defined in Section 4.1 is a $R_\infty$-module and 

Let $\bh:=(\hpi_{\tau,1},\hpi_{\tau,2},\cdots,\hpi_{\tau,n} )_{\tau\in \Sigma_L}$ be a strictly $\Delta_n^k$-dominant weight.\;We put $\hpi_{i}=(\hpi_{\tau,i})_{\tau\in \Sigma_L}$ for $1\leq i\leq n$ and put ${\bm\lambda}_\bh=(\hpi_{\tau,i}+i-1)_{\tau\in \Sigma_L,1\leq i\leq n}$.\;We denote by $\Pi_\infty^{R_\infty-\ana}$ the subrepresentation of $G$ of locally $R_\infty$-analytic vectors of $\Pi_\infty$ (see \cite[Section 3.1]{breuil2017interpretation}).\;Recall that $\bZ^{\lrr}(L)\cong Z^{\lrr}_{\varpi_L}\times \bZ^{\lrr}(\cO_L)$, where 
$Z^{\lrr}_{\varpi_L}$ is the image of $\oplus_{i=1}^k\BZ\hookrightarrow \bZ^{\lrr}(L),\;(m_i)\mapsto (\varpi_L^{m_i})$.\;

We recall the construction of certain  $R_{\infty}\times\FZ_{\omepik}\times Z^{\lrr}_{\varpi_L}\times \bZ^{\lrr}(\cO_L)$-module $B_{\omepik,{\bm\lambda}_\bh}(\Pi_\infty^{R_\infty-\ana})$ in \cite[Sections  3.1.1,\;3.1.2,\;3.3]{Ding2021}.\;Using Bushnell-Kutzko's theory of type,\;we can construct an absolutely irreducible smooth representation $\sigma$ of $\bL^{\lrr}(\cO_L)$ over $E$ from a maximal simple type of $\omepik$ (see the argument in \cite[Sections 3.1.1]{Ding2021}).\;We have $$\FZ_{\omepik}\cong \EndO_{\bL^{\lrr}(L)}\big(\mathrm{c-ind}_{\bL^{\lrr}(\cO_L)}^{\bL^{\lrr}(L)}\sigma\big),\;$$where ``$\mathrm{c-ind}$'' denotes the compact induction.\;We put 
\begin{equation*}
\begin{aligned}
	B_{\sigma,{\bm\lambda}_\bh}(\Pi_\infty^{R_\infty-\ana}):=&\;\homo_{\bL^{\lrr}(\cO_L)}\big(\sigma,J_{\bP^{\lrr}(L)}(\Pi_\infty^{R_\infty-\ana})_{{\bm\lambda}_\bh}\widehat{\otimes}_E\cC^{\BQ_p-\ana}(\bZ^{\lrr}(\cO_L),E)\big)\\
	\cong &\; \homo_{\bL^{\lrr}(L)}\big(c-\mathrm{ind}_{\bL^{\lrr}(\cO_L)}^{\bL^{\lrr}(L)}\sigma,J_{\bP^{\lrr}(L)}(\Pi_\infty^{R_\infty-\ana})_{{\bm\lambda}_\bh}\widehat{\otimes}_E\cC^{\BQ_p-\ana}(\bZ^{\lrr}(\cO_L),E)\big),\;
\end{aligned}
\end{equation*}
where $J_{\bP^{\lrr}(L)}(\Pi_\infty^{R_\infty-\ana})_{{\bm\lambda}_\bh}:=\homo_{\fd^{\lrr}}(L^{\lrr}({\bm\lambda}_\bh),J_{\bP^{\lrr}(L)}(\Pi_\infty^{R_\infty-\ana}))$ (recall that $\fd^{\lrr}$ is the Lie algebra of the derived subgroup $\bD^{\lrr}$ of $\bL^{\lrr}$,\;and $J_{\bP^{\lrr}(L)}$ is the Emerton-Jacquet functor \cite{emerton2006jacquet}).

As in \cite[Sections 3.1.2]{Ding2021},\;we recall various group actions on $B_{\sigma,{\bm\lambda}_\bh}(\Pi_\infty^{R_\infty-\ana})$.\;We write
\begin{equation}
\begin{aligned}
	&\;\iota_1(\bZ^{\lrr}(L))\cong \bZ^{\lrr}(L),\;\big(\text{i.e., the notation $\cZ_1$ in \cite[(3.4)]{Ding2021}})\\
	\text{resp.,}&\;\iota_0(\bZ^{\lrr}(\cO_L))\cong \bZ^{\lrr}(\cO_L),\;\big(\text{i.e., the notation $\cZ_0$ in \cite[(3.4)]{Ding2021}}\big)
\end{aligned}
\end{equation}
for  the action of  $\bZ^{\lrr}(L)$ on $B_{\sigma,{\bm\lambda}_\bh}(\Pi_\infty^{R_\infty-\ana})$ induced by the action $\bZ^{\lrr}(L)$ (resp.,\;$\bZ^{\lrr}(\cO_L)$) on \\ $J_{\bP^{\lrr}(L)}(\Pi_\infty^{R_\infty-\ana})_{{\bm\lambda}_\bh}$( resp.,\;$\cC^{\BQ_p-\ana}(\bZ^{\lrr}(\cO_L),E)$).\;

Next,\;the Bernstein centre $\FZ_{\omepik}$  acts on the module $B_{\sigma,{\bm\lambda}_\bh}(\Pi_\infty^{R_\infty-\ana})$ via the factor $c-\mathrm{ind}_{\bL^{\lrr}(\cO_L)}^{\bL^{\lrr}(L)}\sigma$, which commutes with the action of $\iota_1(\bZ^{\lrr}(L))\times \iota_0(\bZ^{\lrr}(\cO_L))$.\;Write $\cY_1$ (resp.,\;$\cY_0$) for the action $\iota_1(\bZ^{\lrr}(L))$ (resp., $\iota_0(\bZ^{\lrr}(\cO_L))$).\;

We write $\Delta_0$ for the action of  $\bZ^{\lrr}(L)$ on $B_{\sigma,{\bm\lambda}_\bh}(\Pi_\infty^{R_\infty-\ana})$ induced by the diagonal action of $\bZ^{\lrr}(L)$ on $J_{\bP^{\lrr}(L)}(\Pi_\infty^{R_\infty-\ana})_{{\bm\lambda}_\bh}\widehat{\otimes}_E\cC^{\BQ_p-\ana}(\bZ^{\lrr}(\cO_L),E)$ (see the argument before \cite[(3.4)]{Ding2021}).\;The argument before \cite[Lemma 3.1.2]{Ding2021} shows that the action of $\Delta_0$ is determined by $\Delta_0|_{Z^{\lrr}_{\varpi_L}}$ (and $\Delta_0|_{\bZ^{\lrr}(\cO_L)}$ acts via the central character of $\sigma$).

The $R_{\infty}\times\FZ_{\omepik}\times Z^{\lrr}_{\varpi_L}\times \bZ^{\lrr}(\cO_L)$-module structure of $B_{\omepik,{\bm\lambda}_\bh}(\Pi_\infty^{R_\infty-\ana})^\vee$ is given by the $R_{\infty}\times\FZ_{\omepik}\times\iota_1(Z^{\lrr}_{\varpi_L})\times \iota_0(\bZ^{\lrr}(\cO_L))$-action,\;where $R_{\infty}$ acts on $B_{\sigma,{\bm\lambda}_\bh}(\Pi_\infty^{R_\infty-\ana})$ via the factor $\Pi_\infty^{R_\infty-\ana}$.\;By \cite[Lemma 3.17]{Ding2021},\;the $\FZ_{\omepik}\times Z^{\lrr}_{\varpi_L}\times \bZ^{\lrr}(\cO_L)$-module $B_{\sigma,{\bm\lambda}_\bh}(\Pi_\infty^{R_\infty-\ana})$ does not depend on the choice $\sigma$.\;We denote hence $B_{\omepik,{\bm\lambda}_\bh}(\Pi_\infty^{R_\infty-\ana}):=B_{\sigma,{\bm\lambda}_\bh}(\Pi_\infty^{R_\infty-\ana})$.\;

We also recall how to recover the action $\cY_1$ from $\cY_0$-action and $\FZ_{\omepik}$-action.\;Denoting by $\psi_{\sigma}$ the central character of $\sigma$ (a character of $\bZ^{\lrr}(L)$).\;By \cite[(3.5)]{Ding2021},\;we see  that \[\cY_1(z_0)=\psi_{\sigma}(z_0)\cY_0(\mathrm{det}_{\bL^{\lrr}(L)}(z_0)).\;\]
for any $z_0\in \bZ^{\lrr}(\cO_L)$.\;On the other hand,\;we see that $\cY_1|_{Z^{\lrr}_{\varpi_L}}=\Delta_0|_{Z^{\lrr}_{\varpi_L}}$.\;Moreover, the $\bZ^{\lrr}(L)$-action on $B_{\omepik,{\bm\lambda}_\bh}(\widehat{S}(U^{\fp},W^{\fp})_{\overline{\rho}}^{\ana})$ induced by the map $$\bZ^{\lrr}(L)\rightarrow  \EndO_{\bL^{\lrr}(L)}\big(\mathrm{c-ind}_{\bL^{\lrr}(\cO_L)}^{\bL^{\lrr}(L)}\sigma\big)\cong\FZ_{\omepik}$$ coincides with the $\Delta_0$-action.\;Then the action $\cY_1$ can be recovered from the $\FZ_{\omepik}$-action.\;

By an easy variation of the proof of \cite[Lemma 3.1.3]{Ding2021} and the argument before \cite[(3.28)]{Ding2021},\;we see that $B_{\omepik,{\bm\lambda}_\bh}(\Pi_\infty^{R_\infty-\ana})^\vee$ is a coadmissible module over $\cO(\FX_{\infty}\times\zuni\times\rigch)$, which corresponds to a coherent sheaf $\cM_{\omepik,{\bm\lambda}_\bh}^{\infty,0}$ over $\FX_{\infty}\times \zuni\times \rigch$.\;Since the action of $Z^{\lrr}_{\varpi_L}$ factors through $\sbanpik$,\;we see that $\cM_{\omepik,{\bm\lambda}_\bh}^{\infty,0}$ gives rise to a coherent sheaf $\cM_{\omepik,{\bm\lambda}_\bh}^{\infty}$ over $\FX_{\infty}\times\sbanpik\times \rigch$ such that
\[\Gamma\Big(\FX_{\infty}\times\sbanpik\times \rigch,\cM_{\omepik,{\bm\lambda}_\bh}^{\infty}\Big)\cong B_{\omepik,{\bm\lambda}_\bh}(\Pi_\infty^{R_\infty-\ana})^\vee.\]
Let $\mathcal{E}_{\omepik,\fp,{\bm\lambda}_\bh}^\infty(\overline{\rho})\hookrightarrow\FX_{\infty}\times\sbanpik\times \rigch$ be the Zariski-closed support of $\cM_{\omepik,{\bm\lambda}_\bh}^{\infty}$.\;We call $\mathcal{E}_{\omepik,\fp,{\bm\lambda}_\bh}^\infty(\overline{\rho})$  the patched Bernstein eigenvariety.\;

By an easy variation of the proof of Proposition  3.3.2,\;Corollary \;3.3.3,\;Proposition\;3.3.4, Theorem 3.3.5 and Proposition\;3.3.6 in\cite{Ding2021}, we have
\begin{pro}\label{basicpropertyeigen}\hspace{20pt}
\begin{itemize}
	\item[(1)]  For $x=(\fm_x, \pi_{x},\chi_x)\in \FX_{\infty}\times\sbanpik\times \rigch$,\;$x\in \mathcal{E}_{\omepik,\fp,{\bm\lambda}_\bh}^\infty(\overline{\rho})$ if and only if
	\[\homo_{\bL^{\lrr}(L)}\Big(\pi_{x}\otimes_E((\chi_x)_{\varpi_{L}}\circ\mathrm{ det}_{\bL^{\lrr}(L)})\otimes_E L^{\lrr}({\bm\lambda}_\bh),J_{\bP^{\lrr}(L)}(\Pi_\infty^{R_\infty-\ana}[\fm_y] )\Big)\neq0\]
	\item[(2)] The rigid space $\mathcal{E}_{\omepik,\fp,{\bm\lambda}_\bh}^\infty(\overline{\rho})$ is reduced and equidimensional of dimension
	\[g+kd_L+n^2(|S_p|+1)+[F^+:\BQ]\frac{n(n-1)}{2}.\]
	\item[(3)] The coherent sheaf $\cM_{\omepik,{\bm\lambda}_\bh}^{\infty}$ is Cohen-Macaulay over $\mathcal{E}_{\omepik,\fp,{\bm\lambda}_\bh}^\infty(\overline{\rho})$.\;
	\item[(4)] The set of very classical non-critical generic points  (see \cite[Defintion 3.2.7,\;(3.22)]{Ding2021} for the definitions of classical,\;very classical,\;non-critical,\;and generic points respectively) is Zarisiki-dense in $\mathcal{E}_{\omepik,\fp,{\bm\lambda}_\bh}^\infty(\overline{\rho})$ and is an accumulation set.\;The set of very classical non-critical generic points  accumulates at point $x=(\fm_x, \pi_{x},\chi_x)$ with $\chi_x$ locally algebraic.\;
\end{itemize}
\end{pro}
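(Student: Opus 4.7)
The plan is to reduce each of the four assertions to the corresponding statement in \cite[Section 3.3]{Ding2021}, by verifying that the adaptation from varying weights and Bernstein components at every $p$-adic place to varying them only at the single place $\fp$ causes no essential change in the cohomological and geometric arguments. The overall strategy is that the finite projectivity of $M_\infty$ over $S_\infty[\![\GLN_n(\cO_L)]\!]$ controls everything: the Jacquet-Emerton functor composed with extracting the fixed weight ${\bm\lambda}_\bh$ on $\fd^{\lrr}$ and the $\sigma$-isotypic component produces a coherent sheaf on $\FX_\infty\times\sbanpik\times\rigch$ whose support, dimension, depth, and locus of classical points are then governed by standard computations.

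For part (1), a point $x=(\fm_x,\pi_x,\chi_x)$ lies in the support of $\cM_{\omepik,{\bm\lambda}_\bh}^{\infty}$ precisely when the fiber is nonzero. Unwinding the definition of $B_{\omepik,{\bm\lambda}_\bh}(\Pi_\infty^{R_\infty-\ana})$ together with the recovery of the $\cY_1$-action from the $\cY_0$-action and the $\FZ_{\omepik}$-action via the central character $\psi_\sigma$, the nonvanishing of the fiber translates into the existence of a nonzero $\bL^{\lrr}(L)$-equivariant map from $\pi_x\otimes_E((\chi_x)_{\varpi_L}\circ\det_{\bL^{\lrr}(L)})\otimes_E L^{\lrr}({\bm\lambda}_\bh)$ into $J_{\bP^{\lrr}(L)}(\Pi_\infty^{R_\infty-\ana}[\fm_x])$, exactly as in \cite[Proposition 3.3.2]{Ding2021}.

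Parts (2) and (3) go together. The main input is that $M_\infty$ is Cohen-Macaulay as an $S_\infty$-module of the expected dimension, and that both the Jacquet-Emerton functor on locally $R_\infty$-analytic vectors and the extraction of the $\sigma$-isotypic component preserve Cohen-Macaulayness over the relevant base ring. Assembling the pieces, the ambient rigid space $\FX_\infty\times\sbanpik\times\rigch$ has the dimension claimed: the factor $g$ comes from the formal variables $x_1,\ldots,x_g$; the factor $n^2(|S_p|+1)$ comes from the framed deformation variables at $\widetilde{v}_1$ and at each place in $S_p$; the contribution $[F^+:\BQ]\frac{n(n-1)}{2}$ is the Hodge-Tate part, assembled from the potentially crystalline deformation rings at $v\in S_p\setminus\{\fp\}$ together with the de Rham locus at $\fp$; and $kd_L$ comes from the character space $\rigch$ parameterizing the $\omepik$-data. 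Reducedness and equidimensionality follow as in \cite[Theorem 3.3.5]{Ding2021} from generic reducedness at very classical points (supplied by part (4)) combined with the Cohen-Macaulay property.

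For part (4), the Zariski-density of very classical non-critical generic points is obtained by first producing a supply of classical points from the identification $\Pi_\infty[\fa]\cong\widehat{S}_{\xi,\tau}(U^{\fp},E)_{\fm_{\overline{\rho}}}$, then perturbing within the eigenvariety to avoid the critical and non-generic loci, which are cut out by proper Zariski-closed conditions on the parameters. The accumulation property at a point with $\chi_x$ locally algebraic follows, as in \cite[Proposition 3.3.6]{Ding2021}, from the fact that the weight map to $\rigch$ is, Zariski-locally, a submersion onto an open polydisc of characters, so that the accumulation subset of locally algebraic characters of $\bZ^{\lrr}(\cO_L)$ pulls back to an accumulation subset of $\bersteineigenvarpik$. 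The main technical obstacle is the bookkeeping around the modified patching setup: since we fix the potentially crystalline type $(\xi,\tau)$ at every $v\in S_p\setminus\{\fp\}$ rather than letting those types vary, the parameter space at the other $p$-adic places decouples from the Bernstein parameter at $\fp$, and one must check that this decoupling is compatible with the Cohen-Macaulayness input and that the contributions from the different $p$-adic places combine to give precisely the dimension formula above; once this is done, the remainder of the argument proceeds essentially in parallel to \cite{Ding2021}.
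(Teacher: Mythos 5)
Your overall strategy for (1)--(3) -- reducing each assertion to the corresponding statement of \cite[Section 3.3]{Ding2021} and checking that varying the data only at the single place $\fp$ changes nothing essential -- is exactly what the paper does (its proof consists of invoking an ``easy variation'' of Proposition 3.3.2, Corollary 3.3.3, Proposition 3.3.4, Theorem 3.3.5 and Proposition 3.3.6 of \cite{Ding2021}), and your dimension bookkeeping comes out right, since $\dim\big((\Spf S_\infty)^{\rig}\times\rigch\big)=g+n^2(|S_p|+1)+[F^+:\BQ]\frac{n(n-1)}{2}+kd_L$. One imprecision worth flagging: for (2)--(3) the input is the finite projectivity of $M_\infty$ over $S_\infty\llbracket \GLN_n(\cO_L)\rrbracket$, not Cohen--Macaulayness of $M_\infty$ over $S_\infty$, and ``the Jacquet--Emerton functor preserves Cohen--Macaulayness'' is not a general fact; it has to be replaced by the explicit argument of \cite{Ding2021} (which uses that $\Pi_\infty^{R_\infty-\ana}$ is, as a $\GLN_n(\cO_L)$-representation, a direct summand of $\cC^{\BQ_p-\ana}(\BZ_p^s\times\GLN_n(\cO_L),E)^{\oplus r}$).

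Your argument for (4), however, has a genuine gap. You propose to produce classical points ``from the identification $\Pi_\infty[\fa]\cong\widehat{S}_{\xi,\tau}(U^{\fp},E)_{\fm_{\overline{\rho}}}$'' and then perturb to avoid the critical and non-generic loci. Points coming from genuine automorphic forms lie only over the Zariski-closed locus $V(\fa)\subset\FX_\infty$, which has large codimension in the patched space, so such points can never be Zariski-dense in $\bersteineigenvarpik$; and ``perturbing off the critical and non-generic loci'' presupposes that these loci are proper Zariski-closed subsets of the eigenvariety, which is not known a priori. The mechanism actually used -- both in \cite{Ding2021} and again in the paper's Lemma \ref{accumulation} -- is the slope-theoretic classicality criterion: on a connected affinoid $U\subset\bersteineigenvarpik$ the map $U\rightarrow\omega(U)$ is finite onto an affinoid open, the $Z^{\lrr}_{\varpi_L}$-eigenvalues are therefore bounded on $U$, and for $C$ sufficiently large any point of $U$ whose character component is locally algebraic with weight gaps $>C$ is automatically very classical, non-critical and generic (\cite[Proposition 3.2.9, Proposition 3.2.14]{Ding2021}); density and accumulation at points with $\chi_x$ locally algebraic then follow because such characters accumulate in $\omega(U)$. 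In other words, non-criticality and genericity are forced by a numerical condition on the weight relative to a slope bound, not obtained by deforming within the eigenvariety, and ``classical'' here means that the Jacquet--Emerton eigenvector lifts to a locally algebraic vector of $\Pi_\infty^{R_\infty-\ana}$, with no reference to the ideal $\fa$. Getting this mechanism right matters downstream, since the same criterion is what the paper needs for the accumulation property at the non-critical special point.
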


We now recall the definition of Bernstein paraboline varieties \cite[Section 4.2]{Ding2021}.\;The Bernstein paraboline variety $\defvar$ of type {$(\omepik,\bh)$} is a subspace of $\mathfrak{X}_{\overline{r}}^\Box\times \sbanpik\times\rigch$.\;It contains a subspace $U^\Box_{\omepik,\mathbf{{h}}}(\overline{r})$  consists of the point $(\rho,\underline{x},\undelram)$ such that
\begin{itemize}\label{dfnvardef}
\item[(1)] $(\underline{x},\undelram)\in \big(\sbanpik\times\rigch\big)^{\gen}$ (the set of generic points in $\sbanpik\times\rigch$, see \cite[Section 4.2]{Ding2021}),
\item[(2)] $D_{\rig}(\rho)$ admits an $\omepik$-filtration $\cF=\fil_{\bullet}^{\cF} D_{\rig}(\rho)$ such that
\begin{equation}\label{paradefiinj}
	\begin{aligned}
		\gr_i^{\cF}D_{\rig}(\rho)\otimes_{\cR_{k(x),L}}\cR_{k(x),L}((\delta_i^0)^{-1}_{\varpi_L})\hookrightarrow \Delta_{x_i}\otimes_{\cR_{k(x),L}}\cR_{k(x),L}(z^{\bh_{ir}})
	\end{aligned}
\end{equation}
and the image has Hodge-Tate weights $(\bh_{(i-1)r+1},\cdots,\bh_{ir})$.\;By using Berger's equivalence of categories \cite[Theorem  A]{berger2008equations} and comparing the Hodge-Tate weights,\;we see that (\ref{paradefiinj}) is equivalent to 
\begin{equation}\label{dfnvardef1}
	\Delta_{x_i}\otimes_{\cR_{k(x),L}}\cR_{k(x),L}(z^{\bh_{(i-1)r+1}})  \hookrightarrow\gr_i^{\cF}D_{\rig}(\rho)\otimes_{\cR_{k(x),L}}\cR_{k(x),L}((\delta_i^0)^{-1}_{\varpi_L}).
\end{equation}
\end{itemize}
We define $\defvar$ to be the Zariski-closure of $U^\Box_{\omepik,\mathbf{{h}}}(\overline{r})$ in $\mathfrak{X}_{\overline{r}}^\Box\times \sbanpik\times\rigch$.\;By \cite[Theorem\;4.2.5,\;Corollary\;4.2.5]{Ding2021},\;we have:
\begin{pro}\label{propertyparavar}\hspace{20pt}
\begin{itemize}
	\item[(1)] The rigid space $\defvar$ is equidimensional of dimension $n^2+\left(\frac{n(n-1)}{2}+k\right)d_L$.\;
	\item[(2)] The set $U^\Box_{\omepik,\mathbf{{h}}}(\overline{r})$ is Zariski-open and  Zariski-dense in $\defvar$.
	\item[(3)] The rigid space  $U^\Box_{\omepik,\mathbf{{h}}}(\overline{r})$ is smooth over $E$,\;and the morphism $${\omega}|_{U^\Box_{\omepik,\mathbf{{h}}}(\overline{r})}:U^\Box_{\omepik,\mathbf{{h}}}(\overline{r})\rightarrow \sbanpik\times\rigch$$ is smooth.\;
	\item[(4)] Let $x=(\rho_x,\underline{x},\undelram)\in \defvar$,\;then $D_{\rig}(\rho_x)$ admits an $\omepik$-filtration $\cF=\{\fil_i^\cF D_{\rig}(\rho_x)\}$ such that,\;for all $1=1,\cdots,s$,\;
	\[\gr_{i}^{\cF}D_{\rig}(\rho_x)\otimes_{\cR_{k(x),L}}\cR_{k(x),L}((\delta_i^0)^{-1}_{\varpi_L})\bigg[\frac{1}{t}\bigg]=\Delta_{x_i}\bigg[\frac{1}{t}\bigg].\]
\end{itemize}
\end{pro}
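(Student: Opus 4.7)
The plan is to deduce the four assertions from the corresponding results of \cite[Theorem~4.2.5, Corollary~4.2.5]{Ding2021}, after checking that our setup fits their framework. The decisive observation is that condition (1) in the definition of $U^\Box_{\omepik,\mathbf{h}}(\overline{r})$ forces $(\underline{x},\undelram)\in (\sbanpik\times\rigch)^{\gen}$, so all the local analysis on the Zariski-open locus $U^\Box_{\omepik,\mathbf{h}}(\overline{r})$ takes place where the $\omepik$-parameters are generic; the non-generic parameters appearing for our non-critical special $\omepik$-filtrations (cf.~Remark~\ref{nongenediffobstru}) can only arise on the boundary and do not affect the constructions that underlie parts (1)--(3).

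For part (3), I would first represent the paraboline deformation functor attached to a point of $U^\Box_{\omepik,\mathbf{h}}(\overline{r})$ using the $F^0_{D,\cF}$-style deformation formalism. In the generic case, \cite[Proposition~4.1.16,\;Proposition~4.1.17]{Ding2021} show that this deformation functor is formally smooth; combining with the standard framed-vs-unframed comparison, one deduces that $U^\Box_{\omepik,\mathbf{h}}(\overline{r})$ is smooth and that the weight map $\omega$ is smooth. For parts (1) and (2), the Zariski-openness of $U^\Box_{\omepik,\mathbf{h}}(\overline{r})$ in $\defvar$ follows from its defining conditions being open (the generic locus is open in $\sbanpik\times\rigch$ and the prescribed Hodge-Tate weights cut out an open locus in $\mathfrak{X}_{\overline{r}}^\Box$); Zariski-density is automatic because by construction $\defvar$ is defined as the Zariski-closure of $U^\Box_{\omepik,\mathbf{h}}(\overline{r})$. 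The dimension count proceeds by evaluating the tangent space at a classical crystalline point of $U^\Box_{\omepik,\mathbf{h}}(\overline{r})$ where one can decompose contributions as: $n^2+\tfrac{n(n-1)}{2}d_L$ from the framed Galois deformations (using Tate's Euler-Poincar\'e characteristic formula), $kd_L$ from the free parameter $\undelram\in \rigch$ (after accounting for the rigidity coming from fixing a non-critical Hodge filtration), and no extra contribution from the discrete invariants parametrised by $\sbanpik$. Equidimensionality then follows from smoothness of $U^\Box_{\omepik,\mathbf{h}}(\overline{r})$ together with density.

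Part (4) is the global paraboline analogue of the spreading-out of triangulations. For $x\in U^\Box_{\omepik,\mathbf{h}}(\overline{r})$ the filtration exists tautologically. For a general $x\in \defvar$, I would choose a one-dimensional affinoid disc $\mathrm{Sp}\,A\subset \defvar$ passing through $x$ and meeting $U^\Box_{\omepik,\mathbf{h}}(\overline{r})$ in a Zariski-dense subset; over $\mathrm{Sp}\,A$, the family of $\omepik$-filtrations on $D_{\rig}(\rho_A)\big[\tfrac{1}{t}\big]$ obtained from the Zariski-dense generic locus extends, by the Kedlaya-Pottharst-Xiao / Berger-Chenevier style results used in \cite[Section~4.2]{Ding2021}, to a filtration in the whole family, whose fibre at $x$ gives the desired filtration modulo inverting $t$. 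Specialisation of the injection (\ref{dfnvardef1}) together with the comparison of Hodge-Tate weights on both sides then yields the asserted equality after inverting $t$.

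The main obstacle is the passage from $U^\Box_{\omepik,\mathbf{h}}(\overline{r})$ to all of $\defvar$: on the boundary the parameters may be non-generic (in the sense of \cite[(4.13)]{Ding2021}), so one cannot directly apply the formally smooth deformation theory there, and in particular the filtration on $D_{\rig}(\rho_x)$ itself need not be saturated -- which is why part (4) is only stated after inverting $t$. Working around this is precisely why $\defvar$ must be defined as a Zariski-closure rather than a moduli space of $\omepik$-filtered $(\varphi,\Gamma)$-modules, and why the sharper geometric statements (smoothness, local models) are confined to the open subspace $U^\Box_{\omepik,\mathbf{h}}(\overline{r})$.
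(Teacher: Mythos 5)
Your proposal takes essentially the same route as the paper: Proposition \ref{propertyparavar} is simply quoted from \cite[Theorem 4.2.5, Corollary 4.2.5]{Ding2021}, and your argument likewise reduces all four assertions to those results (together with the generic-case deformation theory of \cite[Section 4.1]{Ding2021} and the spreading-out of $\omepik$-filtrations used there), with your sketches of the openness, dimension count, and part (4) matching the proofs in that reference. Nothing further is needed.
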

\begin{rmk}In general,\;$(\underline{x},((\delta_i^0)_{\varpi_L}z^{\bh_{ir}}))$ is not a parameter (recall Definition \ref{dfnomegafil}) of the $\omepik$-filtration $\cF$ in (4).\;By definition,\;for $x=(\rho_x,\underline{x},\undelram)\in U^\Box_{\omepik,\mathbf{{h}}}(\overline{r})$,\;we see that $(\underline{x},((\delta_i^0)_{\varpi_L}z^{\bh_{ir}}))$ is a parameter of  $\omepik$-filtration  $\cF$.\;
\end{rmk}

The following definition is a generalization of the accumulation property in \cite[Definition 2.11]{breuil2017smoothness}.\;
\begin{dfn}\label{accum}Let $X$ be a union of irreducible components of an open subset of $\defvar$. We say that $X$ satisfies the accumulation property at $x$,\;if for any positive real number $C>0$,\;the set of potentially crystalline strictly points $x'=(\rho',\underline{x}',\delta^0_{x'})\in \defvar$ such that:
\begin{itemize}
	\item $x'$ is generic;
	\item $x'$ is non-critical;
	\item $\wt(\delta_{x,i}^0)-\wt(\delta_{x,i+1}^0)>C$ for $1\leq i\leq n-1$ and $\tau\in \Sigma_L$;
\end{itemize}
accumulate at $x$.\;
\end{dfn}

The (purely local) Bernstein paraboline variety is  closely related to the (global) patched Bernstein eigenvariety.\;Consider the composition
\begin{equation}\label{composition}
\begin{aligned}
	\bersteineigenvarpik\hooklongrightarrow \FX_{\overline{\rho}^{\fp}}^\Box&\times \mathfrak{X}_{\overline{r}}^\Box \times \sbanpik\times \rigch\\
	&\xrightarrow{\iota_{\omepik}} \FX_{\overline{\rho}^{\fp}}^\Box\times \mathfrak{X}_{\overline{r}}^\Box \times \sbanpik\times \rigch.
\end{aligned}
\end{equation}
where $\iota_{\omepik}:\sbanpik\xrightarrow{\sim}\sbanpik$ is the isomorphism such that \[\pi_{\iota_{\omepik}(x)_i}=\pi_{x_i}\otimes_E\unr(q_L^{(i-1)r+\frac{r-1}{2}})\circ\det\] for $x=(x_i)_{1\leq i\leq k}\in \sbanpik$.\;An easy variation of the proof of \cite[Theorem 3.3.9]{Ding2021} asserts that
\begin{pro}\label{linkingpatched}The composition in (\ref{composition}) factors through $\FX_{\overline{\rho}^{\fp}}^\Box\times \defvarrho$,\;i.e.,\;
\begin{equation}\label{mapbersteineigenvarpikdefvarrho}
	\Lambda:\bersteineigenvarpik\hooklongrightarrow \FX_{\overline{\rho}^{\fp}}^\Box\times \iota_{\omepik}^{-1}(\defvar).
\end{equation}	
It induces an isomorphism between $\bersteineigenvarpik$ with a union of irreducible components of the space $\FX_{\overline{\rho}^{\fp}}^\Box\times \iota_{\omepik}^{-1}(\defvar)$ equipped with the reduced closed rigid subspace structure.\;
\end{pro}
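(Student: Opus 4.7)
The plan is to adapt the proof of \cite[Theorem 3.3.9]{Ding2021} to the Zelevinsky-segment setting, via three ingredients: density of classical points, a classical-point factorization, and a dimension match. First, I would invoke Proposition \ref{basicpropertyeigen}(4), which gives Zariski-density of very classical non-critical generic points in $\bersteineigenvarpik$ and accumulation at points with locally algebraic central character. Let $x=(\fm_x,\pi_x,\chi_x)$ be such a classical point and write $\rho_x$ for the $\gal_L$-representation cut out by $\fm_x$ at the place $\fp$. By Proposition \ref{basicpropertyeigen}(1), $\pi_x\otimes_E((\chi_x)_{\varpi_L}\circ\det)\otimes_EL^{\lrr}(\bm{\lambda}_\bh)$ occurs in $J_{\bP^{\lrr}(L)}(\Pi_\infty^{R_\infty-\ana}[\fm_x])$; by classical local-global compatibility the $F$-semisimple Weil-Deligne representation attached to $\rho_x$ corresponds via normalized local Langlands to an unramified twist of the Zelevinsky-segment built from $\pi_x$. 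Combined with Berger's equivalence \cite{berger2008equations}, this produces an $\omepik$-filtration on $D_{\rig}(\rho_x)$ whose graded pieces satisfy (\ref{paradefiinj}) with parameters $(\iota_{\omepik}(\underline{x}),\delta_x^0)$; the twist $\iota_{\omepik}$ precisely absorbs the factor $\delta_{\op^{\lrr}(L)}^{1/2}$ from the Emerton-Jacquet functor, cf.\ (\ref{pilrr}). Genericity and non-criticality at $x$ then place its image in the open dense subset $U^\Box_{\omepik,\bh}(\overline{r})\subset\defvar$.

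Second, since $\defvar$ is closed in $\mathfrak{X}_{\overline{r}}^\Box\times\sbanpik\times\rigch$ and $\bersteineigenvarpik$ is reduced (Proposition \ref{basicpropertyeigen}(2)), the Zariski-density in step one propagates the factorization globally: the composition (\ref{composition}) factors through $\FX_{\overline{\rho}^{\fp}}^\Box\times\iota_{\omepik}^{-1}(\defvar)$, yielding the closed immersion $\Lambda$ of (\ref{mapbersteineigenvarpikdefvarrho}).

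Third, to show $\Lambda$ identifies $\bersteineigenvarpik$ with a union of irreducible components, I would match dimensions. Proposition \ref{basicpropertyeigen}(2) gives $\dim\bersteineigenvarpik=g+kd_L+n^2(|S_p|+1)+[F^+:\BQ]\tfrac{n(n-1)}{2}$, while combining Proposition \ref{propertyparavar}(1) with the known dimensions of the generic fibres of the potentially crystalline framed deformation rings $R_{\widetilde v}^{\square,\xi,\tau}$ (for $v\in S_p\setminus\{\fp\}$) and of $R_{\widetilde{v_1}}^\square$, together with the $g$ formal variables implicit in $\FX_{\overline{\rho}^{\fp}}^\Box$, gives the same number for the target. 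Both sides are reduced and equidimensional of equal dimension; the closed immersion $\Lambda$ must then realize $\bersteineigenvarpik$ as a union of irreducible components, inheriting the reduced structure.

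The main obstacle will be the \emph{parabolic global triangulation} input at classical points: one must verify that at a very classical non-critical generic $x$, $D_{\rig}(\rho_x)$ genuinely admits an $\omepik$-filtration whose graded pieces are $p$-adic differential equations of the prescribed Zelevinsky-segment type with the expected Hodge-Tate weights in blocks of size $r$. This is the parabolic analogue of the trianguline global triangulation theorem, essentially the content encoded in Lemma \ref{detlameda}; the non-critical hypothesis is essential so that the Hodge-Tate jumps distribute along the filtration in the order prescribed by $\bh$ rather than being permuted by some $\sW^{\lrr}$-element.
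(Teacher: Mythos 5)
Your proposal is correct and follows essentially the same route as the paper, whose proof is just the indicated adaptation of \cite[Theorem 3.3.9]{Ding2021}: factorization at the Zariski-dense, accumulating set of very classical non-critical generic points (Proposition \ref{basicpropertyeigen}) via classical local-global compatibility and Berger's theory, propagation by closedness of $\defvar$ and reducedness of $\bersteineigenvarpik$, and the equidimensionality/dimension count matching Proposition \ref{basicpropertyeigen}(2) with Proposition \ref{propertyparavar}(1) to identify the image with a union of irreducible components. One cosmetic caveat: at a generic classical point the graded pieces are pairwise unlinked unramified twists, so the associated smooth representation is an irreducible parabolic induction rather than a genuine Zelevinsky segment, and the needed $\Omega_{[1,k]}$-filtration is a purely local refinement statement for generic potentially crystalline representations rather than the family-theoretic input of Lemma \ref{detlameda}; this does not affect the argument.
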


\subsection{Non-critical special point}\label{appearpoint}

Let  $\rho_L:\gal_L\rightarrow \GLN_{n}(E)$ be a potentially semistable non-crystalline representation.\;Let $\Dpik$ be the associated  
$(\varphi,\Gamma)$-module over $\cR_{E,L}$ of rank $n$.\;

Let  $\bh:=(\hpi_{\tau,1}>\hpi_{\tau,2}>\cdots>\hpi_{\tau,n} )_{\tau\in \Sigma_L}$ be the Hodge-Tate weights of $\rho_L$ (or $\Dpik$).\;We put $\hpi_{i}=(\hpi_{\tau,i})_{\tau\in \Sigma_L}$ for $1\leq i\leq n$.\;We put ${\bm\lambda}_\bh=(\hpi_{\tau,i}+i-1)_{\tau\in \Sigma_L,1\leq i\leq n}$,\;which is a dominant weight of $(\mathrm{Res}_{L/\BQ_p}\GLN_n)\times_{\BQ_p}E$ respect to $(\mathrm{Res}_{L/\BQ_p}\bB)\times_{\BQ_p}E$.\;In the sequel,\;we fix this weight $\bh$.\;

Suppose that $\Dpik$ admits a non-critical special  $\omepik$-filtration $\cF$ with parameter $$(\widetilde{\bx}_{\pi,\bh},\widetilde{\bm{\delta}}_\bh)\in \sbanpik\times\rigch $$ (or $(\bx_{\pi},\bmdel)\in \sbanpik\times\rigchl $),\;see (Definition \ref{weaklynoncritical},\;Definition \ref{dfnnoncriticalspecial}).\;In the sequel,\;we use the notations of Section \ref{noncrispecfildfn} freely.\;

Recall that $\pi\cong \pi_0\otimes\unr(\alpha_\pi )$ for some $\alpha_\pi \in E^\times$.\;We put 
\begin{equation}
\begin{aligned}
	\bullet\;	&(\breve{\mathbf{{x}}}_\pi,\underline{\mathbf{1}})\in \sbanpik\times\rigch,\\
	&\breve{\mathbf{{x}}}_\pi=(\breve{\mathbf{{x}}}_{\pi,i})_{1\leq i\leq k},\;\text{and\;}\pi_{\breve{\mathbf{{x}}}_{\pi,i}}\cong \pi_0\te\unr\big(\alpha_\pi q_L^{i-k+\frac{1-r}{2}-r(i-1)}\big)\circ\det \text{\;for\;}  1\leq i\leq k;\hspace{40pt}\\
	\bullet\;	&(\widehat{\bx}_\pi,\underline{\mathbf{1}})\in \sbanpik\times\rigch,\\
	&\widehat{\bx}_\pi=(\bx_{\pi,i})_{1\leq i\leq k},\text{and\;}\pi_{\widehat{\bx}_{\pi,i}}\cong \pi_0\te\unr(\alpha_\pi q_L^{i-k})\circ\det \text{\;for\;}  1\leq i\leq k.\;
\end{aligned}
\end{equation}

Suppose that there exists $x^{\fp}\in \FX_{\overline{\rho}^{\fp}}^\Box$ such that
$$x=(x^{\fp}, \rho_L, \breve{\mathbf{{x}}}_\pi,\underline{\mathbf{1}})\in \bersteineigenvarpik \hooklongrightarrow \FX_{\overline{\rho}^{\fp}}^\Box\times \mathfrak{X}_{\overline{r}}^\Box \times \sbanpik\times \rigch,\;$$
By definition,\;we see that $$\Lambda(x)=(x^{\fp},\iota_{\omepik}^{-1}(x_L))=(x^{\fp},\iota_{\omepik}^{-1}(\rho_L,\widehat{\bx}_\pi,\mathbf{\underline{1}}))\in \FX_{\overline{\rho}^{\fp}}^\Box\times \iota_{\omepik}^{-1}(\defvar)$$ via the injection (\ref{mapbersteineigenvarpikdefvarrho}).\;In this case,\;we call $x_L$ (resp.,\;$x$) a \textit{non-critical special point} on $\defvarrho$ (resp.,\;$\bersteineigenvarpik$).\;

%By Section 2.5,\;we attach to $\rho_L$ the parabolic simple $\sL$-invariants $\sL(\rho_L)$.

We derive several local consequences of Bernstein paraboline variety $\defvarrho$ at
the non-critical special point $x_L$.\;Recall $\complocalbersteineigenvarpik$ (resp.\;$\complocaldefvarrho$)
means the complete local ring of $\bersteineigenvarpik$ (resp.\;$\defvarrho$) at $x$ (resp.\;$x_L$).\;

\subsection{Accumulation property at non-critical special point}\label{ACC}
Let $\cE(x)$ be the union of  irreducible components of $\bersteineigenvarpik$ containing $x$,\;which is thus equidimensional of dimension \[g+nd_L+n^2(|S_p|+1)+[F^+:\BQ]\Big(\frac{n(n-1)}{2}+r\Big).\;\]Since $\bersteineigenvarpik$ is reduced at $x$ (by Proposition \ref{basicpropertyeigen} (2)),\;one has
\begin{equation}\label{OZlocal}
\widehat{\cO}_{\bersteineigenvarpik,x}\cong \widehat{\cO}_{\cE(x),x}.\;
\end{equation}
By Theorem \ref{linkingpatched},\;$\cE(x)$ has the form $\cup_{i,j} \big(X_i^{\fp}\times \BU^g \times \iota_{\omepik}^{-1}(X_{j,\fp})\big)$,\;where $X_{i,\fp}$ is an irreducible component of $\defvarrho$ containing $x_L$, and $X_i^{\fp}$ is an irreducible component of $\FX_{\overline{\rho}^{\fp}}^\Box$.\;By \cite[Theorem 3.3.8]{kisin2008potentially} and \cite[Lemma 2.5]{PATCHING2016},\;$\defvarrho$ is  smooth at $x^{\fp}$, and hence $\{X_i^{\fp}\}_i$ is a singleton $\{X^{\fp}\}$.\;Therefore,\;we have
\begin{equation}\label{OZfactor}
\cE(x)=X^{\fp} \times \BU^g \times \iota_{\omepik}^{-1}(\xfpx)\hookrightarrow X^{\fp} \times \BU^g \times \iota_{\omepik}^{-1}(\defvarrho)
\end{equation}
with $\xfpx=\cup_j X_{j,\fp}$.\;

\begin{lem}\label{accumulation} $\xfpx$ satisfies the accumulation property at $x_L$.
\end{lem}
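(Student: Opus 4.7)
The plan is to transfer the accumulation property from the patched Bernstein eigenvariety side (where it is essentially given by Proposition \ref{basicpropertyeigen}(4)) to the Bernstein paraboline variety side via the embedding $\Lambda$ of Proposition \ref{linkingpatched} and the factorization (\ref{OZfactor}).

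First, I would observe that since $x = (x^{\fp}, \rho_L, \breve{\mathbf{x}}_\pi, \underline{\mathbf{1}})$ has trivial (hence locally algebraic) character component $\chi_x = \underline{\mathbf{1}}$, Proposition \ref{basicpropertyeigen}(4) directly applies: the set $\Sigma^{\mathrm{cl}}$ of very classical non-critical generic points accumulates at $x$ inside $\bersteineigenvarpik$. Because $\cE(x)$ is the union of the irreducible components of $\bersteineigenvarpik$ passing through $x$, the accumulation happens inside $\cE(x)$; in particular, after intersecting with a cofinal system of affinoid neighborhoods of $x$ in $\cE(x)$, we obtain Zariski-dense subsets $\Sigma^{\mathrm{cl}}_U \subseteq \Sigma^{\mathrm{cl}} \cap U$.

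Next, by (\ref{OZfactor}) the rigid space $\cE(x)$ factors as $X^{\fp} \times \BU^g \times \iota_{\omepik}^{-1}(\xfpx)$, so the projection $\mathrm{pr} \colon \cE(x) \to \iota_{\omepik}^{-1}(\xfpx)$ is a flat smooth surjection. Hence the image of an accumulating Zariski-dense set remains accumulating at $x_L$ (up to applying $\iota_{\omepik}$). The very classicality and non-criticality of a point $y \in \Sigma^{\mathrm{cl}}$ translate, via the description of $\Lambda$ and the characterization in Proposition \ref{basicpropertyeigen}(1) combined with the classical-local-Langlands/Emerton-Jacquet dictionary, into the statement that $\mathrm{pr}(y)$ is a potentially crystalline, generic, non-critical point of $\defvarrho$ in the sense of Definition \ref{accum}; by construction these points lie in $\xfpx$.

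The main obstacle will be the weight-gap condition $\wt(\delta^0_{y',i}) - \wt(\delta^0_{y',i+1}) > C$ in Definition \ref{accum}. The very classical points produced by Proposition \ref{basicpropertyeigen}(4) come equipped with weights $\chi_{y}$ varying in a Zariski-dense subset of every neighborhood of $\chi_x = \underline{\mathbf{1}}$ in $\rigch$; more precisely, the weight map $\omega \colon \bersteineigenvarpik \to \sbanpik \times \rigch$ is, restricted to $\Sigma^{\mathrm{cl}}$, surjective onto a Zariski-dense subset of locally algebraic characters. The locus of locally algebraic characters whose component-wise $\tau$-weights $(h_{\tau,i})$ satisfy $h_{\tau,i} - h_{\tau,i+1} > C$ for all $\tau$ and $i$ is Zariski-dense in any open neighborhood of any fixed locally algebraic character of $\rigch$ (this is the standard Zariski-density of integral weight characters with arbitrary weight gaps). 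Refining $\Sigma^{\mathrm{cl}}_U$ to the preimage of this dense subset under $\omega$ preserves Zariski-density, hence preserves accumulation at $x$; projecting to $\xfpx$ via $\iota_{\omepik}^{-1}\circ\mathrm{pr}$ yields the desired accumulating set of points in $\xfpx$ satisfying all three bullets of Definition \ref{accum}.
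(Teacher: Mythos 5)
Your overall route agrees with the paper's: both arguments work on the patched side and transfer the conclusion to $\xfpx$ through the decomposition $\cE(x)=X^{\fp}\times \BU^g\times \iota_{\omepik}^{-1}(\xfpx)$ of (\ref{OZfactor}). The problem is the step where you handle the weight-gap condition of Definition \ref{accum}, and this is a genuine gap rather than a detail. Proposition \ref{basicpropertyeigen}(4) only tells you that very classical non-critical generic points accumulate at $x$; it gives no control whatsoever on which characters in $\rigch$ these points hit, so your assertion that $\omega$ restricted to $\Sigma^{\mathrm{cl}}$ is ``surjective onto a Zariski-dense subset of locally algebraic characters'' near $\underline{\mathbf{1}}$ is exactly what needs to be proved, not something you may quote. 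Moreover, even granting that the locus $W_C$ of big-gap algebraic weights is Zariski-dense in weight space, your refinement step ``intersect $\Sigma^{\mathrm{cl}}_U$ with $\omega^{-1}(W_C)$ and density is preserved'' is false as a general principle: the intersection of a Zariski-dense subset of an affinoid with the preimage of a Zariski-dense set of weights can easily fail to be dense, and can even be empty, unless one has structural information about $\omega$ on a neighborhood of $x$. Since Definition \ref{accum} demands, for \emph{every} $C$, an accumulating set of points with all weight gaps $>C$, this is precisely the heart of the lemma and cannot be waved through.

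What the paper does instead (following \cite[Theorem 3.3.5]{Ding2021} and the second paragraph of the proof of \cite[Theorem 3.9]{breuil2017interpretation}) is to re-run the classicality argument rather than cite its conclusion: on each irreducible component $X$ through $x$ one chooses an affinoid connected neighborhood $U$ of $x$ such that $U\rightarrow \omega_X(U)$ is \emph{finite surjective} onto an affinoid open (\cite[Proposition 3.3.2(2)]{Ding2021}); then, by the numerical classicality and non-criticality criteria (\cite[Propositions 3.2.9, 3.2.14 and (3.17)]{Ding2021}), for $C$ sufficiently large relative to bounds on the affinoid $U$, \emph{every} point of $U$ lying over $W_C$ is automatically very classical, non-critical and generic; finally, finiteness and surjectivity of $\omega_X|_U$ guarantee that $\omega_X^{-1}\big((\Spf S_\infty)^{\rig}\times W_C\big)\cap U$ is Zariski-dense in $U$, hence accumulates at $x$, and one then projects to $\xfpx$ as you do. In short, the finiteness of the weight map on an affinoid neighborhood together with the small-slope classicality criterion is the ingredient that simultaneously produces the big-gap points and their density; your proposal needs to supply this (or an equivalent substitute), since the intersection/refinement argument you propose does not.
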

\begin{proof}We follow the route of \cite[Proposition 3.10]{breuil2017smoothness},\;the proof of \cite[Theorem 3.9]{breuil2017interpretation} and the statement in \cite[Theorem 3.3.5]{Ding2021}.\;Recall that \cite[Theorem 3.3.5]{Ding2021} say that the set of very classical non-critical generic points (and therefore potentially crystalline) is Zariski-dense in $\bersteineigenvarpik$ and accumulate at any point $x=(\fm_x, \pi_{x},\chi_x)\in \bersteineigenvarpik$  with $\chi_x$ locally algebraic.\;We adapt the proof of this statement to our setting.\;More precisely,\;for the non-critical point $x=(x^{\fp}, \rho_L, \breve{\mathbf{{x}}}_\pi,\underline{\mathbf{1}})\in \bersteineigenvarpik $
and an irreducible component $X$ of $\bersteineigenvarpik $ containing $x$,\;
one can choose an affinoid and connected neighborhood of $x$ such that $U\rightarrow \omega_X(U)$ is a finite surjective morphism,\;and $\omega_X(U)$ is an affinoid open subset,\;by \cite[Proposition 3.3.2 (2)]{Ding2021}.\;For any sufficiently large $C$,\;we define a part $W_C$ of 
$\rigch$ consists of dominant algebraic character $\delta^0\in \rigch$ such that $\wt(\delta_{\tau,i}^0)-\wt(\delta_{\tau,i+1}^0)>C$ for $1\leq i\leq k-1$ and $\tau\in \Sigma_L$.\;Since $U$ is affinoid,\;by \cite[Proposition 3.2.14]{Ding2021} and \cite[Proposition 3.2.9 and (3.17)]{Ding2021},\;for any sufficiently large $C$,\;if a point $x'=(x'^\fp,\rho',\underline{x}',\delta^0_{x'})\in \bersteineigenvarpik$ satisfies that $\delta^0_{x'}\in W_C$\;then $x'$ is a very classical non-critical generic point.\;Then by a similar  argument of the second paragraph in the proof of \cite[Theorem 3.9]{breuil2017interpretation},\;we see that there exists an affinoid neighbourhood $U$ of $x$ in $X$ and a part $W$ of $\rigch$ such that $\omega_X^{-1}((\spf S_\infty)^\rig\times W)\cap U$ consists of very classical non-critical generic points,\;where $\omega_X$ is the restriction of the map \[\bersteineigenvarpik\xrightarrow{\kappa_z}((\Spf\;S_\infty)^\rig\times \rigch\times \BG_m^{\rig}\rightarrow (\Spf\;S_\infty)^\rig\times \rigch,\]which is defined in the \cite[discussion before Proposition 3.3.2]{Ding2021}.\;Using the isomorphism \[\cE(x)=X^{\fp} \times \BU^g \times \iota_{\omepik}^{-1}(\xfpx),\;\]this lemma is contained in the above discussion.\;
\end{proof}

Let $r^{\univer}$ be the universal framed Galois deformation of $\overline{r}$ over $\mathfrak{X}_{\overline{r}}^\Box$,\;$\delta_i^{0,\univer}$ be the universal character of $\cO_L^\times$ over $\widehat{\cO_L^\times}$,\;and $\Delta^{\univer}_{\Omega_r}$ be the universal $p$-adic differential equation over $\sbanpi$ (see \cite[Section 2.2]{Ding2021}).\;Let $X\subset \xfpx$ be a subspace,\;and let $r^{\univer}_{X}$,\;$\delta_{X,i}^{0,\univer}$ and $\Delta^{\univer}_{\Omega_r,X}$ be the pull-back of $r^{\univer}$,\;$\delta_{i}^{0,\univer}$ and $\Delta^{\univer}_{\pi}$ over $X$.\;Let $D_{\rig}\big(r^{\univer}_X\big)$ be the $(\varphi,\Gamma)$-module over $\cR_{X,L}$ associated to $r^{\univer}_{X}$.\;

For $1\leq i\leq k$,\;we set
\[\Delta_{X,i}=\Delta^{\univer}_{\Omega_r,X}\otimes_{\cR_{X,L}}\cR_{X,L}\Big(\big(\delta_{X,i}^{0,\univer}\big)_{\varpi_L}z^{{\bh}_{(i-1)r+1}}\Big).\]
By the argument after (\ref{Dpikinjection}),\;we have natural injection $\Delta_{X,x_L,i}\hookrightarrow \gr^{\cF}_i\Dpik\cong \gr_i^{\cF}D_{\rig}(r^{\univer}_{X,x_L})$ for $1\leq i\leq k$, where $\Delta_{X,x_L,i}$ is the specialization of $\Delta_{X,i}$ at $x_L$.\;We show that the  $\omepik$-filtration on $x_L$ can extend to some open affinoid neighborhood around $x_L$.\;

\begin{thm}\label{localfiltration} There exists an open affinoid neighborhood $X\subset \xfpx$ of non-critical special point $x_L$ such that the $(\varphi,\Gamma)$-module $D_{\rig}(r^{\univer}_X)[\frac{1}{t}]$ over $\cR_{X,L}$ admits a filtration $\cM_\bullet$ such that $\gr^i\cM_\bullet\cong \Delta_{X,i}[\frac{1}{t}]$.\;In particular,\;the specialization of the filtration $\cM_\bullet$ on non-critical special point $x_L$ gives an $\omepik$-filtration of $D_{\rig}(\rho_L)[\frac{1}{t}]$  with parameter $({\bx}_\pi,\bm{\delta}_{\bh})$.\;
\end{thm}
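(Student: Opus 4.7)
The strategy is to construct the filtration $\cM_\bullet$ inductively, using the accumulation property (Lemma \ref{accumulation}) to reduce to a generic situation where the filtration is known to exist, and then interpolating it using a Hom-sheaf argument made possible by the non-critical assumption. Concretely, shrink $\xfpx$ to an open affinoid neighborhood $X$ of $x_L$, and consider for each $1\leq i\leq k$ the coherent $\cO_X$-module
\[
\cH_i := \homo_{(\varphi,\Gamma)}\bigl(\Delta_{X,i}[\tfrac1t],\,D_{\rig}(r^{\univer}_X)[\tfrac1t]/\fil_{i-1}\cM_\bullet\bigr),
\]
where $\fil_0\cM_\bullet=0$ and $\fil_{i-1}\cM_\bullet$ will have been constructed inductively. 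The claim is that, after possibly shrinking $X$, the sheaf $\cH_i$ is free of rank one and a generator gives an injection whose image is saturated in $D_{\rig}(r^{\univer}_X)[\tfrac1t]/\fil_{i-1}\cM_\bullet$; this image is declared to be $\fil_i\cM_\bullet/\fil_{i-1}\cM_\bullet$.

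To carry this out, first recall that at $x_L$ the $(\varphi,\Gamma)$-module $D_{\rig}(\rho_L)$ admits the non-critical special $\omepik$-filtration $\cF$ with $\gr_i^{\cF}\Dpik\cong \Delta_{\pi,i}$ in rank $r$ up to the twist recorded by $\bmdel$; after inverting $t$ this yields exactly the graded pieces $\Delta_{X,i}|_{x_L}[\tfrac1t]$. By Lemma \ref{accumulation} the set $\Sigma$ of very classical non-critical generic points of $\xfpx$ in $U^\Box_{\omepik,\bh}(\overline{r})$ accumulates at $x_L$; at every such point the filtration $\cM_\bullet$ exists by the very definition of $U^\Box_{\omepik,\bh}(\overline{r})$ (cf.\ (\ref{dfnvardef1})), and the non-criticality together with Lemma \ref{phigammacohononcrtialspecial} (a) gives
$\dim_{k(x')}\homo_{(\varphi,\Gamma)}(\Delta_{x',i}[\tfrac1t],D_{\rig}(r_{x'})[\tfrac1t]/\fil_{i-1})=1$
and vanishing of the corresponding $\ext^1$. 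This pointwise input, combined with the Tor-spectral-sequence/base-change formalism for cohomology of $(\varphi,\Gamma)$-modules over affinoid algebras (as in Kedlaya--Pottharst--Xiao and used throughout \cite{Ding2021}), shows that $\cH_i$ commutes with base change at every point of $\Sigma$ and has fibre dimension constantly equal to $1$ on the Zariski-dense subset $\Sigma$; by upper semicontinuity it is then locally free of rank $1$ in a neighborhood of $x_L$, once we verify the same fibre dimension at $x_L$ itself. The latter follows from the non-critical special $\omepik$-filtration on $\Dpik$: Lemma \ref{pullbackforiso} and the argument of Lemma \ref{phigammacohononcrtialspecial} ensure that any nonzero morphism $\Delta_{\pi,i}\to \Dpik/\fil_{i-1}^{\cF}\Dpik$ factors through $\gr_i^{\cF}\Dpik$ after inverting $t$, hence through the specialization of $\Delta_{X,i}[\tfrac1t]$, so the fibre is one-dimensional.

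Granting that $\cH_i$ is invertible on some open $X$ containing $x_L$, trivialize it and let $\iota_i$ be a generator. The fact that $\iota_i$ specializes to the given injection at $x_L$ and to a saturated embedding at all nearby classical non-critical generic points means that $\iota_i$ is itself an injection with saturated image on all of $X$: indeed the cokernel is a coherent $(\varphi,\Gamma)$-module on $\cR_{X,L}[\tfrac1t]$ whose fibre at $x_L$ is $(\Dpik/\fil_{i}^{\cF}\Dpik)[\tfrac1t]$ and whose fibre at every point of $\Sigma$ is $\oplus_{j>i}\Delta_{x',j}[\tfrac1t]$, hence is flat of the right rank in a neighborhood of $x_L$. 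Define $\fil_i\cM_\bullet$ as the preimage of the image of $\iota_i$ in $D_{\rig}(r^{\univer}_X)[\tfrac1t]$; then $\gr_i\cM_\bullet\cong \Delta_{X,i}[\tfrac1t]$ by construction, and the inductive step goes through.

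The main obstacle is the base-change behaviour of the Hom-sheaves $\cH_i$: one needs that the formation of $\homo_{(\varphi,\Gamma)}$ commutes with specialization at $x_L$ (not just at the Zariski-dense set $\Sigma$), and that the resulting sheaf is coherent on $X$. This is where the non-critical hypothesis is doing real work, since it rules out accidental $\ext^1$-obstructions at $x_L$ via the vanishing statements of Lemma \ref{phigammacohononcrtialspecial} and Lemma \ref{pullbackforiso}; in the generic case treated in \cite{Ding2021} the analogous point was already subtle. The remainder of the proof is a sheaf-theoretic shrinking argument combined with the standard machinery of families of $(\varphi,\Gamma)$-modules over affinoids, and does not introduce further difficulties.
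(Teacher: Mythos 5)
Your overall strategy (induct on the filtration, use the accumulation property to get a Zariski-dense set of non-critical generic points, compute the relevant $\hH^0$/$\ext^1$ at those points and at $x_L$ via the non-critical special structure, deduce that a rank-one Hom-sheaf is locally free and take a generator) is the same as the paper's, but there is a genuine gap in the step you yourself flag as the "main obstacle" and then dismiss as standard. You form the sheaf $\cH_i=\homo_{(\varphi,\Gamma)}\bigl(\Delta_{X,i}[\tfrac1t],\,D_{\rig}(r^{\univer}_X)[\tfrac1t]/\fil_{i-1}\cM_\bullet\bigr)$ over $\cR_{X,L}[\tfrac1t]$ and invoke the Kedlaya--Pottharst--Xiao / Bergdall finiteness, base-change and semicontinuity machinery to get coherence and local freeness of rank one. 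That machinery (e.g.\ \cite[Corollary 4.7, Lemma 5.4]{bergdall_2017}, \cite{XLROBBA}) is established for coherent families of $(\varphi,\Gamma)$-modules over $\cR_{X,L}$, not for modules over $\cR_{X,L}[\tfrac1t]$; your $\cH_i$ is not a priori coherent, its fibres need not compute specializations, and no upper-semicontinuity statement is available for it. Inverting $t$ from the outset is precisely how you avoid the real difficulty, and it is exactly where the cited tools stop applying. The same problem infects your final step: "injective with saturated image, cokernel flat of the right rank" is asserted from fibrewise information, whereas in the coherent setting this is what Bergdall's Lemma 5.4(a),(b) (injectivity, near-flatness of the cokernel after shrinking) is needed for.

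The paper's proof stays integral to keep coherence: it produces a generator $\iota_1:\Delta_{X,1}\to D_{\rig}(r^{\univer}_X)$ of a genuinely coherent $\hH^0$-sheaf, and then must confront the fact (emphasized there, and absent from your write-up) that $\iota_1$ is \emph{never} saturated, so the cokernel $Q_{X,1}$ has $t$-torsion at every point -- a real departure from the trianguline case. The inductive step is then carried out not with $\Delta_{X,2}[\tfrac1t]$ but with $t^N\Delta_{X,2}$ mapping into $Q_{X,1}$ for a sufficiently large fixed $N$, and it uses the full strength of the accumulation property (Definition \ref{accum}: points with weight gaps $\wt(\delta^0_{1,z})-\wt(\delta^0_{2,z})>N$ accumulate at $x_L$) to kill $\ext^1_{(\varphi,\Gamma)}(t^N\Delta_{X,z,2},\gr_1^{\cF}D_{\rig}(r^{\univer}_{X,z})/\Delta_{X,z,1})$ at the dense set, so that the one-dimensionality of the $\hH^0$ persists in the family; only after the whole filtration is built does one invert $t$. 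To repair your argument you would either have to develop a coherence/base-change theory for $\cR_{X,L}[\tfrac1t]$-modules (not available in the references you cite) or reformulate each inductive step with the $t^N$-twisted lattices and the large-weight-gap accumulation points, as the paper does.
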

\begin{proof}
By the accumulation property at $x_L$,\;there exists a Zariski dense set $S$ of potentially crystalline and non-critical generic points such that for $z\in S$,\;we have (see the proof of \cite[Proposition 4.2.7]{Ding2021})
\[\dim_{k(z)}\ext^i_{(\varphi,\Gamma)}(\Delta_{X,z,1},D_{\rig}(r^{\univer}_{X,z}))=\left\{
\begin{array}{ll}
	1, & \hbox{$i=0$;} \\
	1+nr\cdot d_L, & \hbox{$i=1$;} \\
	0, & \hbox{$i=2$.}
\end{array}
\right.\]
Similar to the proof of  Lemma \ref{phigammacohononcrtialspecial} and Lemma \ref{pullbackforiso} (recall that $\gr_1^{\cF}D_{\rig}(r^{\univer}_{X,x_L})/\Delta_{X,x_L,1}$ is a torsion $(\varphi,\Gamma)$-module),\;we can deduce that for $s>1$,\;
\begin{description}
	\item[-] $\ext^0_{(\varphi,\Gamma)}(\Delta_{X,x_L,1},\gr_s^{\cF}D_{\rig}(r^{\univer}_{X,x_L}))=0$,\;
	\item[-] $\ext^2_{(\varphi,\Gamma)}(\Delta_{X,x_L,1},\gr_s^{\cF}D_{\rig}(r^{\univer}_{X,x_L}))=0$,\;
	\item[-] $\ext^0_{(\varphi,\Gamma)}(\Delta_{X,x_L,1},\gr_1^{\cF}D_{\rig}(r^{\univer}_{X,x_L})/\Delta_{X,x_L,1})=0$,\;
	\item[-] $\ext^2_{(\varphi,\Gamma)}(\Delta_{X,x_L,1},\gr_1^{\cF}D_{\rig}(r^{\univer}_{X,x_L})/\Delta_{X,x_L,1})=0$.\;
\end{description}
An easy computation and d\'{e}vissage argument imply
\[\dim_{k(x_L)}\ext^i_{(\varphi,\Gamma)}(\Delta_{X,x_L,1},D_{\rig}(r^{\univer}_{X,x_L}))=\left\{
\begin{array}{ll}
	1, & \hbox{$i=0$;} \\
	1+nr\cdot d_L, & \hbox{$i=1$;} \\
	0, & \hbox{$i=2$.}
\end{array}
\right.\]
By \cite[Corollary 4.7]{bergdall_2017},\;$\ext^0_{(\varphi,\Gamma)}(\Delta_{X,1},D_{\rig}(r^{\univer}_X))$ is locally free of rank one at $x_L$.\;Shrinking $X$,\;we can assume that $\ext^0_{(\varphi,\Gamma)}(\Delta_{X,1},D_{\rig}(r^{\univer}_X))$ is free of rank one at $x_L$.\;A choice of a generator allows a map
\[\iota_1:\Delta_{X,1}\longrightarrow D_{\rig}(r^{\univer}_X).\]
By \cite[Lemma 5.4 (a)]{bergdall_2017},\;$\iota_1$ is injective.\;

Now consider $Q_{X,1}:=\cokerr \iota_1$,\;which is a generalized $(\varphi,\Gamma)$-module over $\cR_{X,L}$.\;By \cite[Lemma 5.4 (b)]{bergdall_2017},\;we can assume $Q_{X,1}$ is nearly flat (in the sense of the \cite[Definition  4.1]{bergdall_2017}) by shrinking $X$.\;A difference with trianguline case is that the map $\iota_1$ is never saturated,\;and so $Q_{X,1}$ has $t$-torsions at all points.\;For any $z\in S$ (resp.,\;$x_L$),\;by definition $Q_{X,z}$ (resp.,\;$Q_{X,x_L,1}$) is an extension of
\begin{equation}
	\begin{aligned}
		&\fil_2^kD_{\rig}(r^{\univer}_{X,z}):=D_{\rig}(r^{\univer}_{X,z})/\gr_1^{\cF}D_{\rig}(r^{\univer}_{X,z})\;\\
		(\text{resp.},\;&\fil_2^kD_{\rig}(r^{\univer}_{X,x_L}):=D_{\rig}(r^{\univer}_{X,x_L})/\gr_1^{\cF}D_{\rig}(r^{\univer}_{X,x_L}))
	\end{aligned}
\end{equation}
by torsion $(\varphi,\Gamma)$-module $\gr_1^{\cF}D_{\rig}(r^{\univer}_{X,z})/\Delta_{X,z,1}$ (resp.,\;$\gr_1^{\cF}D_{\rig}(r^{\univer}_{X,x_L})/\Delta_{X,x_L,1}$).\;Moreover,\;as $z\in S$ is generic,\;we have  $$\ext^1_{(\varphi,\Gamma)}(\fil_2^kD_{\rig}(r^{\univer}_{X,z}),\gr_1^{\cF}D_{\rig}(r^{\univer}_{X,z})/\Delta_{X,z,1})=0$$ and hence $Q_{X,z,1}\cong \fil_2^kD_{\rig}(r^{\univer}_{X,z})\oplus \gr_1^{\cF}D_{\rig}(r^{\univer}_{X,z})/\Delta_{X,z,1}$.\;

For the point $x_L$,\;since $Q_{X,x_L}[\frac{1}{t}]=\fil_2^kD_{\rig}(r^{\univer}_{X,x_L})[\frac{1}{t}]$,\;then there exists a sufficiently large integer $N$,\;such that (using $E$-$B$-pairs and the same strategy as in the proof of  Lemma \ref{phigammacohononcrtialspecial} and Lemma \ref{pullbackforiso})
\[\ext^1_{(\varphi,\Gamma)}(t^N\Delta_{X,x_L,2},Q_{X,x_L,1})\xrightarrow{\sim}\ext^1_{(\varphi,\Gamma)}(t^N\Delta_{X,x_L,2},\fil_2^kD_{\rig}(r^{\univer}_{X,x_L})).\]
In this case,\;note that for $s>2$,\;
\begin{description}
	\item[-] $\ext^0_{(\varphi,\Gamma)}(t^N\Delta_{X,x_L,2},\gr_s^{\cF}D_{\rig}(r^{\univer}_{X,x_L}))=0$,\;
	\item[-] $\ext^2_{(\varphi,\Gamma)}(t^N\Delta_{X,x_L,2},\gr_s^{\cF}D_{\rig}(r^{\univer}_{X,x_L}))=0$,\;
	\item[-] $\ext^0_{(\varphi,\Gamma)}(t^N\Delta_{X,x_L,2},\gr_2^{\cF}D_{\rig}(r^{\univer}_{X,x_L})/\Delta_{X,x_L,2})=0$,\;
	\item[-] $\ext^2_{(\varphi,\Gamma)}(t^N\Delta_{X,x_L,2},\gr_2^{\cF}D_{\rig}(r^{\univer}_{X,x_L})/\Delta_{X,x_L,2})=0$.\;
\end{description}
We can obtain
\begin{equation}
	\begin{aligned}
		&\dim_E\ext^1_{(\varphi,\Gamma)}(t^N\Delta_{X,x_L,2},\fil_2^kD_{\rig}(r^{\univer}_{X,x_L}))=\left\{
		\begin{array}{ll}
			1, & \hbox{$i=0$;} \\
			1+(k-1)r^2\cdot d_L, & \hbox{$i=1$;} \\
			0, & \hbox{$i=2$.}
		\end{array}
		\right.
	\end{aligned}
\end{equation}
Fix this integer $N$,\;for any $z\in S$,\;we claim that if $\wt(\delta_{1,z}^0)-\wt(\delta_{2,z}^0)>N$,\;then we have
\[\ext^1_{(\varphi,\Gamma)}(t^N\Delta_{X,z,2},\gr_1^{\cF}D_{\rig}(r^{\univer}_{X,z})/\Delta_{X,z,1})=0.\]
Therefore,\;we can obtain
\begin{equation}
	\begin{aligned}
		&\dim_E\ext^i_{(\varphi,\Gamma)}(t^N\Delta_{X,z,2},Q_{X,z,1})=\ext^1_{(\varphi,\Gamma)}(t^N\Delta_{X,z,2},\fil_2^kD_{\rig}(r^{\univer}_{X,z}))\\
		=\;&\left\{
		\begin{array}{ll}
			1, & \hbox{$i=0$;} \\
			1+(k-1)r^2\cdot d_L, & \hbox{$i=1$;} \\
			0, & \hbox{$i=2$.}
		\end{array}
		\right.
	\end{aligned}
\end{equation}
By the  accumulation property at $x_L$,\;the potentially crystalline and non-critical generic points $z\in S$ such that $\wt(\delta_{1,z}^0)-\wt(\delta_{2,z}^0)>N$ is Zariski dense in $X$.\;By \cite[Corollary 5.4(b)]{bergdall_2017},\;we see that\\ $\ext^0_{(\varphi,\Gamma)}(t^N\Delta_{X,2},Q_{X,1})$ is locally free of rank one at $x_L$.\;By shrinking $X$,\;we can further assume that $\ext^0_{(\varphi,\Gamma)}(t^N\Delta_{X,2},Q_{X,1})$ is free of rank one at $x_L$.\;A choice of a generator allows a map
\[\iota_2:t^N\Delta_{X,2}\longrightarrow Q_{X,1}.\]
By \cite[Corollary 5.4(a)]{bergdall_2017},\;$\iota_2$ is injective.\;Now consider $Q_{X,2}:=\cokerr \iota_2$.\;Proceeding as we did for $Q_{X,1}:=\cokerr \iota_1$,\;we complete the proof step by step.\;In conclusion,\;the $(\varphi,\Gamma)$-module $D_{\rig}(r^{\univer}_X)[\frac{1}{t}]$ over $\cR_{X,L}$ admits a filtration $\cM_{\bullet}$ such that $\gr_i^{\cM_{\bullet}}D_{\rig}(r^{\univer}_X)[\frac{1}{t}]\cong \Delta_{X,i}[\frac{1}{t}]$.\;The specialization of the filtration $\cM_\bullet$ at non-critical special point $x_L$ gives a $\omepik$-filtration of $D_{\rig}(\rho_L)[\frac{1}{t}]$ such that
\begin{equation}
	\begin{aligned}
		\gr_i^{\cM_\bullet}D_{\rig}(\rho)[\frac{1}{t}]\xrightarrow{\sim}\Delta_{\widehat{\bx}_{\pi,i}}\tee \cR_{E,L}(z^{\bh_{(i-1)r+1}})[\frac{1}{t}]\cong\Delta_{\widehat{\bx}_{\pi,i}}\tee \cR_{E,L}(z^{{\bh}_{ir}})[\frac{1}{t}].
	\end{aligned}
\end{equation}
This proves the last assertion.\;
\end{proof}

\subsection{Smoothness of non-critical special point}\label{smoothpoint}
Recall that the deformation theory of Galois representations is usually studied by considering a functor whose values are the set of isomorphism classes of liftings.\;In some cases,\;it is better to consider a functor with values in groupoids,\;i.e.,\;to consider the category of liftings and the isomorphisms between them.\;If $X$ is a groupoid over $\Art_E$,\;we denote by $|X|$ the functor on  $\Art_E$ such that $|X|(A)$ is the set of isomorphim classes of the category $X(A)$.\;In this section,\;we study certain groupoids or functors whose values are  deformations of type $\omepik$ for $(\varphi,\Gamma)$-module $\Dpik$ over $\cR_{E,L}$ (and  $\cM_\Dpik:=\Dpik[\frac{1}{t}]$ over $\cR_{E,L}[\frac{1}{t}]$).\;

We also fix the $\omepik$-filtration  $\cF$ on $\Dpik$ with non-critical special parameter \[(\bx_{\pi},\bm{\delta}_{\bh})\in \sbanpik\times\rigchl.\;\]By inverting $t$,\;this filtration $\cF$ on $\Dpik$ induces an increasing filtration $\cM_{\bullet}=\{\fil_{i}^\cF \Dpik[\frac{1}{t}]\}$ on $\cM_\Dpik$ by $(\varphi,\Gamma)$-submodules over $\cR_{E,L}[\frac{1}{t}]$.\;Under the terminology in \cite[Section 6.2]{Ding2021},\;a parameter of $\omepik$-filtration $\cM_{\bullet}$ of $\cM_\Dpik$ is $(\bx_{\pi},\bm{\delta}_{\bh})\in \sbanpik\times\rigchl $.\;By \cite[Lemmma 6.2.1]{Ding2021},\;we see that all parameters of $\cM_{\bullet}$ are of the form $(\bx_{\pi}',\bm{\delta}_{\bh}')$ such that,\;for $i=1,\cdots,k$,\; $\Delta_{{\bx}_{\pi,i}'}=\Delta_{{\bx}_{\pi,i}}\otimes_{\cR_{E,L}} \cR_{E,L}(\psi_i)$ and $\bm{\delta}_{\bh,i}'=\bm{\delta}_{\bh,i}\psi_i^{-1}\eta_iz^\bk$ for some unramified character $\psi_i$ of $L^\times$,\;$\eta_i\in \mu_{\Omega_\pi}$,\;and $\bk\in \BZ^{|\Sigma_L|}$.\;

Let $X_{\cM_\Dpik}$ be the groupoid over $\Art_E$ of deformation of $\cM_\Dpik$,\;and $X_{\cM_\Dpik,\cM_{\bullet}}$ be the groupoid over $\Art_E$ of $\omepik$- deformation of $(\cM_\Dpik,\cM_{\bullet})$ (see \cite[Section 6.2]{Ding2021}).\;There is a natural morphism (by forgetting the filtration) $X_{\cM_\Dpik,\cM_{\bullet}}\rightarrow X_{\cM_\Dpik} $.\;Let $X_{\Dpik}$  be the groupoid over $\Art_E$ of deformation of $\Dpik$.\;Recall that we have natural morphism $X_{\Dpik}\rightarrow X_{\cM_\Dpik}$
by inverting $t$.\;More precisely,\;note that $|X_{\cM_\Dpik,\cM_{\bullet}}|$ is the deformation functor
\[|X_{\cM_\Dpik,\cM_{\bullet}}|:\Art_E:=\{\text{Artinian local $E$-algebra with residue field $E$}\}\longrightarrow \{\text{sets}\}\]
sends $A$ to the set of isomorphism classes $\{(\cM_A,j_A,\cM_{A,\bullet})\}/\sim$,\;where
\begin{description}
\item[(1)] $\cM_A$ is a $(\varphi,\Gamma)$-module of rank $n$ over $\cR_{A,L}$ with an isomorphism $j_A:\cM_A\otimes_AE\cong \cM_\Dpik$,
\item[(2)] $\cM_{A,\bullet}$ is an increasing  $\omepik$-filtration of $(\varphi,\Gamma)$-module over $\cR_{A,L}[\frac{1}{t}]$ on $\cM_A$,
\item[(3)] $j_A$ induces isomorphisms $j_A:\cM_{A,i}\otimes_AE\cong \cM_i$.\;
\end{description}
By \cite[Lemma 6.2.2]{Ding2021},\;there exists a unique characters $\delta_A=\otimes_{i=1}^k{\delta}_{A,i}$ such that ${\delta}_{A,i}\equiv \bm{\delta}_{\bh,i}(\mod \mathfrak{m}_A)$ and 
$(\bx_{\pi}:=(\bx_{\pi,i})_{1\leq i\leq k},\delta_A)$ is a parameter of $\cM_{A,\bullet}$,\;i.e.,\;there exists an isomorphism of $(\varphi,\Gamma)$-module of rank $r$ over $\cR_{A,L}[\frac{1}{t}]$:\;$\gr_i\cM_A\xrightarrow{\sim}{\Delta_\pi}\otimes_{\cR_{E,L}}\cR_{A,L}({\delta}_{A,i})[\frac{1}{t}]$.\;We put $X_{\Dpik,\cM_{\bullet}}:=X_{\Dpik}\times_{X_{\cM_\Dpik}}X_{\cM_\Dpik,\cM_{\bullet}}$.\;

We view the character $\bm{\delta}_{\bh}$ as a point of $\cZ_{\bL^{\lrr},L}$.\;Observe that the functor
\[A\in \Art_E\mapsto \{\delta_A:\bZ^{\lrr}(L)\rightarrow A^\times,\;{\delta}_{A,i}\equiv \bm{\delta}_{\bh,i}(\mathrm{mod}\;\mathfrak{m}_A)\}\]
is pro-representable by $\widehat{(\cZ_{\bL^{\lrr},L})}_{\bm{\delta}_{\bh}}$,\;where $\widehat{(\cZ_{\bL^{\lrr},L})}_{\bm{\delta}_{\bh}}$ is the completion of $\cZ_{\bL^{\lrr},L}$ at the point $\bm{\delta}_{\bh}$.\;Then we have a morphism  of groupoids over $\Art_E$:
\begin{equation}\label{omegadeltapinon}
\omega_{\bm{\delta}_{\bh}}:X_{\cM_\Dpik,\cM_{\bullet}}\rightarrow \widehat{(\cZ_{\bL^{\lrr},L})}_{\bm{\delta}_{\bh}},\;\;(A,\cM_A,j_A,\cM_{A,\bullet})\mapsto \delta_A,
\end{equation}
by \cite[Lemma 6.2.2]{Ding2021}.\;

The $E$-linear map $\delta_{E[\epsilon]/\epsilon^2}\mapsto (\delta_{E[\epsilon]/\epsilon^2}\bm{\delta}_{\bh}^{-1}-1)/\epsilon$ induces an isomorphism $$\widehat{(\cZ_{\bL^{\lrr},L})}_{\bm{\delta}_{\bh}}(E[\epsilon]/\epsilon^2)\xrightarrow{\sim} \homo(\bZ^{\lrr}(L),E)=\prod_{i=1}^k\homo(L^\times,E).$$Therefore,\;we
consider the following composition
\begin{equation}\label{omegadeltapi}
\omega^{\kappa}_{\bm{\delta}_{\bh}}:X_{\cM_\Dpik,\cM_{\bullet}}(E[\epsilon]/\epsilon^2)\rightarrow \widehat{(\cZ_{\bL^{\lrr},L})}_{\bm{\delta}_{\bh}}(E[\epsilon]/\epsilon^2) \xrightarrow{\kappa_L}
\prod_{ir\in \Delta_n(k)}\homo(L^\times,E),
\end{equation}
where the last map $\kappa_L$ sends $(\psi_1,\psi_2,\cdots,\psi_k)$ to $(\psi_i-\psi_{i+1})_{ir\in \Delta_n(k)}$ (see \ref{kappakappaL}).\;The composition of the morphism $\omega_{\bm{\delta}_{\bh}}$ \big(resp.,\;the map $\omega^{\kappa}_{\bm{\delta}_{\bh}}$ \big) with the natural morphism  $X_{\Dpik,\cM_{\bullet}}\rightarrow X_{\cM_\Dpik,\cM_{\bullet}}$  \big(resp.,\;the map $X_{\Dpik,\cM_{\bullet}}(E[\epsilon]/\epsilon^2)\rightarrow X_{\cM_\Dpik,\cM_{\bullet}}(E[\epsilon]/\epsilon^2)$ \big) of groupoids gives a morphism
$\omega_{\bm{\delta}_{\bh}}:X_{\Dpik,\cM_{\bullet}}\rightarrow \widehat{(\cZ_{\bL^{\lrr},L})}_{\bm{\delta}_{\bh}}$ of groupoids over $\Art_E$
\big(resp.,\;a map 
$\omega^{\kappa}_{\bm{\delta}_{\bh}}:X_{\Dpik,\cM_{\bullet}}(E[\epsilon]/\epsilon^2)\rightarrow \prod_{ir\in \Delta_n(k)}\homo(L^\times,E)$\big). Note that $\omega_{\bm{\delta}_{\bh}}$ (resp.,\; $\omega^{\kappa}_{\bm{\delta}_{\bh}}$) factors through $|X_{\Dpik,\cM_{\bullet}}|$  \big(resp.,\;$|X_{\Dpik,\cM_{\bullet}}|(E[\epsilon]/\epsilon^2)$ \big).\;

\begin{lem}We have
\begin{itemize}
	\item[(1)]	$|X_{\cM_\Dpik,\cM_{\bullet}}|$ is a subfunctor of $|X_{\cM_\Dpik}|$.\;
	\item[(2)]  $F_{\Dpik,\cF}^0$ (see Section \ref{defdfnomepik}) is a subfunctor of $|X_{\Dpik,\cM_{\bullet}}|$.\;Moreover,\;the following  diagram is commutative:
	\begin{equation}\label{LINVcompatiblemap1}
		\xymatrix{ F_{\Dpik,\cF}^0(E[\epsilon]/\epsilon^2) \ar[dr]^{\kappa,\;(\ref{kappakappaL})}  \ar@{^(->}[r] &  |X_{\Dpik,\cM_{\bullet}}|(E[\epsilon]/\epsilon^2) \ar[d]^{\omega^{\kappa}_{\bm{\delta}_{\bh}}}  \\
			& \prod\limits_{i=1}^k\homo(L^\times,E).}
	\end{equation}
\end{itemize}
\end{lem}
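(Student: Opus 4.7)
The plan is to deduce both parts from rigidity/uniqueness results for $\omepik$-filtrations that are essentially encoded in Lemma~2.4.2 (vanishing of $\hH^0_{(\varphi,\Gamma)}$ between distinct graded pieces), Proposition~2.4.7 (the fact that $\hH^0_{(\varphi,\Gamma)}(\EndO_{\cF}(\Dpik,\Dpik)) \cong E$), and Lemma~6.2.2 of \cite{Ding2021} (uniqueness of the character parameter attached to an $\omepik$-filtration after inverting~$t$).

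For part~(1), I need to show that for every $A\in \Art_E$ the forgetful map
$|X_{\cM_\Dpik,\cM_\bullet}|(A)\rightarrow |X_{\cM_\Dpik}|(A)$ is injective; equivalently, that any two $\omepik$-filtrations $\cM_{A,\bullet}$ and $\cM'_{A,\bullet}$ on the same deformation $(\cM_A,j_A)$ lifting $\cM_\bullet$ are isomorphic as filtered objects. I would argue by induction on the length of $A$ over $E$ and on the filtration length~$k$. The base case $A=E$ is trivial. For the induction step, choose a small extension $A'\twoheadrightarrow A$ and assume uniqueness over $A$; to lift an isomorphism of filtered objects from $A$ to $A'$, I would express the obstruction as a class in an appropriate $\ext^1$-group between graded pieces, and then kill it using the key vanishing of $\ext^0$ and $\ext^1$ groups between $\gr_i \cM_\Dpik$ and $\gr_j \cM_\Dpik$ for $i\ne j$ (after inverting $t$), which one obtains from Lemma~2.4.2 by comparing the Hodge--Tate weights of the differential equations $\Delta_{\pi,i}$ and $\Delta_{\pi,j}$. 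For the reduction on $k$, one peels off $\fil_1$ (which is uniquely determined by Lemma~6.2.2 of \cite{Ding2021}) and applies the induction hypothesis to the quotient.

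For part~(2), the map $F_{\Dpik,\cF}^0 \rightarrow |X_{\Dpik,\cM_\bullet}|$ is given by $(D_A,\pi_A,\cF_A,\delta_A)\mapsto (D_A,\pi_A, (\fil_\bullet^{\cF_A}D_A)[1/t])$: i.e.\ forget $\delta_A$ and invert~$t$. This is well-defined because the injection $\gr_i D_A\hookrightarrow \Delta_\pi \otimes_{\cR_{E,L}} \cR_{A,L}(\delta_{A,i})$ in~(\ref{injectiondefF0deforma}) becomes an isomorphism after inverting~$t$, so $(\fil_\bullet^{\cF_A}D_A)[1/t]$ is genuinely an $\omepik$-filtration with parameter $((\bx_{\pi,i}), (\delta_{A,i}))$. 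Injectivity of the induced map on isomorphism classes follows from Lemma~6.2.2 of \cite{Ding2021}: the character $\delta_A$ is uniquely determined by the filtered deformation $(D_A[1/t], (\fil^{\cF_A}D_A)[1/t])$, so recovering $\delta_A$ from the image in $|X_{\Dpik,\cM_\bullet}|(A)$ is canonical and no information is lost.

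For the commutativity of~(\ref{LINVcompatiblemap1}), one simply tracks characters. Given $(D_{E[\epsilon]/\epsilon^2},\pi_{E[\epsilon]/\epsilon^2},\cF_{E[\epsilon]/\epsilon^2},\delta_{E[\epsilon]/\epsilon^2})\in F^0_{\Dpik,\cF}(E[\epsilon]/\epsilon^2)$, the map $\omega_{\bm{\delta}_\bh}$ of~(\ref{omegadeltapinon}) applied to its image in $|X_{\Dpik,\cM_\bullet}|$ produces, by the uniqueness in Lemma~6.2.2 of \cite{Ding2021}, the very same character $\delta_{E[\epsilon]/\epsilon^2}$. Thus the tangent vector $((\delta_{E[\epsilon]/\epsilon^2,i}\bm{\delta}_{\bh,i}^{-1}-1)/\epsilon)_{i}\in \prod_{i=1}^k \homo(L^\times,E)$ obtained from $\omega^\kappa_{\bm{\delta}_\bh}$ coincides with the tangent vector obtained from the composition $\omega^0 \circ \Upsilon^0(E[\epsilon]/\epsilon^2)$ in~(\ref{kappakappaL}); applying $\kappa_L$ yields~$\kappa$ on both sides. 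The main mild obstacle in the whole argument is really just establishing the uniqueness in part~(1), i.e.\ controlling the $\ext$-group obstructions; once that is in place, parts~(2) and the commutativity are purely formal consequences of the existence and uniqueness statements for the character parameter.
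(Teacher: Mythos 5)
Your reduction of Part (1) to a ``vanishing of $\ext^0$ and $\ext^1$ between $\gr_i\cM_\Dpik$ and $\gr_j\cM_\Dpik$ for $i\neq j$ after inverting $t$'' contains a genuine gap. Lemma \ref{phigammacohononcrtialspecial} gives only $\hH^0$-vanishing for $i\neq j$ and $\hH^2$-vanishing for $j\neq i-1$; it says nothing about $\ext^1$, and the $\ext^1$-vanishing you invoke is in fact false in the situation of this paper. After inverting $t$ the graded pieces are $\Delta_{\pi}\tee\cR_{E,L}(\unr(q_L^{i-k}))[1/t]$ up to a common twist (the weight twists $z^{\hpi_{ir}}$ disappear, so ``comparing Hodge--Tate weights'' can no longer separate them), and the non-split subquotients $\Dpik_{i}^{i+1}[1/t]$ give non-zero classes in $\ext^1(\gr_{i+1}\cM_\Dpik,\gr_{i}\cM_\Dpik)$; if those groups vanished, $\cM_\Dpik$ would decompose as the direct sum of its graded pieces, contradicting $N^{k-1}\neq 0$ on $\df(\Dpik)$. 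This is precisely the non-generic (``special'') case the paper treats, so the obstruction-killing step of your small-extension induction is unavailable. In addition, \cite[Lemma 6.2.2]{Ding2021} determines the \emph{character parameter} attached to a filtration, not the submodule $\fil_1$, so it cannot be used to ``peel off'' $\fil_1$; the uniqueness of $\cM_{A,1}$ is exactly what has to be proved.

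The correct argument needs only Hom-vanishing, not $\ext^1$-vanishing: if $\widetilde\cM_{A,1}$ is another rank-$r$ submodule of $\cM_A$ lifting $\cM_1$, then $\widetilde\cM_{A,1}$ (resp.\ $\cM_A/\cM_{A,1}$) is a successive extension of $\cM_1$ (resp.\ of the higher graded pieces), and a d\'evissage from $\homo_{(\varphi,\Gamma)}(\gr_1\cM_\Dpik,\gr_j\cM_\Dpik)=0$ for $j\geq 2$ (this is \cite[Lemma 6.2.5 (1)]{Ding2021}, which holds because the graded pieces are the irreducible $\Delta_\pi[1/t]$ twisted by pairwise distinct unramified characters) yields $\homo_{(\varphi,\Gamma)}(\widetilde\cM_{A,1},\cM_A/\cM_{A,1})=0$, hence $\widetilde\cM_{A,1}\subseteq\cM_{A,1}$ and equality by comparing ranks; one then iterates on the quotient. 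This is the route of the paper, following \cite[Proposition 6.2.8]{Ding2021} and \cite[Proposition 2.3.6]{AST2009324R10}. Your Part (2) and the commutativity of (\ref{LINVcompatiblemap1}) are essentially sound (recover $\delta_A$ from the filtration by uniqueness of the parameter, and recover the integral filtration from its $[1/t]$-version since the graded pieces of $\cF_A$ are $t$-torsion-free); this is a mild variant of the paper, which instead shows $F^0_{\Dpik,\cF}\hookrightarrow |X_{\Dpik}|$ by the same Hom-vanishing d\'evissage with Lemma \ref{phigammacohononcrtialspecial} in place of \cite[Lemma 6.2.5 (1)]{Ding2021} and then uses the factorization through $|X_{\Dpik,\cM_{\bullet}}|$. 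But without repairing Part (1) as above, the lemma is not established by your proposal.
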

\begin{proof}We first to show $|X_{\cM_\Dpik,\cM_{\bullet}}|$ that is a subfunctor  $|X_{\cM_\Dpik}|$,\;i.e.,\;the $\omepik$-filtration  $\cM_{A,\bullet}$ deforming $\cM_{\bullet}$ on a deformation $\cM_A$ is unique.\;This follows from an argument analogous to   \cite[Proposition 6.2.8]{Ding2021} and \cite[Proposition 2.3.6]{AST2009324R10}.\;The proof proceeds by induction on the length of $\cM_{A,\bullet}$,\;we should show that $\cM_{A,\bullet}$ is an $\Omega$-filtration on $\cM_A$,\;then $\cM_{A,1}$ is uniquely determined as a $(\varphi,\Gamma)$-submodule of $\cM_{A}$,
${\cM}_{A,2}/{\cM}_{A,1}$ is uniquely determined as a $(\varphi,\Gamma)$-submodule of $\cM_{A}/{\cM}_{A,1}$,\;and so on.\;Now suppose that $\widetilde{\cM}_{A,1}$ is another $(\varphi,\Gamma)$-submodule of $\cM_{A}$ deforming $\cM_{1}$.\;Observe that $\widetilde{\cM}_{A,1}$ (resp.,\;$\cM_A/\cM_{A,1}$) is a successive extension of $\cM_{1}$ (resp.,\;$\cM/\cM_{1}$).\;Applying \cite[Lemma 6.2.5 (1)]{Ding2021} to the case $1=i<j$,\;we deduce  $\homo_{(\varphi,\Gamma)}(\widetilde{\cM}_{A,1},\cM_A/\cM_{A,1})=0$ by an easy d\'{e}vissage argument.\;Therefore,\;we see that $\widetilde{\cM}_{A,1}\subset \cM_{A,1}$.\;Then we are done since $\widetilde{\cM}_{A,1}$ and $\cM_{A,1}$ have the same rank.\;This completes the proof of Part (1) by proceeding as we did for $\cM_{A,1}$ step by step.\;Similar to the proof of Part (1),\;we see that $F_{\Dpik,\cF}^0$ is a subfunctor of $|X_{\Dpik}|$.\;The \cite[Lemma 6.2.5 (1)]{Ding2021} has to be replaced by Lemma \ref{phigammacohononcrtialspecial}.\;Since the morphism $F_{\Dpik,\cF}^0\rightarrow |X_{\Dpik}|$ factors through $F_{\Dpik,\cF}^0\rightarrow |X_{\Dpik,\cM_{\bullet}}|$,\;we deduce from Part (1) that $F_{\Dpik,\cF}^0$ is a subfunctor of $|X_{\Dpik,\cM_{\bullet}}|$.\;
\end{proof}

The non-critical property of $\Dpik$ implies the following isomorphism of functors.\;

\begin{pro}\label{isotwofunctor}The morphism $F_{\Dpik,\cF}^0\hookrightarrow |X_{\Dpik,\cM_{\bullet}}|$ of functors induces an isomorphism.\;
\end{pro}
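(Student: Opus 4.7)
The plan is to construct an explicit inverse to the inclusion, making use of the non-critical assumption on $\cF$. Given $A\in\Art(E)$ and any triple $(D_A,j_A,\cM_{A,\bullet})$ representing a class in $|X_{\Dpik,\cM_\bullet}|(A)$, I would attach to it the saturation
\[
\fil_i^{\cF_A} D_A \;:=\; \cM_{A,i}\cap D_A \qquad (\text{inside }\cM_A=D_A[\tfrac1t]),
\]
together with the character $\delta_A=\otimes_{i=1}^k\delta_{A,i}$ produced by \cite[Lemma 6.2.2]{Ding2021}, so that $\gr_i\cM_{A,\bullet}\xrightarrow{\sim}\Delta_\pi\otimes_{\cR_{E,L}}\cR_{A,L}(\delta_{A,i})[\tfrac1t]$. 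Each $\fil_i^{\cF_A}D_A$ is patently stable under $(\varphi,\Gamma)$, so what must be verified is (i) it is a direct summand of $D_A$ as $\cR_{A,L}$-module; (ii) the reduction modulo $\fm_A$ of $\fil_i^{\cF_A}D_A$ coincides with $\fil_i^{\cF}\Dpik$; and (iii) the natural map $\gr_i^{\cF_A}D_A\hookrightarrow \Delta_\pi\otimes_{\cR_{E,L}}\cR_{A,L}(\delta_{A,i})$ exists and is injective.

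For the base case $A=E$, all three properties are precisely the content of the non-critical assumption on $\cF$ recorded in Section~\ref{noncrispecfildfn}: the equality $\fil_i^{\cF}\Dpik=(\fil_i^{\cF}\cM_\Dpik)\cap\Dpik$ is how $\cF$ was obtained from $\cM_\bullet$, the graded pieces are saturated because the non-critical Hodge--Tate weight distribution forces the $t$-gaps between $\gr_i^{\cF}\Dpik$ and $\Delta_\pi\otimes\cR_{E,L}(\bm\delta_{\bh,i})$ to be trivial, and (iii) is exactly the injection $\mathbf{I}_i$ of~\eqref{Dpikinjection}. I would then proceed by induction on $\ell_E(A)$ via a small extension $0\to I\to A\to A'\to 0$ with $I\fm_A=0$. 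Writing $D_{A'}:=D_A\otimes_A A'$ and similarly for $\cM_{A',\bullet}$, the inductive hypothesis supplies $\fil_\bullet^{\cF_{A'}}D_{A'}$ with the desired properties over $A'$; the naturality of intersection gives a short exact sequence
\[
0\longrightarrow I\otimes_E\fil_i^{\cF}\Dpik \longrightarrow \fil_i^{\cF_A}D_A \longrightarrow \fil_i^{\cF_{A'}}D_{A'}\longrightarrow 0,
\]
and from this together with the direct-summand property at the residue level and the flatness of $D_A$ over $\cR_{A,L}$, a Nakayama-type argument promotes $\fil_i^{\cF_A}D_A$ to a direct summand of $D_A$ lifting $\fil_i^{\cF_{A'}}D_{A'}$.

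For (iii) in the inductive step, inverting $t$ recovers $\gr_i^{\cF_A}D_A[\tfrac1t]\cong \gr_i\cM_{A,\bullet}\cong \Delta_\pi\otimes_{\cR_{E,L}}\cR_{A,L}(\delta_{A,i})[\tfrac1t]$, so the question is whether the saturation of $\gr_i^{\cF_A}D_A$ inside this isomorphism is all of $\Delta_\pi\otimes_{\cR_{E,L}}\cR_{A,L}(\delta_{A,i})$; non-criticality over $E$ ensures this at the residue level, and a Nakayama argument on the cokernel (a finitely generated $\cR_{A,L}$-module that vanishes modulo $\fm_A$) propagates the equality to $A$. Once (i)--(iii) are verified, the assignment $(D_A,j_A,\cM_{A,\bullet})\mapsto (D_A,j_A,\fil_\bullet^{\cF_A}D_A,\delta_A)$ clearly lands in $F_{\Dpik,\cF}^0(A)$ and is inverse to the natural inclusion, because the $\omepik$-filtration on $D_A$ and the induced filtration on $\cM_A$ determine each other (saturation in one direction, inverting $t$ in the other).

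The step I expect to be the main obstacle is the inductive direct-summand property (i): keeping track that $\cM_{A,i}\cap D_A$ remains a projective $\cR_{A,L}$-module summand of $D_A$ rather than merely a submodule. The Hodge--Tate weight non-criticality at the residue is exactly what is needed to exclude pathological deformations that could lose the summand property, but making this rigorous requires the careful Nakayama/flatness bookkeeping sketched above; all the other verifications reduce, after inverting $t$, to what is already built into $|X_{\Dpik,\cM_\bullet}|$.
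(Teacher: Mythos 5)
Your overall construction is the same as the paper's: take the intersection $\fil_i^{\cF_A}D_A:=\cM_{A,i}\cap D_A$ together with the character $\delta_A$ supplied by \cite[Lemma 6.2.2]{Ding2021}, and show this lands in $F_{\Dpik,\cF}^0(A)$. But the two steps you gloss over are exactly where the content lies, and your sketch does not close them. In the inductive step you assert that ``naturality of intersection'' yields an exact sequence $0\to I\otimes_E\fil_i^{\cF}\Dpik\to \cM_{A,i}\cap D_A\to \cM_{A',i}\cap D_{A'}\to 0$. Intersection does not commute with the base change $A\to A'$: surjectivity onto $\cM_{A',i}\cap D_{A'}$ and the identification of the kernel with $I\otimes_E\fil_i^{\cF}\Dpik$ are equivalent to the $A$-flatness of the intersection, which is precisely what is in doubt --- the paper explicitly warns that $(\cM_{A,i}\cap \Dpik_A)_{1\leq i\leq k}$ may fail to be projective over $A$ and a priori gives only an unsaturated filtration of $\Dpik_A$. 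Your subsequent Nakayama arguments presuppose this flatness, so as written they are circular.

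The second gap is that non-criticality never actually enters your argument except as a slogan. The decisive point in the paper is to identify the filtration induced on the special fibre with $\cF$ itself: one saturates the specialization (a variant of \cite[Lemma 2.4.2]{AST2009324R10}), uses the Hom-vanishing of Lemma \ref{phigammacohononcrtialspecial} to get inclusions in both directions between the steps of the two filtrations, compares the resulting injections into the relevant twists of $\Delta_\pi$ to obtain the weight inequalities $\hpi'_{(i-1)r+1}\geq \hpi_{ir}$ and $\hpi_{(i-1)r+1}\geq \hpi'_{ir}$, which by non-criticality force equality of Hodge--Tate weights, and then invokes the uniqueness of the $\omepik$-filtration with parameter $(\bx_{\pi},\bmdel)$; only after this special-fibre identification does a Bellaïche--Chenevier style argument (as in \cite[Lemma 2.2.3]{AST2009324R10}) give projectivity over $A$ and the injections $\gr_i^{\cF_A}D_A\hookrightarrow \Delta_\pi\otimes_{\cR_{E,L}}\cR_{A,L}(\delta_{A,i})$. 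Your proposal replaces all of this by the assertion that ``non-criticality at the residue excludes pathological deformations.'' Note also that your base-case justification is off: the claim that non-criticality makes the $t$-gap between $\gr_i^{\cF}\Dpik$ and $\Delta_\pi\otimes_{\cR_{E,L}}\cR_{E,L}(\bm{\delta}_{\bh,i})$ trivial is false for $r>1$, since $\gr_i^{\cF}\Dpik$ has weights $\hpi_{(i-1)r+1},\dots,\hpi_{ir}$ while the target has constant weight $\hpi_{ir}$ (the base case $A=E$ is anyway immediate from the definition $\fil_i^{\cF}\Dpik=(\fil_i^{\cF}\cM_\Dpik)\cap\Dpik$). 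So the proposal identifies the right inverse map but leaves the two verifications that constitute the proof unproved.
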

\begin{proof}For any $A\in \Art_E$  and $(A,\Dpik_A,j_A,\cM_{A,\bullet})\in X_{\Dpik,\cM_{\bullet}}(A)$,\;the $\omepik$-filtration on $\cM_\Dpik$ induces an increasing filtration $\cF'_{A}:=(\fil_{i}^{\cF'_{A}}\Dpik_A)_{1\leq i\leq k}:=(\cM_{A,i}\cap \Dpik_A)_{1\leq i\leq k}$ on $\Dpik$ by $(\varphi,\Gamma)$-submodules  over $\cR_{E,L}$.\;Therefore,\;we get that 
\[\gr_{i}^{\cF_{A}}\Dpik_A\bigg[\frac{1}{t}\bigg]=\gr_{i}\cM_A\bigg[\frac{1}{t}\bigg]=\Delta_\pi\otimes_{\cR_{E,L}}\cR_{E,L}(\delta_{A,i})\bigg[\frac{1}{t}\bigg]\]
for any $1\leq i\leq k$.\;Note that $(\cM_{A,i}\cap \Dpik_A)_{1\leq i\leq k}$ may fail to be projective over $A$,\;and thus $\cF_{A}$ only gives an unsaturated filtration of $\Dpik_A$.\;We first show that $\cF'_{E}=\cF_{E}$.\;By an easy variation of \cite[Lemma 2.4.2]{AST2009324R10},\;we see that $\cF':=\cF'_{E}$ is a saturated filtration of $\Dpik$.\;Let $\{\hpi'_{\tau,1},\hpi'_{\tau,2},\cdots,\hpi'_{\tau,ir}\}_{\tau\in \Sigma_L}$ be the Hodge-Tate weights of $\fil_i^{\cF'}\Dpik$.\;By Lemma \ref{phigammacohononcrtialspecial},\;we see that $\homo_{(\varphi,\Gamma)}(\gr_{i}^{\cF'}\Dpik,\Dpik/\fil_{i}^{\cF}\Dpik)=0$ (resp.,\;$\homo_{(\varphi,\Gamma)}(\gr_{i}^{\cF}\Dpik,\Dpik/\fil_{i}^{\cF'}\Dpik)=0$) for $1\leq i\leq k$.\;Therefore,\;we see that $\gr_{i}^{\cF'}\Dpik\subset \fil_{i}^{\cF}\Dpik$ (resp.,\;$\gr_{i}^{\cF}\Dpik\subset \fil_{i}^{\cF'}\Dpik$).\;For $1\leq i\leq k$,\;we further recall that $\homo_{(\varphi,\Gamma)}(\gr_{i}^{\cF'}\Dpik,\fil_{i-1}^{\cF}\Dpik)=0$ (resp.,\;$\homo_{(\varphi,\Gamma)}(\gr_{i}^{\cF}\Dpik,\fil_{i-1}^{\cF'}\Dpik)=0$).\;Therefore.\;We have two injections of $(\varphi,\Gamma)$-modules:
\begin{equation}
	\begin{aligned}
		&\Delta_\pi\otimes_{\cR_{E,L}}\cR_{E,L}(\unr(q_L^{1-k}){z^{\hpi'_{(i-1)r+1}}})\hookrightarrow \fil_i^{\cF'}\Dpik\hookrightarrow \fil_i^{\cF}\Dpik \hookrightarrow \Delta_\pi\otimes_{\cR_{E,L}}\cR_{E,L}(\unr(q_L^{1-k}){z^{\hpi_{ir}}}),\\
		&\Delta_\pi\otimes_{\cR_{E,L}}\cR_{E,L}(\unr(q_L^{1-k}){z^{\hpi_{(i-1)r+1}}})\hookrightarrow \fil_i^{\cF}\Dpik\hookrightarrow \fil_i^{\cF'}\Dpik \hookrightarrow \Delta_\pi\otimes_{\cR_{E,L}}\cR_{E,L}(\unr(q_L^{1-k}){z^{\hpi'_{ir}}}).
	\end{aligned}
\end{equation}
This implies that $\hpi'_{(i-1)r+1}\geq \hpi_{ir}$ and $\hpi_{(i-1)r+1}\geq \hpi'_{ir}$.\;This implies $\hpi'_{\tau,i}=\hpi_{\tau,i}$ for each $1\leq i\leq k$ and $\tau\in \Sigma_{L}$ by the  non-critical assumption.\;By the uniqueness of $\omepik$-filtration with the parameter $(\bx_{\pi},\bm{\delta}_{\bh})\in \sbanpik\times\rigchl$,\;we conclude that $\cF'_{E}=\cF_{E}$.\;Furthermore,\;by the above discussion and  the uniqueness of $\omepik$-filtration with the parameter $(\bx_{\pi},\bm{\delta}_{\bh})\in \sbanpik\times\rigchl$,\;we also see that the filtration $\cF_{A}$ on $\Dpik_A$ coincides with $\cF$ on $\Dpik$ when module $\fm_A$.\;Similar to the proof of \cite[Lemma 2.2.3]{AST2009324R10},\;we get the result.\;	%The last assertion follows from Proposition \ref{dimtanFD0}.\;
\end{proof}

%Let $V$ be the associated representation of $\rho_L$.\;
% (deformations of the representation $V$).\;We can identity $X_{\rho_L}$ with the framed deformations of $V$.\;Therefore the morphisms $X_{\rho_L}\rightarrow X_{V}$ is relatively representable and formally smooth of relative dimension $n^2$.\;We also have an equivalence $X_V\xrightarrow{\sim} X_{\Dpik}$ and $X_{\rho_L} \xrightarrow{\sim} |X_{\rho_L}|$.\;

We now prove the main results of this section,\;which assert that $\xfpx$ (resp.,\;$\bersteineigenvarpik$) is smooth at the point $x_L$ (resp.,\;$x$).\;

We need to study the tangent space $T_{\defvarrho,x_L}$ (resp.,\;$T_{\bersteineigenvarpik,x}$) of  $\defvarrho$ (resp., $\bersteineigenvarpik$) at the non-critical special point $x_L$ (resp.,\;$x$).\;Recall that $ \xfpx\subset \defvarrho$ is a union of  irreducible components of $\defvarrho$ containing $x_L$.\;Consider the morphisms
\begin{equation}\label{variousmorphisms}
\begin{aligned}
	&\omega^{\Box}:=\omega|_{\xfpx}:	\xfpx\subset \defvarrho \longrightarrow \sbanpik\times\rigch,\\
	&\zeta^{\Box}:=\xfpx\subset \defvarrho \longrightarrow {\mathfrak{X}_{\overline{r}}^\Box}.\\
\end{aligned}
\end{equation}
We define $\omega^{\Box,1}:\xfpx\longrightarrow \sbanpik$ (resp.,\;$\omega^{\Box,2}:\xfpx\defvarrho \longrightarrow \rigch$) to be the composition of $\omega^{\Box}$ with projection to the $1$-th (resp.,\;$2$-th) factor.\;

We describe explicitly the tangent map of $\omega^{\Box,1}$ (resp.,\; $\omega^{\Box,2}$) at $x_L$:
\begin{equation}\label{tangentwxl}
\begin{aligned}
	d\omega_{x_L}^{\Box,1}\;(\text{resp}.,\;d\omega_{x_L}^{\Box,2}):	T_{\xfpx,X_L} \longrightarrow T_{\sbanpik,\breve{\mathbf{{x}}}_\pi}  (\text{resp}.,\;T_{\rigch,\underline{\mathbf{1}}}) .\;
\end{aligned}
\end{equation}
Then $d\omega_{x_L}^\Box=d\omega_{x_L}^{\Box,1}\oplus d\omega_{x_L}^{\Box,2}$.\;

We identify $\bv\in T_{\xfpx,x_L}$ with an $E[\epsilon]/\epsilon^2$-valued point $\Spec \big(E[\epsilon]/\epsilon^2\big)\xrightarrow{\bv} \xfpx$ of $\xfpx$. Then the composition
\[\Spec \big(E[\epsilon]/\epsilon^2\big)\xrightarrow{\bv} \xfpx \rightarrow \rigch\]
gives the point $d\omega_{x_L}^{\Box,2}(\bv)$ and thus a continuous character $\delta_{\bv}^0=\boxtimes_{i=1}^k\delta_{\bv,i}^0:\bZ^{\lrr}(\cO_L)\longrightarrow \big(E[\epsilon]/\epsilon^2\big)^\times$ such that $\delta_{\bv}^0\equiv \underline{1}\mod \epsilon$.\;Via the identification $T_{\rigch,\underline{\mathbf{1}}}\cong \homo(\bZ^{\lrr}(\cO_L),E)=\prod_{i=1}^k\homo(\cO_L^{\times},E)$, the element 
$\delta_{\bv}^0$  becomes $((\delta_{\bv,i}^0-1)/\epsilon)_{1\leq i \leq k}$.\;
%Recall that $\{\psi_{\sigma,L}\}_{\sigma\in \Sigma_{L}}$ form a basis of $\homo(\cO_L^{\times},E)$,\;we write $(\delta_{\bv,i}^0(\delta_{i}^{0})^{-1}-1)/\epsilon=\sum_{\tau\in\Sigma_L} a_{\bv,i,\tau}\psi_{i,\tau,L}$ for some $a_{\bv,i,\tau}\in E$.\;
On the other hand,\;the composition 
\[\Spec \big(E[\epsilon]/\epsilon^2\big)\xrightarrow{\bv} \xfpx \rightarrow  \sbanpik\]
corresponds to the point $d\omega_{x_L}^{\Box,1}(\bv)$.\;Recall that 
$\mathfrak{Z}_{\omepik}\cong \otimes_{i=1}^kE[z,z^{-1}]^{\mu_{\Omega_{r}}^{\unr}}$ (depending on the choice of the element $\pi_0^{\otimes k}$ in $\omepik$).\;Therefore,\;for $A\in \Art_E$,\;the $A$-valued points of $\sbanpik$ are isomorphic to the vector space of $A$-valued unramified characters on $\bZ^{\lrr}(L)$.\;In terms of the language of $p$-adic differential equations,\;
for any $\psi=(\psi_i)_{1\leq i\leq k}\in \sbanpik(\spr A)$, there exists a unique unramified character $\unr(a_{\psi_i}):L^\times\rightarrow A^\times$,\;such that $a_{\psi_i}\equiv 1\mod m_A$ and
\begin{equation}\label{defdiffequation}
\psi_i^*\Delta_{\Omega_r}\cong \Delta_{\Omega_r,\widehat{\mathbf{{x}}}_{\pi,i}}\otimes_{\cR_{k(x),L}}\cR_{A,L}(\unr(a_{\psi_i}))=\Delta_{\pi_0}\otimes_{\cR_{k(x),L}}\cR_{A,L}(\unr(a_{\psi_i}\alpha_\pi q_L^{i-k})),
\end{equation}
where $\Delta_{\Omega_r}$ is the universal $p$-adic differential equation over $\big(\mathrm{Spec}\hspace{2pt}\mathfrak{Z}_{\Omega_r}\big)^{\mathrm{rig}}$.\;For  $1\leq i\leq k$,\;we put $\mathrm{\mathbf{q}}_{k,i}:=\unr(\alpha_\pi q_L^{i-k}))\circ\det$
and $\mathrm{\mathbf{q}}_k:=\boxtimes_{i=1}^k\mathrm{\mathbf{q}}_{k,i}$.\;Therefore,\;we have
\begin{equation}\label{tangentwxl2}
\begin{aligned}
	d\omega_{x_L}^\Box:	T_{\xfpx,X_L} \longrightarrow T_{\sbanpik\times \rigch,\omega(x_L)}&=T_{\sbanpik,\breve{\mathbf{{x}}}_\pi}\oplus T_{\rigch,\underline{\mathbf{1}}}\\
	&\cong T_{\rigchl,\mathrm{\mathbf{q}}_k}\xrightarrow{-\cdot (z^{\bh_{ir}})_{1\leq i\leq k}} T_{\rigchl,\bm{\delta}_{\bh}}.\;
\end{aligned}
\end{equation}		
Therefore,\;by composting $	d\omega_{x_L}^\Box$ with $\kappa_L$,\;we can get a map
\begin{equation}\label{tangentLinveta}
\begin{aligned}
	\eta^\Box:\;T_{\xfpx,x_L}\xrightarrow{d\omega_{x_L}^\Box}T_{\rigchl,\bm{\delta}_{\bh}}&\cong\homo(\bZ^{\lrr}(L^{\times}),E)=\prod_{i=1}^k\homo(L^{\times},E) \\
	&\xrightarrow{\kappa_L} \prod_{ir\in \Delta_k}\homo(L^\times,E).
\end{aligned}
\end{equation}

Let $X_{\rho_L}$ be the groupoid over $\Art_E$ of deformations of the group morphism $\rho_L$.\;The map $\zeta^{\Box}$ (see (\ref{variousmorphisms})) induces a  natural morphism $\complocaldefvarrhosub \rightarrow (\widehat{\mathfrak{X}_{\overline{r}}^\Box})_{\rho_L}\cong X_{\rho_L}$.\;By \cite[Lemma 4.13]{breuil2017smoothness},\;there is an exact sequence of $E$-vector spaces
\begin{equation}\label{dimcaluexatsequence}
0\rightarrow K(\rho_L)\rightarrow T_{\mathfrak{X}_{\overline{r}}^\Box,\rho_L}\xrightarrow{f_{\rho_L}} \ext^1_{\gal_L}(\rho_L,\rho_L)\cong \ext^1_{(\varphi,\Gamma)}(\Dpik,\Dpik)=F_{\Dpik}(E[\epsilon]/\epsilon^2)\rightarrow 0,\;
\end{equation}
where $K(\rho_L)$ is a $k(x_L)$-vector space of $T_{\mathfrak{X}_{\overline{r}}^\Box,\rho_L}$ of dimension $n^2-\dim_E\EndO_{\gal_L}(\rho_L)$.\;

Let $\sL(\rho_L):=\sL(\Dpik)$ be the parabolic Fontaine-Mazur simple $\sL$-invariants associated to $\rho_L$ (equivalently,\;to $\Dpik$).\;

\begin{pro}\label{smoothnessdefvar}$\xfpx$ is smooth at the point $x_L$,\;and (\ref{tangentLinveta}) factors thought a surjective map
\begin{equation}\label{defortangentmap1eta}
	\eta^\Box:T_{\xfpx,x_L}\twoheadlongrightarrow \sL(\rho_L).\;
\end{equation}
\end{pro}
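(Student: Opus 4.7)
The plan is to combine Theorem \ref{localfiltration} with the deformation-theoretic computations of Section \ref{cgsformulaSection} (in particular Corollary \ref{corlinvaraint} and Proposition \ref{dimtanFD0}), together with the dimension formula of Proposition \ref{propertyparavar}(1), to pin down $T_{\xfpx,x_L}$ up to an upper bound that matches the expected dimension. Once smoothness is established, surjectivity of $\eta^{\Box}$ onto $\sL(\rho_L)$ will follow by reinterpreting $\eta^{\Box}$ via the diagram (\ref{LINVcompatiblemap1}).

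First I would construct a natural $E$-linear map
\[
\Phi : T_{\xfpx,x_L} \longrightarrow F_{\Dpik,\cF}^0(E[\epsilon]/\epsilon^2).
\]
A tangent vector $\bv \in T_{\xfpx,x_L}$ gives an $E[\epsilon]/\epsilon^2$-point $(\rho_{\bv},\underline{x}_{\bv},\delta^0_{\bv})$ of $\xfpx$. By Theorem \ref{localfiltration}, after shrinking to an affinoid neighborhood $X$ of $x_L$, the filtration $\cM_{\bullet}$ on $D_{\rig}(r^{\univ}_X)[1/t]$ specializes at the $E[\epsilon]/\epsilon^2$-point to an $\omepik$-filtration on $D_{\rig}(\rho_{\bv})[1/t]$ lifting $\cM_{\bullet}$ on $\Dpik[1/t]$, with parameter controlled by $(\underline{x}_{\bv},\delta^0_{\bv})$. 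By Proposition \ref{isotwofunctor}, this data lies in $F_{\Dpik,\cF}^0(E[\epsilon]/\epsilon^2)$, giving $\Phi$.

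Next I would bound the kernel and image of $\Phi$. The projection $\zeta^{\Box}$ (see (\ref{variousmorphisms})) induces $T_{\xfpx,x_L}\to T_{\mathfrak{X}_{\overline{r}}^\Box,\rho_L}$. By (\ref{dimcaluexatsequence}) and $\EndO_{\gal_L}(\rho_L)=E$ (since $\wdre(\rho_L)^{F\text{-}ss}$ is absolutely indecomposable), the kernel of $T_{\mathfrak{X}_{\overline{r}}^\Box,\rho_L}\to F_{\Dpik}(E[\epsilon]/\epsilon^2)$ is the framing subspace $K(\rho_L)$ of dimension $n^2-1$. Since $\ker\Phi$ is contained in this framing subspace, $\dim_E\ker\Phi\leq n^2-1$. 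Together with $\dim_E\mathrm{Im}\,\Phi\leq\dim_EF_{\Dpik,\cF}^0(E[\epsilon]/\epsilon^2)=1+d_L\bigl(k+\tfrac{n(n-1)}{2}\bigr)$ from Proposition \ref{dimtanFD0}, this yields
\[
\dim_E T_{\xfpx,x_L} \;\leq\; (n^2-1) + 1 + d_L\Bigl(k+\tfrac{n(n-1)}{2}\Bigr) \;=\; n^2 + \Bigl(k+\tfrac{n(n-1)}{2}\Bigr)d_L.
\]
By Proposition \ref{propertyparavar}(1), $\defvarrho$ and hence $\xfpx$ is equidimensional of this dimension, so the inequality is an equality. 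Thus $\xfpx$ is smooth at $x_L$, $\Phi$ is surjective, and $\ker\Phi=K(\rho_L)$.

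Finally, for the second assertion I would post-compose with the surjection $\kappa:F_{\Dpik,\cF}^0(E[\epsilon]/\epsilon^2)\twoheadrightarrow\sL(\rho_L)$ of Corollary \ref{corlinvaraint}, obtaining a surjection $\kappa\circ\Phi:T_{\xfpx,x_L}\twoheadrightarrow\sL(\rho_L)$. It remains to identify $\kappa\circ\Phi$ with $\eta^{\Box}$. Unwinding the definitions, the map $d\omega_{x_L}^{\Box,2}$ reads off $\bigl((\delta^0_{\bv,i}-1)/\epsilon\bigr)_i$ while $d\omega_{x_L}^{\Box,1}$ reads off the unramified twist $\bigl(\unr(a_{\psi_i})\bigr)_i$ through (\ref{defdiffequation}); multiplying these together, as in (\ref{tangentwxl2}), gives exactly the character component of the parameter of the $\omepik$-filtration on $D_{\rig}(\rho_{\bv})[1/t]$ produced by $\Phi(\bv)$, shifted by $(z^{\bh_{ir}})_i$. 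Applying $\kappa_L$ matches the definition of $\kappa$ in (\ref{kappakappaL}), so $\eta^{\Box}=\kappa\circ\Phi$ and the surjection (\ref{defortangentmap1eta}) follows.

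The main obstacle is the verification in the last paragraph: carefully tracking the various normalizations (the shift by $(z^{\bh_{ir}})_i$, the $\unr(q_L^{i-k})$ twists entering through $\iota_{\omepik}$ and the definition of $\breve{\mathbf{{x}}}_\pi$, and the Hodge-Tate weight twist) to confirm that the tangent map of the weight map, composed with $\kappa_L$, agrees under the isomorphism $F_{\Dpik,\cF}^0\cong|X_{\Dpik,\cM_\bullet}|$ with the map $\kappa$ used to cut out $\sL(\rho_L)$ in Definition (\ref{dfnl-invariant}). All other steps are dimension counts using results already established.
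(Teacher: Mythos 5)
Your proposal follows essentially the same route as the paper's proof: Theorem \ref{localfiltration} together with Proposition \ref{isotwofunctor} produces the map $\Phi$ (this is the paper's restriction of $f_{\rho_L}$ to $T_{\xfpx,x_L}$), the dimension count combining (\ref{dimcaluexatsequence}), Proposition \ref{dimtanFD0} and the equidimensionality from Proposition \ref{propertyparavar} (1) gives smoothness and the surjectivity of $\Phi$ onto $F_{\Dpik,\cF}^0(E[\epsilon]/\epsilon^2)$, and post-composing with the surjection $\kappa$ of Corollary \ref{corlinvaraint} yields (\ref{defortangentmap1eta}); the normalization check identifying $\eta^{\Box}$ with $\kappa\circ\Phi$ is exactly what the paper disposes of with ``by comparing the construction''.

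The one step you assert without proof is ``$\ker\Phi$ is contained in this framing subspace, so $\dim_E\ker\Phi\leq n^2-1$''. Since $T_{\xfpx,x_L}$ sits inside $T_{\mathfrak{X}_{\overline{r}}^\Box,\rho_L}\oplus T_{\sbanpik\times\rigch,(\breve{\mathbf{x}}_\pi,\underline{\mathbf{1}})}$, a vector $\bv\in\ker\Phi$ could a priori have a nonzero component in the $\sbanpik\times\rigch$ factor, in which case $\ker\Phi$ would not embed into $K(\rho_L)$ and your bound would fail; this is precisely the injectivity of $d\zeta^{\Box}_{x_L}$ on such vectors, which the paper proves explicitly. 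The point is that $F_{\Dpik,\cF}^0$ records the character data $\delta_A$, and the parameter of the deformation produced by Theorem \ref{localfiltration} is read off from the image of $\bv$ in $\sbanpik\times\rigch$ via (\ref{Adefor}); so triviality of the class $\Phi(\bv)$, together with the uniqueness of the parameter lifting $\bm{\delta}_{\bh}$, forces the $\sbanpik\times\rigch$-component of $\bv$ to vanish, and then $\bv$ is detected by its image in $T_{\mathfrak{X}_{\overline{r}}^\Box,\rho_L}$ because the tangent map of the closed embedding $\xfpx\hookrightarrow \mathfrak{X}_{\overline{r}}^\Box\times\sbanpik\times\rigch$ is injective. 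Once this is inserted your dimension count goes through verbatim. (A minor remark: the identity $\EndO_{\gal_L}(\rho_L)=E$ is cleanest via Proposition \ref{dimlemmaDpik} (1), which uses the non-splitness built into the non-critical special hypothesis, rather than directly from indecomposability of the Weil--Deligne representation; but for the upper bound you only need $\dim_E\EndO_{\gal_L}(\rho_L)\geq 1$, and the equality is forced by the final dimension comparison.)
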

\begin{proof}We first show that the image of $f_{\rho_L}$ in (\ref{dimcaluexatsequence}) is contained in the $E$-vector space\\ $|X_{\Dpik,\cM_{\bullet}}|(E[\epsilon]/\epsilon^2)\cong F_{\Dpik,\cF}^0(E[\epsilon]/\epsilon^2)$.\;By Theorem \ref{localfiltration} (we use the notation of Section \ref{ACC},\;Theorem \ref{localfiltration} and its proof),\;we see that there  exists an open affinoid neighborhood $X\subset \xfpx$ of  $x_L$ such that the $(\varphi,\Gamma)$-module $D_{\rig}(r^{\univer}_X)[\frac{1}{t}]$ over $\cR_{X,L}$ admits a filtration $\cM_\bullet$ such that $\gr^i\cM_\bullet\cong \Delta_{X,i}[\frac{1}{t}]$.\;Let $A\in \Art_E$,\;and $\Spec(A)\rightarrow X$ a morphism of rigid analytic spaces sending the only point of $\Spec(A)$ to $x$.\;By pulling along $\psi:\Spec(A)\rightarrow X\rightarrow \mathfrak{X}_{\overline{r}}^\Box$,\;we obtain a deformation $\rho_A$ of $\overline{r}$ such that \[D_{\rig}(\rho_A)[\frac{1}{t}]\cong A\otimes_{\Gamma(X,\cO_X)}D_{\rig}(r^{\univer}_X)[\frac{1}{t}],\;\]and $A\otimes_{\Gamma(X,\cO_X)}\cM_\bullet $ gives a filtration $\cM_{A,\bullet}$ on $D_{\rig}(\rho_A)[\frac{1}{t}]$.\;Let $\Delta_{A,i}$ be the  pull-back of $\Delta_{X,i}$ along $\Spec(A)\rightarrow X$.\;Therefore,\;the filtration $\cM_{A,\bullet}$ on $D_{\rig}(\rho_A)[\frac{1}{t}]$ satisfies
\begin{equation}\label{Adefor}
	\begin{aligned}
		&\gr_i^{\cM_{A,\bullet}}D_{\rig}(\rho_{A})\Big[\frac{1}{t}\Big]\xrightarrow{\sim}\Delta_{A,i}\Big[\frac{1}{t}\Big]\\
		\xrightarrow{\sim}&\Delta_{\pi_0}\otimes_{\cR_{E,L}}\cR_{A,L}\big(\unr(a_{A,i}\alpha_\pi q_L^{i-k})\cdot({\delta}_{A,i})_{\varpi_L}\cdot z^{{\bh}_{(i-1)r+1}}\big)\Big[\frac{1}{t}\Big]
	\end{aligned}
\end{equation}
for $1\leq i\leq k$,\;where $\delta_A$ is the pullback of the universal character of  $\cO_L^\times$ along  $\Spec(A)\rightarrow X$,\;and $a_{A,i}$ is an element of $A$ such that $a_{A,i}\equiv 1\mod m_A$.\;Since the
%=\Delta^{\univer}_{\pi}\wte_{\cR_{X}}\cR_{X}((\delta_i^{0,\univer})_{\varpi_L}\cdot z^{\underline{h}_{s_{i-1}+1}}|_{\co_L^\times})$
point $x_L$ is non-critical,\;the uniqueness of $\omepik$-filtration on $\cM_\Dpik$ implies that $\cM_{A,\bullet}\otimes_AE\cong \cM_{\bullet}$,\;i.e.,\;the filtration $\cM_{\bullet}$ on $D_{\rig}(\rho_L)[\frac{1}{t}]$ satisfies
\[\gr_i^{\cM_\bullet}D_{\rig}(\rho_L)\Big[\frac{1}{t}\Big]\xrightarrow{\sim}\Delta_{\pi_0}\otimes_{\cR_{E,L}}\cR_{E,L}(\unr(\alpha_\pi )\bm{\delta}_{\bh,i})\Big[\frac{1}{t}\Big],\]
for $1\leq i\leq k$.\;Evaluating at $A=E[\epsilon]/\epsilon^2$-points and using Proposition \ref{isotwofunctor},\;we see that the image of $f_{\rho_L}$ in (\ref{dimcaluexatsequence}) is contained in the $E$-vector space $|X_{\Dpik,\cM_{\bullet}}|(E[\epsilon]/\epsilon^2)\cong F_{\Dpik,\cF}^0(E[\epsilon]/\epsilon^2)$.\;

The closed embedding $\xfpx\hookrightarrow {\defvarrho}\hookrightarrow \mathfrak{X}_{\overline{r}}^\Box\times \sbanpik\times\rigch$ induces an injection on tangent spaces $T_{\xfpx,x_L}\hookrightarrow T_{\mathfrak{X}_{\overline{r}}^\Box,\rho_L}\oplus T_{\sbanpik\times\rigch,(\breve{\mathbf{{x}}}_\pi,\underline{1})}$.\;Then $d\zeta^{\Box}_{x_L}$ is the composition of this injection with the projection
$$T_{\mathfrak{X}_{\overline{r}}^\Box,\rho_L}\oplus T_{\sbanpik\times\rigch,(\breve{\mathbf{{x}}}_\pi,\underline{1})}\rightarrow T_{\mathfrak{X}_{\overline{r}}^\Box,\rho_L}.\;$$We claim that the tangent map $d\zeta^{\Box}_{x_L}$ remains injective.\;Let $\bv\in T_{\xfpx,x_L}$ which maps to $0$ in $T_{\mathfrak{X}_{\overline{r}}^\Box,\rho_L}$.\;Then it maps to $0$ in $F_{\Dpik}(E[\epsilon]/\epsilon^2)$ via $f_{\rho_L}$.\;We have to show that the image of $\bv\in T_{\xfpx,x_L}$ in $T_{\sbanpik\times\rigch,(\breve{\mathbf{{x}}}_\pi,\underline{1})}$ is also zero.\;Since $F_{\Dpik,\cF}^0$ is a subfunctor of $F_{\Dpik,\cF}$,\;we see that the image $f_{\rho_L}(\bv)\in F_{\Dpik,\cF}^0(E[\epsilon]/\epsilon^2)$ is isomorphic to the trivial deformation of $\Dpik$ of type $\omepik$ over $E[\epsilon]/\epsilon^2$ (and then the parameter of trivial deformation is trivial).\;We conclude that the image of $\bv\in T_{\xfpx,x_L}$ in $T_{\sbanpik\times\rigch,(\breve{\mathbf{{x}}}_\pi,\underline{1})}$ is also zero.\;Therefore,\;we obtain from (\ref{dimcaluexatsequence}) a short exact sequence
\begin{equation}\label{dimcaluexatsequenceproof}
	\begin{aligned}
		0\rightarrow K(\rho_L)\cap T_{\xfpx,X_L}\rightarrow T_{\xfpx,X_L}\xrightarrow{f_{\rho_L}} F_{\Dpik,\cF}^0(E[\epsilon]/\epsilon^2).\;
	\end{aligned}
\end{equation}
Then Proposition \ref{dimtanFD0} gives an upper bound:
\[\dim_ET_{\xfpx,x_L}\leq n^2+d_L\Big(\frac{n(n-1)}{2}+k\Big)=\dim_E{\defvarrho}.\]
This implies that $\xfpx$ is smooth at the point $x_L$.\;At last,\;by comparing the construction,\;we see that the composition
\begin{equation}
	\begin{aligned}
		T_{\xfpx,X_L}\xrightarrow{f_{\rho_L}} F_{\Dpik,\cF}^0(E[\epsilon]/\epsilon^2)\xrightarrow{\omega_{\bm{\delta}_{\bh}}(E[\epsilon]/\epsilon^2),(\ref{omegadeltapinon})}\widehat{(\cZ_{\bL^{\lrr},L})}_{\bm{\delta}_{\bh}}(E[\epsilon]/\epsilon^2)=T_{\rigchl,\bm{\delta}_{\bh}}
	\end{aligned}
\end{equation}
coincides with $d\omega_{x_L}^\Box$ (see (\ref{tangentwxl2})).\;We deduce from Corollary \ref{corlinvaraint} the surjection (\ref{defortangentmap1eta}).\;	
\end{proof}

We now prove the main proposition of this section.\;The natural embedding $$\bersteineigenvarpik \hooklongrightarrow (\Spf\;R^{\fp}_{\infty})^{\rig}\times\mathfrak{X}_{\overline{r}}^\Box \times \sbanpik\times \rigch$$ induces the weight map 
\begin{equation}
\omega:	\bersteineigenvarpik \longrightarrow \sbanpik\times \rigch.\;
\end{equation}
Consider the tangent map of $\omega$ at point $x=(x^{\fp}, \rho_L, \breve{\mathbf{{x}}}_\pi,\underline{\mathbf{1}})$:
\begin{equation}\label{tangentpatchvar1eta}
d\omega_x:	T_{\bersteineigenvarpik,x} \longrightarrow T_{\sbanpik\times \rigch,\omega(x)}\cong  T_{\rigchl,\bm{\delta}_{\bh}},
\end{equation}
where the second isomorphism  follows by the same argument of (\ref{tangentwxl2}).\;
\begin{pro}\label{coreigentangentmap2}$\bersteineigenvarpik$ is smooth at $x$,\;and (\ref{tangentpatchvar1eta}) factors through a surjective map
\begin{equation}\label{eigentangentmap2}
	T_{\bersteineigenvarpik,x}\twoheadlongrightarrow \sL(\rho_L).\;
\end{equation}
\end{pro}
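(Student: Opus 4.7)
The plan is to reduce everything to the (purely local) statements already proved for the Bernstein paraboline variety $\defvarrho$ at the point $x_L$, via the decomposition of a neighbourhood of $x$ in $\bersteineigenvarpik$ obtained in Section~\ref{appearpoint}.

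First, recall from (\ref{OZlocal}) and (\ref{OZfactor}) that, since $\bersteineigenvarpik$ is reduced at $x$, one has
\[
\widehat{\cO}_{\bersteineigenvarpik,x}\;\cong\;\widehat{\cO}_{\cE(x),x},\qquad \cE(x)=X^{\fp}\times \BU^{g}\times \iota_{\omepik}^{-1}(\xfpx),
\]
where $X^{\fp}$ is the unique irreducible component of $\FX_{\overline{\rho}^{\fp}}^{\Box}$ through $x^{\fp}$. By \cite[Theorem 3.3.8]{kisin2008potentially} and \cite[Lemma~2.5]{PATCHING2016}, $X^{\fp}$ is smooth at $x^{\fp}$; of course $\BU^{g}$ is smooth, and $\iota_{\omepik}^{-1}(\xfpx)\cong \xfpx$ (as $\iota_{\omepik}$ is an isomorphism on $\sbanpik$) is smooth at $\iota_{\omepik}^{-1}(x_L)$ by Proposition~\ref{smoothnessdefvar}. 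Thus $\cE(x)$ is smooth at $x$ as a product of three smooth formal germs, and smoothness of $\bersteineigenvarpik$ at $x$ follows.

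For the second assertion, the natural embedding $\Lambda$ of Proposition~\ref{linkingpatched} gives, on tangent spaces, a decomposition
\[
T_{\bersteineigenvarpik,x}\;=\;T_{X^{\fp},x^{\fp}}\;\oplus\;T_{\BU^{g},0}\;\oplus\;T_{\xfpx,x_L},
\]
compatible with the weight maps in the sense that the composition
\[
T_{\bersteineigenvarpik,x}\xrightarrow{d\omega_x}T_{\sbanpik\times\rigch,\omega(x)}\cong T_{\rigchl,\bm{\delta}_{\bh}}
\]
factors through the projection to the third summand followed by $d\omega^{\Box}_{x_L}$ (up to the twist by $(z^{\bh_{ir}})_{1\leq i\leq k}$ absorbed as in (\ref{tangentwxl2})). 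Composing further with $\kappa_L$ as in (\ref{tangentLinveta}), we land in $\prod_{ir\in\Delta_n(k)}\homo(L^{\times},E)$, and the resulting map coincides with $\eta^{\Box}$ on the third factor (and is zero on the first two). By Proposition~\ref{smoothnessdefvar}, $\eta^{\Box}$ factors through and surjects onto $\sL(\rho_L)$; hence so does the composite on $T_{\bersteineigenvarpik,x}$.

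The conceptual content is entirely contained in Proposition~\ref{smoothnessdefvar}; the only thing to verify here is the compatibility of the local decomposition (\ref{OZfactor}) with the weight map, which is immediate from the definition of $\Lambda$ and the fact that the weight map $\omega$ only depends on the $\sbanpik\times\rigch$-factor. I expect no serious obstacle: the main step is simply to record that the product structure on $\cE(x)$ forces both smoothness and the desired factorization of $d\omega_x$, after which Proposition~\ref{smoothnessdefvar} yields the surjection onto $\sL(\rho_L)$.
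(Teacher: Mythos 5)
Your proposal is correct and follows essentially the same route as the paper: both reduce to Proposition \ref{smoothnessdefvar} via the identifications (\ref{OZlocal}) and (\ref{OZfactor}), using smoothness of $X^{\fp}$, $\BU^{g}$ and $\xfpx$ to get smoothness of $\bersteineigenvarpik$ at $x$, and the fact that $\kappa_L\circ d\omega_x$ factors through the projection $T_{\cE(x),x}\twoheadrightarrow T_{\xfpx,x_L}$ followed by $\eta^{\Box}$. Your write-up merely makes explicit the compatibility of the product decomposition with the weight map, which the paper leaves implicit.
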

\begin{proof}Recall that $\cE(x)$ is the union of  irreducible components of $\bersteineigenvarpik$ containing $x$.\;By (\ref{OZlocal}) and (\ref{OZfactor}),\;we see that  $\kappa_L\circ d\omega_x$ is equal to the following composition
\begin{equation}\label{tangentpatchvar1}
	\begin{aligned}
		T_{\bersteineigenvarpik,x}\cong T_{\cE(x),x}
		\cong &\;T_{X^{\fp} \times \BU^g \times \iota_{\omepik}^{-1}(\xfpx),(x^{\fp},x_L)}\\
		&\twoheadlongrightarrow T_{\xfpx,x_L}\xrightarrow{\eta^\Box,\;(\ref{tangentLinveta})}\prod_{ir\in \Delta_n(k)}\homo(L^\times,E).\;
	\end{aligned}
\end{equation}
Now the result follows.\;
\end{proof}

\begin{rmk}\label{localmodeldescri}Let $X_{\rho_L,\cM_{\bullet}}$ be the groupoid over $\Art_E$ defined in \cite[Section 6.4]{Ding2021}.\;For $w\in \sW_{n,\Sigma_L}$, let $X^{w}_{\rho_L,\cM_{\bullet}}$ be the closed subgroupoid of $X_{\rho_L,\cM_{\bullet}}$ over $\Art_E$ defined in \cite[Section 6.4]{Ding2021},\;which is closely related to some varieties studied in geometric representation theory.\;Since the parameters of our $\omepik$-filtration $\cF$ are non-generic (in the sense of \cite[(6.5)]{Ding2021}),\;the morphism \[X_{\cM_{\Dpik},\cM_{\bullet}}\rightarrow \widehat{(\cZ_{\bL^{\lrr},L})}_{\bm{\delta}_{\bh}}\times_{\widehat\fz^\lrr_{L}} X_{W_{\dr}(\cM_{\Dpik}),\cF},\;\cF:=W_{\dr}(\cM_{\bullet})\] of groupoids in \cite[Theorem\;6.2.6]{Ding2021} is no longer formally smooth.\;Thus the discussions in  \cite[Section 6.4]{Ding2021}  cannot be applied to our case (for example,\;the final result \cite[Corollary 6.4.7]{Ding2021}).\;In our non-critical special case,\;it seems likely that $\complocaldefvarrho$ is also isomorphic to $X^{w_0}_{\rho_L,\cM_{\bullet}}$.\;
\end{rmk}

\subsection{Local-global compatibility and the main theorem}\label{lgcompthemainSECTION}

We are ready to establish the second main theorem.\;Suppose that $\rho_L$ appears in the patched Bernstein eigenvariety $\bersteineigenvarpik$,\;i.e.\;there exist  $x^{\fp}\in (\Spf\;R_{\infty}^{\fp})^{\rig}$,\;and  $$(\pi_{x,\bL^{\lrr}},\chi)\in \sbanpik\times\rigch $$ such that $x:=(x^{\fp}, \rho_L, \pi_{x,\bL^{\lrr}},\chi)\in \bersteineigenvarpik$.\;Let $\fm_y$ be the maximal ideal of $R_{\infty}[1/p]$ corresponding to the point $y:=(x^{\fp}, \rho_L)$ of $(\Spf\;R_{\infty})^{\rig}$ (e.g. if $\rho_L$ is attached to an automorphic representation of $\widetilde{G}$ with non-zero $U^{\fp}$-fixed vectors).\;By the argument in Section \ref{BENVARPARAVAR} (especially,\;the continuous $R_{\infty}$-admissible unitary representation $\Pi_{\infty}$ of $G$ over $E$) or  
\cite{PATCHING2016},\;we get that 
\begin{equation*}
\widehat{\Pi}(\rho_L):=\Pi_{\infty}[\fm_y]
\end{equation*}
is an admissible unitary Banach representation of $\GLN_n(L)$, which one might expect to be the right representation (up to multiplicities) corresponding to $\rho_L$ in the $p$-adic local Langlands program.\;Suppose the following holds:
\begin{itemize}
\item[(a)] The $(\varphi,\Gamma)$-module $\Dpik=D_{\rig}(\rho_L)$ over $\cR_{E,L}$ admits a non-critical special $\omepik$-filtration  with parameter $(\widetilde{\bx}_{\pi,\bh},\widetilde{\bm{\delta}}_\bh)\in \sbanpik\times\rigch $ (see Definition \ref{dfnnoncriticalspecial});
\item[(b)] $\rho_L$ is potentially semi-stable;
\item[(c)] The monodromy operator $N$ on $D_{\pst}(\rho_L)$ satisfies $N^{k-1}\neq 0$.
\end{itemize}
By the argument before \cite[Corollary 3.1.11]{Ding2021},\;$ \sbanpik\times\rigch $ has an action of $\mu_{\omepik}$.\;For $\psi\in \mu_{\omepik}$,\;we have
\[{\psi}\cdot((x_i)_{1\leq i\leq k},(\chi_i)_{1\leq i\leq k})=((x_i\te \unr(\psi_i(\varpi_L)))_{1\leq i\leq k},(\chi_i\psi_i|_{\cO_L^\times})_{1\leq i\leq k}),\;{\psi}=(\psi_i)_{1\leq i\leq k}\in \mu_{\omepik}.\]
%The non-critical assumption predicts that the non-existence for the non-trivial companion points of $x$ on the  patched Bernstein eigenvariety $\bersteineigenvarpik$.\;

By Proposition \ref{basicpropertyeigen} (1),\;$(x^{\fp}, \rho_L, \pi_{x,\bL^{\lrr}},\chi)$ appears in patched Berstein eigenvariety $\bersteineigenvarpik$ if and only if $\homo_{\bL^{\lrr}(L)}\big(\pi_{x,\bL^{\lrr}}\te\chi_{\varpi_L}\te \lamoverx, J_{\bP^{\lrr}(L)}\big(\Pi_{\infty}^{R_{\infty}-\ana}[\fm_y]\big)\big)\neq0$.\;By Proposition \ref{propertyparavar} (4) (where we use the theory of $\omepik$-filtration in families (see \cite[Appendix A.1]{Ding2021}),\;which may be viewed as a parabolic analogue of the global triangulation theory),\;\cite[Corollary 3.1.11]{Ding2021},\;and the uniqueness of $\omepik$-filtration with the parameter $(\widetilde{\bx}_{\pi,\bh},\widetilde{\bm{\delta}}_\bh)\in \sbanpik\times\rigch $ and proof of Proposition \ref{isotwofunctor},\;we deduce
\begin{lem}\label{detlameda}
For $(\pi_{x,\bL^{\lrr}},\chi)\in \sbanpik\times\rigch $,\;the eigenspace
\[\homo_{\bL^{\lrr}(L)}\Big(\pi_{x,\bL^{\lrr}}\te\chi_{\varpi_L}\te \lamoverx, J_{\bP^{\lrr}(L)}\big(\Pi_{\infty}^{R_{\infty}-\ana}[\fm_y]\big)\Big)\neq0\]
i.e.,\;$(x^{\fp}, \rho_L, \pi_{x,\bL^{\lrr}},\chi)$ appears in patched Berstein eigenvariety $\bersteineigenvarpik$ if and only if
$(\pi_{x,\bL^{\lrr}},\chi)$ belongs to the same $\mu_{\omepik}$-orbit of the point $(\breve{\mathbf{{x}}}_\pi,\underline{\mathbf{1}})\in \sbanpik\times\rigch $.\;For $\psi\in \mu_{\omepik}$,\;we put $x_{\psi}:=(x^{\fp}, \rho_L,{\psi}\cdot(\breve{\mathbf{{x}}}_\pi,\underline{\mathbf{1}}))$.\;Then the point $x$ appearing in Section \ref{appearpoint} is just $x=x_{\underline{\mathbf{1}}}$.\;
\end{lem}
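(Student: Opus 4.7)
The statement factors into two parts via Proposition~\ref{basicpropertyeigen}(1), which reduces the non-vanishing of the eigenspace to membership of the point $x':=(x^{\fp},\rho_L,\pi_{x,\bL^{\lrr}},\chi)$ in the patched Bernstein eigenvariety $\bersteineigenvarpik$. Hence the substantive content is the equivalence of this membership with $(\pi_{x,\bL^{\lrr}},\chi)$ lying in the $\mu_{\omepik}$-orbit of $(\breve{\mathbf{{x}}}_\pi,\underline{\mathbf{1}})$. Before splitting into two implications, note also that the point $x_{\underline{\mathbf{1}}}$ corresponds exactly to the normalization chosen in Section~\ref{appearpoint}, so both parameter conventions $(\bx_{\pi},\bmdel)\in\sbanpik\times\rigchl$ and $(\widetilde{\bx}_{\pi,\bh},\widetilde{\bm{\delta}}_\bh)\in\sbanpik\times\rigch$ of Definition~\ref{weaklynoncritical} will need to be translated through the twist $\iota_{\omepik}$ appearing in Proposition~\ref{linkingpatched}.

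For the ``if'' direction, I would first check that $x_{\underline{\mathbf{1}}}\in\bersteineigenvarpik$: since $\rho_L$ appears in $\widehat{\Pi}(\rho_L)=\Pi_{\infty}[\fm_y]$ and its associated $(\varphi,\Gamma)$-module admits the non-critical special $\omepik$-filtration prescribed by hypothesis~(a), the associated irreducible cuspidal Bernstein datum and the character $\widetilde{\bm{\delta}}_\bh$ produce an embedding of the expected $\bL^{\lrr}(L)$-representation into $J_{\bP^{\lrr}(L)}(\Pi_\infty^{R_\infty-\ana})$ at weight ${\bm\lambda}_\bh$, via Proposition~\ref{basicpropertyeigen}(1). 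The $\mu_{\omepik}$-equivariance then follows from the formula $\FZ_{\omepik}\cong E[z,z^{-1}]^{\mu_{m_0}}$ recalled in Section~\ref{BPFORPAPAIINV}: the Bernstein center only sees the $\mu_{\omepik}$-quotient of $\sbanpik\times\rigch$, so if one point of a $\mu_{\omepik}$-orbit is supported by $\cM_{\omepik,{\bm\lambda}_\bh}^\infty$, the entire orbit is, as already noted in the discussion around Corollary~3.1.11 of \cite{Ding2021}.

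For the ``only if'' direction, the plan is to transport $x'$ along the embedding $\Lambda$ of Proposition~\ref{linkingpatched}, obtaining $\iota_{\omepik}^{-1}(\rho_L,\pi_{x,\bL^{\lrr}},\chi)\in\defvarrho$. Proposition~\ref{propertyparavar}(4) then provides an $\omepik$-filtration $\cM'_\bullet$ on $\Dpik[\tfrac{1}{t}]$ whose graded pieces, after inverting $t$, are determined by $(\pi_{x,\bL^{\lrr}},\chi)$. On the other hand, hypothesis~(a) provides the non-critical special $\omepik$-filtration $\cM_\bullet$ of Section~\ref{noncrispecfildfn}. The uniqueness argument that appears in the proof of Proposition~\ref{isotwofunctor} (built on Lemma~\ref{phigammacohononcrtialspecial} and the non-criticality of the Hodge–Tate weights) applies verbatim to force $\cM'_\bullet=\cM_\bullet$. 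Comparing the two parameters and using the description of all parameters of $\cM_\bullet$ recorded in the remark following Definition~\ref{weaklynoncritical}, one obtains an element $\psi\in\mu_{\omepik}$ conjugating the two.

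The main obstacle will be the careful bookkeeping of the various unramified twists. The map $\iota_{\omepik}$ of Proposition~\ref{linkingpatched} involves the twist $\unr(q_L^{(i-1)r+\frac{r-1}{2}})\circ\det$, the parameter conventions for $\omepik$-filtrations differ between $\sbanpik\times\rigch$ and $\sbanpik\times\rigchl$ by the factors $(z^{\bh_{ir}})_{1\le i\le k}$, and finally the $\mu_{\omepik}$-action mixes the cuspidal datum and the character at $\cO_L^\times$. Once these normalizations are threaded through correctly, the element $\psi\in\mu_{\omepik}$ produced in the previous paragraph will be seen to carry $(\breve{\mathbf{{x}}}_\pi,\underline{\mathbf{1}})$ exactly to $(\pi_{x,\bL^{\lrr}},\chi)$, concluding the proof.
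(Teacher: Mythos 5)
Your skeleton coincides with the paper's own deduction: the paper obtains Lemma \ref{detlameda} precisely from Proposition \ref{basicpropertyeigen} (1), Proposition \ref{propertyparavar} (4) (the family $\omepik$-filtration, i.e.\ the parabolic analogue of global triangulation), the uniqueness of the non-critical special $\omepik$-filtration as in the proof of Proposition \ref{isotwofunctor} (which rests on Lemma \ref{phigammacohononcrtialspecial} and the non-criticality assumption), and the $\mu_{\omepik}$-action from \cite[Corollary 3.1.11]{Ding2021}; your ``only if'' direction, including the transport along $\Lambda$ of Proposition \ref{linkingpatched} and the comparison of parameters via the remark after Definition \ref{weaklynoncritical}, reproduces this faithfully.

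There is, however, a genuine gap in your ``if'' direction as written. You assert that hypothesis (a) (the purely local non-critical special $\omepik$-filtration on $D_{\rig}(\rho_L)$), combined with Proposition \ref{basicpropertyeigen} (1), ``produces an embedding'' of the expected $\bL^{\lrr}(L)$-representation into $J_{\bP^{\lrr}(L)}\big(\Pi_{\infty}^{R_{\infty}-\ana}[\fm_y]\big)$, hence that $x_{\underline{\mathbf{1}}}\in\bersteineigenvarpik$. No local hypothesis on $\rho_L$ can yield non-vanishing of this global eigenspace: Proposition \ref{basicpropertyeigen} (1) merely translates membership in $\bersteineigenvarpik$ into such non-vanishing, it does not create vectors, and in general a Galois representation with the right local structure need not appear in the patched module at all. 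The non-vanishing at some point is exactly the standing global hypothesis of Sections \ref{appearpoint} and \ref{lgcompthemainSECTION}, namely that $x=(x^{\fp},\rho_L,\pi_{x,\bL^{\lrr}},\chi)$ lies on $\bersteineigenvarpik$ for some $(\pi_{x,\bL^{\lrr}},\chi)$. The correct ``if'' argument is therefore: apply your ``only if'' direction to that assumed point to conclude that its $(\pi_{x,\bL^{\lrr}},\chi)$ lies in the $\mu_{\omepik}$-orbit of $(\breve{\mathbf{x}}_\pi,\underline{\mathbf{1}})$, and then use the $\mu_{\omepik}$-equivariance of the support of $\cM^{\infty}_{\omepik,{\bm\lambda}_\bh}$ (the action recalled just before the lemma, cf.\ \cite[Corollary 3.1.11]{Ding2021}) to propagate the non-vanishing to every point of that orbit, in particular to $x_{\underline{\mathbf{1}}}$. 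With this replacement your proof agrees with the paper's.
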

For $\beta\in E^\times$,\;and $\underline{\lambda}\in X^+_{\Delta_n^k}$,\;we put $\pi_0^{\lrr}(\beta,\underline{\lambda}):=\unr(\beta)\circ\mathrm{det}_{\bL^{\lrr}(L)}\te \pi_0^{\lrr}\te L^{\lrr}(\underline{\lambda})$ and $\pi_0^{\lrr}(\beta):=\pi_0^{\lrr}(\beta,\underline{0})$ (see (\ref{pilrr}) for $\pi_0^{\lrr}$).\;Then for all $\psi\in \mu_{\omepik}$,
\begin{equation}
\begin{aligned}
	&\pi_{x_{\psi},\bL^{\lrr}}\te (\psi\cdot \underline{\mathbf{1}})_{\varpi_L}\te \lamoverx\\
	\cong &\;\unr(\alpha_\pi  q_L^{\frac{n-1}{2}})\circ\mathrm{det}_{\bL^{\lrr}(L)}\te \delta_{\bP^{\lrr}}^{1/2}\te\Delta_{[k-1,0]}(\pi_0)\te \lamoverx\\
	=&\;\pi_0^{\lrr}(\alpha,{\bm\lambda}_\bh)\te \delta_{\bP^{\lrr}},\;\alpha:=\alpha_\pi  q_L^{\frac{n-1}{2}},
\end{aligned}
\end{equation}
which is independent on the choice of $\psi$.\;Therefore,\;Lemma \ref{detlameda} implies 
\begin{equation}\label{jPinj}
\homo_{\bL^{\lrr}(L)}\big(\pi_0^{\lrr}(\alpha,{\bm\lambda}_\bh)\te \delta_{\bP^{\lrr}}, J_{\bP^{\lrr}(L)}\big(\Pi_{\infty}^{R_{\infty}-\ana}[\fm_y]\big)\big)\neq0.
\end{equation}
We define the natural map
\begin{equation}\label{adjuncitonformula}
\begin{aligned}
	\homo_{G}\big(&\BI_{\op^{\lrr}}^G(\alpha,\pi,{\bm\lambda}_\bh),\Pi_{\infty}^{R_{\infty}-\ana}[\fm_y]\big)\\
	&\longrightarrow\homo_{\bL^{\lrr}(L)}\big(J_{\bP^{\lrr}(L)}\big(\BI_{\op^{\lrr}}^G(\alpha,\pi_0,{\bm\lambda}_\bh)\big), J_{\bP^{\lrr}(L)}\big(\Pi_{\infty}^{R_{\infty}-\ana}[\fm_y]\big)\big)\\
	&\longrightarrow \homo_{\bL^{\lrr}(L)}\big(\pi_0^{\lrr}(\alpha,{\bm\lambda}_\bh)\te \delta_{\bP^{\lrr}}, J_{\bP^{\lrr}(L)}\big(\Pi_{\infty}^{R_{\infty}-\ana}[\fm_y]\big)\big),
\end{aligned}
\end{equation}
where the first map is induced by applying the Jacquet-Emerton functor,\;the second map is induced by the injection 
\[\pi_0^{\lrr}(\alpha,{\bm\lambda}_\bh)\te \delta_{\bP^{\lrr}}\hookrightarrow J_{\bP^{\lrr}(L)}\Big(i_{\op^{\lrr}}^G(\alpha,\pi_0,{\bm\lambda}_\bh\big)\Big)\hookrightarrow J_{\bP^{\lrr}(L)}\Big(\BI_{\op^{\lrr}}^G(\alpha,\pi_0,{\bm\lambda}_\bh\big)\Big)\]
(by the same argument as in the proof of \cite[Lemma 3.4]{2022ext1hyq}),\;we can prove that $J_{\bP^{\lrr}(L)}(i_{\op^{\lrr}}^G(\alpha,\pi_0,{\bm\lambda}_\bh))$ is semi-simple and $J_{\bP^{\lrr}(L)}\big(\st^{\infty}_{(r,k)}(\alpha,\pi_0,{\bm\lambda}_\bh)\big)=\pi_0^{\lrr}(\alpha,{\bm\lambda}_\bh)\te \delta_{\bP^{\lrr}}$ for the unique irreducible quotient $\st^{\infty}_{(r,k)}(\alpha,\pi_0,{\bm\lambda}_\bh)$ of $i_{\op^{\lrr}}^G(\alpha,\pi_0,{\bm\lambda}_\bh)$).

We first establish an adjunction formula,\;following the line of \cite[Proposition 4.7]{2019DINGSimple} and \cite{bergdall2018adjunction}.\;

\begin{pro}\label{proadjuncitonformularight}
The natural map constructed in (\ref{adjuncitonformula})
\begin{equation}\label{adjuncitonformularight}
	\begin{aligned}
		\homo_{G}\Big(&\BI_{\op^{\lrr}}^G(\alpha,\pi_0,{\bm\lambda}_\bh),\; \Pi_{\infty}^{R_{\infty}-\ana}[\fm_y]\Big)\\
		&\lra \homo_{\bL^{\lrr}(L)}\Big(\pi_0^{\lrr}(\alpha,{\bm\lambda}_\bh)\te \delta_{\bP^{\lrr}}, J_{\bP^{\lrr}(L)}\big(\Pi_{\infty}^{R_{\infty}-\ana}[\fm_y]\big)\Big)
	\end{aligned}
\end{equation}
is bijective.\;Moreover.\;this bijection stays true if \;$\BI_{\op^{\lrr}}^G(\alpha,\pi_0,{\bm\lambda}_\bh)$ is replaced by any subrepresentation $W$ such that $\st^{\infty}_{(r,k)}(\alpha,\pi_0,{\bm\lambda}_\bh)\subseteq W \subseteq \st^{\ana}_{(r,k)}(\alpha,\pi_0,{\bm\lambda}_\bh)$.\;In particular,\;we have an injection
\begin{equation}\label{steinberginj}
	\st_{(r,k)}^{\infty}(\alpha,\pi_0, {\bm\lambda}_\bh)\hookrightarrow\Pi_{\infty}^{R_{\infty}-\ana}[\fm_y]\subseteq\widehat{\Pi}(\rho_L).
\end{equation}
\end{pro}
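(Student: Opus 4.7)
The plan is to deduce the bijectivity from Emerton's adjunction formula between the locally $\bQ_p$-analytic parabolic induction and the Jacquet--Emerton functor $J_{\bP^{\lrr}(L)}$ (\cite[Theorem 0.13]{emerton2007jacquet}), following the strategy of Ding \cite[Proposition 4.7]{2019DINGSimple} and Bergdall--Chojecki \cite{bergdall2018adjunction}. With $V := \Pi_{\infty}^{R_{\infty}-\ana}[\fm_y]$ and $U := \pi_0^{\lrr}(\alpha, {\bm\lambda}_\bh) \te \delta_{\bP^{\lrr}}$, Emerton's theorem produces a natural bijection between $\homo_{\bL^{\lrr}(L)}(U, J_{\bP^{\lrr}(L)}(V))$ and $G$-equivariant maps from the balanced subspace $(\mathrm{Ind}^G_{\op^{\lrr}(L)} U)^{\bQ_p-\ana}_{\mathrm{bal}}$ into $V$, relative to a chosen embedding $U \hookrightarrow J_{\bP^{\lrr}(L)}(V)$ supplied by (\ref{jPinj}). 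The bijectivity of (\ref{adjuncitonformularight}) thus reduces to verifying that this embedding is balanced in the sense of \cite[Definition 0.8]{emerton2007jacquet} and that the balanced subspace coincides with $\BI^G_{\op^{\lrr}}(\alpha,\pi_0,{\bm\lambda}_\bh)$.

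Balancedness amounts to a slope inequality on the $\bZ^{\lrr}_{\varpi_L}$-action after twisting by the Hodge--Tate weight data. Concretely, the central character of $\pi_0^{\lrr}(\alpha,{\bm\lambda}_\bh)\te\delta_{\bP^{\lrr}}$ at the $i$-th block of $\bZ^{\lrr}_{\varpi_L}$ involves $\alpha q_L^{i-k}$, while the weights contributed by $L^{\lrr}({\bm\lambda}_\bh)$ encode the Hodge--Tate gaps $\hpi_{\tau,(i-1)r+j}$. The strict $\Delta_n^k$-dominance of $\bh$---a direct consequence of the non-critical special hypothesis on the $\omepik$-filtration $\cF$ of $\Dpik$---makes the required inequality strict for every $\tau\in\Sigma_L$ and every $1\leq i\leq k-1$; this is a parabolic analogue of the non-critical slope condition used in \cite[Section 4]{2019DINGSimple}. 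Under this condition $(\mathrm{Ind}^G_{\op^{\lrr}(L)} U)^{\bQ_p-\ana}_{\mathrm{bal}} = \BI^G_{\op^{\lrr}}(\alpha,\pi_0,{\bm\lambda}_\bh)$, and Emerton's theorem yields the bijection (\ref{adjuncitonformularight}).

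For $W$ with $\st^{\infty}_{(r,k)}(\alpha,\pi_0,{\bm\lambda}_\bh) \subseteq W \subseteq \st^{\ana}_{(r,k)}(\alpha,\pi_0,{\bm\lambda}_\bh)$, I would apply $\homo_G(-, V)$ to the short exact sequence $0\to W \to \BI^G_{\op^{\lrr}}(\alpha,\pi_0,{\bm\lambda}_\bh) \to Q \to 0$ and argue that the restriction map $\homo_G(\BI^G_{\op^{\lrr}}, V) \to \homo_G(W, V)$ is itself a bijection, compatible with the adjunction. This reduces to checking that $\homo_G(Q', V) = 0$ and $\ext^1_G(Q', V) = 0$ (on the $\pi_0^{\lrr}$-isotypic component on the Jacquet side) for every irreducible subquotient $Q'$ of $Q$, which follows from the description of the constituents of $\BI^G_{\op^{\lrr}}$ and their Jacquet modules in Section \ref{BreuilLINVSUMMMANY} (cf.\ the Orlik--Strauch analysis in \cite[Section 2]{breuil2016socle}) combined with the same slope dominance used above. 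The injection (\ref{steinberginj}) is then immediate: (\ref{jPinj}) exhibits a non-zero element in the target of (\ref{adjuncitonformularight}) for $W = \st^{\infty}_{(r,k)}(\alpha,\pi_0,{\bm\lambda}_\bh)$, whose preimage under the now-established bijection is a non-zero $G$-equivariant map $\st^{\infty}_{(r,k)}(\alpha,\pi_0,{\bm\lambda}_\bh) \to V$, necessarily injective by irreducibility of the source.

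The principal obstacle is the verification of balancedness. In the trianguline setting of \cite{2019DINGSimple} the inducing data is a character of the torus and balancedness reduces to a transparent inequality on Hodge--Tate weights alone; here the inducing representation on the Levi is the cuspidal $\pi_0^{\lrr}$, and one must carefully match its central character---encoded by the parameter $(\widetilde{\bx}_{\pi,\bh}, \widetilde{\bm{\delta}}_\bh)$ of the non-critical special $\omepik$-filtration---against the strict dominance of ${\bm\lambda}_\bh$. The eigenvalue analysis of the constituents of $Q$ is the secondary technical point, but is essentially formal given the results of Section \ref{BreuilLINVSUMMMANY}.
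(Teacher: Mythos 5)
Your route through Emerton's adjunction \cite[Theorem 0.13]{emerton2007jacquet} cannot reach the statement as formulated. That theorem identifies balanced maps $\pi_0^{\lrr}(\alpha,{\bm\lambda}_\bh)\te\delta_{\bP^{\lrr}}\rightarrow J_{\bP^{\lrr}(L)}\big(\Pi_{\infty}^{R_{\infty}-\ana}[\fm_y]\big)$ with $G$-maps out of $I^G_{\op^{\lrr}}\big(\pi_0^{\lrr}(\alpha,{\bm\lambda}_\bh)\big)$, and for a locally algebraic inducing datum this representation is the locally algebraic induction $i^G_{\op^{\lrr}}(\alpha,\pi_0,{\bm\lambda}_\bh)$ by \cite[Proposition 2.8.10]{emerton2007jacquet}; there is no ``balanced subspace'' of the analytic induction equal to $\BI^G_{\op^{\lrr}}(\alpha,\pi_0,{\bm\lambda}_\bh)$. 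So even granting balancedness, your argument yields the adjunction only for $i^G_{\op^{\lrr}}$, whereas the whole content of the proposition is the statement for the full locally analytic induction and for every $W$ with $\st^{\infty}_{(r,k)}\subseteq W\subseteq \st^{\ana}_{(r,k)}$. That upgrade is exactly what the paper's proof supplies via the Bergdall--Chojecki adjunction \cite[Theorem 4.8]{bergdall2018adjunction}, whose hypotheses have to be verified: the $\overline{\fp}^{\lrr}_{\Sigma_L}$-cohomology vanishing of Lemma \ref{fpacycilic} (available because $M_\infty$ is projective over $S_{\infty}\llbracket \GLN_n(\cO_L)\rrbracket$, so $\Pi_{\infty}^{R_{\infty}-\ana}$ is a $\GLN_n(\cO_L)$-direct summand of $\cC^{\BQ_p-\ana}(\BZ_p^s\times\GLN_n(\cO_L),E)$), and the non-criticality of the pair with respect to $\Pi_{\infty}^{R_{\infty}-\ana}[\fm_y]$. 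Neither ingredient appears in your proposal.

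Second, your verification of balancedness/non-criticality is not valid: strict dominance of ${\bm\lambda}_\bh$ holds automatically here (the Hodge--Tate weights are distinct) and carries no information about criticality of the eigensystem --- a critical point satisfies the same numerical condition. What is actually needed is that no companion eigenvectors of weight $s\cdot{\bm\lambda}_\bh$ with $s\neq 1$, and no other smooth types of the block, occur in $J_{\bP^{\lrr}(L)}\big(\Pi_{\infty}^{R_{\infty}-\ana}[\fm_y]\big)$; this is precisely Lemma \ref{detlameda}, which rests on the paraboline variety and the non-critical special $\omepik$-filtration hypothesis on $\rho_L$, not on dominance of the weight. Relatedly, both the injectivity of (\ref{adjuncitonformularight}) and the passage to general $W$ require the Orlik--Strauch constituent analysis showing that any $G$-map $\BI^G_{\op^{\lrr}}(\alpha,\pi_0,{\bm\lambda}_\bh)\rightarrow\Pi_{\infty}^{R_{\infty}-\ana}[\fm_y]$ kills $\sum_{\emptyset\neq I}\BI^G_{\op^{\lrr}_I}(\alpha,\pi_0,{\bm\lambda}_\bh)$ (again via Lemma \ref{detlameda}); your substitute, the vanishing of $\ext^1_G\big(Q',\Pi_{\infty}^{R_{\infty}-\ana}[\fm_y]\big)$ for all constituents $Q'$ of $\BI^G_{\op^{\lrr}}/W$, is unsupported --- no such Ext computation into the patched representation is available from Section 3 or \cite{breuil2016socle} --- and the paper instead obtains surjectivity and the extension statements by constructing sections using the $R_\infty$-action together with Proposition \ref{emertongenerpara}. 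Your deduction of (\ref{steinberginj}) from (\ref{jPinj}) and the irreducibility of $\st^{\infty}_{(r,k)}(\alpha,\pi_0,{\bm\lambda}_\bh)$ is fine once the adjunction is actually in place.
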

\begin{proof}The proof is divided into the following steps.\;\\
\textbf{(a)} Let $f\in \homo_{G}\Big(\BI_{\op^{\lrr}}^G(\alpha,\pi_0,{\bm\lambda}_\bh), \Pi_{\infty}^{R_{\infty}-\ana}[\fm_y]\Big)$ be any non-zero map.\;By Lemma \ref{detlameda},\;\cite[Theorem 4.3]{breuil2015II},\;\cite[Corollary 3.4]{breuil2016socle},\;we see that $f$  factors through $\st^{\ana}_{(r,k)}(\alpha,\pi_0,{\bm\lambda}_\bh)$ and induces a non-zero map $\st^{\infty}_{(r,k)}(\alpha,\pi_0,{\bm\lambda}_\bh)\hookrightarrow \Pi_{\infty}^{R_{\infty}-\ana}[\fm_y]$.\;Indeed,\;suppose that $W'$ is an irreducible constituent of $\BI_{\op^{\lrr}_I}^G(\alpha,\pi_0,{\bm\lambda}_\bh)$ for some $\emptyset\neq I\subset \Delta_n(k)$.\;If $W'$ is locally algebraic (i.e.,\;$s=1$),\;then $W'$ is an irreducible constituent of $i_{\op^{\lrr}_{I}}^G(\alpha,\pi_0,\bm{\lambda}_\bh)$.\;But $$\homo_{\bL^{\lrr}(L)}\Big( J_{\bP^{\lrr}(L)}\big(i_{\op^{\lrr}_{{I}}}^G(\alpha,\pi_0,{\bm\lambda}_\bh)\big), J_{\bP^{\lrr}(L)}\big(\Pi_{\infty}^{R_{\infty}-\ana}[\fm_y]\big)\Big)=0$$
by Lemma \ref{detlameda}.\;If $W'$ is not locally algebraic,\;we deduce from  \cite[Theorem]{orlik2014jordan} that $W'$ has the form $\mathcal{F}_{\op_J^{\lrr}(L)}^G(L^{\lrr}(-s\cdot{\bm\lambda}_\bh),W'')$
with $s\in \sW_{n,\Sigma_{L}}$ satisfying $I\subseteq J$,\;$s\cdot{\bm\lambda}_\bh\in X^+_{\Delta_n^k\cup J}$,\;where $W''$ is an irreducible  constituent of $i_{\op_I^{\lrr}(L)\cap\bL^{\lrr}_J(L) }^{\bL^{\lrr}_J(L)}\pi_I$.\;We deduce from \cite[Th\'{e}or\`{e}me 4.3,\;Remarque 4.4 (i),\;Corollaire 4.5]{breuil2015II} that
\begin{equation}
	\begin{aligned}
		0\neq \homo_{G}\Big(&\mathcal{F}_{\op^{\lrr}(L)}^G((\overline{M}^{\lrr}(-s\cdot{\bm\lambda}_\bh))^\vee,\pi_0^{\lrr}),\;\Pi_{\infty}^{R_{\infty}-\ana}[\fm_y]\Big)\\
		&\cong \homo_{\bL^{\lrr}(L)}\Big(\pi_0^{\lrr}\te L^{\lrr}(s\cdot{\bm\lambda}_\bh), J_{\op^{\lrr}(L)}\big(\Pi_{\infty}^{R_{\infty}-\ana}[\fm_y]\big)\Big),
	\end{aligned}
\end{equation}
which leads a contradiction to Lemma \ref{detlameda}.\;This shows that $f$ factors through the locally $\bQ_p$-analytic representation $\st^{\ana}_{(r,k)}(\alpha,\pi_0,{\bm\lambda}_\bh)$. By the left exactness of $J_{\bP^{\lrr}(L)}$,\;we get that $f$ induces an non-zero map $\st^{\infty}_{(r,k)}(\alpha,\pi_0,{\bm\lambda}_\bh)\hookrightarrow \Pi_{\infty}^{R_{\infty}-\ana}[\fm_y]$ (so (\ref{steinberginj}) holds).\;This implies that
(\ref{adjuncitonformularight}) is injective.\\
%Therefore,\;$W'$ is an irreducible  constituent of $\mathcal{F}_{\op^{\lrr}(L)}^G(\overline{M}^{\lrr}(-s\cdot{\bm\lambda}_\bh),\pi_0^{\lrr})$.\;Recall that $\mathcal{F}_{\op^{\lrr}(L)}^G(\overline{M}^{\lrr}(-s\cdot{\bm\lambda}_\bh),\pi_0^{\lrr})$ and $\mathcal{F}_{\op^{\lrr}(L)}^G((\overline{M}^{\lrr}(-s\cdot{\bm\lambda}_\bh))^\vee,\pi_0^{\lrr})$ has the same  irreducible constituents.\;and $s\cdot{\bm\lambda}_\bh\upuparrows {{\bm\lambda}_\bh}$
\textbf{(b)}\;The proof of \cite[Theorem 4.8]{bergdall2018adjunction} and \cite[Proposition 4.7]{2019DINGSimple}
apply in our case,\;although our input is slightly different from that in \cite[Theorem 4.8]{bergdall2018adjunction} or \cite[Proposition 4.7]{2019DINGSimple}.\;We indicate below the changes.\;Recall that $M_{\infty}$ is finite projective over $S_{\infty}[[\GLN_n(\cO_L)]]$,\;we deduce from \cite[Corollary 3.9]{breuil2017interpretation} that $\Pi_{\infty}^{R_{\infty}-\ana}$ is a direct summand of $\cC^{\BQ_p-\ana}\big(\BZ_p^s \times \GLN_n(\cO_L),E\big)$ as $\GLN_n(\cO_L)$-representations where $s=n^2(|S_p|+1)+q$.\;Let $\fm \subset S_{\infty}$ be the preimage of $\fm_y$ via the morphism $S_{\infty} \ra R_{\infty}$.\;\\
\textbf{(c)}\;We put $V:=\Pi_{\infty}^{R_{\infty}-\ana}[\fm]$.\;
It is an admissible Banach representation of $\GLN_n(L)$ equipped with a continuous action of $R_{\infty}$.\;There exists a 
pro-$p$ uniform compact open subgroup $H$ of $\GLN_n(\cO_L)$ such that  $V|_{H}\cong \cC^{\BQ_p-\ana}(H,E)^{\oplus r}$ for certain $r\in \BZ_{\geq 1}$.\;By Lemma \ref{fpacycilic} below,\;we see that $V$ satisfies the first hypothesis in  \cite[Theorem 4.8]{bergdall2018adjunction}.\;On the other hand,\;by Lemma \ref{detlameda},\;$(U,\pi)=\Big(\lamoverx,\pi_0^{\lrr}(\alpha)\te \delta_{\bP^{\lrr}}\Big)$ is non-critical with respect to $\Pi_{\infty}^{R_{\infty}-\ana}[\fm_y]$ (in the terminology of \cite[Definition 4.4]{bergdall2018adjunction}).\;

We first recall that $\cC^{\lp}_c(\bN^{\lrr}(L),\pi_0^{\lrr}(\alpha,{\bm\lambda}_\bh))$ (resp.,\;$\cC^{\infty}_c(\bN^{\lrr}(L),\pi_0^{\lrr}(\alpha))$) is the locally polynomial (resp.,\;smooth) $\pi_0^{\lrr}(\alpha,{\bm\lambda}_\bh)$-valued (resp.,\;$\pi_0^{\lrr}(\alpha)$-valued) functions on $\bN^{\lrr}(L)$ with compact support,\;it has the topology defined in \cite[(2.5)]{emerton2007jacquet} (resp.\;see \cite[(2.2)]{emerton2007jacquet}).\;Both spaces $\cC^{\lp}_c(\bN^{\lrr}(L),\pi_0^{\lrr}(\alpha,{\bm\lambda}_\bh))$ and $\cC^{\infty}_c(\bN^{\lrr}(L),\pi_0^{\lrr}(\alpha))$ are convex $E$-vector space of compact type.\;We write \[A^{\lp}:=\cC^{\lp}_c(\bN^{\lrr}(L),\pi_0^{\lrr}(\alpha,{\bm\lambda}_\bh)),\; A^{\infty}:=\cC^{\infty}_c(\bN^{\lrr}(L),\pi_0^{\lrr}(\alpha))\] for simplicity (only suitable for the following commutative diagram).\;We use a similar commutative diagram to \cite[proof of Theorem 4.8]{bergdall2018adjunction} with $V$  being our $V[\fm_y]$,\;$(U,\pi)$ being our $\Big(\lamoverx,\pi_0^{\lrr}(\alpha)\te \delta_{\bP^{\lrr}}\Big)$,\;
\[\xymatrix{
	\homo_{G}\big(\BI_{\op^{\lrr}}^G(\alpha,\pi_0,{\bm\lambda}_\bh),V[\fm_y]\big) \ar[d]^{\simeq}_{(a)} \ar[r]^{(1)\hspace{40pt}} & \homo_{\bL^{\lrr}(L)}\big(\pi_0^{\lrr}(\alpha,{\bm\lambda}_\bh)\te \delta_{\bP^{\lrr}}, J_{\bP^{\lrr}(L)}\big(V[\fm_y]\big)\big) \ar[d]^{\simeq}_{(b)}  \\
	\homo^\#\big(A^{\lp},V[\fm_y]\big)  \ar[r]^{(2)\hspace{30pt}} \ar[d]^{\simeq}_{(c)} & \homo^\#\big(U(\mathfrak{g}_{\Sigma_L})\otimes_ {U(\mathfrak{p}^{\lrr}_{\Sigma_L})}A^{\lp},V[\fm_y]\big)\ar[d]^{\simeq}_{(d)}\\
	\homo^\#\big(M^{\lrr}({\bm\lambda}_\bh)^\vee\otimes_EA^{\infty},V[\fm_y]\big)  \ar[r]^{(3)} \ar[d]^{\eta_1} & \homo^\#\big(M^{\lrr}({\bm\lambda}_\bh)\otimes_EA^{\infty},V[\fm_y]\big)\\
	\homo^\#\big(L^{\lrr}({\bm\lambda}_\bh)\otimes_EA^{\infty},V[\fm_y]\big),  \ar[ur]^{\eta_2} &
}\]
where we write  $\homo^\#:=\homo_{(\mathfrak{g}_{\Sigma_L},\bP^{\lrr}(L))}$ for simplicity.\;The map $(1)$ is equal to (\ref{adjuncitonformularight}).\;We refer to 
\cite[Theorem 4.8]{bergdall2018adjunction} or \cite[Proposition 4.7]{2019DINGSimple} for the explanation for the terms and maps $(1)$,\;$(2)$,\;$(a)$,\;$(b)$,\;$(c)$ and $(d)$.\;
The maps $\eta_1$,\;$\eta_2$ and $(3)$ are induced by natural morphisms $M^{\lrr}({\bm\lambda}_\bh)\twoheadrightarrow \lamoverx\hookrightarrow M^{\lrr}({\bm\lambda}_\bh)^\vee$. By Lemma \ref{detlameda} and the same arguments as in \cite[Proposition 4.9]{bergdall2018adjunction},\;we can show that $\eta_2$ is bijective and $\eta_1$ is injective.\;

To prove that $\eta_1$ is surjective,\;we need a generalization of Step (c) of the proof of \cite[Proposition 4.7]{2019DINGSimple}.\;It suffices to prove that for any pair $(M,M')$ such that $M'/M=L(s\cdot{\bm\lambda}_\bh)$ in $\cO_{\alge}^{\overline{\fp}^{\lrr},\Sigma_L}$ with $1\neq s\cdot{\bm\lambda}_\bh\in X^+_{\Delta_n^k}$,\;the restriction map
\begin{equation*}
	\begin{aligned}
		\homo_{(\mathfrak{g}_{\Sigma_L},\bP^{\lrr}(L))}&\big(M'\otimes_E\cC^{\infty}_c(\bN^{\lrr}(L),\pi_0^{\lrr}(\alpha)),V[\fm_y]\big)\\
		&\longrightarrow \homo_{(\mathfrak{g}_{\Sigma_L},\bP^{\lrr}(L))}\big(M\otimes_E\cC^{\infty}_c(\bN^{\lrr}(L),\pi_0^{\lrr}(\alpha)),V[\fm_y]\big)
	\end{aligned}
\end{equation*}
is surjective.\;Now given a $(\mathfrak{g}_{\Sigma_L},\bP^{\lrr}(L))$-equivariant morphism $f$ in the hright side,\;we can obtain
\begin{equation}\label{proofadjformla}
	M\otimes_E\cC^{\infty}_c(\bN^{\lrr}(L),\pi_0^{\lrr}(\alpha))\rightarrow V[\fm_y]\hookrightarrow V.
\end{equation}
Observe that $V$ is equipped with a natural action of $R_{\infty}[\frac{1}{p}]$,\;we can endow with an $R_{\infty}[\frac{1}{p}]$ action on the left side of (\ref{proofadjformla}) via $R_{\infty}[\frac{1}{p}]\twoheadrightarrow R_{\infty}[\frac{1}{p}]/\fm_y\cong E$ to make that the $(\mathfrak{g}_{\Sigma_L},\bP^{\lrr}(L))$-equivariant morphism $f$ is also $R_{\infty}[\frac{1}{p}]$-equivariant.\;Let $V'$ denote the pushfoward of $M'\otimes_E\cC^{\infty}_c(\bN^{\lrr}(L),\pi_0^{\lrr}(\alpha))$ via $f$.\;Then we get an exact sequence of  $(\mathfrak{g}_{\Sigma_L},\bP^{\lrr}(L))$-modules (endowed with natural  continuous $R_{\infty}[\frac{1}{p}]$ actions)
\begin{equation}\label{adjuncitonexact1}
	0\rightarrow V \rightarrow  V' \xrightarrow{\beta_0} L(s\cdot{\bm\lambda}_\bh)\otimes_E\cC^{\infty}_c(\bN^{\lrr}(L),\pi_0^{\lrr}(\alpha))
	\rightarrow0.\;
\end{equation}
It suffices to construct  a section of $\beta_0$ (as $(\mathfrak{g}_{\Sigma_L},\bP^{\lrr}(L))\times R_{\infty}[\frac{1}{p}] $-equivariant morphism.\;Indeed, if we can obtain a section $s_0$ of $\beta_0 $,\;then $s_0\circ f'$ gives a desired lifting of $f$ to $M'\otimes_E\cC^{\infty}_c(\bN^{\lrr}(L),\pi_0^{\lrr}(\alpha))$, and gives an $(\mathfrak{g}_{\Sigma_L},\bP^{\lrr}(L))$-morphism $M'\otimes_E\cC^{\infty}_c(\bN^{\lrr}(L),\pi_0^{\lrr}(\alpha))\rightarrow V[\fm_y]$.\;
Pulling back the exact sequence (\ref{adjuncitonexact1}) along the quotient map $M^{\lrr}(s\cdot{\bm\lambda}_\bh)\twoheadrightarrow L(s\cdot{\bm\lambda}_\bh)$,\;we get an exact sequence 
\begin{equation}\label{adjuncitonexact2}
	0\rightarrow V \rightarrow  V'' \xrightarrow{\beta_1}  M^{\lrr}(s\cdot{\bm\lambda}_\bh)\otimes_E\cC^{\infty}_c(\bN^{\lrr}(L),\pi_0^{\lrr}(\alpha))
	\rightarrow0.\;
\end{equation}
By Lemma \ref{detlameda}  and the argument after \cite[(80)]{2019DINGSimple} or \cite[(15)]{bergdall2018adjunction},\;it suffices to construct a $(\mathfrak{g}_{\Sigma_L},\bP^{\lrr}(L))\times R_{\infty}[\frac{1}{p}]$-equivariant map 
\begin{equation}\label{desiredhomo}
	M^{\lrr}(s\cdot{\bm\lambda}_\bh) \otimes_E\cC^{\infty}_c(\bN^{\lrr}(L),\pi_0^{\lrr}(\alpha))\rightarrow  V''.
\end{equation}
It remains to construct a $\bP^{\lrr}(L)\times R_{\infty}[\frac{1}{p}]$-equivariant map
\begin{equation}\label{desiredhomo1}
	L^{\lrr}(s\cdot{\bm\lambda}_\bh) \otimes_E\cC^{\infty}_c(\bN^{\lrr}(L),\pi_0^{\lrr}(\alpha))\rightarrow  V''.
\end{equation}
By using Proposition \ref{emertongenerpara} below and taking the $\fm_y$-generalized eigenspaces for $R_{\infty}[\frac{1}{p}]$,\;we can get an $R_{\infty}[\frac{1}{p}]$-equivariant exact sequence of finite-dimensional $E$-vector spaces
\begin{equation}
	\begin{aligned}
		0&\rightarrow B_{\omepik,\lambda_\bh}(V)[\fz^{\lrr}=d\omega_{s\cdot{\bm\lambda}_\bh}][\fm^\infty,\fm_y^\infty][
		\fm_{\omega_{s\cdot{\bm\lambda}_\bh}}]\\
		&\rightarrow B_{\omepik,\lambda_\bh}(V'')[\fz^{\lrr}=d\omega_{s\cdot{\bm\lambda}_\bh}][\fm^\infty,\fm_y^\infty][
		\fm_{\omega_{s\cdot{\bm\lambda}_\bh}}]\\
		&\rightarrow B_{\omepik,\lambda_\bh}(L(s\cdot{\bm\lambda}_\bh)\otimes_E\cC^{\infty}_c(\bN^{\lrr}(L),\pi_0^{\lrr}(\alpha)))[\fz^{\lrr}=d\omega_{s\cdot{\bm\lambda}_\bh}][\fm^\infty,\fm_y^\infty][
		\fm_{\omega_{s\cdot{\bm\lambda}_\bh}}]\\&\rightarrow 0.
	\end{aligned}
\end{equation}
where $\omega_{s\cdot{\bm\lambda}_\bh}$ denotes the central character of  $\pi_0^{\lrr}(\alpha)\te L^{\lrr}(s\cdot{\bm\lambda}_\bh)$,\;and $\fm$ denotes the maximal ideal of $\mathfrak{Z}_{\omepik}$ associated to $\pi_0^{\lrr}(\alpha)\te \delta_{\bP^{\lrr}}$.\;By Lemma \ref{detlameda},\;we deduce that
%Using the non-criticality of $(\lamoverx,\pi_0^{\lrr}(\alpha)\te \delta_{\bP^{\lrr}})$ with respect to $\Pi_{\infty}^{R_{\infty}-\ana}[\fm_x]$,\;
\[B_{\omepik,\lambda_\bh}(V)[\fz^{\lrr}=d\omega_{s\cdot{\bm\lambda}_\bh}][\fm^\infty,\fm_y^\infty][
\fm_{\omega_{s\cdot{\bm\lambda}_\bh}}]=0.\]
Hence we obtain an isomorphism
\begin{equation}\label{526}
	\begin{aligned}
		&B_{\omepik,\lambda_\bh}(V'')[\fz^{\lrr}=d\omega_{s\cdot{\bm\lambda}_\bh}][\fm^\infty,\fm_y^\infty][
		\fm_{\omega_{s\cdot{\bm\lambda}_\bh}}]\\
		\cong\;&B_{\omepik,\lambda_\bh}(L(s\cdot{\bm\lambda}_\bh)\otimes_E\cC^{\infty}_c(\bN^{\lrr}(L),\pi_0^{\lrr}(\alpha)))[\fz^{\lrr}=d\omega_{s\cdot{\bm\lambda}_\bh}][\fm^\infty,\fm_y^\infty][
		\fm_{\omega_{s\cdot{\bm\lambda}_\bh}}]\\ \cong\;&\pi_0^{\lrr}(\alpha)\te \delta_{\bP^{\lrr}}\te L^{\lrr}(s\cdot{\bm\lambda}_\bh),
	\end{aligned}
\end{equation}
where the second isomorphism follows from \cite[Lemma 3.5.2]{emerton2006jacquet}.\;Then (\ref{desiredhomo1}) is induced by the inverse of (\ref{526}) and \cite[Theorem\;3.5.6]{emerton2006jacquet}.\;
\end{proof}

The proof of \cite[Example 4.2]{bergdall2018adjunction} gives the following Lemma.\;
\begin{lem}\label{fpacycilic}Suppose that $V$ is an admissible continuous Banach representation of $G$,\;and there exists a compact open subgroup $H$ such that $V|_H\xrightarrow{\sim}C^0(H,L)^{\oplus l}$.\;Then 
\begin{equation}
	\hH^1\big(\overline{\fp}^{\lrr}_{\Sigma_L},V^{\BQ_p-\ana}\otimes U^\vee\big)=0,
\end{equation}
for all $i\in \BZ_{\geq 1}$, and any finite-dimensional $E$-linear locally analytic representation $U$ of $\bL^{\lrr}(L)$.\;
\end{lem}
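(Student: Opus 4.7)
The plan is to follow \cite[Example 4.2]{bergdall2018adjunction} closely, adapting the argument there to our parabolic setting. First, since Lie algebra cohomology is additive in its coefficients, I would immediately reduce to the case $l=1$, so that $V|_H\cong \cC^0(H,E)$ as continuous $H$-representations. Taking locally $\bQ_p$-analytic vectors yields an identification $V^{\bQ_p-\ana}|_H\cong \cC^{\bQ_p-\ana}(H,E)$, with $H$ acting by right regular translation.

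Next, after possibly shrinking $H$ (which does not affect the Lie algebra cohomology in question), I would use the Iwahori-type decomposition of $H$ relative to the parabolic $\op^{\lrr}$: one has $H=H_{\onn^{\lrr}}\cdot H_{\bL^{\lrr}}\cdot H_{\bN^{\lrr}}$, where $H_*:=H\cap *(L)$. This induces an $H_{\op^{\lrr}}$-equivariant isomorphism of locally $\bQ_p$-analytic representations
\[
\cC^{\bQ_p-\ana}(H,E)\;\cong\;\cC^{\bQ_p-\ana}(H_{\op^{\lrr}},E)\,\widehat{\otimes}_E\,\cC^{\bQ_p-\ana}(H_{\bN^{\lrr}},E),
\]
on which $\overline{\fp}^{\lrr}_{\Sigma_L}$ acts only through the first tensor factor. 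Since $\cC^{\bQ_p-\ana}(H_{\bN^{\lrr}},E)$ sits outside the Lie algebra action and $U^\vee$ is finite dimensional, a K\"unneth-type argument (valid for completed tensor products of compact-type spaces) reduces the vanishing in question to showing
\[
\hH^i\bigl(\overline{\fp}^{\lrr}_{\Sigma_L},\;\cC^{\bQ_p-\ana}(H_{\op^{\lrr}},E)\otimes_E U^\vee\bigr)=0\qquad\text{for all } i\geq 1.
\]

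The crux is then the acyclicity of the right regular representation on $\cC^{\bQ_p-\ana}(H_{\op^{\lrr}},E)$ under $\overline{\fp}^{\lrr}_{\Sigma_L}$. This is the genuine technical input: as a locally analytic $H_{\op^{\lrr}}$-representation, $\cC^{\bQ_p-\ana}(H_{\op^{\lrr}},E)$ is (topologically) coinduced from the trivial representation of the trivial subgroup, so the Chevalley-Eilenberg complex computing $\hH^{\bullet}(\overline{\fp}^{\lrr}_{\Sigma_L},-)$ with coefficients in it identifies with the de Rham complex of locally analytic sections on the compact $p$-adic analytic group $H_{\op^{\lrr}}$ and is exact in positive degrees. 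Twisting by the finite-dimensional $U^\vee$ preserves this exactness, since tensoring with a finite-dimensional $E$-vector space commutes with the Chevalley-Eilenberg differential.

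The main obstacle is the functional-analytic bookkeeping: the Iwahori-decomposition identification must be performed carefully within the category of locally analytic representations, and the acyclicity statement for $\cC^{\bQ_p-\ana}(H_{\op^{\lrr}},E)$ is the substantive analytic input beyond which everything is formal. Both steps are handled in \cite[Example 4.2]{bergdall2018adjunction}, and their arguments transfer without essential modification to our parabolic situation, completing the proof.
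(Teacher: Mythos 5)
Your proposal takes essentially the same route as the paper, whose entire proof of this lemma is the citation to \cite[Example 4.2]{bergdall2018adjunction}: reduce to $\cC^{\BQ_p-\ana}(H,E)$, split off the $\bN^{\lrr}$-directions via the Iwahori decomposition, and use the locally analytic acyclicity of the right regular representation of $H_{\op^{\lrr}}$, so your sketch is a faithful expansion of exactly that argument in the parabolic setting. Two small bookkeeping corrections: to make right translation by $\op^{\lrr}(L)\cap H$ act through a single tensor factor you should order the decomposition as $H=H_{\bN^{\lrr}}\cdot H_{\bL^{\lrr}}\cdot H_{\onn^{\lrr}}$ (with the $\op^{\lrr}$-part on the right, not as you wrote it), and the twist by $U^\vee$ is handled not because the Chevalley--Eilenberg differential is literally unchanged (the $\overline{\fp}^{\lrr}_{\Sigma_L}$-action on $U^\vee$ is nontrivial) but by the standard untwisting isomorphism $\cC^{\BQ_p-\ana}(H_{\op^{\lrr}},E)\otimes_E U^\vee\cong \cC^{\BQ_p-\ana}(H_{\op^{\lrr}},E)^{\oplus \dim_E U}$ of $\overline{\fp}^{\lrr}_{\Sigma_L}$-modules.
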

%\begin{proof}Following the route of \cite[Example 4.2]{bergdall2018adjunction},\;it is sufficient to show that $\hH^1\big(\overline{\tau}^{\lrr}_{\Sigma_L},C^{\BQ_p-\ana}(G,L)\big)=0$.\;This result follows from the same strategy as in the proof of \cite[Example 4.2]{bergdall2018adjunction} with $P$  (resp.\;$\fp_L$) being our  $\on^{\lrr}(L)\rtimes \bZ^{\lrr}(L)$ (resp.,\;$\overline{\tau}^{\lrr}_{\Sigma_L}$).\;\end{proof}
%Passing to the derivative of a character induces a natural map $\rigerchl$,\;and hence (by restriction) a map \[\bersteineigenvarpik\rightarrow\widehat{\fz^{\lrr}}.\]

The following proposition generalizes a result of \cite[Proposition 4.1]{breuil2007first}.\;Let $\fo$ be an integral weight of $\fz^{\lrr}$.\;Let $\mathcal{E}_{\omepik,\fp,{\bm\lambda}_\bh}^\infty(\overline{\rho})_{\fo}$ be the fibre of $\bersteineigenvarpik$  at $\fo$ via the morphism 
\[\bersteineigenvarpik\hookrightarrow \FX_{\infty}\times \sbanpik\times \rigch\rightarrow \rigch\rightarrow (\fz^{\lrr})^\vee,\;\]where $\rigch\rightarrow (\fz^{\lrr})^\vee$ is the differentiation map $\chi\mapsto d\chi$.\;We use $\iota_0(\fz^{\lrr})$  to emphasize the action on $B_{\omepik,{\bm\lambda}_\bh}(\Pi_\infty^{R_\infty-\ana})$ derived from $\iota_0(\bZ^{\lrr}(\cO_L))$.\;For an $E$-algebra $A$,\;$\fm\subset A$ an ideal,\;and an $A$-module,\;we denote by $M[\fm^{\infty}]$ the $A$-submodule of $M$ consisting of elements annihilated by $\fm^n$ for some $n\geq 0$.\;Then \cite[Lemma 3.1.4]{Ding2021} shows that 
\begin{equation}
\begin{aligned}
	%\Gamma(\mathcal{E}_{\omepik,\fp,{\bm\lambda}_\bh}^\infty(\overline{\rho})_{\fo},(\cM_{\omepik,{\bm\lambda}_\bh}^{\infty})_{\fo})^\vee&=
	&B_{\omepik,{\bm\lambda}_\bh}(\Pi_\infty^{R_\infty-\ana})[\iota_0(\fz^{\lrr})=\fo]\\
	=&\;\bigoplus_{\delta,\chi}B_{\omepik,{\bm\lambda}_\bh}(\Pi_\infty^{R_\infty-\ana})[\iota_0(\fz^{\lrr})=\fo][\fm_\chi][\fm_\delta^\infty]\\
	=&\;\bigoplus_{\fm\in \MSpec\mathfrak{Z}_{\omepik},\chi}B_{\omepik,{\bm\lambda}_\bh}(\Pi_\infty^{R_\infty-\ana})[\iota_0(\fz^{\lrr})=\fo][\fm_\chi][\fm^\infty],\;
\end{aligned}
\end{equation}
where $\delta$ (resp.\;$\chi$) runs through the smooth characters of $\Delta_0$ (see \cite[(3.4)]{Ding2021}) (resp.\;through the locally algebraic character of $\bZ^{\lrr}(\cO_L)$ of weight $\fo$),\;and $\fm_\delta\subset E[\Delta_0]$ (resp.\;$\fm_\chi\subset  E[\bZ^{\lrr}(\cO_L)]$) is the maximal ideal associated to $\delta$ (resp.\;$\chi$).\;Replacing character $\delta$ of $\Delta_0$ by maximal ideals $\fm$ of $\mathfrak{Z}_{\omepik}$,\;we can get the second identity by the proof of \cite[Lemma 3.1.4]{Ding2021}.\;By \cite[Lemma 3.1.4]{Ding2021},\;we see that each term in the direct sums is finite-dimensional over $E$.\;
%This is equivalent to say that $\mathcal{E}_{\omepik,\fp,{\bm\lambda}_\bh}^\infty(\overline{\rho})_{\fo}$ is discrete.\;Therefore,\;we deduce
\begin{pro}\label{emertongenerpara}Suppose that $V$ is an admissible continuous Banach representation of $G$ and there exists a compact open subgroup $H$ such that $V|_H\xrightarrow{\sim}C^0(H,L)^{\oplus l}$.\;Assume that
\begin{equation}\label{exactanaylytic}
	0\rightarrow V^{\BQ_p-\ana}\rightarrow \Pi\rightarrow \Pi_1\rightarrow 0,
\end{equation}
is an exact sequence of admissible locally $\BQ_p$-analytic representations of $G$.\;Let $\chi:\bZ^{\lrr}(\cO_L)\rightarrow E^\times$ be a continuous character of weight $\fd$,\;and let $\fm$ be a maximal ideal of $\mathfrak{Z}_{\omepik}$.\;Now we have short exact sequences
\begin{equation}\label{techexactseq1}
	\begin{aligned}
		0\rightarrow (V^{\BQ_p-\ana})^{\bN^{\lrr}(L)_0}[\fz^{\lrr}=\fo,\fd^{\lrr}=0]\rightarrow& \Pi^{\bN^{\lrr}(L)_0}[\fz^{\lrr}=\fo,\fd^{\lrr}=0] \\
		&\rightarrow \Pi_1^{\bN^{\lrr}(L)_0}[\fz^{\lrr}=\fo,\fd^{\lrr}=0]\rightarrow 0,
	\end{aligned}
\end{equation}
\begin{equation}\label{techexactseq2}
	\begin{aligned}
		0\rightarrow B_{\omepik,{\bm\lambda}_\bh}(V^{\BQ_p-\ana})[\fz^{\lrr}=\fo][\fm^\infty][
		\fm_{\chi}]\rightarrow &B_{\omepik,{\bm\lambda}_\bh}(\Pi)[\fz^{\lrr}=\fo][\fm^\infty][
		\fm_{\chi}]\\
		&\rightarrow B_{\omepik,{\bm\lambda}_\bh}(\Pi_1)[\fz^{\lrr}=\fo][\fm^\infty][
		\fm_{\chi}]\rightarrow 0.
	\end{aligned}
\end{equation}
Moreover,\;all the vector spaces in the last exact sequence are finite-dimensional.\;
\end{pro}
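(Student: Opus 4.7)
The plan is to treat the two exact sequences in turn.\;First I would establish (\ref{techexactseq1}) by applying the left-exact functor $(-)^{\bN^{\lrr}(L)_0}$ to (\ref{exactanaylytic}) to obtain the four-term exact sequence
\begin{equation*}
0\to (V^{\BQ_p-\ana})^{\bN^{\lrr}(L)_0}\to \Pi^{\bN^{\lrr}(L)_0}\to \Pi_1^{\bN^{\lrr}(L)_0}\to H^1(\bN^{\lrr}(L)_0,V^{\BQ_p-\ana})\to\cdots,
\end{equation*}
and then restrict everything to the generalized eigenspace $[\fz^{\lrr}=\fo,\fd^{\lrr}=0]$.\;Right-exactness of (\ref{techexactseq1}) thus reduces to the single vanishing statement $H^1(\bN^{\lrr}(L)_0,V^{\BQ_p-\ana})[\fz^{\lrr}=\fo,\fd^{\lrr}=0]=0$.\;

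To obtain this vanishing, I would invoke Lazard's theorem (applied to the uniform pro-$p$ subgroup $\bN^{\lrr}(L)_0$) to identify the continuous group cohomology with the Lie algebra cohomology $H^1(\fn^{\lrr}_{\Sigma_L},V^{\BQ_p-\ana})$ equipped with its residual $\fl^{\lrr}_{\Sigma_L}$-action, then apply the Hochschild-Serre spectral sequence attached to the ideal inclusion $\fn^{\lrr}_{\Sigma_L}\triangleleft\fp^{\lrr}_{\Sigma_L}$.\;After twisting by $U^\vee$, where $U$ is the one-dimensional locally analytic $\bL^{\lrr}(L)$-representation of central weight $\fo$ on $\bZ^{\lrr}(L)$ and trivial derived-group action, the associated five-term exact sequence bounds the $[\fz^{\lrr}=\fo,\fd^{\lrr}=0]$-piece of $H^1(\fn^{\lrr}_{\Sigma_L},V^{\BQ_p-\ana})$ in terms of $H^1(\fp^{\lrr}_{\Sigma_L},V^{\BQ_p-\ana}\otimes U^\vee)$ and $H^2(\fl^{\lrr}_{\Sigma_L},(V^{\BQ_p-\ana})^{\fn^{\lrr}}\otimes U^\vee)$.\;The upper-parabolic analogue of Lemma \ref{fpacycilic}, whose proof following \cite[Example 4.2]{bergdall2018adjunction} is insensitive to the choice of upper versus lower parabolic once one exploits $V|_H\cong C^0(H,L)^{\oplus l}$ symmetrically,\;then kills the first term,\;and a parallel argument at the Levi level handles the second.\;

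With (\ref{techexactseq1}) in hand,\;the exact sequence (\ref{techexactseq2}) follows by further applying the projection onto the generalized Bernstein eigenspace $[\fm^\infty][\fm_\chi]$ together with the functor $\homo_{\bL^{\lrr}(\cO_L)}(\sigma,-\widehat{\otimes}_E\cC^{\BQ_p-\ana}(\bZ^{\lrr}(\cO_L),E))$ unfolding the definition of $B_{\omepik,{\bm\lambda}_\bh}$ recalled in Section \ref{BENVARPARAVAR};\;since each of these is an exact operation on the category of admissible locally analytic representations,\;exactness of (\ref{techexactseq1}) is preserved throughout.\;Finite-dimensionality of the three resulting terms follows from \cite[Lemma 3.1.4]{Ding2021},\;whose proof applies verbatim to any admissible locally analytic $G$-representation satisfying our hypotheses on $V$, $\Pi$ and $\Pi_1$.\;The main obstacle is the spectral-sequence step:\;the condition $[\fz^{\lrr}=\fo,\fd^{\lrr}=0]$ picks out a generalized (rather than honest) eigenspace,\;and one must verify carefully that the Hochschild-Serre decomposition respects these generalized pieces so that the acyclicity from the upper-parabolic analogue of Lemma \ref{fpacycilic} descends to exactly the subquotient we need.\;
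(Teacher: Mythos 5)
Your derivation of the second exact sequence (\ref{techexactseq2}) has a genuine gap, and it is the heart of the proposition. You assert that, once (\ref{techexactseq1}) is known, it suffices to apply the projection onto $[\fm^\infty][\fm_{\chi}]$ and the functor $\homo_{\bL^{\lrr}(\cO_L)}(\sigma,-\widehat{\otimes}_E\cC^{\BQ_p-\ana}(\bZ^{\lrr}(\cO_L),E))$, ``each of which is exact''. But $B_{\omepik,{\bm\lambda}_\bh}$ is built from the Emerton--Jacquet module $J_{\bP^{\lrr}(L)}(-)_{{\bm\lambda}_\bh}$, i.e.\ it involves the finite-slope part of the $\bN^{\lrr}(L)_0$-invariants with respect to the Hecke action of the central monoid, followed by generalized eigenspaces for the Bernstein center on infinite-dimensional spaces. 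The functor $J_{\bP^{\lrr}(L)}$ is only left exact, and neither the finite-slope part nor $[\fm^\infty]$ is exact on admissible locally analytic representations in any a priori way; establishing exactness here is precisely the nontrivial content. The paper's proof handles this by unwinding the $\Delta_0$-, $\iota_0$- and $\iota_1$-actions as in \cite[Lemma 3.1.4]{Ding2021}, shrinking the compact open subgroup of $\bZ^{\lrr}(\cO_L)$ so that $B_{\omepik,{\bm\lambda}_\bh}(W)[\fz^{\lrr}=\fo]$ becomes an explicit direct summand, and then running a Breuil--Herzig type finite-slope argument (cf.\ \cite[Proposition 4.1]{breuil2020towards}) showing that the relevant generalized eigenspaces are finite-dimensional direct summands cut out by compact operators, so that the short exact sequence survives; the finite-dimensionality claim is part of that same argument rather than a separate quotation of \cite[Lemma 3.1.4]{Ding2021}. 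Without this step your proof of (\ref{techexactseq2}) does not go through.

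Your route to (\ref{techexactseq1}) is also heavier than necessary and incomplete as written. The paper never takes continuous $\bN^{\lrr}(L)_0$-cohomology: since an $\bN^{\lrr}(L)_0$-invariant vector of a locally analytic representation is killed by $\fn^{\lrr}_{\Sigma_L}$, the functor in (\ref{techexactseq1}) is $W\mapsto\big(H^0(\text{parabolic Lie algebra},W\otimes_E U^\vee)\otimes_E U\big)^{\bN^{\lrr}(L)_0}$ for the one-dimensional $U$ of weight $(\fo,0)$, so Lemma \ref{fpacycilic} kills all higher Lie-algebra cohomology in one stroke and the remaining $(-)^{\bN^{\lrr}(L)_0}$ is exact on the resulting smooth $\bN^{\lrr}(L)_0$-representations by averaging. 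Your reduction of surjectivity to the single vanishing $H^1(\bN^{\lrr}(L)_0,V^{\BQ_p-\ana})[\fz^{\lrr}=\fo,\fd^{\lrr}=0]=0$ is not sufficient: even when the connecting class vanishes, a lift to $\Pi^{\bN^{\lrr}(L)_0}$ of an eigenvector in $\Pi_1^{\bN^{\lrr}(L)_0}[\fz^{\lrr}=\fo,\fd^{\lrr}=0]$ need not lie in the eigenspace, and correcting it requires a vanishing of $H^1$ of the Levi Lie algebra on $(V^{\BQ_p-\ana})^{\bN^{\lrr}(L)_0}\otimes_E U^\vee$, which is absent from your five-term bookkeeping (that only controls an $H^2(\fl^{\lrr}_{\Sigma_L},\cdot)$ term). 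Note also that $[\fz^{\lrr}=\fo,\fd^{\lrr}=0]$ is an honest, not generalized, eigenspace, so the difficulty you flag at the end is not the real one, and the Lazard comparison between continuous and Lie-algebra cohomology of $\bN^{\lrr}(L)_0$ with locally analytic coefficients would itself need justification; the paper's combined-functor argument avoids all of this.
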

\begin{proof}The above proposition implies that taking $\overline{\fp}_{\Sigma_L}^{\lrr}$-invariant is exact on the short exact sequence (\ref{exactanaylytic}).\;Since taking $\bN^{\lrr}(L)_0$-invariant is exact on the category of smooth representations of $\bN^{\lrr}(L)_0$,\;we deduce the first exact sequence (\ref{techexactseq1}).\;To see the exactness of the second sequence of (\ref{techexactseq2}),\;we need to unwind the action of $\Delta_0$,\;$\iota_0(\bZ^{\lrr}(\cO_L))$ and $\iota_1(\bZ^{\lrr}(\cO_L))$ on $B_{\omepik,{\bm\lambda}_\bh}(W)$ for $W\in\{V^{\BQ_p-\ana},\Pi,\Pi_1\}$.\;As in the proof of \cite[Lemma 3.1.4]{Ding2021},\;we can choose a compact open subgroup of  $\bZ^{\lrr}(\cO_L)$ such that $B_{\omepik,{\bm\lambda}_\bh}(W)[\fz^{\lrr}=\fo]$ is a $\Delta_0$-equivalent direct summand of 
\begin{equation*}
	\begin{aligned}
		\big(J_{\bP^{\lrr}(L)}&(W)_{{\bm\lambda}_\bh}[\fz^{\lrr}=\fo\circ_{\bL^{\lrr}(L)}]\widehat{\otimes}_E\cC^{\BQ_p-\ana}(\bZ^{\lrr}(\cO_L),E)[\fz^{\lrr}=\fo]\otimes_E\sigma^\vee\big)^H\\
		\cong\;\bigoplus_{\delta'}&\big(\big(J_{\bP^{\lrr}(L)}(W)_{{\bm\lambda}_\bh}\otimes_E\sigma^\vee\big)^{H^D}[\fz^{\lrr}=\fo\circ\mathrm{det}_{\bL^{\lrr}(L)}][\fm_{\delta'}^\infty]\\&\widehat{\otimes}_E\cC^{\BQ_p-\ana}(\bZ^{\lrr}(\cO_L),E)[\fz^{\lrr}=\fo]\big)^{Z_H},		
	\end{aligned}
\end{equation*}
where $H^D:=H\cap \bD^{\lrr}(\cO_L)$ (resp.,\;$Z_H:=H\cap\bZ^{\lrr}(\cO_L)$),\;and $\delta'$ runs though the locally algebraic characters of $\iota_1(\bZ^{\lrr}(\cO_L))$ (and $[\fm_{\delta'}^\infty]$ is for the corresponding $\iota_1(\bZ^{\lrr}(\cO_L))$-action).\;Similar to the proof of \cite[Proposition 4.1,\;(4.3),\;Pages 10521-10522]{breuil2007first},\;we have a short exact sequence of finite-dimensional $E$-vector spaces:
\begin{equation*}
	\begin{aligned}
		0 \rightarrow \big(J_{\bP^{\lrr}(L)}(V^{\BQ_p-\ana})_{{\bm\lambda}_\bh}&\otimes_E\sigma^\vee\big)^{H^D}[\fz^{\lrr}=\fo\circ_{\bL^{\lrr}(L)}][\fm_{\delta'}^\infty] \\
		\rightarrow& \big(J_{\bP^{\lrr}(L)}(\Pi)_{{\bm\lambda}_\bh}\otimes_E\sigma^\vee\big)^{H^D}[\fz^{\lrr}=\fo\circ_{\bL^{\lrr}(L)}][\fm_{\delta'}^\infty]\\ & \rightarrow \big(J_{\bP^{\lrr}(L)}(\Pi_1)_{{\bm\lambda}_\bh}\otimes_E\sigma^\vee\big)^{H^D}[\fz^{\lrr}=\fo\circ_{\bL^{\lrr}(L)}][\fm_{\delta'}^\infty] \rightarrow 0.\;
	\end{aligned}
\end{equation*}
Then the result follows by taking $[\fm_\chi]$-eigenspaces and replacing character $\delta'$ of $\Delta_0$ by maximal ideals $\fm$ of $\mathfrak{Z}_{\omepik}$,\;as in the proof of \cite[Lemma 3.1.4]{Ding2021}.\;
\end{proof}

%Using the same strategy as in the proof of \cite[Prop.\;4.1]{breuil2020towards} and the fact that $\Supp(\cM_{\infty,\omepik,\bm{\lambda}_\bh}^0)\rightarrow\widehat{\fz^{\lrr}}$ has discrete fibers (see Proposition \ref{eigendiscretefibre}),\;we see that the three term in the second sequence are finite-dimensional vector space.\;A slight modification of \cite[Prop.\;4.1]{breuil2020towards} can show the exactness of the second sequence.\;

Let $\FI_y\subseteq \fm_y$ be a closed ideal of $R_{\infty}[1/p]$ such that $\dim_E (R_{\infty}[1/p]/\FI_y) < +\infty$ and that $\fm_y$ is the unique closed maximal ideal containing $\FI_y$ (e.g. $\FI_y=\fm_y^k$).\;Similar to \cite[Corollary 4.9]{2015Ding},\;we can obtain
\begin{cor}\label{cor: lgln-bij3}
Let $\st_{(r,k)}^{\infty}(\alpha,\pi_0,{\bm\lambda}_\bh)\subseteq W\subseteq \st_{(r,k)}^{\ana}(\alpha,\pi_0, {\bm\lambda}_\bh)$.\;If a morphism $$f: W\ra \Pi_{\infty}^{R_{\infty}-\ana}[\FI_y]$$ satisfies that $f|_{\st_{(r,k)}^{\infty}(\alpha,\pi_0,{\bm\lambda}_\bh)}\subseteq \Pi_{\infty}^{R_{\infty}-\ana}[\fm_y]$,\;then $f$ has image in $\Pi_{\infty}^{R_{\infty}-\ana}[\fm_y]$.
\end{cor}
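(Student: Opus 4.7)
The plan is to proceed by induction on $\dim_E(R_\infty[1/p]/\FI_y)$, using Proposition \ref{proadjuncitonformularight} to rigidify morphisms into $\Pi_\infty^{R_\infty-\ana}[\fm_y]$. Since $\fm_y$ is the unique maximal ideal of $R_\infty[1/p]$ containing $\FI_y$, the quotient $R_\infty[1/p]/\FI_y$ is a finite-dimensional local Artinian $E$-algebra with maximal ideal $\fm_y/\FI_y$, so the induction is well-founded; the base case $\FI_y = \fm_y$ is trivial.

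For the inductive step, I would choose an ideal $\FI_y \subsetneq \FI_y' \subseteq \fm_y$ with $\FI_y'/\FI_y$ a simple $R_\infty[1/p]/\FI_y$-module, so that $\FI_y' = \FI_y + R_\infty[1/p]\cdot a$ for some $a \in \fm_y$ with $\fm_y\cdot a \subseteq \FI_y$. Multiplication by $a$ then yields a $G$-equivariant $R_\infty$-linear map
\begin{equation*}
\mu_a: \Pi_\infty^{R_\infty-\ana}[\FI_y] \longrightarrow \Pi_\infty^{R_\infty-\ana}[\fm_y],
\end{equation*}
because for $x \in \Pi_\infty^{R_\infty-\ana}[\FI_y]$ we have $\fm_y\cdot (ax) = (\fm_y\cdot a) x \subseteq \FI_y\cdot x = 0$; moreover $\ker\mu_a = \Pi_\infty^{R_\infty-\ana}[\FI_y']$ by construction of $\FI_y'$.

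The heart of the argument is then to show $\mu_a\circ f = 0$. The restriction of $\mu_a\circ f$ to $\st_{(r,k)}^{\infty}(\alpha,\pi_0,{\bm\lambda}_\bh)$ equals $a\cdot(f|_{\st_{(r,k)}^{\infty}(\alpha,\pi_0,{\bm\lambda}_\bh)})$, which vanishes because the hypothesis places $f|_{\st_{(r,k)}^{\infty}(\alpha,\pi_0,{\bm\lambda}_\bh)}$ in $\Pi_\infty^{R_\infty-\ana}[\fm_y]$ and $a \in \fm_y$. By Proposition \ref{proadjuncitonformularight} applied to the subrepresentation $W$, the adjunction map
\begin{equation*}
\homo_G\bigl(W, \Pi_\infty^{R_\infty-\ana}[\fm_y]\bigr) \xrightarrow{\;\sim\;} \homo_{\bL^{\lrr}(L)}\bigl(\pi_0^{\lrr}(\alpha,{\bm\lambda}_\bh)\otimes_E \delta_{\bP^{\lrr}}, J_{\bP^{\lrr}(L)}(\Pi_\infty^{R_\infty-\ana}[\fm_y])\bigr)
\end{equation*}
is a bijection; since the embedding $\pi_0^{\lrr}(\alpha,{\bm\lambda}_\bh)\otimes_E \delta_{\bP^{\lrr}} \hookrightarrow J_{\bP^{\lrr}(L)}(W)$ factors through $J_{\bP^{\lrr}(L)}(\st_{(r,k)}^{\infty}(\alpha,\pi_0,{\bm\lambda}_\bh))$, any $g \in \homo_G(W, \Pi_\infty^{R_\infty-\ana}[\fm_y])$ with $g|_{\st_{(r,k)}^{\infty}(\alpha,\pi_0,{\bm\lambda}_\bh)} = 0$ is forced to be zero. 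Applying this with $g = \mu_a\circ f$ yields $\mu_a\circ f = 0$, so $f$ factors through $\Pi_\infty^{R_\infty-\ana}[\FI_y']$, and the inductive hypothesis with $\FI_y'$ in place of $\FI_y$ completes the argument.

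The main obstacle—already dispatched in Proposition \ref{proadjuncitonformularight}—is precisely the adjunction formula for the intermediate subrepresentations $W$ between $\st_{(r,k)}^{\infty}$ and $\st_{(r,k)}^{\ana}$; once this is in hand, the remaining work is the clean induction driven by multiplication by a socle element in the Artinian $E$-algebra $R_\infty[1/p]/\FI_y$.
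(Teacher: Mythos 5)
Your argument is correct, and it is essentially the argument the paper delegates to the citation of \cite[Corollary 4.9]{2015Ding}: reduce to the Artinian local quotient $R_\infty[1/p]/\FI_y$, multiply by a socle element $a\in\fm_y$ so that $\mu_a\circ f$ lands in $\Pi_\infty^{R_\infty-\ana}[\fm_y]$ and vanishes on $\st_{(r,k)}^{\infty}(\alpha,\pi_0,{\bm\lambda}_\bh)$, then kill it using the injectivity built into Proposition \ref{proadjuncitonformularight} (the adjunction map factors through $J_{\bP^{\lrr}(L)}(\st_{(r,k)}^{\infty}(\alpha,\pi_0,{\bm\lambda}_\bh))=\pi_0^{\lrr}(\alpha,{\bm\lambda}_\bh)\otimes_E\delta_{\bP^{\lrr}}$), and conclude by induction on $\dim_E(R_\infty[1/p]/\FI_y)$. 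No gaps; this is a complete proof of the corollary.
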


The following theorem is the main result of this paper.\;As soon as  we finish the previous preparation,\;the proof is almost completely parallel to the proof of \cite[Theorem 4.10]{2019DINGSimple}.\;We include a proof for the reader's convenience.\;

\begin{thm}\label{thm: lgln-main}
(1) The following restriction map is bijective
\begin{equation}\label{equ: lgln-main}
	\begin{aligned}
		\homo_{G}\big(\Sigma^{\lrr}(\alpha,\pi_0,{\bm\lambda}_\bh, \sL(\rho_L)),& \Pi_{\infty}^{R_{\infty}-\ana}[\fm_y]\big) \\
		&\lra \homo_{G}\big(\st_{(r,k)}^{\infty}(\alpha,\pi_0,{\bm\lambda}_\bh), \Pi_{\infty}^{R_{\infty}-\ana}[\fm_y]\big).
	\end{aligned}
\end{equation}

\noindent
(2) Let $0\neq \psi\in \homo(L^{\times},E)$ and $ir\in \Delta_n(k)$, an injection
\begin{equation*}
	f: \st_{(r,k)}^{\infty}(\alpha,\pi_0,{\bm\lambda}_\bh)\hooklongrightarrow \Pi_{\infty}^{R_{\infty}-\ana}[\fm_y]   \end{equation*}
can extend to an injection $\Sigma_{i}(\alpha, \pi,{\bm\lambda}_\bh, \psi)\hookrightarrow \Pi_{\infty}^{R_{\infty}-\ana}[\fm_y]$ if and only if $\psi\in \sL(\rho_L)_{ir}$.
\end{thm}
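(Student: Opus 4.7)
\subsection*{Proof plan for Theorem \ref{thm: lgln-main}}

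The plan is to reduce Part (1) to Part (2) and then prove Part (2) in both directions using the smoothness/tangent map surjection of Proposition \ref{coreigentangentmap2} together with the adjunction formula of Proposition \ref{proadjuncitonformularight} and its consequence Corollary \ref{cor: lgln-bij3}.

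\textbf{Reduction.} For Part (1), injectivity of the restriction map follows from the socle identity $\soc_G \Sigma^{\lrr}(\alpha,\pi_0,{\bm\lambda}_\bh, \sL(\rho_L))\cong \st_{(r,k)}^{\infty}(\alpha,\pi_0,{\bm\lambda}_\bh)$ recorded at the end of Section \ref{BreuilLINVSUMMMANY}: any $G$-morphism whose restriction to $\st_{(r,k)}^\infty$ vanishes is zero. For surjectivity, I would first observe that by construction $\Sigma^{\lrr}(\alpha,\pi_0,{\bm\lambda}_\bh, \sL(\rho_L))$ is the amalgamated direct sum over $\st_{(r,k)}^\infty$ of the representations $\Sigma_i^{\lrr}(\alpha,\pi,{\bm\lambda}_\bh, \sL(\rho_L)_{ir})$ for $ir\in \Delta_n(k)$, each of which is itself the amalgamated direct sum over $\st_{(r,k)}^\infty$ of $\Sigma_i^{\lrr}(\alpha,\pi,{\bm\lambda}_\bh,\psi_{i,j})$ for a basis $\{\psi_{i,j}\}$ of $\sL(\rho_L)_{ir}$. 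Thus extending a given $f\in \homo_G\big(\st_{(r,k)}^\infty(\alpha,\pi_0,\bm\lambda_\bh),\Pi_\infty^{R_\infty-\ana}[\fm_y]\big)$ to $\Sigma^\lrr$ reduces to separately extending it to every such $\Sigma_i^\lrr(\alpha,\pi,\bm\lambda_\bh,\psi_{i,j})$, which is exactly the ``if'' direction of Part (2).

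\textbf{``Only if'' direction of Part (2).} Suppose an injection $\widetilde{f}:\Sigma_i^\lrr(\alpha,\pi,{\bm\lambda}_\bh,\psi)\hookrightarrow \Pi_\infty^{R_\infty-\ana}[\fm_y]$ extends $f$. Applying the Jacquet-Emerton functor and using the explicit description of $\Sigma_i^\lrr$ from Section \ref{BreuilLINVSUMMMANY}, one checks that $J_{\bP^{\lrr}(L)}(\Sigma_i^{\lrr}(\alpha,\pi,\bm\lambda_\bh,\psi))$ contains a non-split self-extension of $\pi_0^\lrr(\alpha,\bm{\lambda}_\bh)\otimes_E \delta_{\bP^\lrr}$ parametrized by a character $\Psi\in \homo(\bZ^\lrr(L),E)$ whose image under $\kappa_L$ has $ir$-component equal to $\psi$. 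Transporting this along $\widetilde f$ and invoking the construction of $\bersteineigenvarpik$ (Section \ref{BENVARPARAVAR}), one obtains a tangent vector $v\in T_{\bersteineigenvarpik,x}$ with $d\omega_x(v)=\Psi$. Proposition \ref{coreigentangentmap2} then forces $\kappa_L(\Psi)\in \sL(\rho_L)=\prod_{jr\in\Delta_n(k)}\sL(\rho_L)_{jr}$, hence $\psi\in \sL(\rho_L)_{ir}$.

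\textbf{``If'' direction of Part (2).} Given $\psi\in \sL(\rho_L)_{ir}$, the surjection in Proposition \ref{coreigentangentmap2} produces a tangent vector $v\in T_{\bersteineigenvarpik,x}$ with $(\kappa_L\circ d\omega_x)(v)$ supported at the $ir$-coordinate and equal to $\psi$ there. Choosing a lift $\Psi\in \homo(\bZ^\lrr(L),E)$ of this class, the $E[\epsilon]/\epsilon^2$-point of $\bersteineigenvarpik$ attached to $v$ yields an injection of $\bL^\lrr(L)$-representations
\[
\pi_0^{\lrr}(\alpha,\bm\lambda_\bh)\otimes_E \big(\mathbf{1}_\Psi\circ\det_{\bL^\lrr(L)}\big)\otimes_E \delta_{\bP^\lrr}\hooklongrightarrow J_{\bP^\lrr(L)}\big(\Pi_\infty^{R_\infty-\ana}[\fm_x^\infty]\big),
\]
where $\mathbf{1}_\Psi$ is the non-split self-extension of the trivial character determined by $\Psi$. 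The strict $\Delta_n^k$-dominance of $\bm\lambda_\bh$ ensures that this embedding is balanced in the sense of \cite[Definition 0.8]{emerton2007jacquet}, so Emerton's adjunction formula \cite[Theorem 0.13]{emerton2007jacquet} delivers a non-zero morphism
\[
\big(\mathrm{Ind}_{\op^\lrr(L)}^G \pi_0^\lrr(\alpha)\otimes_E (\mathbf{1}_\Psi\circ\det_{\bL^\lrr(L)})\otimes_E L^\lrr(\bm\lambda_\bh)\big)^{\BQ_p-\ana}\longrightarrow \Pi_\infty^{R_\infty-\ana}[\fm_x^\infty].
\]
Applying step (a) of the proof of Proposition \ref{proadjuncitonformularight} (which eliminates irreducible constituents of the source whose Jacquet modules vanish against $\Pi_\infty^{R_\infty-\ana}[\fm_y]$) and matching with the explicit construction of $\widetilde{\Sigma}_i^\lrr(\alpha,\pi,\bm\lambda_\bh,\psi)$ given after Theorem \ref{thmintroLINV}, I would obtain a non-zero morphism $\widetilde{\Sigma}_i^\lrr(\alpha,\pi,\bm\lambda_\bh,\psi)\to \Pi_\infty^{R_\infty-\ana}[\fm_x^\infty]$ whose restriction to $\st_{(r,k)}^\infty$ agrees with $f$. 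Corollary \ref{cor: lgln-bij3} then lifts the target to $\Pi_\infty^{R_\infty-\ana}[\fm_y]$, and restricting to the subrepresentation $\Sigma_i^\lrr(\alpha,\pi,\bm\lambda_\bh,\psi)\subseteq \widetilde{\Sigma}_i^\lrr(\alpha,\pi,\bm\lambda_\bh,\psi)$ yields the desired extension; injectivity follows from the socle identity for $\Sigma_i^\lrr$ combined with injectivity of $f$.

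\textbf{Main obstacle.} The most delicate step is identifying the image of the morphism produced by Emerton's adjunction with (a subrepresentation containing) $\Sigma_i^\lrr(\alpha,\pi,\bm\lambda_\bh,\psi)$, rather than merely with some larger subrepresentation of the locally analytic parabolic induction. This requires a careful Jordan--H\"older analysis of the induction and a verification that the Orlik--Strauch constituents $C_{i,\sigma}$ appearing in $\Sigma_i^\lrr$ are the only non-locally-algebraic constituents that can embed into $\Pi_\infty^{R_\infty-\ana}[\fm_y]$ against the given $\Psi$, separated by central characters and exploiting the structure recalled in Section \ref{BreuilLINVSUMMMANY}. A secondary technical point is verifying the balancedness hypothesis for Emerton's adjunction under the sole assumption of strict $\Delta_n^k$-dominance of $\bm\lambda_\bh$, which replaces the strict $\Delta_n$-dominance used in the trianguline case of \cite{2019DINGSimple}.
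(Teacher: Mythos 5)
Your reduction of Part (1) to the ``if'' direction of Part (2), and your argument for that ``if'' direction, follow essentially the same route as the paper: produce a tangent vector via the surjection of Proposition \ref{coreigentangentmap2}, pass to the $E[\epsilon]/\epsilon^2$-point of $\bersteineigenvarpik$ and the locally free fibre of $\cM^{\infty}_{\omepik,{\bm\lambda}_\bh}$, check balancedness, apply Emerton's adjunction, eliminate the unwanted constituents as in step (a) of Proposition \ref{proadjuncitonformularight}, and descend from $[\FI_t]$ to $[\fm_y]$ by Corollary \ref{cor: lgln-bij3}. The ``main obstacle'' you flag (identifying the image with a subrepresentation containing $\Sigma_i^{\lrr}(\alpha,\pi_0,{\bm\lambda}_\bh,\psi)$ rather than some other piece of the induction) is exactly where the paper works hardest: it is resolved by the explicit diagram (\ref{commrevi}), the splitness of the pull-back along $w_{\op^{\lrr}_{ir}}^{\infty}\hookrightarrow i_{\op^{\lrr}_{ir}}^{G}$, and the $\ext^1$-comparisons imported from \cite{2022ext1hyq}; you would need to supply that analysis, but the skeleton is the paper's.

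The genuine gap is your ``only if'' direction. You claim that $J_{\bP^{\lrr}(L)}\big(\Sigma_i^{\lrr}(\alpha,\pi,{\bm\lambda}_\bh,\psi)\big)$ contains a non-split self-extension of $\pi_0^{\lrr}(\alpha,{\bm\lambda}_\bh)\te\delta_{\bP^{\lrr}}$ parametrized by a lift $\Psi$ of $\psi$, and that this transports to a tangent vector of $\bersteineigenvarpik$. Neither step is substantiated, and the first is very likely false as stated: in the Jordan--H\"older series of the relevant isotypic part of the Jacquet module, the character $\pi_0^{\lrr}(\alpha,{\bm\lambda}_\bh)\te\delta_{\bP^{\lrr}}$ occurs only once, coming from $\st_{(r,k)}^{\infty}(\alpha,\pi_0,{\bm\lambda}_\bh)$ --- the constituents $C_{i,\sigma}$ contribute exponents of weight $s_{ir,\sigma}\cdot{\bm\lambda}_\bh$, and $v_{\op^{\lrr}_{ir}}^{\infty}$ contributes different unramified twists --- so no self-extension of that character can sit inside $J_{\bP^{\lrr}(L)}(\Sigma_i^{\lrr}(\alpha,\pi,{\bm\lambda}_\bh,\psi))$; the ``derivative'' data you want lives in constituents of $\st_{(r,k)}^{\ana}$ that are deliberately excluded from $\Sigma_i^{\lrr}$. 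Moreover, even granting such data inside $J_{\bP^{\lrr}(L)}(\Pi_\infty^{R_\infty-\ana}[\fm_y])$, converting it into an element of $T_{\bersteineigenvarpik,x}$ requires an Artinian-point analogue of Proposition \ref{basicpropertyeigen} (1), which you do not address. The paper proves ``only if'' by a much shorter contradiction that you should adopt: if $\psi\notin\sL(\rho_L)_{ir}$ then, since $\sL(\rho_L)_{ir}$ has codimension $1$, $\sL(\rho_L)_{ir}+E\psi=\homo(L^\times,E)$; amalgamating the hypothesized embedding of $\Sigma_i^{\lrr}(\alpha,\pi,{\bm\lambda}_\bh,\psi)$ with the embeddings of $\Sigma_i^{\lrr}(\alpha,\pi,{\bm\lambda}_\bh,\sL(\rho_L)_{ir})$ furnished by Part (1) yields an embedding of $\Sigma_i^{\lrr}(\alpha,\pi,{\bm\lambda}_\bh,\psi_\infty)$ for a nonzero smooth $\psi_\infty$; by \cite[Remark 5.21]{2022ext1hyq} this contains a locally algebraic subrepresentation strictly larger than $\st_{(r,k)}^{\infty}(\alpha,\pi_0,{\bm\lambda}_\bh)$, and applying the left-exact Jacquet--Emerton functor contradicts Lemma \ref{detlameda} (this is where the non-crystalline hypothesis, via Lemma \ref{criofnoncrystalline}, is used).
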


\noindent The rest of the section is to prove Theorem \ref{thm: lgln-main}, and we use the strategy of (\cite{2019DINGSimple}).\;\\
\begin{proof}
\textbf{(a)} The ``if " part  of Theorem \ref{thm: lgln-main} (2) is a consequence of (1).\;It suffices to prove the ``only if " part.\;Otherwise,\;we assume that there exists $\Sigma_{i}^{\lrr}(\alpha,\pi_0,{\bm\lambda}_\bh, \psi)\hookrightarrow \Pi_{\infty}^{R_{\infty}-\ana}[\fm_y]$ with $\psi\notin \sL(\rho_L)_{ir}$. Recall that $\sL(\rho_L)_{ir}$ is of codimension $1$ in $\homo(L^{\times}, E)$,\;we see that  $\sL(\rho_L)_{ir}+ E\psi =\homo(L^{\times}, E)$. Then the injection
\begin{equation*}
	\Sigma_i^{\lrr}(\alpha, \pi,{\bm\lambda}_\bh, \sL(\rho_L)_{ir})\oplus_{\Sigma_i^{\lrr}(\alpha,\pi_0,{\bm\lambda}_\bh)} \Sigma_{i}^{\lrr}(\alpha,\pi_0,{\bm\lambda}_\bh, \psi) \hooklongrightarrow \Pi_{\infty}^{R_{\infty}-\ana}[\fm_y]
\end{equation*}
induces
\begin{equation*}
	\Sigma_i^{\lrr}(\alpha, \pi,{\bm\lambda}_\bh, \psi_{\infty}) \hooklongrightarrow\Sigma_i^{\lrr}(\alpha, \pi,{\bm\lambda}_\bh, \sL(\rho_L)_{ir})\oplus_{\Sigma_i^{\lrr}(\alpha, \pi,{\bm\lambda}_\bh)} \Sigma_{i}^{\lrr}(\alpha, \pi,{\bm\lambda}_\bh, \psi) \hooklongrightarrow \Pi_{\infty}^{R_{\infty}-\ana}[\fm_y].
\end{equation*}
for any $0\neq \psi_{\infty}\in \homo_{\infty}(L^{\times},E)$.\;But by \cite[Remark 5.21]{2022ext1hyq}, $\Sigma_i^{\lrr}(\alpha,\pi_0,{\bm\lambda}_\bh, \psi_{\infty})$ contains $V\otimes_E L({\bm\lambda}_\bh)$ where $V$ is a smooth extension of $v_{\op^{\lrr}_I}^{\infty}(\pi)$ by $\st_{(r,k)}^{\infty}(\pi)$.\;By applying the (left exact) Jacquet-Emerton functor to $V\otimes_E L({\bm\lambda}_\bh)\hookrightarrow \Pi_{\infty}^{R_{\infty}-\ana}[\fm_y]$,\;we get a contradiction with Lemma \ref{detlameda}.\;This completes the proof of (2).\;
\\
\noindent \textbf{(b)} Since $\soc_{G} \Sigma^{\lrr}(\alpha,\pi_0,{\bm\lambda}_\bh=\st_{(r,k)}^{\infty}(\alpha,\pi_0,{\bm\lambda}_\bh)$,\;it follows from Lemma \ref{detlameda} and the step (a) of the proof of Proposition \ref{proadjuncitonformularight}) that the injectivity of (\ref{equ: lgln-main}) holds.\;It remains  to  show that (\ref{equ: lgln-main}) is surjective.\;By definition, it suffices to show that for any $ir\in \Delta_n(k)$, $\psi\in \sL(\rho_L)_{ir}$, the following restriction map is surjective
\begin{equation}\label{equ: lgln-rest}
	\homo_{\GLN_n(L)}\big(\Sigma_i^{\lrr}(\alpha ,\pi,{\bm\lambda}_\bh, \psi), \Pi_{\infty}^{R_{\infty}-\ana}[\fm_y]\big) \lra \homo_{\GLN_n(L)}\big(\st_{(r,k)}^{\infty}(\alpha,\pi_0,{\bm\lambda}_\bh), \Pi_{\infty}^{R_{\infty}-\ana}[\fm_y]\big).
\end{equation}
The key ingredient is the consequence of the surjectivity of (\ref{eigentangentmap2}).\;
By Corollary \ref{coreigentangentmap2},\;there exists a  $t: \Spec E[\epsilon]/\epsilon^2 \ra \bersteineigenvarpik$ (as an element in $T_{\bersteineigenvarpik,x}$)\;such that the $i$-th factor of the image of $t$ under (\ref{eigentangentmap2}) equals $\psi$, and the $j$-th factors for all $j\neq i$ are zero.\;Let $\Psi\in \homo(\bZ^{\lrr}(L),E)$ be the image of $t$ via the first map in (\ref{tangentpatchvar1}).\;By Proposition \ref{basicpropertyeigen} (3),\;we see that the coherent sheaf $\cM^{\infty}_{\omepik,{\bm\lambda}_\bh}$ is Cohen-Macaulay over $\mathcal{E}_{\omepik,\fp,{\bm\lambda}_\bh}^\infty(\overline{\rho})$.\;Since 
non-critical special point $x$ is a smooth point on $\bersteineigenvarpik$,\;we see that $\cM^{\infty}_{\omepik,{\bm\lambda}_\bh}$ is locally free in a certain neighborhood of $x$.\;let $\FI_t$ denotes the kernel of the morphism $R_{\infty}[1/p] \ra E[\epsilon]/\epsilon^2$ induced by $t$.\;We put \[\pi_0^{\lrr}(\alpha,{\bm\lambda}_\bh,\Psi):=\pi_0^{\lrr}(\alpha,{\bm\lambda}_\bh)\te({1+\Psi \epsilon})\circ\mathrm{det}_{\bL^{\lrr}(L)}.\;\]By the construction of $\cM^{\infty}_{\omepik,{\bm\lambda}_\bh}$,\;we deduce the following facts:
\begin{itemize}
	\item[(a)] $(x^*\cM^{\infty}_{\omepik,{\bm\lambda}_\bh})^{\vee}\cong \homo_{\bL^{\lrr}(L)}\big(\pi_0^{\lrr}(\alpha,{\bm\lambda}_\bh)\te \delta_{\bP^{\lrr}}, {J}_{\bP^{\lrr}(L)}\big(\Pi_{\infty}^{R_{\infty}-\ana}[\fm_y]\big)\big)$,\;and \\
	$(t^* \cM^{\infty}_{\omepik,{\bm\lambda}_\bh})^{\vee}\cong \homo_{\bL^{\lrr}(L)}\big(\pi_0^{\lrr}(\alpha,{\bm\lambda}_\bh,\Psi)\te \delta_{\bP^{\lrr}}, J_{\bP^{\lrr}(L)}\big(\Pi_{\infty}^{R_{\infty}-\ana}[\FI_t]\big)\big)$,\;\\which are closed subrepresentations of $J_{\bP^{\lrr}(L)}\big(\Pi_{\infty}^{R_{\infty}-\ana}[\fm_y])$ and $J_{\bP^{\lrr}(L)}(\Pi_{\infty}^{R_{\infty}-\ana}[\FI_t])$ respectively;
	% (since Jacquet Emerton functor is left exact and $\pi_0^{\lrr}(\alpha,{\bm\lambda}_\bh)\te \delta_{\bP^{\lrr}}\cong J_{\bP^{\lrr}(L)}\big(\st^{\infty}_{(r,k)}(\alpha,\pi_0,{\bm\lambda}_\bh)\big)$);
	\item[(b)] there are natural $\bL^{\lrr}(L)$-equivariant injections
	\begin{equation}\label{equ: lgln-injf}(x^*\cM^{\infty}_{\omepik,{\bm\lambda}_\bh})^{\vee}\hooklongrightarrow (t^* \cM^{\infty}_{\omepik,{\bm\lambda}_\bh})^{\vee} \hooklongrightarrow J_{\bP^{\lrr}(L)}(\Pi_{\infty}^{R_{\infty}-\ana}[\FI_t]).\end{equation}
\end{itemize}
By Lemma \ref{detlameda} and an easy variation of the proof of
\cite[Lemma 4.11]{2019DINGSimple},\;we can show that
\begin{lem} The morphisms of $\bL^{\lrr}(L)$-representations:
	\begin{equation}
		\begin{aligned}
			&(x^*\cM_{\infty,\omepik,{\bm\lambda}_\bh})^{\vee} \hookrightarrow J_{\bP^{\lrr}(L)}(\Pi_{\infty}^{R_{\infty}-\ana}[\fm_y]),\\
			\text{and\;}&(t^* \cM_{\infty,\omepik,{\bm\lambda}_\bh})^{\vee} \hookrightarrow
			J_{\bP^{\lrr}(L)}(\Pi_{\infty}^{R_{\infty}-\ana}[\FI_t])
		\end{aligned}
	\end{equation}
	are balanced (see \cite[Definition 0.8]{emerton2007jacquet},\;\cite[Definition 5.17]{Emerton2007summary}).
\end{lem}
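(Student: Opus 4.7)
The plan is to adapt the strategy of \cite[Lemma 4.11]{2019DINGSimple} to our parabolic non-trianguline setting. Recall that an $\bL^{\lrr}(L)$-equivariant closed embedding $U \hooklongrightarrow J_{\bP^{\lrr}(L)}(V)$ is balanced in the sense of \cite[Definition 0.8]{emerton2007jacquet} essentially if $U$ lifts to a closed subrepresentation $U^+$ of the $\bP^{\lrr}(L)$-representation on $V^{\bP^{\lrr}(L)-\mathrm{an}}$ with appropriate analyticity conditions relative to $\op^{\lrr}(L)$, so that Emerton's adjunction formula \cite[Theorem 0.13]{emerton2007jacquet} produces a non-zero map from the locally analytic parabolic induction $I^G_{\op^{\lrr}}(U)$ to $V$. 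In practice this amounts to controlling the $\bT(L)$-weights that occur in $J_{\bP^{\lrr}(L)}(V)$ in a neighbourhood of the weight of $U$, ruling out the ``wrong'' Weyl translates that would obstruct the lifting.

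First, I would verify the balanced condition for $(x^{\ast}\cM^{\infty}_{\omepik,{\bm\lambda}_\bh})^{\vee}\hookrightarrow J_{\bP^{\lrr}(L)}(\Pi_{\infty}^{R_{\infty}-\ana}[\fm_y])$. The embedded representation is $\pi_0^{\lrr}(\alpha,{\bm\lambda}_\bh)\otimes_E\delta_{\bP^{\lrr}}$, which has a specific locally algebraic central character and a prescribed infinitesimal character $d\omega_{{\bm\lambda}_\bh}$. By Lemma \ref{detlameda}, the only irreducible $\bL^{\lrr}(L)$-constituents appearing in $J_{\bP^{\lrr}(L)}(\Pi_{\infty}^{R_{\infty}-\ana}[\fm_y])$ whose $\FZ_{\omepik}$- and $\bZ^{\lrr}(\cO_L)$-eigensystems match those of $\pi_0^{\lrr}(\alpha,{\bm\lambda}_\bh)\otimes_E\delta_{\bP^{\lrr}}$ lie in the single $\mu_{\omepik}$-orbit of $(\breve{\mathbf{x}}_\pi,\underline{\mathbf{1}})$. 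Combined with the strict $\Delta_n^k$-dominance of $\bh$ (the non-critical hypothesis guarantees $h_{\tau,i}>h_{\tau,i+1}$ for all $i$ and $\tau$), this rules out the appearance in $J_{\bP^{\lrr}(L)}(\Pi_{\infty}^{R_{\infty}-\ana}[\fm_y])$ of any constituent whose weight is obtained from ${\bm\lambda}_\bh$ by a non-trivial element $s\in\sW_{n,\Sigma_L}^{\lrr}$ with $s\cdot{\bm\lambda}_\bh\in X^+_{\Delta_n^k}$. This vanishing is the precise form of the balanced condition, as in the derivation leading to \cite[(5.16)]{2019DINGSimple}. Proposition \ref{emertongenerpara} may then be invoked to propagate the vanishing through the successive quotients used to check the analyticity criterion.

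For the deformed embedding $(t^{\ast}\cM^{\infty}_{\omepik,{\bm\lambda}_\bh})^{\vee}\hookrightarrow J_{\bP^{\lrr}(L)}(\Pi_{\infty}^{R_{\infty}-\ana}[\FI_t])$, I would use the smoothness of $\bersteineigenvarpik$ at $x$ established in Proposition \ref{coreigentangentmap2}, together with the Cohen-Macaulay property of $\cM^{\infty}_{\omepik,{\bm\lambda}_\bh}$ from Proposition \ref{basicpropertyeigen}(3), to conclude that $\cM^{\infty}_{\omepik,{\bm\lambda}_\bh}$ is locally free near $x$. Consequently $(t^{\ast}\cM^{\infty}_{\omepik,{\bm\lambda}_\bh})^{\vee}$ is a rank-$2$ free $E[\epsilon]/\epsilon^2$-module fitting into a short exact sequence with two copies of $(x^{\ast}\cM^{\infty}_{\omepik,{\bm\lambda}_\bh})^{\vee}$, and the balanced condition for it reduces, by d\'evissage, to the condition already checked for $x$ together with the observation that the character $(1+\Psi\epsilon)\circ\mathrm{det}_{\bL^{\lrr}(L)}$ only twists the central character, not the infinitesimal weight classes that have to be excluded.

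The main obstacle will be the second step, because the thickening at $t$ means we can no longer appeal to Lemma \ref{detlameda} on the nose: one has to verify that the infinitesimal deformation of the $\FZ_{\omepik}$-eigensystem stays within the locus where Lemma \ref{detlameda} applies. The key input is that the map $\omega$ at $x$ has the surjectivity property of Proposition \ref{coreigentangentmap2}, so that the $1$-dimensional deformation along $t$ lies inside the Bernstein eigenvariety itself, and hence along this infinitesimal direction the eigenspace $J_{\bP^{\lrr}(L)}(\Pi_{\infty}^{R_{\infty}-\ana}[\FI_t])$ admits no spurious weights outside the $\mu_{\omepik}$-orbit. Once this is in hand, the balanced condition for the $(t^{\ast})$-embedding follows by the same weight-exclusion argument as in the undeformed case.
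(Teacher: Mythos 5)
Your overall strategy is the same as the paper's: the proof there is literally an adaptation of \cite[Lemma 4.11]{2019DINGSimple}, with $U(\fb)$ replaced by $U(\fp^{\lrr}_{\Sigma_L})$, the spaces $\cC^{\bullet}_c(\bN(L),-)$ replaced by $\cC^{\bullet}_c(\bN^{\lrr}(L),-)$, the extension of characters $0\to\chi\to\widetilde{\chi}\to\chi\to0$ replaced by $0\to\pi_0^{\lrr}(\alpha,{\bm\lambda}_\bh)\te\delta_{\bP^{\lrr}}\to\pi_0^{\lrr}(\alpha,{\bm\lambda}_\bh,\Psi)\te\delta_{\bP^{\lrr}}\to\pi_0^{\lrr}(\alpha,{\bm\lambda}_\bh)\te\delta_{\bP^{\lrr}}\to0$, and Ding's Lemma 4.6 replaced by Lemma \ref{detlameda}; your weight-exclusion argument for the embedding at $x$ is exactly this mechanism.

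There is, however, a genuine gap in your treatment of the embedding at $t$. The claim that the required vanishing in $J_{\bP^{\lrr}(L)}(\Pi_{\infty}^{R_{\infty}-\ana}[\FI_t])$ follows because ``the deformation along $t$ lies inside the Bernstein eigenvariety'' does not establish what is needed: membership of $t$ in $T_{\bersteineigenvarpik,x}$ only controls the fibre $t^*\cM^{\infty}_{\omepik,{\bm\lambda}_\bh}$, i.e. the single eigensystem $(\breve{\mathbf{x}}_\pi,\underline{\mathbf{1}})$ twisted by $\Psi$; it says nothing about whether \emph{other} $\FZ_{\omepik}\times\bZ^{\lrr}(\cO_L)\times\fz^{\lrr}$-eigensystems (in particular those of weight $s\cdot{\bm\lambda}_\bh$ with $s\neq 1$) occur in $J_{\bP^{\lrr}(L)}(\Pi_{\infty}^{R_{\infty}-\ana}[\FI_t])$, and it is precisely these that the balanced condition forces you to exclude. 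The correct bridge from $\FI_t$ back to $\fm_y$ is a d\'evissage of the \emph{target}: since $\fm_y^2\subseteq\FI_t\subseteq\fm_y$ and $\fm_y/\FI_t$ is one-dimensional, multiplication by a lift of a generator of $\fm_y/\FI_t$ embeds $\Pi_{\infty}^{R_{\infty}-\ana}[\FI_t]/\Pi_{\infty}^{R_{\infty}-\ana}[\fm_y]$ into $\Pi_{\infty}^{R_{\infty}-\ana}[\fm_y]$, and left exactness of $J_{\bP^{\lrr}(L)}$ then reduces every required vanishing statement to Lemma \ref{detlameda} at the maximal ideal; combined with the filtration of the source coming from the displayed extension by two copies of $\pi_0^{\lrr}(\alpha,{\bm\lambda}_\bh)\te\delta_{\bP^{\lrr}}$, this is exactly how the paper (following Ding) handles the thickened case. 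Two smaller points: Proposition \ref{emertongenerpara} is not needed here (mere left exactness of the Jacquet functor suffices for vanishing; the full exactness is used for the adjunction formula, not for balancedness), and $t^*\cM^{\infty}_{\omepik,{\bm\lambda}_\bh}$ is free of some rank $r\geq 1$ over $E[\epsilon]/\epsilon^2$ (the multiplicity of the fibre at $x$ is not known to be one), not of rank $2$; this does not affect the structure of the argument.
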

\begin{proof}This lemma follows by an easy variation of the proof of \cite[Lemma 4.11]{2019DINGSimple}.\;We briefly indicate below the changes.\;The notation ''$U(\bB(L))$'' (resp.,\;$\cC^{\sm}_c\big(\bN(L),-)$,\;resp.,\;$\cC^{\lp}_c\big(\bN(L),-)$,\;resp., $\cC^{\BQ_p-\pol}_c\big(\bN(L),-)$) has to be replaced by $\text{U}(\fp^{\lrr}_{\Sigma_L})$ (resp.,\;$\cC^{\sm}_c\big(\bN^{\lrr}(L),-)$,\;resp.,$\cC^{\lp}_c\big(\bN^{\lrr}(L),-)$,\;resp., $\cC^{\BQ_p-\pol}_c\big(\bN^{\lrr}(L),-)$).\;The short exact sequence ``$0\rightarrow\chi\rightarrow\widetilde{\chi}\rightarrow\chi\rightarrow0$"  has to be replaced by the short exact sequence
	$$ ``0 \ra\pi_0^{\lrr}(\alpha,{\bm\lambda}_\bh)\te \delta_{\bP^{\lrr}}  \ra \pi_0^{\lrr}(\alpha,{\bm\lambda}_\bh,\Psi)\te \delta_{\bP^{\lrr}}\ra \pi_0^{\lrr}(\alpha,{\bm\lambda}_\bh)\te \delta_{\bP^{\lrr}} \ra 0".\;$$The "\cite[Lemma 4.6]{2019DINGSimple}" has to be replaced by Lemmas \ref{detlameda}.\;
\end{proof}

This lemma shows that the adjunction formula in \cite{emerton2007jacquet} is suitable for us.\;Let $V''$ be a  locally $\BQ_p$-analytic representation of $\bL^{\lrr}(L)$.\;We refer to \cite{emerton2007jacquet} for the detail of locally $\BQ_p$-analytic representation $I_{\op^{\lrr}}^G(V'')$ of $G$,\;which is a closed $G$-subrepresentation of $(\ind_{\op^{\lrr}(L)}^{G} V'')^{\BQ_p-\ana}$.\;By \cite[Theorem 0.13]{emerton2007jacquet},\;there exists an integer $r$,\;such that the injections in (\ref{equ: lgln-injf}) induce
\begin{equation}\label{equ: lgln-keymap}
	I_{\op^{\lrr}}^G \big(\pi_0^{\lrr}(\alpha,{\bm\lambda}_\bh)\big)^{\oplus r} \hooklongrightarrow I_{\op^{\lrr}}^G \big(\pi_0^{\lrr}(\alpha,{\bm\lambda}_\bh,\Psi)\big)^{\oplus r} \longrightarrow \pi_{\infty}^{R_{\infty}-\ana}[\FI_t].
\end{equation}
By \cite[Proposition 2.8.10]{emerton2007jacquet},\;we have $I_{\op^{\lrr}}^G (\pi_0^{\lrr}(\alpha,{\bm\lambda}_\bh))\cong i_{\op^{\lrr}}^G(\alpha,\pi_0, {\bm\lambda}_\bh)$.\;The natural exact sequence $$0\ra \;\pi_0^{\lrr}(\alpha,{\bm\lambda}_\bh)\; \ra \;\pi_0^{\lrr}(\alpha,{\bm\lambda}_\bh,\Psi)\; \ra \;\pi_0^{\lrr}(\alpha,{\bm\lambda}_\bh)\; \ra 0$$ induces a sequence (not necessary exact)
\begin{equation*}
	i_{\op^{\lrr}}^G(\alpha,\pi_0, {\bm\lambda}_\bh) \hooklongrightarrow I_{\op^{\lrr}}^G(\pi_0^{\lrr}(\alpha,{\bm\lambda}_\bh,\Psi)) \twoheadlongrightarrow  i_{\op^{\lrr}}^G(\alpha,\pi_0, {\bm\lambda}_\bh).
\end{equation*}
Then $v_{\op^{\lrr}_I}^{\infty}(\alpha,\pi_0, {\bm\lambda}_\bh)$ is an irreducible constituent in $I_{\op^{\lrr}}^G(\pi_0^{\lrr}(\alpha,{\bm\lambda}_\bh,\Psi)) $ of multiplicity $2$.\;

Using the natural embedding $$I_{\op^{\lrr}}^G(\pi_0^{\lrr}(\alpha,{\bm\lambda}_\bh,\Psi))  \hookrightarrow \Big(\ind_{\op^{\lrr}(L)}^{G} \pi_0^{\lrr}(\alpha,{\bm\lambda}_\bh,\Psi)\Big)^{\BQ_p-\ana},\;$$we can take inside  $\Big(\ind_{\op^{\lrr}(L)}^{G} \pi_0^{\lrr}(\alpha,{\bm\lambda}_\bh,\Psi)\Big)^{\BQ_p-\ana}$ the following intersections:
\begin{eqnarray*}U&:=&I_{\op^{\lrr}}^G(\pi_0^{\lrr}(\alpha,{\bm\lambda}_\bh,\Psi)) \cap \Big(\sum_{\emptyset\neq I\subseteq \Delta_n(k)} \BI_{\op^{\lrr}_{I}}^G(\alpha,\pi_0, {\bm\lambda}_\bh)\Big), \\
	W&:=&I_{\op^{\lrr}}^G(\pi_0^{\lrr}(\alpha,{\bm\lambda}_\bh,\Psi))\cap \BI_{\op^{\lrr}}^G(\alpha,\pi_0, {\bm\lambda}_\bh).
\end{eqnarray*}
Thus $\widetilde{\Sigma}^{\lrr}(\alpha,\pi_0, {\bm\lambda}_\bh)' :=W/U$ is a subrepresentation of $\st_{(r,k)}^{\ana}(\alpha,\pi_0, {\bm\lambda}_\bh)$, and  $I_{\op^{\lrr}}^G(\pi_0^{\lrr}(\alpha,{\bm\lambda}_\bh,\Psi))/U$ is an extension of $i_{\op^{\lrr}}^G(\alpha,\pi_0, {\bm\lambda}_\bh)$ by $\widetilde{\Sigma}^{\lrr}(\alpha,\pi_0, {\bm\lambda}_\bh)'$.\;As in Step (a) of the  proof of Proposition \ref{proadjuncitonformularight},\;we deduce from Lemma \ref{detlameda} that any irreducible constituent of $U$ can not appear in the socle of $\pi_{\infty}^{R_{\infty}-\ana}[\FI_t]$.\;Then (\ref{equ: lgln-keymap}) induces
\begin{equation}\label{equ: lgln-keymap2}
	\st_{(r,k)}^{\infty}(\alpha,\pi_0, {\bm\lambda}_\bh)^{\oplus r}\hooklongrightarrow \big(I_{\op^{\lrr}}^G (\pi_0^{\lrr}(\alpha,{\bm\lambda}_\bh,\Psi))/U\big)^{\oplus r} \lra \Pi_{\infty}^{R_{\infty}-\ana}[\FI_t].
\end{equation}
This composition is injective.\;Since $(x^* \cM_{\infty})^{\vee}$ has image in $J_{\bP^{\lrr}(L)}(\pi_{\infty}^{R_{\infty}-\ana}[\fm_y])$ via (\ref{equ: lgln-injf}),\;we see that it factors through an injection $\Pi_{\infty}^{R_{\infty}-\ana}[\fm_y]\hookrightarrow \Pi_{\infty}^{R_{\infty}-\ana}[\FI_t]$ by Corollary \ref{cor: lgln-bij3}.\;

Denote by $\Sigma_i^{\lrr}(\alpha,\pi_0, {\bm\lambda}_\bh)':=\widetilde{\Sigma}^{\lrr}(\alpha,\pi_0, {\bm\lambda}_\bh)' \cap \Sigma_i^{\lrr}(\alpha,\pi_0, {\bm\lambda}_\bh)$.\;Then \cite[Proposition 5.28]{2022ext1hyq} deduce $\soc_{G} \Sigma_i^{\lrr}(\alpha,\pi_0, {\bm\lambda}_\bh)'\cong \st_{(r,k)}^{\infty}(\alpha,\pi_0, {\bm\lambda}_\bh)$.\;Thus by Corollary \ref{cor: lgln-bij3},\;the  composition
\begin{equation}\label{equ: compW}
	(\Sigma_i^{\lrr}(\alpha,\pi_0, {\bm\lambda}_\bh)')^{\oplus r} \hooklongrightarrow \big(I_{\op^{\lrr}}^G (\pi_0^{\lrr}(\alpha,{\bm\lambda}_\bh,\Psi))/U\big)^{\oplus r} \lra \pi_{\infty}^{R_{\infty}-\ana}[\FI_t]
\end{equation}
also has image in $\pi_{\infty}^{R_{\infty}-\ana}[\fm_y]$.\;By Lemma \ref{detlameda},\;it is also injective.\;By Proposition \ref{proadjuncitonformularight}, we see that (\ref{equ: compW}) extends uniquely to   an injection
\begin{equation}\label{equ: compnoL}\Sigma_i^{\lrr}(\alpha,\pi_0, {\bm\lambda}_\bh)^{\oplus r} \lra \pi_{\infty}^{R_{\infty}-\ana}[\fm_y] \hooklongrightarrow \pi_{\infty}^{R_{\infty}-\ana}[\FI_t].\end{equation}
Combining (\ref{equ: compW}) with  (\ref{equ: compnoL}),\;we put $$V^+:=\big(I_{\op^{\lrr}}^G (\pi_0^{\lrr}(\alpha,{\bm\lambda}_\bh,\Psi))/U\big)\oplus_{\Sigma_i^{\lrr}(\alpha,\pi_0, {\bm\lambda}_\bh)'} \Sigma_i^{\lrr}(\alpha,\pi_0, {\bm\lambda}_\bh).$$
Therefore,\;(\ref{equ: compW}) and  (\ref{equ: compnoL}) give
\begin{equation}\label{equ: lgL1}
	\begin{aligned}
		\Sigma_i^{\lrr}(\alpha,\pi_0, {\bm\lambda}_\bh)^{\oplus r}\hooklongrightarrow \big(V^+\big)^{\oplus r}\lra \pi_{\infty}^{R_{\infty}-\ana}[\FI_t].
	\end{aligned}
\end{equation}
It has image in $\pi_{\infty}^{R_{\infty}-\ana}[\fm_y]$.\;It suffices to prove the following assertion.\;\\
\textbf{Claim.\;}$\Sigma_i^{\lrr}(\alpha,\pi_0, {\bm\lambda}_\bh, \psi)$ is a subrepresentation of $V^+$.\;\\
\textbf{Proof of the claim.\;}Let $V$ be the pull-back of $I_{\op^{\lrr}}^G (\pi_0^{\lrr}(\alpha,{\bm\lambda}_\bh,\Psi))/U$ via the injection \[i_{\op^{\lrr}_{ir}}^G(\alpha,\pi_0, {\bm\lambda}_\bh) \hookrightarrow i_{\op^{\lrr}}^G(\alpha,\pi_0, {\bm\lambda}_\bh).\;\]Then $V$ lies in a commutative diagram
\begin{equation}\label{commrevi}
	\xymatrix{0 \ar[r] & \widetilde{\Sigma}^{\lrr}(\alpha,\pi_0, {\bm\lambda}_\bh)' \ar[d]^{\subseteq}  \ar[r] & \hspace{20pt}V\hspace{20pt} \ar[d]^{\subseteq} \ar[r] & i_{\op^{\lrr}_{ir}}^{G}(\alpha,\pi_0,{\bm\lambda}_\bh) \ar@{=}[d] \ar[r] & 0  \\
		%0 \ar[r] & \widetilde{\Sigma}^{\lrr}(\alpha,\pi_0, {\bm\lambda}_\bh)'  \ar[r] \ar[d]^{\subseteq} & \BI_{\op^{\lrr}}^G (\pi_0^{\lrr}(\alpha,{\bm\lambda}_\bh,\Psi))/U \ar[r] \ar[d]^{\subseteq} & \BI_{\op^{lrr}_I}^{G}(\alpha,\pi_0,{\bm\lambda}_\bh)\ar[r] \ar[d]^{=} & 0\\
		0 \ar[r] &\st_{(r,k)}^{\ana}(\alpha,\pi_0, {\bm\lambda}_\bh) \ar[r] & \sE_{\{i\}}^\emptyset(\alpha,\pi_0,{\bm\lambda}_\bh,\Psi)^0/\widetilde{U} \ar[r] & i_{\op^{\lrr}_{ir}}^{G}(\alpha,\pi_0,{\bm\lambda}_\bh)\ar[r]&0,}
\end{equation}
where $\widetilde{U}:=\sum_{\emptyset\neq I\subseteq \Delta_n(k)} \BI_{\op^{\lrr}_I}^G(\alpha,\pi_0, {\bm\lambda}_\bh)$.\;Recall that the representation
$\sE_{\{ir\}}^\emptyset(\alpha,\pi_0,{\bm\lambda}_\bh,\Psi)^0$ is defined in the argument below \cite[Theorem 5.19]{2022ext1hyq}.\;By \cite[Theorem 5.19]{2022ext1hyq} and \cite[Remark 5.21]{2022ext1hyq},\;the pull-back of the bottom exact sequence via the injection
\begin{equation}\label{injabc}
	w_{\op^{\lrr}_{ir}}^{\infty}(\alpha,\pi_0,{\bm\lambda}_\bh):=w_{\op^{\lrr}_{ir}}^{\infty}(\alpha,\pi_0,{\bm\lambda}_\bh)\otimes_E \unr(\alpha)\circ \dett\hooklongrightarrow i_{\op^{\lrr}_{ir}}^G(\alpha,\pi_0, {\bm\lambda}_\bh)\end{equation}
is split.\;Since $\homo_G(w_{\op^{\lrr}_I}^{\infty}(\alpha,{\bm\lambda}_\bh), \st_{(r,k)}^{\ana}(\alpha,{\bm\lambda}_\bh)/\widetilde{\Sigma}^{\lrr}(\alpha,\pi_0, {\bm\lambda}_\bh)')=0$,\;we deduce
\begin{equation*}\ext^1_G(w_{\op^{\lrr}_{ir}}^{\infty}(\alpha,\pi_0,{\bm\lambda}_\bh), \widetilde{\Sigma}^{\lrr}(\alpha,\pi_0, {\bm\lambda}_\bh)')\hooklongrightarrow \ext^1_G(w_{\op^{\lrr}_{ir}}^{\infty}(\alpha,\pi_0,{\bm\lambda}_\bh),\st_{(r,k)}^{\ana}(\alpha,\pi_0,{\bm\lambda}_\bh))
\end{equation*}
Therefore,\;the pull-back of the top exact sequence of (\ref{commrevi}) via (\ref{injabc}) is also split.\;This implies that
$I_{\op^{\lrr}}^G (\pi_0^{\lrr}(\alpha,{\bm\lambda}_\bh,\Psi))/U$ contains a subrepresentation $\widetilde{\Sigma}_i^{\lrr}(\alpha,\pi_0, {\bm\lambda}_\bh, \psi)'$,\;which is isomorphic to an extension of $v_{\op^{\lrr}_{ir}}^{\infty}(\alpha,{\bm\lambda}_\bh)$ by $\widetilde{\Sigma}^{\lrr}(\alpha,\pi_0, {\bm\lambda}_\bh)'$.\;We have an isomorphism
\begin{equation*}
	\ext^1_G(v_{\op^{\lrr}_I}^{\infty}(\alpha,\pi_0,{\bm\lambda}_\bh), \Sigma_i^{\lrr}(\alpha,\pi_0, {\bm\lambda}_\bh)') \xrightarrow{\sim}  \ext^1_G(v_{\op^{\lrr}_I}^{\infty}(\alpha,\pi_0.{\bm\lambda}_\bh), \widetilde{\Sigma}^{\lrr}(\alpha,\pi_0, {\bm\lambda}_\bh)'),
\end{equation*}
by similar strategy in the proof of \cite[Proposition 5.29]{2022ext1hyq} and \cite[Proposition 5.34(2)]{2022ext1hyq}.\;This asserts that
$\widetilde{\Sigma}_i^{\lrr}(\alpha,\pi_0,{\bm\lambda}_\bh, \psi)'$ comes from some $\Sigma_i^{\lrr}(\alpha,\pi_0,{\bm\lambda}_\bh, \psi)'$,\;which is
an extension of $v_{\op^{\lrr}_{ir}}^{\infty}(\alpha,\pi_0, {\bm\lambda}_\bh)$ by the representation $\Sigma_i^{\lrr}(\alpha,\pi_0,{\bm\lambda}_\bh)'$. But the push-forward of $\Sigma_i^{\lrr}(\alpha,\pi_0, {\bm\lambda}_\bh, \psi)'$ via the injection $\Sigma_i^{\lrr}(\alpha,\pi_0,{\bm\lambda}_\bh)'\hookrightarrow \Sigma_i^{\lrr}(\alpha,\pi_0,{\bm\lambda}_\bh)$ is isomorphic to $\Sigma_i^{\lrr}(\alpha,\pi_0,{\bm\lambda}_\bh, \psi)$.\;The claim follows.\;

The composition in (\ref{equ: lgL1}) induces then
\begin{equation*}
	\Sigma_i^{\lrr}(\alpha,\pi_0, {\bm\lambda}_\bh)^{\oplus r}\hooklongrightarrow \Sigma_i^{\lrr}(\alpha,\pi_0,{\bm\lambda}_\bh, \psi)^{\oplus r} \lra \Pi_{\infty}^{R_{\infty}-\ana}[\FI_t].
\end{equation*}
By using the same argument as in the \cite[Page.\;8040]{2015Ding},\;we see that the image of the second morphism is also contained in $ \pi_{\infty}^{R_{\infty}-\ana}[\fm_y] $.\;The surjectivity of (\ref{equ: lgln-rest}) now follows.\;We complete the proof of Theorem \ref{thm: lgln-main}.
\end{proof}

\acknowl{This is part of the author's PhD thesis.\;I would like to express my sincere gratitude to my advisor Yiwen Ding for introducing me to this subject,\;for many helpful discussion and suggestions,\;and for his comments on earlier draft of this paper.\;}

\end{document}